\newcommand{\cA}{\ensuremath{\mathcal{A}}}
\renewcommand{\C}{\ensuremath{\mathbb{C}}}
\newcommand{\cE}{\ensuremath{\mathcal{E}}}
\newcommand{\F}{\ensuremath{\mathbb{F}}}
\newcommand{\rH}{\ensuremath{\mathrm{H}}}
\newcommand{\bK}{\ensuremath{\mathbb{K}}}
\newcommand{\bM}{\ensuremath{\mathbb{M}}}
\newcommand{\bP}{\ensuremath{\mathbb{P}}}
\newcommand{\R}{\ensuremath{\mathbb{R}}}
\newcommand{\Z}{\ensuremath{\mathbb{Z}}}
\newcommand{\smsh}{\wedge}
\newcommand{\into}{\hookrightarrow}
\newcommand{\rtarr}{\longrightarrow}
\newcommand{\ltarr}{\longleftarrow}
\newcommand{\xrtarr}{\xrightarrow}
\newcommand{\iso}{\cong}
\newcommand{\bracket}[1]{ \left\langle #1 \right\rangle}
\newcommand{\cla}{\mathrm{cl}}
\newcommand{\Sq}{\ensuremath{\mathrm{Sq}}}
\newcommand{\gr}{\ensuremath{\mathrm{gr}}}
\newcommand{\coB}{\mathrm{coB}}
\newcommand{\SpRmot}{\ensuremath{\mathbf{Sp}^\R}}
\newcommand{\SpCmot}{\ensuremath{\mathbf{Sp}^\C}}
\newcommand{\SpC}{\ensuremath{\mathbf{Sp}^{C_2}}}
\newcommand{\Sp}{\ensuremath{\mathbf{Sp}}}
\newcommand{\arho}{\rho}
\DeclareMathOperator{\Hom}{Hom}
\DeclareMathOperator{\Ext}{Ext}
\DeclareMathOperator{\Tor}{Tor}
\DeclareMathOperator*{\colim}{colim}
\DeclareMathOperator*{\holim}{holim}
\newcounter{themyfigure}
\newenvironment{myfigure}
	{\refstepcounter{themyfigure}}
    {}
\newcommand{\MTwocrad}{0.11}
\newcommand{\SqOneLine}{\psline{->}(0.6,0.5)(1.42,0.5)}
\newcommand{\SqTwoLine}{\psline[linecolor=blue]{->}(0.6,0.55)(2.42,1.46)}
\newcommand{\SqFourLine}{\psbezier[linecolor=orange]{->}(0.5,0.5)(2,0.9)(3,1.4)(4.47,2.445)}
\newcommand{\smallcrad}{0.1}
\newcommand{\crad}{0.12}
\newcommand{\bigcrad}{0.2}
\newcommand{\Bigcrad}{0.27}
\newcommand{\cirrad}{0.11}
\newcommand{\labelrad}{0.1}
\newcolumntype{C}{>{$}c<{$}}
\newcolumntype{L}{>{$}l<{$}}
\begin{document}
\title{
The cohomology of $C_2$-equivariant $\cA(1)$ and the homotopy of 
$\MakeLowercase{ko}_{C_2}$}
\author{B. J. Guillou}
\email{bertguillou@uky.edu}
\address{Department of Mathematics, The University of Kentucky, Lexington, KY}
\author{M. A. Hill}
\email{mikehill@math.ucla.edu}
\address{Department of Mathematics, University of California, Los Angeles, Los Angelas, CA}
\author{D. C. Isaksen}
\email{isaksen@wayne.edu}
\address{Department of Mathematics, Wayne State University, Dertoit, MI}
\author{D. C. Ravenel}
\email{doug@math.rochester.edu}
\address{Department of Mathematics, University of Rochester, Rochester, NY}

\keywords{Adams spectral sequence, motivic homotopy, equivariant homotopy, equivariant $K$-theory, cohomology of the Steenrod algebra}
\subjclass[2000]{14F42, 55Q91, 55T15}

\thanks{The authors were supported by NSF grants DMS-1710379, DMS-1509652, DMS-1606290, and DMS-1606623 and by Simons Collaboration Grant No. 282316.}
\

\begin{abstract}
We compute the cohomology of the subalgebra $\cA^{C_2}(1)$
of the $C_2$-equivariant Steenrod algebra $\cA^{C_2}$.
This serves as the input to 
the $C_2$-equivariant Adams spectral sequence 
converging to the completed $RO(C_2)$-graded homotopy groups of 
an equivariant spectrum $ko_{C_2}$. 
Our approach is to use
simpler $\C$-motivic and $\R$-motivic calculations
as stepping stones.
\end{abstract}

\numberwithin{equation}{section}

\newtheorem{thm}{Theorem}[section]
\newtheorem{prop}[thm]{Proposition}
\newtheorem{lemma}[thm]{Lemma}
\newtheorem{cor}[thm]{Corollary}
\newtheorem{conj}[thm]{Conjecture}
\newtheorem{quest}{Question}
\theoremstyle{definition}
\newtheorem{rmk}[thm]{Remark}
\newtheorem{eg}[thm]{Example}
\newtheorem{defn}[thm]{Definition}
\newtheorem{notn}[thm]{Notation}

\newenvironment{pf}{\begin{proof}}{\end{proof}}

\newcounter{marker}

\maketitle

\tableofcontents

\section{Introduction}

The $RO(G)$-graded homotopy groups are among the most fundamental
invariants of the stable $G$-equivariant homotopy category.
This article is a first step towards systematic application of
the equivariant Adams spectral sequence to calculate these groups.

Araki and Iriye \cite{AI} \cite{Iriye} computed much information about
the $C_2$-equivariant stable homotopy groups using EHP-style
techniques in the spirit of Toda \cite{Toda}.  Our approach is
entirely independent from theirs.

We work only with the two-element group $C_2$ because it is the most
elementary non-trivial case.  In order to compute $C_2$-equivariant
stable homotopy groups of the $C_2$-equivariant sphere spectrum
using the Adams spectral sequence, one
needs to work with the full $C_2$-equivariant Steenrod algebra
$\cA^{C_2}$ for the constant Mackey functor $\underline{\F}_2$.  
As the $C_2$-equivariant Eilenberg--Mac~Lane
spectrum for $\underline{\F}_2$ is flat (\cite[Corollary~6.45]{HK})
 the $E_2$-term of the Adams spectral sequence 
is given by the cohomology of the equivariant Steenrod algebra. 
In this article, we tackle a computationally simpler situation by working
over the subalgebra $\cA^{C_2}(1)$.  This means that we are computing
the $C_2$-equivariant stable homotopy groups not of the sphere but of
the $C_2$-equivariant analogue of connective real $K$-theory $ko$.  We
will explicitly construct this $C_2$-equivariant spectrum $ko_{C_2}$ in Section
\ref{sctn:koC2}.

Our calculational program is carried out for $\cA^{C_2}(1)$ in this
article as a warmup for the full Steenrod algebra $\cA$ to be studied
in future work.  Roughly speaking, $\cA$ contains Steenrod squaring
operations $\Sq^{i}$ with the expected properties, and $\cA^{C_2}(1)$
is the subalgebra generated by $\Sq^{1}$ and $\Sq^{2}$. A key point is
that our program works just as well in theory for $\cA^{C_2}$ as for
$\cA^{C_2}(1)$, except that the details are even more complicated.
It remains to be seen how far this can be carried out in practice.

Our strategy is to build up to the complexity of the $C_2$-equivariant
situation by first studying the $\C$-motivic and $\R$-motivic
situations.  
The relevant stable homotopy categories
are related by functors as in the diagram
\[ \xymatrix{
\mathrm{Ho}(\SpRmot) \ar[r]^{-\otimes_\R \C} \ar[d]_{\mathbf{Re}} &  \mathrm{Ho}(\SpCmot) \ar[d]^{\mathbf{Re}} \\
\mathrm{Ho}(\SpC) \ar[r]^{\iota^*} & \mathrm{Ho}(\Sp).
}\]
The vertical functors are Betti realization (see \cite[Section~4.4]{HO}). 
The functor $\iota^*$ restricts an equivariant spectrum to the 
trivial subgroup, yielding the underlying spectrum.

The $\C$-motivic cohomology of a point is equal to
$\F_2[\tau]$ \cite{VoevHF2} (see also \cite[Section~2.1]{DImass}).
The $\C$-motivic Steenrod algebra
$\cA^{\C}$ is very similar to the classical Steenrod algebra, but
there are some small complications related to $\tau$.  In particular,
these complications allow the element $h_1$ in the cohomology of
$\cA^\C$ to be non-nilpotent,
detecting the non-nilpotence of the motivic Hopf map $\eta_\C$ 
(\cite[Corollary~6.4.5]{Mo}). 
In the cohomology of $\cA^\C(1)$, the
non-nilpotence of $h_1$ is essentially the only difference to the
classical case.

The $\R$-motivic cohomology of a point is equal to $\F_2[\tau, \rho]$
\cite{VoevHF2}  (again, see the discussion in \cite[Section~2.1]{DImass}). 
 Now an additional complication enters because
$\Sq^1(\tau) = \rho$.  The computation of the cohomology of the
$\R$-motivic Steenrod algebra $\cA^\R$ becomes more difficult because
the cohomology of a point is a non-trivial $\cA^\R$-module.  In
addition, the $\R$-motivic Steenrod algebra $\cA^\R$ has additional
complications associated with terms involving higher powers of $\rho$
\cite[Theorem~12.6]{Voev}.

A natural way to avoid this problem is to filter by powers of $\rho$.
In the associated graded object, $\Sq^1(\tau)$ becomes zero
and the associated graded Hopf algebroid is simply
the $\C$-motivic Hopf algebra with an adjoined polynomial generator $\rho$.
Therefore, the $\rho$-Bockstein spectral sequence starts from the
cohomology of $\cA^\C$ and converges to the cohomology of $\cA^\R$.

This $\rho$-Bockstein spectral sequence has lots of differentials and
hidden extensions.  Nevertheless, a complete calculation for $\cA^\R(1)$
is reasonable.  A key point is to first carry out the $\rho$-inverted
calculation.  This turns out to be much simpler.  With a priori
knowledge of the $\rho$-inverted calculation in hand, there is just
one possible pattern of $\rho$-Bockstein differentials.

Relying on our experience from the $\R$-motivic situation, we are now
ready to tackle the $C_2$-equivariant situation.  The
$C_2$-equivariant cohomology of a point contains $\F_2[\tau, \rho]$,
but there is an additional ``negative cone" that is infinitely
divisible by both $\tau$ and $\rho$ \cite[Prop~6.2]{HK}.  Except for the
complications in the cohomology of a point, the $C_2$-equivariant
Steenrod algebra $\cA^{C_2}$ is no more complicated than the
$\R$-motivic one \cite[pp. 386--387]{HK}.

Again, a $\rho$-Bockstein spectral sequence allows us to compute the
cohomology of $\cA^{C_2}(1)$.  Because of infinite
$\tau$-divisibility, the starting point of the spectral sequence is
more complicated than just the cohomology of $\cA^\C(1)$.  Once
identified, this issue presents only a minor difficulty.

The $\rho$-inverted calculation determines the part of 
the cohomology of $\cA^{C_2}(1)$
that supports infinitely many $\rho$ multiplications.
Dually, it is also helpful to determine in advance 
the part of 
the cohomology of $\cA^{C_2}(1)$
that is infinitely $\rho$-divisible,
i.e., the inverse limit of an infinite tower of $\rho$-multiplications.
We anticipate that 
this approach via infinitely $\rho$-divisible classes will
be essential in the more complicated calculation over the
full Steenrod algebra $\cA^{C_2}$, to be studied in future work.

As for the $\R$-motivic case, the $\rho$-Bockstein spectral sequence
is manageable, even though it does have lots of differentials
and hidden extensions.

All of these calculations lead to a thorough understanding of
the cohomology of $\cA^{C_2}(1)$.
The charts in Section \ref{sctn:chart} display the
calculation graphically.

The next step is to consider the $C_2$-equivariant 
Adams spectral sequence.  For degree reasons, there are no
non-zero Adams differentials.  The same simple situation occurs 
in the classical, $\C$-motivic, and $\R$-motivic cases.  

However, it turns out that
there are many hidden extensions to be analyzed.
The presence of so many hidden extensions suggests that
the Adams filtration may not be optimal for 
equivariant purposes.
Unfortunately, we do not have an alternative to propose.

The final description of the homotopy groups is complicated.
Nevertheless, our computation establishes
that the homotopy of $ko_{C_2}$ is nearly periodic
(see Theorem~\ref{thm:PhiPerd}). 
We refer to Section \ref{sec:homotopy} and the charts in
Section \ref{sctn:chart} for details.

We are very grateful to the referees for close readings of our paper and detailed commentary.

\subsection{Organization}

In Section \ref{sctn:Ext}, we provide the basic algebraic input to our 
calculation by thoroughly describing the $C_2$-equivariant
cohomology of a point and the $C_2$-equivariant Steenrod algebra
$\cA^{C_2}$.
In Section \ref{sctn:Bockstein}, we set up the $\rho$-Bockstein spectral
sequence, which is our main tool for
computing 
the cohomology of $\cA^{C_2}(1)$.
In Sections \ref{sctn:rho-inverted} and \ref{sec:RhoComp}, 
we 
carry out the $\rho$-inverted and the infinitely
$\rho$-divisible calculations.
In Section \ref{sec:ExtAR1}, we carry out the $\R$-motivic
$\rho$-Bockstein spectral sequence as a warmup for the
$C_2$-equivariant $\rho$-Bockstein spectral sequence
in Section \ref{sec:BockNegCone}.
Section \ref{sctn:Massey} provides some information about
Massey products in the $C_2$-equivariant cohomology
of $\cA(1)$, which is used in Section \ref{sec:hidden}
to determine multiplicative structure that is hidden
by the $\rho$-Bockstein spectral sequence.
Section \ref{sctn:koC2} gives the construction of the
$C_2$-equivariant spectrum whose homotopy groups are
computed by the cohomology of $\cA^{C_2}(1)$,
and Section \ref{sec:homotopy} analyzes multiplicative structure
in these homotopy groups that is hidden by the
Adams spectral sequence.
Finally, Section \ref{sctn:chart} includes a series 
of charts that graphically describe our calculation.

\subsection{Notation}\label{sec:notn}

We employ notation as follows:
\begin{enumerate}
\item
$\bM_2^\C=\F_2[\tau]$ 
is the motivic cohomology of $\C$ with $\F_2$ coefficients, where $\tau$ has bidegree $(0,1)$.
\item
$\bM^\R_2=\F_2[\tau,\rho]$ 
is the motivic cohomology of $\R$ with $\F_2$ coefficients, where $\tau$ and $\rho$ have bidegrees $(0,1)$ and $(1,1)$, respectively.
\item
$\bM^{C_2}_2$ is the bigraded equivariant cohomology of a point with coefficients in the constant Mackey functor $\underline{\F}_2$. See Section \ref{subsctn:cohomology-point} for a description of this algebra. 
\item
$NC$ is the ``negative cone" part of $\bM^{C_2}_2$.  
See Section \ref{subsctn:cohomology-point} for a precise description.
\item
$H^{*,*}_{C_2}(X)$ is the $C_2$-equivariant cohomology of $X$,
with coefficients in the constant Mackey functor $\underline{\F}_2$.
\item
$\cA^\cla$, $\cA^\C$, $\cA^\R$, and $\cA^{C_2}$ are the classical,
$\C$-motivic, $\R$-motivic, and $C_2$-equivariant mod 2 Steenrod algebras.
\item
$\cA^\cla(n)$, $\cA^\C(n)$, $\cA^\R(n)$, and
$\cA^{C_2}(n)$ are the classical, $\C$-motivic, $\R$-motivic,
and $C_2$-equivariant subalgebras generated by
$\Sq^1, \Sq^2, \Sq^4, \ldots, \Sq^{2^n}$.
\item
$\cE^{C_2}(1)$ is the subalgebra of $\cA^{C_2}$ generated by $Q_0 = \Sq^1$
and $Q_1 = \Sq^1 \Sq^2 + \Sq^2 \Sq^1$.
\item
$\Ext_\cla$ is the bigraded ring $\Ext_{\cA^{\cla}} (\F_2, \F_2)$,
i.e., the cohomology of $\cA^{\cla}$.
\item
$\Ext_\C$ is the trigraded ring $\Ext_{\cA^\C}(\bM_2^\C,\bM_2^\C)$,
i.e., the cohomology of $\cA^{\C}$.
\item
$\Ext_\R$ is the trigraded ring $\Ext_{\cA^\R}(\bM^\R_2,\bM^\R_2)$,
i.e., the cohomology of $\cA^{\R}$.
\item
$\Ext_{C_2}$ is the trigraded ring $\Ext_{\cA^{C_2}}(\bM^{C_2}_2,\bM^{C_2}_2)$,
i.e., the cohomology of $\cA^{C_2}$.
\item
$\Ext_{NC}$ is the $\Ext_{\cA^{\R}}$-module
$\Ext_{\cA^{\R}}(NC, \bM_2^\R)$.
\item
$\Ext_\cla(n)$ is the bigraded ring $\Ext_{\cA^{\cla}(n)} (\F_2, \F_2)$,
i.e., the cohomology of $\cA^{\cla}(n)$.
\item
$\Ext_\C(n)$ is the trigraded ring 
$\Ext_{\cA^\C(n)}(\bM^\C_2,\bM^\C_2)$,
i.e., the cohomology of $\cA^{\C}(n)$.
\item 
$\Ext_\R(n)$ is the trigraded ring 
$\Ext_{\cA^\R(n)}(\bM^\R_2,\bM^\R_2)$,
i.e., the cohomology of $\cA^{\R}(n)$.
\item 
$\Ext_{C_2}(n)$ is the trigraded ring 
$\Ext_{\cA^{C_2}(n)}(\bM^{C_2}_2,\bM^{C_2}_2)$,
i.e., the cohomology of $\cA^{C_2}(n)$.
\item 
$\Ext_{NC}(n)$ is the
$\Ext_{\R}(n)$-module
$\Ext_{\cA^\R(n)}(NC,\bM^\R_2)$.
\item
$E^+$ is the $\rho$-Bockstein spectral sequence
\[
\Ext_\C(1)[\rho] \Rightarrow \Ext_\R(1).
\]
See Section~\ref{sctn:Bockstein}.
\item
$E^-$ is the $\rho$-Bockstein spectral sequence
that converges to $\Ext_{NC}(1)$. See Section~\ref{sctn:Bockstein}.
\item
$\frac{\F_2[x]}{x^\infty} \{ y \}$ is the infinitely $x$-divisible module
$\colim_n \F_2[x]/x^n$, consisting of elements of the form
$\frac{y}{x^k}$ for $k \geq 1$.
See Remark \ref{rmk:theta-notation}.
\item
$ko_{C_2}$ is a $C_2$-equivariant spectrum such that
$H^{*,*}_{C_2}(ko_{C_2}) \iso
\cA^{C_2} \!/\!/ \cA^{C_2}(1)$. 
See Section~\ref{sctn:koC2}.
\item
${\pi}_{*,*}(X)$ are the bigraded 
$C_2$-equivariant stable homotopy groups of $X$, completed at $2$ 
so that the equivariant Adams spectral sequence converges.
\item
$\Pi_n(X)$ is the Milnor-Witt $n$-stem 
$\bigoplus\limits_p \pi_{p+n,p}$.

\end{enumerate}

We use grading conventions that are
common in motivic homotopy theory
but less common in equivariant homotopy theory.
In equivariant homotopy theory,
$RO(C_2)\iso \Z[\sigma]/(\sigma^2 - 1)$ 
is the real representation ring of $C_2$, 
where $\sigma$ is the $1$-dimensional sign representation.
The main points of translation are:
\begin{enumerate}
\item
Equivariant degree $p + q \sigma$ will be expressed, according to
the motivic convention, as
$(p+q, q)$, where $p+q$ is the total degree and $q$ is the weight.
\item
The element $\tau$ in $\bM^\R_2$ maps to 
$u$ \cite[Definition 3.12]{HHR} under the
realization map from $\R$-motivic to $C_2$-equivariant
homotopy theory.  We use the symbol $\tau$ in both cases.
\item 
Similarly, realization takes the $\R$-motivic homotopy class
$\rho: S^{-1,-1} \rightarrow S^{0,0}$ to 
$a$ in $\pi_{-1,-1}$ \cite[Definition 3.11]{HHR}.  We use
the symbol $\rho$ for both of these homotopy classes, and
also for the corresponding elements of
$\bM^\R_2$ and $\bM^{C_2}_2$.
\end{enumerate}

We grade $\Ext$ groups in the form $(s,f,w)$, where $s$ is the stem,
i.e., the total degree minus the homological degree; $f$ is the Adams
 filtration, i.e., the homological degree; and $w$ is the weight.
We will also refer to the Milnor-Witt degree, which equals $s-w$.

\section{Ext groups}
\label{sctn:Ext}

\subsection{The equivariant cohomology of a point}
\label{subsctn:cohomology-point}

The purpose of this section is to carefully describe the structure
of the equivariant cohomology ring 
$\bM^{C_2}_2$
of a point from a perspective
that will be useful for our calculations.  This section is a
reinterpretation of results from \cite{HK}*{Proposition~6.2}.

Additively, $\bM^{C_2}_2$ equals
\begin{enumerate}
\item
$\F_2$ in degree $(s,w)$ if $s \geq 0$ and $w \geq s$.
\item
$\F_2$ in degree $(s,w)$ if $s \leq 0$ and $w \leq s - 2$.
\item
$0$ otherwise.
\end{enumerate}
This additive structure is represented by the dots in 
Figure \ref{fig:M2}.  
The non-zero element in degree $(0,1)$ is called $\tau$,
and the non-zero element in degree $(1,1)$ is called $\rho$.
We remind the reader that we are here employing 
cohomological grading. Thus the class $\rho$ has degree $(-1,-1)$
when considered as an element of the homology ring 
$\pi_{*,*}H\underline{\F}_2$.

\psset{linewidth=0.25mm}    
\psset{unit=5mm}
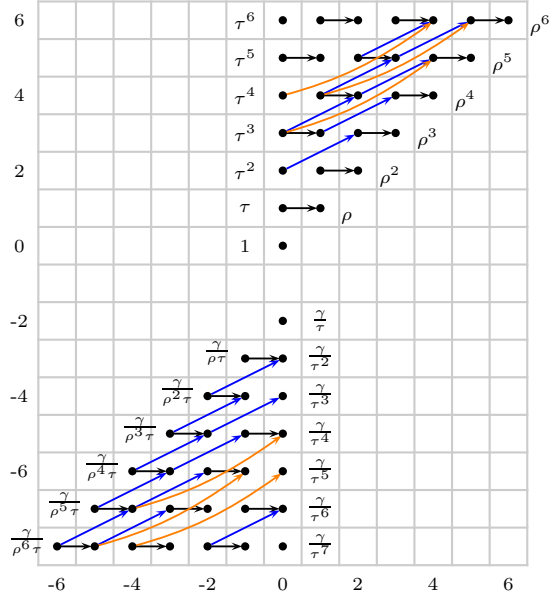
\begin{figure}[h]
\caption{
\label{fig:M2}
$\bM_2^{C_2}$, with action by $\Sq^1$, $\Sq^2$, and $\Sq^4$}
\begin{center}
\begin{pspicture}(-8,-9)(8,8)

\psgrid[unit=1,gridcolor=gridline,subgriddiv=0,gridlabelcolor=white](-0.5,-0.5)(-5.5,-7.5)(7,7)

\scriptsize 
\rput(0,1){\SqOneLine}
\rput(1,2){\SqOneLine}
\rput(0,3){\SqOneLine}
\rput(2,3){\SqOneLine}
\rput(1,4){\SqOneLine}
\rput(3,4){\SqOneLine}
\rput(0,5){\SqOneLine}
\rput(2,5){\SqOneLine}
\rput(4,5){\SqOneLine}
\rput(1,6){\SqOneLine}
\rput(3,6){\SqOneLine}
\rput(5,6){\SqOneLine}
\rput(0,2){\SqTwoLine}
\rput(0,3){\SqTwoLine}
\rput(1,3){\SqTwoLine}
\rput(1,4){\SqTwoLine}
\rput(2,4){\SqTwoLine}
\rput(2,5){\SqTwoLine}
\rput(3,5){\SqTwoLine}
\rput(0,3){\SqFourLine}
\rput(0,4){\SqFourLine}
\rput(1,4){\SqFourLine}

\rput(-1,-3){\SqOneLine}
\rput(-2,-4){\SqOneLine}
\rput(-3,-5){\SqOneLine}
\rput(-1,-5){\SqOneLine}
\rput(-4,-6){\SqOneLine}
\rput(-2,-6){\SqOneLine}
\rput(-5,-7){\SqOneLine}
\rput(-3,-7){\SqOneLine}
\rput(-1,-7){\SqOneLine}
\rput(-6,-8){\SqOneLine}
\rput(-4,-8){\SqOneLine}
\rput(-2,-8){\SqOneLine}
\rput(-2,-4){\SqTwoLine}
\rput(-3,-5){\SqTwoLine}
\rput(-2,-5){\SqTwoLine}
\rput(-4,-6){\SqTwoLine}
\rput(-3,-6){\SqTwoLine}
\rput(-5,-7){\SqTwoLine}
\rput(-4,-7){\SqTwoLine}
\rput(-6,-8){\SqTwoLine}
\rput(-5,-8){\SqTwoLine}
\rput(-2,-8){\SqTwoLine}
\rput(-4,-7){\SqFourLine}
\rput(-5,-8){\SqFourLine}
\rput(-4,-8){\SqFourLine}

\multido{\ii=0+1}{7}
{\rput(\ii,\ii){\pscircle*(0.5,0.5){\MTwocrad}}}
\multido{\ii=0+1}{6}
{\rput(\ii,\ii){\pscircle*(0.5,1.5){\MTwocrad}}}
\multido{\ii=0+1}{5}
{\rput(\ii,\ii){\pscircle*(0.5,2.5){\MTwocrad}}}
\multido{\ii=0+1}{4}
{\rput(\ii,\ii){\pscircle*(0.5,3.5){\MTwocrad}}}
\multido{\ii=0+1}{3}
{\rput(\ii,\ii){\pscircle*(0.5,4.5){\MTwocrad}}}
\multido{\ii=0+1}{2}
{\rput(\ii,\ii){\pscircle*(0.5,5.5){\MTwocrad}}}
\multido{\ii=0+1}{1}
{\rput(\ii,\ii){\pscircle*(0.5,6.5){\MTwocrad}}}

\multido{\ii=0+1}{7}
{\rput(-\ii,-\ii){\pscircle*(0.5,-1.5){\MTwocrad}}}
\multido{\ii=0+1}{6}
{\rput(-\ii,-\ii){\pscircle*(0.5,-2.5){\MTwocrad}}}
\multido{\ii=0+1}{5}
{\rput(-\ii,-\ii){\pscircle*(0.5,-3.5){\MTwocrad}}}
\multido{\ii=0+1}{4}
{\rput(-\ii,-\ii){\pscircle*(0.5,-4.5){\MTwocrad}}}
\multido{\ii=0+1}{3}
{\rput(-\ii,-\ii){\pscircle*(0.5,-5.5){\MTwocrad}}}
\multido{\ii=0+1}{2}
{\rput(-\ii,-\ii){\pscircle*(0.5,-6.5){\MTwocrad}}}
\multido{\ii=0+1}{1}
{\rput(-\ii,-\ii){\pscircle*(0.5,-7.5){\MTwocrad}}}

\multido{\ii=-6+2}{7}
{\rput(\ii,-8.5){\rput(0.5,0){\ii}}}
\multido{\ii=-6+2}{7}
{\rput(-6.5,\ii){\rput(0,0.5){\ii}}}
\rput(-0.5,0.5){$1$}
\rput(-0.5,1.5){$\tau$}
\rput(-0.5,2.5){$\tau^2$}
\rput(-0.5,3.5){$\tau^3$}
\rput(-0.5,4.5){$\tau^4$}
\rput(-0.5,5.5){$\tau^5$}
\rput(-0.5,6.5){$\tau^6$}
\rput(2.2,1.25){$\rho$}
\rput(3.35,2.35){$\rho^2$}
\rput(4.35,3.35){$\rho^3$}
\rput(5.35,4.35){$\rho^4$}
\rput(6.35,5.35){$\rho^5$}
\rput(7.35,6.35){$\rho^6$}
\multido{\ii=2+1}{6}
{\rput(0,-\ii){\rput(1.5,-0.5){$\frac{\gamma}{\tau^\ii}$}}}
\rput(1.5,-1.5){$\frac{\gamma}{\tau}$}
\multido{\ii=2+1}{5}
{\rput(-\ii,-\ii){\rput(-0.3,-1.4){$\frac{\gamma}{\rho^\ii \tau}$}}}
\rput(-1.25,-2.35){$\frac{\gamma}{\rho \tau}$}

\end{pspicture}
\end{center}
\end{figure}

The ``positive cone" refers to the part of $\bM^{C_2}_2$ in degrees 
$(s,w)$ with $w \geq 0$.  The positive cone is isomorphic
to the $\R$-motivic cohomology ring $\bM^\R_2$ of a point.
Multiplicatively, the positive cone is just a polynomial ring
on two variables, $\rho$ and $\tau$.

The ``negative cone" $NC$ refers to the part of $\bM^{C_2}_2$
in degrees $(s,w)$ with $w \leq -2$.  Multiplicatively,
the product of any two elements of $NC$ is zero,
so $\bM^{C_2}_2$ is a square-zero extension
of $\bM^\R_2$.
Also, multiplications by $\rho$ and $\tau$ are non-zero in
$NC$ whenever they make sense.  Thus, the elements of $NC$
are infinitely divisible by both $\rho$ and $\tau$.

We use the notation
$\frac{\gamma}{\rho^j \tau^{k}}$ for the non-zero element
in degree $(-j,-1-j-k)$.  
This is consistent
with the multiplicative properties described in the previous paragraph.
So $\tau \cdot \frac{\gamma}{\rho^j \tau^k}$ equals 
$\frac{\gamma}{\rho^j \tau^{k-1}}$ when $k \geq 2$,
and $\rho \cdot \frac{\gamma}{\rho^j \tau^k}$ equals
$\frac{\gamma}{\rho^{j-1} \tau^k}$ when $j \geq 2$.

The symbol $\gamma$, which does not correspond to
an actual element of $\bM^{C_2}_2$, has degree $(0,-1)$.

The $\F_2[\tau]$-module structure on $\bM^{C_2}_2$ is essential
for later calculations, since we will filter by powers of $\rho$.
Therefore, we explore further the $\F_2[\tau]$-module structure 
on $NC$.

\begin{rmk}
\label{rmk:theta-notation}
Recall that 
$\F_2[\tau]/\tau^\infty$ is the $\F_2[\tau]$-module
$\colim \F_2[\tau]/\tau^k$, which consists entirely of elements
that are divisible by $\tau$.
We write
$\frac{\F_2[\tau]}{\tau^\infty} \left\{ x \right\}$
for the infinitely divisible $\F_2[\tau]$-module consisting
of elements of the form $\frac{x}{\tau^k}$ for $k \geq 1$.
Note that $x$ itself is not an element of
$\frac{\F_2[\tau]}{\tau^\infty} \left\{ x \right\}$.
The idea is that $x$ represents the 
infinitely many relations $\tau^k \cdot \frac{x}{\tau^k} = 0$
that define
$\frac{\F_2[\tau]}{\tau^\infty} \left\{ x \right\}$.
\end{rmk}

With this notation in place,
$\bM^{C_2}_2$ is equal to 
\begin{equation} 
\bM_2^\R \oplus NC = 
\bM_2^\R \oplus \bigoplus_{s \geq 0} \frac{\F_2[\tau]}{\tau^\infty}
\left\{ \frac{\gamma}{\rho^s}\right\} 
\label{eq:MC2}
\end{equation}
as an $\F_2[\tau]$-module.

\subsection{The equivariant Steenrod algebra}\label{sec:C2Steenrod}

As a Hopf algebroid, the equivariant dual Steenrod algebra can be described \cite{Ricka}*{Proposition~6.10(2)} as
\begin{equation} \cA^{C_2}_* \iso \bM_2^{C_2}\otimes_{\bM_2^{\R}} \cA^\R_*. \label{eq:AC2-split}
\end{equation}
Recall \cite{Voev} that 
\[ \cA^\R_* \iso \bM_2^\R[\tau_0,\tau_1,\dots,\xi_1,\xi_2,\dots]/(\tau_i^2 = \rho \tau_{i+1} + \tau \xi_{i+1} + \rho\tau_0\xi_{i+1}),\]
with $\eta_R(\rho)=\rho$ and $\eta_R(\tau) = \tau + \rho \tau_0$.
The formula for the right unit $\eta_R$ on the negative cone given in 
\cite{HK}*{Theorem~6.41} appears in our notation as
\begin{equation} 
\eta_R\left( \frac{\gamma}{\rho^j \tau^k }\right) = 
\frac{\gamma}{\rho^j \tau^k } 
\left[ \sum_{i\geq 0} \left(\frac{\rho}{\tau}\tau_0 \right)^i \right]^k.
\label{eq:etaR}
\end{equation}
Note that the sum is finite because $ \frac{\gamma}{\rho^j \tau^k } \cdot \rho^n=0 $ if $n > j$.

We have quotient Hopf algebroids
\[ \cA^\R_*(n) := 
\bM_2^\R[\tau_0,\dots, \tau_n,\xi_1,\dots,\xi_n]/(\xi_i^{2^{n-i+1}},
\tau_i^2 = \rho \tau_{i+1} + \tau \xi_{i+1} + \rho\tau_0\xi_{i+1}).
\]
and
\[ \cE^\R_*(n) :=
\bM_2^\R [\tau_0,\dots,\tau_n]/( \tau_i^2 = \rho \tau_{i+1} , \tau_n^2)\]
and their equivariant analogues 
\begin{equation}
\cA^{C_2}_*(n) := \bM^{C_2} \otimes_{\bM^\R} \cA^\R_*(n), \qquad \cE^{C_2}_*(n) := \bM^{C_2}\otimes_{\bM^R}E^\R_*(n)
\label{eq:AnC2-split}
\end{equation}
Their duals are the subalgebras $\cA^{C_2}(n)\subseteq \cA^{C_2}$ and $\cE^{C_2}(n)\subseteq \cA^{C_2}$.

The relationship between the equivariant and $\R$-motivic
Steenrod algebras leads to an analogous relationship between
$\Ext$ groups.

\begin{prop}\label{ExtSplits} 
Suppose that $\Gamma$ is a Hopf algebroid over $A$ and that $B\iso A \oplus M$ is a $\Gamma$-comodule which is a square-zero extension of $A$,
meaning that the product of any two elements in $M$ is zero. Then the $A$-module splitting of $B$ induces a splitting
\[ \Ext_{B\otimes_A \Gamma}(B,B) \iso \Ext_\Gamma(A,A) \oplus \Ext_\Gamma(M,A)\]
of $\Ext_\Gamma(A,A)$-modules. Furthermore, this is an isomorphism of $\Ext_\Gamma(A,A)$-algebras, if the right-hand side
is taken to be a square-zero extension of $\Ext_\Gamma(A,A)$.
\end{prop}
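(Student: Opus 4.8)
The plan is to prove the statement by a Cartan--Eilenberg change-of-rings argument, carried out at the level of cobar complexes so that no flatness of $B$ over $A$ is required --- only flatness of $\Gamma$ over $A$, which is part of the standing hypothesis on the Hopf algebroid and holds (indeed freely) for $\cA^\R_*$ and its quotients. Write $\Gamma_B = B\otimes_A \Gamma$, and recall that a $\Gamma_B$-comodule is the same datum as a $\Gamma$-comodule equipped with a compatible $B$-module structure; in particular the forgetful functor $U$ to $\Gamma$-comodules is exact and $B$ itself is the unit object for this module structure. The central step is a natural isomorphism
\[ \Ext_{\Gamma_B}(B,X) \iso \Ext_\Gamma(A, UX) \]
for every $\Gamma_B$-comodule $X$. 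I would prove this by comparing normalized cobar complexes: the complex $\overline{\Gamma_B}^{\,\otimes_B \bullet}\otimes_B X$ computing the left-hand side simplifies, using $\overline{\Gamma_B} = B\otimes_A \overline{\Gamma}$ together with the twisted right unit of $\Gamma_B$, to the complex $\overline{\Gamma}^{\,\otimes_A \bullet}\otimes_A UX$ computing the right-hand side, with matching differentials.

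Granting that isomorphism, the additive splitting is immediate. Applying it to $X = B$ and using the hypothesis that $UB \iso A\oplus M$ as $\Gamma$-comodules, together with the fact that the cobar construction $\overline{\Gamma}^{\,\otimes_A \bullet}\otimes_A(-)$ is additive in its coefficient module (here flatness of $\Gamma$ over $A$ is what makes this an exact functor of the coefficients), I obtain
\[ \Ext_{\Gamma_B}(B,B) \iso \Ext_\Gamma(A, A\oplus M) \iso \Ext_\Gamma(A,A)\oplus \Ext_\Gamma(A,M). \]
The second summand is the comodule cohomology of $\Gamma$ with coefficients in $M$; it agrees with $\Ext_\Gamma(M,A)$ under the standard symmetry of $\mathrm{Cotor}$, which identifies $\Ext_\Gamma(A,M)$ with $\Ext_\Gamma(M,A)$ whenever one argument is the unit comodule. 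I would record this variance identification explicitly, since the change of rings most naturally produces coefficients-in-$M$ cohomology.

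For the multiplicative statement, the ring structure on $\Ext_{\Gamma_B}(B,B)$ is the Yoneda ring of endomorphisms of the unit object, and under the change-of-rings isomorphism it becomes the product on $\Ext_\Gamma(A,B)$ induced by the comodule-algebra multiplication $\mu\colon B\otimes_A B\to B$. Decomposing both factors along $B = A\oplus M$, the composite $M\otimes_A M \hookrightarrow B\otimes_A B \xrightarrow{\ \mu\ } B$ vanishes by the square-zero hypothesis, the $A$-factor reproduces the subring $\Ext_\Gamma(A,A)$, and the mixed terms give exactly the module action of $\Ext_\Gamma(A,A)$ on $\Ext_\Gamma(A,M)$. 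Hence the product of any two classes from the $M$-summand is zero, which is precisely the assertion that the decomposition is an isomorphism of $\Ext_\Gamma(A,A)$-algebras with the right-hand side taken to be a square-zero extension.

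The main obstacle is the first step. The additive splitting and the square-zero conclusion are then essentially formal, the latter reducing to the single observation that $\mu$ kills $M\otimes_A M$. The real work lies in making the cobar-level comparison precise: one must correctly bookkeep the twisted right unit and the induced $B$-module structures on $\Gamma_B$ so that the collapse $(B\otimes_A \overline{\Gamma})\otimes_B(-) \simeq \overline{\Gamma}\otimes_A U(-)$ is an honest isomorphism of cochain complexes commuting with the cobar differentials, and then verify that this comparison is \emph{multiplicative}, so that the Yoneda product is carried to the $\mu$-induced cup product. I expect the multiplicativity check, rather than the additive bookkeeping, to be the most delicate point.
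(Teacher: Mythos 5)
Your proposal is correct and follows essentially the same route as the paper: both collapse the cobar complex via $B\otimes_B(B\otimes_A\Gamma)^{\otimes s}\iso B\otimes_A\Gamma^{\otimes s}$, split along $B\iso A\oplus M$ as $\Gamma$-comodules, and deduce the square-zero multiplicative structure from the vanishing of $M\otimes_A M\to B$ already at the cochain level. The paper simply states this more tersely, taking the multiplicativity of the cobar-level identification for granted rather than flagging it as a delicate point.
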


\begin{pf}
We may express the cobar complex as:

\[\begin{split}
 \coB^s(B,B\otimes_A \Gamma) = B \otimes_{B} (\Gamma)^{\otimes s} &\iso  B \otimes_{B} (B\otimes_{A} \Gamma)^{\otimes s} \\
 &\iso B \otimes_{A} (\Gamma)^{\otimes s}
\end{split}
\]
As the splitting of $B$ is a splitting as $\Gamma$-comodules, there results a splitting 
\[
\coB^s(A,\Gamma) \oplus \coB^s(M,\Gamma)
\]
of the cobar complex.  
This splitting is square-zero, in the sense that the product
of any two elements in the second factor is equal to zero.
This observation follows from the fact that the product of 
any two elements of $M$ is zero.

In $\Ext_{B\otimes_A \Gamma}$, this yields
\[ \Ext_{B\otimes_A \Gamma} \iso \Ext_{\Gamma}(A,A) \oplus \Ext_{\Gamma}(M,A) .\]
The multiplication on $\Ext_{\Gamma}(M,A)$ is zero since this is already
true in the cobar complex $\coB^s(M,\Gamma)$.
\end{pf}

Employing notation given in section~\ref{sec:notn}, Proposition~\ref{ExtSplits} applies to give isomorphisms
\[ \Ext_{C_2} \iso \Ext_{\R} \oplus \Ext_{NC} \]
and
\[ \Ext_{C_2(n)} \iso \Ext_{\R(n)} \oplus \Ext_{NC(n)}.\]

Thus from the point of view of $\R$-motivic homotopy theory, the cohomology of the negative cone is the only new feature in $\Ext_{\cA^{C_2}}$ or $\Ext_{\cA^{C_2}(n)}$.

\section{The $\rho$-Bockstein spectral sequence}
\label{sctn:Bockstein}

Our tool for computing $\R$-motivic or $C_2$-equivariant $\Ext$ is the $\rho$-Bockstein spectral sequence \cites{H, MWLow}. The $\rho$-Bockstein spectral sequence arises by filtering the cobar complex by powers of $\rho$.
More precisely, 
we can define an $\cA^\R$-module filtration on $\bM_2^{C_2}$, where $F_p(\bM_2^{C_2})$ is the part of $\bM_2^{C_2}$
concentrated in degrees $(s,w)$ with $s \geq p$.
Dualizing, we get a filtration of comodules over the dual Steenrod algebra, which induces a filtration on the cobar complex that computes
$\Ext_{C_2}$. 

Recall that the $\C$-motivic cohomology of a point
is $\bM_2^\C = \F_2[\tau]$, and
the $\C$-motivic Steenrod algebra is $\cA^\C = \cA^\R / \rho$
\cite{VoevHF2} \cite{Voev}.  For convenience,
we write
$\Ext_\C$ for $\Ext_{\cA^\C}(\bM^\C_2, \bM^\C_2)$.

\begin{prop} 
\label{prop:Bockstein-E1}
There is a $\rho$-Bockstein spectral sequence 
\[ E_1 = \Ext_{\gr_{\!\rho} \cA^{C_2}}(\gr_{\!\rho} \, \bM_2^{C_2},\gr_{\!\rho} \, \bM_2^{C_2}) \Rightarrow \Ext_{C_2}\]
such that a Bockstein differential $d_r$ takes a class $x$ of degree $(s,f,w)$ to a class $d_r(x)$ of degree $(s-1,f+1,w)$.
Under the splitting of Proposition~\ref{ExtSplits}, this decomposes as
\[ 
E_1^+ = \Ext_{\C}[\rho] \Rightarrow \Ext_{\R}
\]
and
\[
E_1^- \Rightarrow \Ext_{NC},
\]
where $E_1^-$ belongs to a split short exact sequence
\[
\bigoplus_{s\geq 0} \frac{\bM_2^\C}{\tau^\infty} 
\left\{\frac\gamma{\rho^s}\right\} \otimes_{\bM_2^\C} \Ext_{\C}
\rightarrow E_1^- \rightarrow
\bigoplus_{s\geq 0} \Tor_{\bM_2^\C}
\left( \frac{\bM_2^\C}{\tau^\infty} \left\{\frac\gamma{\rho^s}\right\},
\Ext_{\C}\right).
\]
\end{prop}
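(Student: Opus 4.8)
The plan is to build the spectral sequence from the $\rho$-filtration on the cobar complex and then analyze its $E_1$-page one cone at a time. First I would take the decreasing $\cA^\R$-module filtration $F_p\bM_2^{C_2}$ (the part in degrees $(s,w)$ with $s\geq p$) described in Section~\ref{sctn:Bockstein}, dualize to a comodule filtration, and filter the cobar complex $\coB^\bullet(\bM_2^{C_2},\cA^{C_2}_*)$ accordingly. Standard filtered-complex machinery then produces a spectral sequence whose $E_1$-page is the cohomology of the associated graded Hopf algebroid acting on $\gr_{\!\rho}\bM_2^{C_2}$, and a bookkeeping check on the filtration shows that $d_r$ has the asserted tridegree $(-1,+1,0)$. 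The relations $\eta_R(\tau)=\tau+\rho\tau_0$ and $\tau_i^2=\rho\tau_{i+1}+\tau\xi_{i+1}+\rho\tau_0\xi_{i+1}$ each lose their $\rho$-terms in $\gr_{\!\rho}$, so $\gr_{\!\rho}\cA^\R_*\iso\cA^\C_*[\rho]$ and $\gr_{\!\rho}\cA^{C_2}_*\iso\gr_{\!\rho}\bM_2^{C_2}\otimes_{\bM_2^\C[\rho]}\cA^\C_*[\rho]$. Because $NC$ is a square-zero ideal and the filtration is multiplicative, $\gr_{\!\rho}\bM_2^{C_2}=\bM_2^\C[\rho]\oplus\gr_{\!\rho}NC$ is again a square-zero extension, so Proposition~\ref{ExtSplits} applies verbatim to split $E_1=E_1^+\oplus E_1^-$. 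The positive cone contributes the trivial comodule $\bM_2^\C[\rho]$, whose cohomology is $\Ext_\C[\rho]$, establishing the $E_1^+$ statement.

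The crux is identifying $\gr_{\!\rho}NC$ as a comodule. By \eqref{eq:MC2} it is $\bigoplus_{s\geq0}\frac{\F_2[\tau]}{\tau^\infty}\left\{\frac{\gamma}{\rho^s}\right\}$ as an $\F_2[\tau]$-module, and I would compute its coaction directly from \eqref{eq:etaR}. Expanding the geometric series gives
\[
\eta_R\Bigl(\frac{\gamma}{\rho^j\tau^k}\Bigr)=\sum_{m=0}^{j}\binom{k+m-1}{m}\,\frac{\gamma}{\rho^{j-m}\tau^{k+m}}\otimes\tau_0^m,
\]
and the key observation is that the $m$-th summand multiplies the module coordinate by $\rho^m$, hence lands in filtration $-j+m$. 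Thus every term with $m\geq1$ lies in strictly higher $\rho$-filtration than the $m=0$ term and so dies in the associated graded. Consequently $\gr_{\!\rho}\psi$ is the trivial coaction $x\mapsto x\otimes1$; that is, $\gr_{\!\rho}NC$ is a \emph{trivial} $\cA^\C_*[\rho]$-comodule. I expect this filtration-degree count, together with keeping the grading conventions for $\tau_0$ consistent, to be the main obstacle: one must check that absorbing $\rho^m$ into the module coordinate is exactly what raises filtration, so that no term with $m\geq1$ can survive and the leading coaction is genuinely trivial.

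Granting triviality of the coaction, the reduced cobar complex factors as $\coB^\bullet(\gr_{\!\rho}NC,\cA^\C_*[\rho])\iso\gr_{\!\rho}NC\otimes_{\bM_2^\C[\rho]}\coB^\bullet(\bM_2^\C[\rho],\cA^\C_*[\rho])$, the latter complex consisting of free $\bM_2^\C[\rho]$-modules with cohomology $\Ext_\C[\rho]$. The universal coefficient (Künneth) sequence for this complex reads
\[
0\to\gr_{\!\rho}NC\otimes_{\bM_2^\C[\rho]}\Ext_\C[\rho]\to E_1^-\to\Tor_1^{\bM_2^\C[\rho]}\bigl(\gr_{\!\rho}NC,\Ext_\C[\rho]\bigr)\to0 .
\]
Since $\bM_2^\C[\rho]$ is flat over $\bM_2^\C=\F_2[\tau]$ with $\Ext_\C[\rho]=\Ext_\C\otimes_{\bM_2^\C}\bM_2^\C[\rho]$, a change-of-rings argument collapses the outer terms to $\otimes_{\bM_2^\C}$ and $\Tor^{\bM_2^\C}$. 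Finally, because $\F_2[\tau]$ is a graded principal ideal domain, $\Tor_{\geq2}$ vanishes (so only $\Tor_1=\Tor_{\bM_2^\C}$ survives) and the sequence splits. Reassembling the summand-by-summand description in $s$ then yields exactly the split short exact sequence claimed for $E_1^-$.
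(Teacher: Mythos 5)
Your proposal is correct and follows essentially the same route as the paper: filter the cobar complex by powers of $\rho$, identify $\gr_{\!\rho} NC$ so that $E_0^-$ becomes $\bigoplus_s \frac{\bM_2^\C}{\tau^\infty}\{\frac{\gamma}{\rho^s}\}\otimes_{\bM_2^\C}\coB(\bM_2^\C,\cA_*^\C)$, and apply the K\"unneth split exact sequence over the graded PID $\F_2[\tau]$. The only difference is one of detail: you explicitly expand $\eta_R$ from \eqref{eq:etaR} to verify that the induced coaction on $\gr_{\!\rho} NC$ is trivial, a step the paper compresses into ``it follows that,'' and you route the K\"unneth argument through $\bM_2^\C[\rho]$ before changing rings, which is harmless.
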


\begin{rmk}
Beware that the short exact sequence for $E_1^-$
does not split canonically.
\end{rmk}

\begin{rmk}
\label{rmk:E1-Bockstein}
The same spectral sequences occur in the same form
when $\cA^{C_2}$, $\cA^\R$, and $\cA^\C$ are replaced
by $\cA^{C_2}(n)$, $\cA^\R(n)$, and $\cA^\C(n)$.
\end{rmk}

\begin{pf}
See \cite{H}*{Proposition~2.3} 
(or \cite{MWLow}*{Section~3}) for the description
of $E_1^+$.

For $E_1^-$, the associated graded of $NC$ is
\[ \gr_{\!\rho} NC \iso 
\bigoplus_{s\geq 0}  \frac{\bM_2^\C}{\tau^\infty}\left\{\frac\gamma{\rho^s}\right\},\]
as described in Section \ref{subsctn:cohomology-point}.
It follows that
the Bockstein spectral sequence begins with
\[ E_0 \iso 
\bigoplus_{s\geq 0} \frac{\bM_2^\C}{\tau^\infty}\left\{\frac\gamma{\rho^s}\right\}\otimes_{\bM_2^\C} \coB(\bM_2^\C,\cA_*^\C).\]

The ring $\bM_2^\C\iso \F_2[\tau]$ is a graded principal ideal domain
(in fact, it is a graded local ring with maximal ideal generated by
$\tau$).  Therefore, the Kunneth split exact sequence gives
\[ 
\left(\bigoplus_{s\geq 0} \frac{\bM_2^\C}{\tau^\infty}\left\{\frac\gamma{\rho^s}\right\}\right) \otimes_{\bM_2^\C} \Ext_{\C}
\rightarrow E_1^- \rightarrow
 \Tor_{\bM_2^\C}\left( \bigoplus_{s\geq 0}\frac{\bM_2^\C}{\tau^\infty}\left\{\frac\gamma{\rho^s}\right\},\Ext_{\C}\right).
\]
The first and third terms of the short exact sequence
may be rewritten as in the statement of the proposition
because the direct sum in each case is a splitting of $\bM_2^\C$-modules.
\end{pf}

We shall completely analyze the spectral
sequence
\[ 
E_1^+ = \Ext_{\C}(1)[\rho] \Rightarrow \Ext_{\R}(1)
\]
in Section \ref{sec:ExtAR1}.  While non-trivial, this part
of our calculation is comparatively straightforward.

On the other hand, analysis of the spectral
sequence $E_1^- \Rightarrow \Ext_{NC}(1)$
requires significantly more work.  
The first step is to compute $E_1^-$ more explicitly.
In particular, we must
describe the $\Tor$ groups that arise.

\begin{lemma} \mbox{}
\label{lem:compute-Tor-combined}
\begin{enumerate}
\item
$\Tor_{\bM_2^\C}^* \left( \frac{\bM_2^\C}{\tau^\infty}, 
\bM_2^\C \right)$ equals
$\frac{\bM_2^\C}{\tau^\infty}$, concentrated in homological degree zero.
\item 
$\Tor_{\bM_2^\C}^* \left( \frac{\bM_2^\C}{\tau^\infty}, 
\frac{\bM_2^\C}{\tau^k} \right)$ equals
$\frac{\bM_2^\C}{\tau^k}$, concentrated in homological degree one.
\end{enumerate}
\end{lemma}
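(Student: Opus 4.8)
The plan is to exploit that $\bM_2^\C = \F_2[\tau]$ is a graded PID, so every module admits a free resolution of length at most one and $\Tor$ may be computed in either variable. I would compute $\Tor$ against the explicit two-term resolution of a cyclic module in the second variable, and handle the infinitely divisible first variable by writing it as a filtered colimit and using that $\Tor$ commutes with filtered colimits.

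Concretely, the free resolution $0 \to \bM_2^\C \xrightarrow{\tau^j} \bM_2^\C \to \bM_2^\C/\tau^j \to 0$ shows that for any $\bM_2^\C$-module $N$ one has $\Tor_0 = N/\tau^j N = \mathrm{coker}(\tau^j\mid N)$, $\Tor_1 = \ker(\tau^j\mid N)$, and $\Tor_i = 0$ for $i \geq 2$. Since $\frac{\bM_2^\C}{\tau^\infty} = \colim_j \bM_2^\C/\tau^j$ with transition maps given by multiplication by $\tau$, and $\Tor$ commutes with filtered colimits, I get $\Tor_*(\frac{\bM_2^\C}{\tau^\infty}, N) \iso \colim_j \Tor_*(\bM_2^\C/\tau^j, N)$. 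The whole computation thus reduces to understanding the colimits $\colim_j \ker(\tau^j\mid N)$ and $\colim_j \mathrm{coker}(\tau^j\mid N)$ together with their transition maps.

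The one slightly delicate point, which I expect to be the \emph{main obstacle}, is pinning down these transition maps. Lifting $\bM_2^\C/\tau^j \xrightarrow{\tau} \bM_2^\C/\tau^{j+1}$ to the resolutions gives the chain map that is the identity in resolution degree one and multiplication by $\tau$ in degree zero. Hence on $\Tor_1$ the transition maps are the inclusions $\ker(\tau^j\mid N)\hookrightarrow \ker(\tau^{j+1}\mid N)$, while on $\Tor_0$ they are multiplication by $\tau$. With this settled, both cases are immediate. For (1), with $N = \bM_2^\C$ a domain, every $\ker(\tau^j\mid N)$ vanishes, so $\Tor_1 = 0$, and $\colim_j \bM_2^\C/\tau^j = \frac{\bM_2^\C}{\tau^\infty}$ gives $\Tor_0$. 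For (2), with $N = \bM_2^\C/\tau^k$, once $j \geq k$ we have $\tau^j = 0$ on $N$, so $\ker(\tau^j\mid N) = N$ with identity transition maps, yielding $\Tor_1 = \bM_2^\C/\tau^k$; meanwhile $\mathrm{coker}(\tau^j\mid N) = N$ with transition maps multiplication by $\tau$, whose colimit vanishes since $\tau$ is nilpotent on $N$, so $\Tor_0 = 0$.

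Alternatively, and perhaps more cleanly, I could avoid colimits by resolving the first variable via the short exact sequence $0 \to \bM_2^\C \to \bM_2^\C[\tau^{-1}] \to \frac{\bM_2^\C}{\tau^\infty} \to 0$, whose left term is free and whose middle term is flat. The long exact $\Tor$ sequence for $-\otimes_{\bM_2^\C} N$ then collapses to $\Tor_i = 0$ for $i \geq 2$, $\Tor_1(\frac{\bM_2^\C}{\tau^\infty}, N) = \ker(N \to N[\tau^{-1}])$ (the $\tau$-power torsion of $N$), and $\Tor_0 = N[\tau^{-1}]/N$. Plugging in $N = \bM_2^\C$ and $N = \bM_2^\C/\tau^k$ recovers both statements at once; here the only point needing care is the identification of $\bM_2^\C[\tau^{-1}]/\bM_2^\C$ with $\frac{\bM_2^\C}{\tau^\infty}$ up to a degree shift.
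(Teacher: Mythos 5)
Your argument is correct, and both of your routes (the filtered-colimit computation and the flat-resolution alternative) reach the stated answer; but neither is quite the computation the paper performs. The paper takes the two-term free resolution of the \emph{second} variable $\bM_2^\C/\tau^k$, tensors it with $\frac{\bM_2^\C}{\tau^\infty}$, and directly identifies the resulting map $\frac{\bM_2^\C}{\tau^\infty}\{y\} \to \frac{\bM_2^\C}{\tau^\infty}\{x\}$, $\frac{y}{\tau^a} \mapsto \frac{x}{\tau^{a-k}}$, as surjective with kernel $\bM_2^\C/\tau^k$ spanned by $\frac{y}{\tau},\dots,\frac{y}{\tau^k}$; part (1) is dispatched as the standard vanishing of higher $\Tor$ against a free module. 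You instead resolve the \emph{first} variable, either by writing $\frac{\bM_2^\C}{\tau^\infty} = \colim_j \bM_2^\C/\tau^j$ and passing to the colimit of $\ker(\tau^j\mid N)$ and $\mathrm{coker}(\tau^j\mid N)$, or via $0 \to \bM_2^\C \to \bM_2^\C[\tau^{-1}] \to \frac{\bM_2^\C}{\tau^\infty} \to 0$. Your identification of the transition maps (identity in resolution degree one, multiplication by $\tau$ in degree zero) is the one genuinely delicate point and you have it right. What your approaches buy is uniformity: both compute $\Tor_*\left(\frac{\bM_2^\C}{\tau^\infty}, N\right)$ for arbitrary $N$ at once, with $\Tor_1$ identified functorially as the $\tau$-power torsion of $N$, which would be convenient if one needed the result for modules beyond the two cases in the lemma. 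What the paper's computation buys is an explicit cycle-level description of the kernel inside the tensored cobar-adjacent complex, which is what feeds into the identification of $E_1^-$ in Remark \ref{rmk:Tor-practical} (the degree shift and the concrete generators $\frac{y}{\tau},\dots,\frac{y}{\tau^k}$ matter there).
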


\begin{proof} \mbox{}
\begin{enumerate}
\item
This is a standard fact about the vanishing of higher $\Tor$ groups
for free modules.
\item
This follows from direct computation, using the resolution
\[
\xymatrix@1{
\frac{\bM_2^\C}{\tau^k} & \ar[l] \bM_2^\C & \ar[l]_{\tau^k} \bM_2^\C
& \ar[l] 0.
}
\]
After tensoring with $\frac{\bM_2^\C}{\tau^\infty}$, this gives
the map
\[
\xymatrix@1{ 
\frac{\bM_2^\C}{\tau^\infty} \left\{ x \right\} & \ar[l]
\frac{\bM_2^\C}{\tau^\infty} \left\{ y \right\} }
\]
that takes $\frac{y}{\tau^a}$ to $\frac{x}{\tau^{a-k}}$
if $a > k$, and takes $\frac{y}{\tau^a}$ to zero if
$a \leq k$.
This map is onto, and its kernel is isomorphic
to $\bM_2^\C / \tau^k$.
\end{enumerate}
\end{proof}

\begin{rmk}
\label{rmk:Tor-practical}
Lemma \ref{lem:compute-Tor-combined} 
gives a practical method for identifying $E_1^-$
in Proposition \ref{prop:Bockstein-E1}.
Copies of $\bM_2^\C$ in $\Ext_{\C}(1)$ lead to copies of the negative cone in $E_1^-$. On the other hand, 
copies of $\bM_2^\C/\tau$, such as the submodule generated by $h_1^3$,
lead to copies of
$\bM_2^\C/\tau$ in $E_1^-$ that are infinitely 
divisible by $\rho$.  These copies of 
$\bM_2^\C/\tau$ occur with a degree shift because they arise
from $\Tor^1$.
\end{rmk}

\section{$\rho$-inverted $\Ext_{\R}(1)$}
\label{sctn:rho-inverted}

As a first step towards computing $\Ext_{C_2}(1)$,
we will invert $\rho$ in the $\R$-motivic setting and study
$\Ext_{\R}(1)[\rho^{-1}]$.
This gives partial information about $\Ext_{\R}(1)$
and also about $\Ext_{C_2}(1)$.
Afterwards, it remains to compute $\rho^k$ torsion, including
infinitely $\rho$-divisible elements.

We write $\cA^\cla$ for the classical Steenrod algebra.  For
convenience, we write
$\Ext_\cla$ and $\Ext_\cla(n)$ for
$\Ext_{\cA^{\cla}}(\F_2, \F_2)$ and
$\Ext_{\cA^{\cla}(n)}(\F_2, \F_2)$ respectively.

\begin{prop}\label{prop:RhoLocAn} There is an injection 
$\Ext_{\cla}(n-1)[\rho^{\pm 1}] \into 
\Ext_\R(n)[\rho^{-1}]$ such that:
\begin{enumerate}
\item The map is highly structured, i.e., preserves products, Massey products, and algebraic squaring operations.
\item The element $h_i$ of $\Ext_{\cla}(n-1)$ corresponds to 
$h_{i+1}$ of $\Ext_\R(n)$.
\item The map induces an isomorphism
\[ 
\Ext_{\R}(n)[\rho^{-1}] \iso 
\Ext_{\cla}(n-1)[\rho^{\pm 1}] \otimes \F_2 [ \tau^{2^{n+1}} ].
\]
\item An element in $\Ext_{\cla}(n-1)$ of degree $(s,f)$ corresponds to an element in $\Ext_\R(n)$ of degree $(2s+f,f,s+f)$.
\end{enumerate}
\end{prop}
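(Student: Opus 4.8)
The plan is to work on the dual side and exploit the fact that inverting $\rho$ drastically simplifies the $\R$-motivic dual Steenrod algebra. Since $\rho$ is central, lying in the base ring, and localization is exact, $\Ext_\R(n)[\rho^{-1}]$ is the cohomology of the localized Hopf algebroid $\cA^\R_*(n)[\rho^{-1}]$ over the base $B = \F_2[\tau,\rho^{\pm 1}]$. Once $\rho$ is inverted I can solve the relation $\tau_i^2 = \rho\tau_{i+1} + \tau\xi_{i+1} + \rho\tau_0\xi_{i+1}$ for $\tau_{i+1}$, expressing $\tau_1,\dots,\tau_n$ as explicit polynomials in $\tau_0,\xi_1,\dots,\xi_n$; hence $\cA^\R_*(n)[\rho^{-1}]$ is generated over $B$ by $\tau_0,\xi_1,\dots,\xi_n$. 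Because localization preserves the rank of the free $\bM_2^\R$-module $\cA^\R_*(n)$, a rank count shows that the truncations $\xi_i^{2^{n-i+1}}=0$ survive unchanged and that the sole surviving relation on $\tau_0$ is $\tau_0^{2^{n+1}} = 0$, the image of $\tau_n^2 = 0$. The key structural observation is that $\eta_R(\rho^{-1}\tau) = \rho^{-1}\tau + \tau_0$, so $\tau_0 = \rho^{-1}\bigl(\eta_R(\tau) + \eta_L(\tau)\bigr)$ is a descent datum rather than a source of cohomology.

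I would then organize the computation as an extension of Hopf algebroids. From $\psi(\tau_0) = \tau_0\otimes 1 + 1\otimes\tau_0$ the subobject $\mathcal D = \bigl(B,\ B[\tau_0]/\tau_0^{2^{n+1}}\bigr)$, with $\eta_R(\tau) = \tau + \rho\tau_0$, is a normal sub-Hopf-algebroid, and the quotient $\Phi = \cA^\R_*(n)[\rho^{-1}]/(\tau_0)$ is generated over $B$ by $\xi_1,\dots,\xi_n$. Setting $\tau_0 = 0$ removes the twist in $\eta_R$, while the coproduct $\psi(\xi_k) = \sum_i \xi_{k-i}^{2^i}\otimes\xi_i$ involves no $\tau$'s and is exactly the classical Milnor coproduct; matching the truncation exponents and comparing ranks identifies $\Phi$ with the base change $B\otimes_{\F_2}\cA^\cla_*(n-1)$. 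Separately, the identity $\tau_0 = \rho^{-1}(\eta_R(\tau)+\eta_L(\tau))$ exhibits $B[\tau_0]/\tau_0^{2^{n+1}} \iso B\otimes_A B$, where $A = \F_2[\tau^{2^{n+1}},\rho^{\pm 1}] \into B$ is the degree-$2^{n+1}$ subring (in characteristic $2$ the relation $(\eta_R(\tau)+\tau)^{2^{n+1}}=0$ is the Frobenius collapse of $y^{2^{n+1}}=\tau^{2^{n+1}}$). Thus $\mathcal D$ is the Amitsur descent Hopf algebroid of a faithfully flat extension, and by faithfully flat descent $\Ext_{\mathcal D}(B,B) \iso A$ is concentrated in homological degree zero.

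With these two identifications the Cartan--Eilenberg spectral sequence of the extension collapses onto a single row: its $E_2$-term is $\Ext_\Phi\bigl(B, \Ext_{\mathcal D}(B,B)\bigr) = \Ext_\Phi(B, A)$, and since $A$ is a trivial $\Phi$-comodule and $\Phi = B\otimes\cA^\cla_*(n-1)$, change of rings gives $\Ext_\Phi(B,A) \iso \Ext_\cla(n-1)\otimes_{\F_2} A$. As this sits entirely in $\mathcal D$-degree zero there are no differentials and no extension problems, so $\Ext_\R(n)[\rho^{-1}] \iso \Ext_\cla(n-1)\otimes_{\F_2}\F_2[\tau^{2^{n+1}},\rho^{\pm 1}] = \Ext_\cla(n-1)[\rho^{\pm 1}]\otimes\F_2[\tau^{2^{n+1}}]$, which is statement (3); the injection asserted in the statement is the inclusion of the $\tau^0$-summand $x \mapsto x\otimes 1$. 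Property (2) is read off the identification $\Phi \iso B\otimes\cA^\cla_*(n-1)$, under which classical $h_i = [\xi_1^{2^i}]$ matches $\R$-motivic $h_{i+1} = [\xi_1^{2^i}]$. The degree formula (4) is a bookkeeping check on these generators — classical $(s,f) = (2^i-1,1)$ lands in $\R$-motivic $(2^{i+1}-1,\,1,\,2^i) = (2s+f,\, f,\, s+f)$ — extended multiplicatively. Finally, every step is induced by maps of Hopf algebroids, hence of cobar (cosimplicial) complexes, so the resulting isomorphism automatically respects products, Massey products, and the algebraic squaring operations, giving (1).

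The main obstacle is the verification underlying the second paragraph: that $\mathcal D \into \cA^\R_*(n)[\rho^{-1}]$ is genuinely a normal sub-Hopf-algebroid with quotient $\Phi$, and that $\Phi$ acquires no hidden relations when the derived elements $\tau_1,\dots,\tau_n$ are eliminated — in particular that the collapsed relation $\tau_n^2 = 0$ imposes nothing on the $\xi_i$ beyond the truncations already present. The rank count makes this plausible and pins down the answer, but turning it into a clean isomorphism of Hopf algebroids, compatible with the coproduct and both unit maps, is where the real care is needed. Once $\Phi$ is identified and $\mathcal D$ is recognized as descent data, the collapse of the spectral sequence and the deduction of properties (1), (2), and (4) are formal.
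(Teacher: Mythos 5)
Your argument is correct, and its first half is identical to the paper's: localize, solve the relation $\tau_i^2 = \rho\tau_{i+1}+\tau\xi_{i+1}+\rho\tau_0\xi_{i+1}$ for $\tau_{i+1}$, and identify $\cA^\R(n)_*[\rho^{-1}]$ as generated by $\tau_0,\xi_1,\dots,\xi_n$ with $\tau_0^{2^{n+1}}=0$ and the usual truncations on the $\xi_i$. (Your residual worry about the collapsed relation $\tau_n^2=0$ can be settled directly, without the rank count: expanding $\tau_n^2$ produces $\rho^{-(2^{n+1}-2)}\tau_0^{2^{n+1}}$ plus terms involving $\xi_k^{2^{n+1-k}}$, and each of these is already zero by the truncation $\xi_k^{2^{n-k+1}}=0$.) Where you diverge is in the packaging of the second half. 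The paper observes that the localized object splits as a tensor product of Hopf algebroids over $\F_2$ --- a $\tau_0$-factor $\bM_2^\R[\rho^{-1}][\tau_0]/\tau_0^{2^{n+1}}$ with $\eta_R(\tau)=\tau+\rho\tau_0$, and a $\xi$-factor which is the classical $\cA^\cla(n-1)_*$ with regraded degrees --- so the K\"unneth theorem finishes the computation with no extension problem; this splitting is immediate because $\tau_0$ is primitive, the coproducts of the $\xi_k$ involve only $\xi$'s, and $\eta_R$ twists only the $\tau_0$-factor. You instead treat the $\tau_0$-part as a normal sub-Hopf-algebroid and run the Cartan--Eilenberg spectral sequence of the extension, which is a more general tool than is needed here and obliges you to verify normality and the absence of hidden extensions --- precisely the points you flag as delicate at the end, and which the tensor splitting makes vacuous. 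On the other hand, your identification of the $\tau_0$-factor as the Amitsur descent Hopf algebroid of the free extension $\F_2[\tau^{2^{n+1}},\rho^{\pm 1}]\into\F_2[\tau,\rho^{\pm 1}]$, with cohomology $\F_2[\tau^{2^{n+1}}]$ concentrated in degree zero by faithfully flat descent, is a genuinely nice conceptual replacement for the explicit cocycle argument the paper cites from Dugger--Isaksen (\cite{MWLow}*{Lemma~4.2}); it is the cleanest way to see why $\tau^{2^{n+1}}$, and no smaller power of $\tau$, survives. Your verifications of (2) and (4) on the generators $h_i=[\xi_1^{2^i}]$ are correct.
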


\begin{pf}
The proof is similar to the proof 
 of \cite[Theorem~4.1]{MWLow}. Since localization is an exact functor, we may compute the cohomology of the Hopf algebroid 
$(\bM^\R_2[\rho^{-1}], \cA^\R(n+1)_*[\rho^{-1}])$ to obtain 
$\Ext_\R(n+1)[\rho^{-1}]$. 
After inverting $\rho$, we have 
\[ \tau_{k+1} = \rho^{-1} \tau_k^2 + \rho^{-1}\tau\xi_{k+1} + \tau_0 \xi_{k+1}, 
\]
and it follows that 
\[ \cA^\R(n)_*[\rho^{-1}] \iso \bM_2^\R[\rho^{-1}][\tau_0,\xi_1,\dots,\xi_n]/(\tau_0^{2^{n+1}}, \xi_1^{2^n},\dots,\xi_n^2).\]
This splits as
\[ ( \bM_2^\R[\rho^{-1}],\cA(n)_*[\rho^{-1}]) \iso \big(\bM_2^\R[\rho^{-1}],\cA'(n)\big) \otimes_{\F_2} (\F_2,\cA''(n)),\]
where
\[ \cA'(n) = \bM_2^\R[\rho^{-1}][\tau_0]/\tau_0^{2^{n+1}}\]
and
\[\cA''(n) = \F_2[\xi_1,\dots,\xi_n]/(\xi_1^{2^n},\dots,\xi_n^2).\]
The Hopf algebra  $(\F_2,\cA''(n))$ is isomorphic to the classical Hopf algebra \mbox{$(\F_2,\cA(n-1))$} with altered degrees, 
so its cohomology is $\Ext_{\cla}(n-1)$.

For the Hopf algebroid $\big(\bM_2^\R[\rho^{-1}],\cA'(n)\big) $, we have 
an isomorphism
\[ \big( \bM_2^\R[\rho^{-1}],\cA'(n)\big) \iso \F_2[\rho^{\pm 1}] \otimes_{\F_2} (\F_2[\tau],\F_2[\tau][x]/x^{2^{n+1}})
\]
with 
\[ \eta_L(\tau) = \tau, \qquad \eta_R(\tau) = \tau+x.\]
An argument like that of \cite{MWLow}*{Lemma~4.2} shows that the cohomology of this Hopf algebroid is $\F_2[\tau^{2^{n+1}}]$.
\end{pf}

\begin{cor}\label{ExtA1RRho}
\[ \Ext_{C_2}(1)[\rho^{-1}] \iso  \Ext_\R(1)[\rho^{-1}] \iso \F_2[\rho^{\pm 1}, \tau^4, h_1].\]
\end{cor}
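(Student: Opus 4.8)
The plan is to obtain both isomorphisms from results already in hand. For the first isomorphism I would begin with the splitting
\[ \Ext_{C_2}(1) \iso \Ext_\R(1) \oplus \Ext_{NC}(1) \]
furnished by Proposition~\ref{ExtSplits}; this is a splitting of $\Ext_\R(1)$-modules, so in particular it is compatible with multiplication by $\rho$. Since localization at $\rho$ is exact, it distributes over the direct sum, and it therefore suffices to prove that $\Ext_{NC}(1)[\rho^{-1}] = 0$. The essential input is that $NC$ is $\rho$-power torsion: as noted in Section~\ref{subsctn:cohomology-point}, the class $\frac{\gamma}{\rho^j \tau^k}$ is annihilated by $\rho^{j+1}$. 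Hence every term $\coB^s(NC, \cA^\R(1)_*) = NC \otimes_{\bM_2^\R} (\cA^\R(1)_*)^{\otimes s}$ of the cobar complex computing $\Ext_{NC}(1)$ is $\rho$-power torsion and so vanishes after inverting $\rho$. By exactness of localization this gives $\Ext_{NC}(1)[\rho^{-1}] = 0$, and the first isomorphism follows.

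For the second isomorphism I would specialize Proposition~\ref{prop:RhoLocAn} to $n = 1$, obtaining
\[ \Ext_\R(1)[\rho^{-1}] \iso \Ext_\cla(0)[\rho^{\pm 1}] \otimes \F_2[\tau^4]. \]
It remains only to identify $\Ext_\cla(0)$. Since $\cA^\cla(0)$ is the exterior algebra on $\Sq^1$, its cohomology is the polynomial ring $\F_2[h_0]$ on a single generator $h_0$ in degree $(s,f) = (0,1)$. By part (2) of Proposition~\ref{prop:RhoLocAn} the injection carries $h_0$ to $h_1$, and part (4) confirms that $h_1$ sits in degree $(1,1,1)$. Thus $\Ext_\cla(0)[\rho^{\pm 1}] \iso \F_2[\rho^{\pm 1}, h_1]$, and tensoring with $\F_2[\tau^4]$ yields
\[ \Ext_\R(1)[\rho^{-1}] \iso \F_2[\rho^{\pm 1}, \tau^4, h_1]. \]

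I expect the one substantive step to be the vanishing of $\Ext_{NC}(1)[\rho^{-1}]$; everything there reduces to the $\rho$-power torsion of $NC$ killing the localized cobar complex. The second isomorphism is nearly immediate from Proposition~\ref{prop:RhoLocAn} together with the standard cohomology of the exterior algebra on $\Sq^1$, so the only care required is to keep the degree and naming conventions straight, matching the classical generator $h_0$ with the $\R$-motivic class $h_1$.
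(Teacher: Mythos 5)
Your proposal is correct and follows essentially the same route as the paper: the first isomorphism via the splitting of Proposition~\ref{ExtSplits} together with the fact that $\Ext_{NC}$ is $\rho$-torsion (which you justify, reasonably, at the level of the cobar complex), and the second by specializing Proposition~\ref{prop:RhoLocAn} to $n=1$ with $\Ext_{\cla}(0)\iso\F_2[h_0]$. The paper's proof is just a terser version of the same argument.
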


\begin{pf}
The first isomorphism follows from Proposition~\ref{ExtSplits}, as $\Ext_{NC}$ is $\rho$-torsion.
The second isomorphism follows immediately from Proposition~\ref{prop:RhoLocAn}, given that $\Ext_{\cla}(0) \iso \F_2[h_0]$.
\end{pf}

\begin{rmk}
Corollary \ref{ExtA1RRho} implies that the products
$\tau^4 \cdot h_1^k$ are non-zero in
$\Ext_{\R}(1)$.  But
$\tau^4 h_1^k = 0$ in $\Ext_\C(1)$ when $k \geq 3$,
so the products
$\tau^4 \cdot h_1^k$ are hidden in the $\rho$-Bockstein
spectral sequence for $k \geq 3$.
We will sort this out in detail in Section \ref{sec:ExtAR1}.
\end{rmk}

\section{Infinitely $\rho$-divisible elements of $\Ext_{\cA^{C_2}(1)}$}\label{sec:RhoComp}

Having computed the effect of inverting $\rho$ 
in Section \ref{sctn:rho-inverted}, we now consider the dual
question and study infinitely $\rho$-divisible elements.
This gives additional partial information about
$\Ext_{C_2}(1)$.  Afterwards, it remains only to
compute the $\rho^k$ torsion classes that are not infinitely
$\rho$-divisible.

In fact, this section is not strictly necessary to carry out
the computation of $\Ext_{C_2}(1)$.  Nevertheless,
the infinitely $\rho$-divisible calculation works out rather nicely
and provides some useful insight into the main computation.
We also anticipate that 
this approach via infinitely $\rho$-divisible classes will
be essential in the much more complicated calculation
of $\Ext_{C_2}$, to be studied in further work.

For a $\F_2[\rho]$-module $M$, the $\rho$-colocalization, or $\rho$-cellularization, is
the limit $\lim_{\rho} M$ of the inverse system
\[ \cdots \xrtarr{\rho} M \xrtarr{\rho} M.\]
While $\rho$-localization detects $\rho$-torsion-free elements, the $\rho$-colocalization detects infinitely $\rho$-divisible elements.

An alternative description is given by the isomorphism
\[
\lim_{\rho} M \iso \Hom_{\F_2[\rho]}(\F_2[\rho^{\pm 1}],M)
\]
because $\F_2[\rho^{\pm 1}]$ is isomorphic to $\colim_\rho \F_2[\rho]$.
It follows that $\lim_\rho M$ is an
$\F_2[\rho^{\pm 1}]$-module, and
the functor $M \mapsto \lim_\rho M$ is right adjoint to the restriction 
\[\mathrm{Mod}_{\F_2[\rho^{\pm1}]}\rtarr \mathrm{Mod}_{\F_2[\rho]}.
\]

\begin{lemma}\label{RhoColoc} 
\mbox{}
\begin{enumerate}
\item
Let $M$ be a cyclic $\F_2[\rho]$-module $\F_2[\rho]$ or 
$\F_2[\rho]/\rho^k$.
Then $\lim_\rho M$ is zero.
\item
Let $M$ be the infinitely divisible $\F_2[\rho]$-module
$\F_2[\rho]/\rho^\infty$.  Then
$\lim_\rho M$ is isomorphic to $\F_2[\rho^{\pm 1}]$.
\end{enumerate}
\end{lemma}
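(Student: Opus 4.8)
The plan is to work throughout with the explicit description of $\lim_\rho M$ as the set of \emph{compatible sequences} $(m_n)_{n \geq 0}$ with $m_n \in M$ and $\rho \, m_{n+1} = m_n$; equivalently, via the adjunction recorded just before the lemma, as the graded module $\Hom_{\F_2[\rho]}(\F_2[\rho^{\pm 1}], M)$. The essential point, easy to overlook, is that these limits are taken in the category of \emph{graded} $\F_2[\rho]$-modules, so the limit is computed one internal degree at a time; without the grading the answer in part (2) would be a much larger completion rather than $\F_2[\rho^{\pm 1}]$.

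For part (1), both cases are short. When $M = \F_2[\rho]$, multiplication by $\rho$ is injective, and a compatible sequence satisfies $m_0 = \rho^n m_n$ for all $n$, so $m_0$ is divisible by every power of $\rho$; a nonzero polynomial cannot be divisible by arbitrarily high powers of $\rho$ (for degree reasons), so $m_0 = 0$, and injectivity of $\rho$ then forces $m_n = 0$ for all $n$. When $M = \F_2[\rho]/\rho^k$, the relation $\rho^k = 0$ gives $m_n = \rho^k m_{n+k} = 0$ for every $n$. In both cases $\lim_\rho M = 0$.

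For part (2), I would argue degreewise. Writing $e_j$ for $\frac{1}{\rho^j}$, the module $M = \F_2[\rho]/\rho^\infty$ has $\F_2$-basis $\{e_j : j \geq 1\}$ with $\deg e_j = -j|\rho|$, and $\rho \, e_j = e_{j-1}$ for $j \geq 2$ while $\rho \, e_1 = 0$. Fix a total degree $t$; in that degree a compatible sequence has $m_n$ in the one-dimensional space spanned by $e_{n-t}$ when $n > t$, and $m_n = 0$ when $n \leq t$. The compatibility relation then reads $c_n = c_{n+1}$ for the scalar coefficients once $n > t$, with the single coefficient $c_{t+1}$ left unconstrained precisely because $\rho\, e_1 = 0$ absorbs the boundary relation $\rho \, m_{t+1} = m_t = 0$. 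Hence the degree-$t$ part of $\lim_\rho M$ is exactly one copy of $\F_2$, for every $t \in \Z$. Assembling over all degrees yields one $\F_2$ in each degree with $\rho$ acting as the degree-shifting isomorphism, which is precisely $\F_2[\rho^{\pm 1}]$.

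The only real obstacle is part (2), and specifically the bookkeeping that pins the degreewise limit down to a single $\F_2$: one must check that the top relation $\rho\, e_1 = 0$ leaves the generating coefficient free (so the limit is nonzero in each degree) while the remaining relations $c_n = c_{n+1}$ force everything below to agree (so it is no larger than $\F_2$). I would cross-check this against the adjunction description by applying graded $\Hom_{\F_2[\rho]}(\F_2[\rho^{\pm 1}], -)$ to the short exact sequence $0 \to \F_2[\rho] \to \F_2[\rho^{\pm 1}] \to \F_2[\rho]/\rho^\infty \to 0$: part (1a) kills the $\F_2[\rho]$ term, the middle term contributes $\Hom_{\F_2[\rho]}(\F_2[\rho^{\pm1}],\F_2[\rho^{\pm1}]) \iso \F_2[\rho^{\pm1}]$, and the connecting map lands in a graded $\Ext^1$ that vanishes by a Mittag--Leffler argument on the telescope, again giving $\lim_\rho M \iso \F_2[\rho^{\pm 1}]$.
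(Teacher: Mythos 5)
Your proof is correct and follows essentially the same route as the paper: part (1) by observing that no nonzero element of a cyclic module is infinitely $\rho$-divisible, and part (2) by explicitly identifying the homogeneous compatible sequences in the limit (the paper lists them as the tuples $(\tfrac{1}{\rho^k},\tfrac{1}{\rho^{k+1}},\ldots)$ and $(0,\ldots,0,1,\tfrac{1}{\rho},\ldots)$, which is exactly your degreewise count of one copy of $\F_2$ per degree). Your closing cross-check via the sequence $0 \to \F_2[\rho] \to \F_2[\rho^{\pm 1}] \to \F_2[\rho]/\rho^\infty \to 0$ is a sound independent verification but is not needed.
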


\begin{pf}
If $M$ is cyclic, then no nonzero element is infinitely $\rho$-divisible, which implies the first statement.  For the case $M=\F_2[\rho]/\rho^\infty$, a (homogeneous) element of the limit is either of the form
\[ \left(\frac{1}{\rho^k},\frac1{\rho^{k+1}},\ldots\right)\]
or of the form
\[ \left(0,\dots,0,1,\frac{1}{\rho},\frac1{\rho^{2}},\ldots\right).\]
For $k \geq 0$,
the isomorphism $\F_2[\rho^{\pm 1}] \rightarrow \lim_\rho M$ 
sends $\rho^k$ to the tuple $(0,\ldots,0,1,\frac1\rho,\ldots)$ having $k-1$ zeroes and sends $\frac1{\rho^k}$ to $(\frac1{\rho^k},\frac1{\rho^{k+1}},\ldots)$.
\end{pf}

We will now compute the $\rho$-colocalization
of $\Ext_{{C_2}(1)}$.

\begin{prop} 
\label{prop:rho-divisible-Ext}
\[
\lim_{\rho} \Ext_{C_2}(1) \iso 
\bigoplus_{k \geq 1}
\F_2[\rho^{\pm 1},h_1]\left\{ \frac{\gamma}{\tau^{4k}}\right\}
\iso
\F_2[\rho^{\pm 1}, h_1] \otimes
\frac{\F_2[\tau^4]}{\tau^\infty}
\left\{ \gamma \right\}.
\]
\end{prop}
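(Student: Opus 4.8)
The plan is to split $\Ext_{C_2}(1)$ via Proposition~\ref{ExtSplits} and to compute $\lim_\rho$ on the positive and negative cones separately, the negative-cone piece being handled by a $\rho$-inverted calculation dual to Proposition~\ref{prop:RhoLocAn}. Concretely, Proposition~\ref{ExtSplits} gives an isomorphism $\Ext_{C_2}(1)\iso \Ext_\R(1)\oplus\Ext_{NC}(1)$ of $\F_2[\rho]$-modules, and since $\lim_\rho$ commutes with the finite direct sum it suffices to treat each summand. The conceptual point is that inverting $\rho$ is invisible on $NC$ (which is $\rho$-torsion), so the negative cone can only be seen by its ``dual'', the $\rho$-colocalization.

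For the positive cone I would show $\lim_\rho\Ext_\R(1)=0$. The $\rho$-Bockstein spectral sequence of Proposition~\ref{prop:Bockstein-E1} has $E_1^+=\Ext_\C(1)[\rho]$, which is free over $\F_2[\rho]$, and its filtration is exhaustive and Hausdorff, being finite in each tridegree; hence $\bigcap_p F_p\Ext_\R(1)=0$. Multiplication by $\rho$ raises Bockstein filtration by one, so any element divisible by $\rho^p$ lies in $F_p$. A nonzero class in $\lim_\rho\Ext_\R(1)$ would be infinitely $\rho$-divisible, hence in $\bigcap_p F_p\Ext_\R(1)=0$, a contradiction. Therefore $\lim_\rho\Ext_{C_2}(1)\iso\lim_\rho\Ext_{NC}(1)$.

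The heart of the argument is $\lim_\rho\Ext_{NC}(1)$. As a $\F_2[\rho]$-module, $NC$ splits along powers of $\tau$ into copies of $\F_2[\rho]/\rho^\infty$, one for each $\tau$-power, so Lemma~\ref{RhoColoc}(2) identifies $\lim_\rho NC$ as a genuine $\bM_2^\R[\rho^{-1}]$-comodule which is infinitely divisible by $\tau$ (a $\tau^{-1}$-tower of copies of $\F_2[\rho^{\pm1}]$). The key reduction is that $\rho$-colocalization commutes with the cobar computation: the towers involved have surjective bonding maps, so the relevant $\lim^1$ terms vanish, $\lim_\rho$ is exact on the cobar complex, and the result is the $\rho$-inverted cobar complex with coefficients in $\lim_\rho NC$. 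I would then run exactly the calculation of Proposition~\ref{prop:RhoLocAn} for $n=1$, but with these infinitely $\tau$-divisible coefficients in place of $\bM_2^\R[\rho^{-1}]$. Under the splitting $\cA^\R(1)_*[\rho^{-1}]\iso\cA'(1)\otimes\cA''(1)$, the factor $\cA''(1)$ contributes $\Ext_\cla(0)\iso\F_2[h_0]$, realized as $\F_2[h_1]$, while $\cA'(1)=\bM_2^\R[\rho^{-1}][\tau_0]/\tau_0^{4}$ with $\eta_R(\tau)=\tau+\tau_0$ now computes the $\tau^4$-divisible companion $\frac{\F_2[\tau^4]}{\tau^\infty}\{\gamma\}$ of the polynomial answer $\F_2[\tau^4]$ obtained in the torsion-free case. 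Assembling the factors with the omnipresent $\F_2[\rho^{\pm1}]$ yields $\F_2[\rho^{\pm1},h_1]\otimes\frac{\F_2[\tau^4]}{\tau^\infty}\{\gamma\}$, whose $\F_2[\rho^{\pm1},h_1]$-basis is exactly $\{\gamma/\tau^{4k}\}_{k\geq1}$; this is both displayed forms of the claim, and it is consistent with Corollary~\ref{ExtA1RRho}, being its divisible mirror image.

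I expect the main obstacle to be the reduction statement that $\rho$-colocalization commutes with the $\Ext$ computation, together with the precise identification of the comodule $\lim_\rho NC$. One must verify the $\lim^1$-vanishing and, more delicately, track how the $\cA'(1)$-coaction cuts the naive $\tau$-divisibility of $\lim_\rho NC$ down to $\tau^4$-divisibility and forces the indexing to begin at $k=1$ (the homological shift already visible in Remark~\ref{rmk:Tor-practical}), rather than producing a spurious generator $\gamma$ or a $\gamma/\tau^k$ with $k$ not divisible by $4$. Everything else reduces to the formal bookkeeping of the split short exact sequences and the two cyclic cases of Lemma~\ref{RhoColoc}.
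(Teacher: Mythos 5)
Your outline follows the paper's proof almost step for step: commute $\lim_\rho$ past cohomology of the cobar complex via Mittag--Leffler, discard the positive cone, identify $\lim_\rho NC$ as $\frac{\bM_2^\R[\rho^{-1}]}{\tau^\infty}\{\gamma\}$ using Lemma \ref{RhoColoc}(2), and then reuse the splitting of $\cA^\R(1)_*[\rho^{-1}]$ from Proposition \ref{prop:RhoLocAn}. Two remarks on the reductions. The paper kills the positive cone at the cochain level ($\bM_2^\R\otimes_{\bM_2^\R}\cA^\R(1)^{\otimes s}$ is free over $\F_2[\rho]$, so Lemma \ref{RhoColoc}(1) applies before taking cohomology), whereas you argue on $\Ext_\R(1)$ itself via the Bockstein filtration; both work. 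For the Mittag--Leffler step, the paper invokes finite-dimensionality of the cobar complex in each tridegree, which is also what disposes of the $\lim^1$ of the cohomology groups in Weibel's exact sequence; surjectivity of $\rho$ on the $NC$-summand of the cochains alone does not address that term.

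The more serious issue is that the step you defer as ``the main obstacle'' --- showing that the coaction of $\cA'(1)=\bM_2^\R[\rho^{-1}][\tau_0]/\tau_0^4$ cuts $\frac{\F_2[\tau]}{\tau^\infty}\{\gamma\}$ down to $\frac{\F_2[\tau^4]}{\tau^\infty}\{\gamma\}$ --- is the actual content of the proposition, and your proposal supplies no argument for it. The analogy with the torsion-free case in Proposition \ref{prop:RhoLocAn}, and the consistency check against Corollary \ref{ExtA1RRho}, are suggestive but not proofs: the coefficients are now a non-trivial comodule with $\eta_R\left(\frac{\gamma}{\tau^k}\right)=\frac{\gamma}{\tau^k}\left(1+\frac{x}{\tau}+\frac{x^2}{\tau^2}+\frac{x^3}{\tau^3}\right)^k$. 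The paper carries this out by filtering $\coB^*_{\F_2[\tau]}\left(\frac{\F_2[\tau]}{\tau^\infty}\{\gamma\},\frac{\F_2[\tau,x]}{x^4}\right)$ by powers of $x$, so that $E_1\iso\frac{\F_2[\tau]}{\tau^\infty}\{\gamma\}\otimes\F_2[v_0,v_1]$ with $v_0=[x]$ and $v_1=[x^2]$; the differential $d_1\left(\frac{\gamma}{\tau^{2k-1}}\right)=\frac{\gamma}{\tau^{2k}}v_0$ leaves $\frac{\F_2[\tau^2]}{\tau^\infty}\{\gamma\}\otimes\F_2[v_1]$, and $d_2\left(\frac{\gamma}{\tau^{4k-2}}\right)=\frac{\gamma}{\tau^{4k}}v_1$ leaves exactly $\frac{\F_2[\tau^4]}{\tau^\infty}\{\gamma\}$. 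This is precisely what restricts the surviving generators to $\frac{\gamma}{\tau^{4k}}$ with $k\geq 1$ and rules out a spurious class in exponent not divisible by $4$; without some such computation the proof is incomplete.
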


Recall that $\gamma$ itself is not an element of
$\lim_{\rho} \Ext_{C_2}(1)$, as described in
Remark \ref{rmk:theta-notation}.
The main point of Proposition \ref{prop:rho-divisible-Ext}
is that the elements $\frac{\gamma}{\tau^{4k}} h_1^j$
are infinitely $\rho$-divisible classes in
$\Ext_{C_2}(1)$, and there are no other infinitely
$\rho$-divisible families in 
$\Ext_{C_2}(1)$.

\begin{pf}
Since the cobar complex $\coB^*(\bM_2^{C_2},A^{C_2}(1))$ is finite-dimensional in each tridegree, the inverse systems 
\[ \cdots \xrtarr{\rho} \coB^*(\bM_2^{C_2},A^{C_2}(1))\xrtarr{\rho} \coB^*(\bM_2^{C_2},A^{C_2}(1))\]
and
\[
 \cdots \xrtarr{\rho} \Ext_{C_2}(1) \xrtarr{\rho} \Ext_{C_2}(1)
\]
satisfy the Mittag-Leffler condition, so that (\cite{Weib}*{Theorem~3.5.8})

\[ \lim_\rho 
\Ext_{C_2}(1) \iso \rH^*\Big( \lim_\rho \coB^*(\bM_2^{C_2},A^{C_2}(1)) \Big).\]

Now we compute
\[\begin{split}
 \lim_\rho \coB^s(\bM_2^{C_2},A^{C_2}(1)) &= 
 \lim_{\rho}\left( \bM_2^{C_2} \otimes_{\bM_2^{C_2}} A^{C_2}(1)^{\otimes s}\right) \\
 &\iso \lim_\rho \left( \bM_2^{C_2}\otimes_{\bM_2^\R} A^\R(1)^{\otimes s}\right).
\end{split}
\]
The splitting $\bM_2^{C_2} = \bM_2^\R \oplus NC$
yields a splitting
\[
\left(
\bM_2^\R \otimes_{\bM_2^\R} \cA^\R(1)^{\otimes s} \right)
\oplus 
\left( NC \otimes_{\bM_2^\R} \cA^\R(1)^{\otimes s} \right)
\]
of $\bM_2^{C_2}\otimes_{\bM_2^\R} \cA^\R(1)^{\otimes s}$ 
as an $\F_2[\rho]$-module.
The first piece of the splitting contributes
nothing to the $\rho$-colocalization by
Lemma \ref{RhoColoc}(1) because $\bM_2^\R$ is free
as an $\F_2[\rho]$-module.

On the other hand,
the $\F_2[\rho]$-module $NC$ is a direct sum of copies
of $\F_2[\rho]/\rho^\infty$.
By Lemma \ref{RhoColoc}(2), we have that 
$\lim_\rho \left( NC \otimes_{\bM_2^\R} \cA^\R(1)^{\otimes s} \right)$
is isomorphic to
\[
\left( 
\frac{\bM^\R_2[\rho^{-1}]}{\tau^\infty}
 \left\{ \gamma \right\}
 \right) \otimes_{\bM_2^\R} \cA^\R(1)^{\otimes s}.
 \]

Now the argument of Proposition~\ref{prop:RhoLocAn} provides a splitting
\[
\begin{split} 
\coB^*_{\bM_2^\R} \left(
\frac{\bM_2^\R[\rho^{-1}]}{\tau^\infty} \left\{ \gamma \right\}, 
A^\R(1) \right) &\simeq \\
& \hspace{-7em} \coB^*_{\F_2[\tau]}
\left( \frac{\F_2[\tau]}{\tau^\infty} \left\{ \gamma \right\},
\frac{\F_2[\tau,x]}{x^4} \right)
[\rho^{\pm 1}] \otimes_{\F_2} \coB^*_{\F_2}(\F_2,\F_2[\xi_1]/\xi_1^2),\end{split}\]
where $x=\rho\tau_0$. 
The cohomology of the second factor is $\F_2[h_1]$.

It remains to show that the cohomology of
\[
\coB^*_{\F_2[\tau]}
\left( \frac{\F_2[\tau]}{\tau^\infty} \left\{ \gamma \right\},
\frac{\F_2[\tau,x]}{x^4} \right)
\]
is equal to $\frac{\F_2[\tau^4]}{\tau^\infty} \left\{ \gamma \right\}$.
As in Formula (\ref{eq:etaR}),
 the comodule structure on 
 $\frac{\F_2[\tau]}{\tau^\infty} \left\{ \gamma \right\}$ is given by 
\[ \eta_R\left(\frac{\gamma}{\tau^k}\right) = \frac{\gamma}{\tau^k}\left(1+\frac{x}{\tau}+\frac{x^2}{\tau^2}+\frac{x^3}{\tau^3}\right)^k.\]

Now we filter
$\coB^*_{\F_2[\tau]}
\left( \frac{\F_2[\tau]}{\tau^\infty} \left\{ \gamma \right\},
\frac{\F_2[\tau,x]}{x^4} \right)$
by powers of $x$. We then have 
\[ E_1 \iso \frac{\F_2[\tau]}{\tau^\infty} \left\{ \gamma \right\} \otimes_{\F_2} \F_2[v_0,v_1], \]
where $v_0 = [x]$ and $v_1=[x^2]$. 
The differential $d_1\left( \frac{\gamma}{\tau^{2k-1}}\right) = \frac{\gamma}{\tau^{2k}} v_0$ gives
\[ 
E_2 \iso 
\frac{\F_2[\tau^2]}{\tau^\infty} \left\{ \gamma \right\}
\otimes_{\F_2} \F_2[v_1].  
\]
Finally, the differential $d_2\left( \frac{\gamma}{\tau^{4k-2}}\right) = \frac{\gamma}{\tau^{4k}} v_1$ gives
\[ E_3 = E_\infty \iso \frac{\F_2[\tau^4]}{\tau^\infty} 
\left\{ \gamma \right\}.  \]
\end{pf}

\section{The cohomology of $\cA^\R(1)$}
\label{sec:ExtAR1}

Our next step in working towards the calculation
of $\Ext_{C_2}(1)$ is to describe the simpler
$\R$-motivic 
$\Ext_{\R}(1)$.  
The reader is encouraged to consult the charts on pages 
\pageref{E+start}--\pageref{E+end} to follow along with the calculations
described in this section.
This calculation was originally 
carried out in \cite{H}. 
We include the details of the 
$\R$-motivic $\rho$-Bockstein spectral sequence,
but we take the approach of \cite{MWLow}, rather than
\cite{H}, in establishing $\rho$-Bockstein differentials.
The point is that there is only one pattern of differentials
that is consistent with the $\rho$-inverted calculation 
of Corollary \ref{ExtA1RRho}.  This observation
avoids much technical work with Massey products that would
otherwise be required to establish relations that then
imply differentials.

For $\cA^\R(1)$, the $\R$-motivic $\rho$-Bockstein spectral sequence takes the form
\[ \Ext_\C(1) [\rho] \Rightarrow \Ext_\R(1),
\]
where 
\[ 
\Ext_{\C}(1) \iso \bM_2^\C[h_0,h_1,a,b]/
h_0h_1, \tau h_1^3,h_1 a, a^2+h_0^2 b.
\]

\begin{prop}
\label{prop:R-mot-diff}
In the $\R$-motivic $\rho$-Bockstein spectral sequence,
we have differentials
\begin{enumerate}
\item
$d_1(\tau)=\rho h_0$.
\item
$d_2(\tau^2) = \rho^2 \tau h_1$.
\item
$d_3(\tau^3 h_1^2) = \rho^3 a$.
\end{enumerate}
All other differentials on multiplicative generators are zero,
and $E_4$ equals $E_\infty$.
\end{prop}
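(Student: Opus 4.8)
The plan is to read the first differential off the structure of the dual Steenrod algebra and to force the remaining two from the $\rho$-inverted calculation of Corollary~\ref{ExtA1RRho}; this is the approach of \cite{MWLow}, and it avoids the Massey product computations of \cite{H}.

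First I would establish (1) directly. The positive-cone part of the right unit reads $\eta_R(\tau) = \tau + \rho\tau_0$, so in the cobar complex $d(\tau) = \rho[\tau_0]$. Since $[\tau_0]$ represents $h_0$ and $\tau$ has $\rho$-filtration zero, this is precisely a length-one $\rho$-Bockstein differential $d_1(\tau) = \rho h_0$. Because $d_1$ is a derivation and each of $\rho, h_0, h_1, a, b$ is a $d_1$-cycle (being $\rho$ itself or a class pulled back from $\Ext_\C(1)$), the Leibniz rule propagates $d_1$ throughout; in particular $d_1(\tau^2) = 0$, so $\tau^2$ survives to $E_2$, and I would record $E_2 = \rH(E_1,d_1)$.

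Next I would force (2) and (3). Every $\rho$-Bockstein differential has tridegree $(-1,+1,0)$, and $d_r$ raises the power of $\rho$ by exactly $r$; combined with Corollary~\ref{ExtA1RRho}, which identifies $\Ext_\R(1)[\rho^{-1}] \iso \F_2[\rho^{\pm 1},\tau^4,h_1]$, these constraints typically determine a differential uniquely. The classes $\tau^2$ and $\tau^3 h_1^2$ lie in tridegrees $(0,0,-2)$ and $(2,2,-1)$, in each of which $\F_2[\rho^{\pm 1},\tau^4,h_1]$ vanishes, so neither can be a permanent cycle. A short degree count shows that the only class of the form (a power of $\rho$) times a filtration-one element of $\Ext_\C(1)$ available as a target for $\tau^2$ is $\rho^2\cdot\tau h_1$, necessarily at $r=2$, which gives (2); likewise $a$, sitting in the tridegree $(4,3,2)$ where the $\rho$-inverted ring again vanishes, can only die by receiving $d_3(\tau^3h_1^2) = \rho^3 a$, which gives (3). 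I would stress that, by contrast, $\rho$-inversion alone does \emph{not} pin down (1): both $\rho h_0$ (at $r=1$) and $\rho^2 h_1$ (at $r=2$) are targets of the correct tridegree for $\tau$, which is exactly why the algebraic input above is needed for the first differential.

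Finally I would verify that these are the only nonzero differentials and that $E_4 = E_\infty$. The remaining multiplicative generators $\rho, h_0, h_1, a, b$ support no differentials because there are no classes of the correct tridegree to receive them, and inverting $\rho$ shows $E_4[\rho^{-1}] \iso \F_2[\rho^{\pm 1},\tau^4,h_1]$, in agreement with Corollary~\ref{ExtA1RRho}, so the $\rho$-torsion-free part is already at $E_\infty$; the remaining $\rho$-power-torsion classes are permanent cycles for degree reasons. The hard part is controlling all of $E_4$ rather than just its generators: one must carry the Leibniz rule through the relations $\tau h_1^3 = 0$ and $a^2 + h_0^2 b = 0$, which make $E_1$ non-free and are responsible for the hidden extensions noted after Corollary~\ref{ExtA1RRho} (for instance $\rho^4 b = \tau^4 h_1^4$). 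Because of these hidden extensions $E_\infty$ is not literally $\F_2[\rho^{\pm 1},\tau^4,h_1]$ as a set of named monomials, so the comparison with the $\rho$-inverted answer must be made dimension-by-dimension in each tridegree rather than by matching monomials; this bookkeeping is the main obstacle.
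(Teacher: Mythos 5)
Your proof is correct and follows essentially the same route as the paper: the paper likewise forces the differentials from the $\rho$-inverted calculation of Corollary~\ref{ExtA1RRho} and remarks that the first two also follow easily from the right unit formula, which is your route for (1). One correction to your side remark, though: the $\rho$-inverted calculation \emph{does} pin down (1) on its own. You only examined the possible targets of a differential on $\tau$, where indeed both $\rho h_0$ and $\rho^2 h_1$ sit in the correct tridegree $(-1,1,-1)$; the paper instead observes that $h_0$ is a permanent cycle (nothing in degree $(-1,2,0)$) whose infinite $\rho$-tower must die because it lies outside $\F_2[\rho^{\pm 1},\tau^4,h_1]$, and the only class that can hit $\rho^k h_0$ is the filtration-zero class $\rho^{k-1}\tau$, necessarily via $d_1$. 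Equivalently, the alternative $d_2(\tau)=\rho^2 h_1$ would leave the $h_0$-tower alive and truncate the $h_1$-tower, contradicting Corollary~\ref{ExtA1RRho} twice over. Finally, the hidden $\tau^4$-extensions you flag at the end (such as $\rho\cdot\tau^2 a=\tau^4 h_1^3$) concern the passage from $E_\infty$ to the ring $\Ext_\R(1)$ and are handled in the subsequent theorem; they are not an obstacle to verifying $E_4=E_\infty$, which is just the tridegree-by-tridegree inspection you describe.
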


\begin{proof}
By Corollary~\ref{ExtA1RRho}, the infinite $\rho$-towers that 
survive the $\rho$-Bockstein spectral sequence 
occur in the Milnor-Witt $4k$-stem.  All other
infinite $\rho$-towers are either truncated by a differential
or support a differential.

For example, the permanent cycle $h_0$ must be $\rho$-torsion in 
$\Ext_\R(1)$, which forces the Bockstein differential 
\[ d_1(\tau)=\rho h_0.\]
Next, the $\rho$-tower on $\tau h_1$ cannot survive, and the only possibility is that there is a differential
\[ d_2(\tau^2) = \rho^2 \tau h_1.\]
Note that these differentials also follow easily from the 
right unit formula given in Section~\ref{sec:C2Steenrod}.
The $\rho$-tower on $\tau^3 h_1^2$ cannot survive, and we conclude that it must support a differential
\[ d_3(\tau^3 h_1^2) = \rho^3 a.\]
There is no room for further non-zero differentials, so $E_4=E_\infty$.
\end{proof}

Proposition \ref{prop:R-mot-diff} leads to an explicit description
of the $\R$-motivic $\rho$-Bockstein $E_\infty$-page.
However, there are hidden multiplications 
in passing from $E_\infty$ to $\Ext_\R(1)$.

\begin{thm} 
$\Ext_{\cA^\R(1)}$ is the $\F_2$-algebra on generators given in Table~\ref{tbl:ExtA1Rgen} with relations 
given in Table \ref{tbl:ExtA1Rrel}.
\end{thm}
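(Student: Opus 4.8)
The plan is to derive the ring $\Ext_\R(1)$ from the $\rho$-Bockstein $E_\infty$-page, which Proposition~\ref{prop:R-mot-diff} determines completely, and then to resolve the hidden multiplicative extensions. First I would read off $E_\infty$ as an associated graded: starting from $E_1 = \Ext_\C(1)[\rho]$ and applying the three differentials $d_1(\tau) = \rho h_0$, $d_2(\tau^2) = \rho^2 \tau h_1$, $d_3(\tau^3 h_1^2) = \rho^3 a$, extended as derivations over $\F_2$, together with the $\Ext_\C(1)$-relations $h_0 h_1 = \tau h_1^3 = h_1 a = 0$ and $a^2 = h_0^2 b$. This identifies the surviving classes and pins down a set of multiplicative generators — among them $\rho$, $h_0$, $h_1$, $a$, $b$, and $\tau^4$ (the lowest surviving power of $\tau$), plus whatever products the differentials force to become indecomposable — giving the entries of Table~\ref{tbl:ExtA1Rgen}.

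Next I would choose lifts of these $E_\infty$-generators to honest classes in $\Ext_\R(1)$ and read off the relations that are already visible in the associated graded. Each such relation has a ``leading term'' detected at its nominal $\rho$-filtration; the content of Table~\ref{tbl:ExtA1Rrel} is the actual relation in $\Ext_\R(1)$, which may differ from its leading term by correction terms living in strictly higher $\rho$-filtration. Determining these correction terms is precisely the problem of hidden extensions, and this is where the bulk of the work lies.

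The main obstacle will be detecting and verifying the hidden products. The key tool is the $\rho$-inverted calculation of Corollary~\ref{ExtA1RRho}, which gives $\Ext_\R(1)[\rho^{-1}] \iso \F_2[\rho^{\pm 1}, \tau^4, h_1]$; since localization is a ring homomorphism, any product that is nonzero after inverting $\rho$ must be nonzero in $\Ext_\R(1)$. In particular this forces the hidden extensions $\tau^4 h_1^k \neq 0$ for $k \geq 3$ flagged in the remark after Corollary~\ref{ExtA1RRho}, even though these products vanish in $\Ext_\C(1)$ and hence on the $\rho$-filtration-zero part of $E_\infty$; the product is instead detected by a $\rho$-power multiple of a higher-filtration class. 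For products that remain $\rho$-torsion, and so are invisible to the localization, I would pin down the hidden extensions by degree counting — in many tridegrees $E_\infty$ has a unique nonzero class in the relevant filtration, so the extension is forced — and, where that fails, by the Massey-product and right-unit arguments the paper develops for this purpose.

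Finally I would assemble the generators and the corrected relations and check that the resulting $\F_2$-algebra has the same dimension in each tridegree $(s,f,w)$ as $E_\infty$, confirming that no relations have been missed and that the presentation is complete.
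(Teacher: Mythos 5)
Your proposal is correct and follows essentially the same route as the paper: read off generators and most relations from the $\rho$-Bockstein $E_\infty$-page, use the $\rho$-inverted isomorphism of Corollary~\ref{ExtA1RRho} to force the hidden $\tau^4$-extensions such as $\tau^4 h_1^3 = \rho\cdot\tau^2 a$ and $h_1\cdot\tau^2 a = \rho^3 b$, and settle the remaining $\rho$-torsion products by Massey product shuffles. The only addition is your final tridegree-wise dimension check, which is a reasonable sanity check but not a different method.
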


\begin{table}[h]
\captionof{table}{Generators for $\Ext_\R(1)$
\label{tbl:ExtA1Rgen}}
\begin{center}
\begin{tabular}{LLL} 
\hline
mw & (s,f,w) & \text{generator} \\ \hline
0 & (-1,0,-1) & \rho \\   
0 & (0,1,0) & h_0 \\
0 & (1,1,1) & h_1 \\
1 & (1,1,0) & \tau h_1 \\
2 & (0,1,-2) & \tau^2 h_0 \\
2 & (4,3,2) & a \\
4 & (4,3,0) & \tau^2 a \\
4 & (8,4,4) & b \\
4 & (0,0,-4) & \tau^4 \\
 \hline
\end{tabular}
\end{center}
\end{table}

\begin{table}[h]
\captionof{table}{Relations for $\Ext_\R(1)$
\label{tbl:ExtA1Rrel}}
\begin{center}
\begin{tabular}{LLL} 
\hline
mw & (s,f,w) & \text{relation} \\ \hline
0 & (-1,1,-1) & \rho h_0 \\ 
2 & (-1,1,-3) & \rho\cdot \tau^2 h_0 \\
1 & (-1,1,-2) & \rho^2\cdot \tau h_1 \\
2 & (1,3,-1) & \rho^3 a \\
\hline
4 & (0, 2, -4) & (\tau^2 h_0)^2 + \tau^4 h_0^2 \\
\hline
0 & (1, 2, 1) & h_0 h_1 \\
1 & (1, 2, 0) & h_0 \cdot \tau h_1 + \rho h_1\cdot \tau h_1\\
2 & (1, 2, -1) & \tau^2 h_0 \cdot h_1 + \rho(\tau h_1)^2 \\
3 & (1, 2, -2) & \tau^2 h_0 \cdot \tau h_1 \\
\hline
1 & (3, 3, 2) & h_1^2 \cdot \tau h_1 \\
2 & (3, 3, 1) & h_1(\tau h_1)^2 + \rho a \\
3 & (3, 3, 0) & (\tau h_1)^3 \\
4 & (3, 3, -1) & \tau^4 \cdot h_1^3 + \rho\cdot \tau^2 a \\
\hline
4 & (4, 4, 0) & \tau^2 h_0 \cdot a + h_0\cdot \tau^2 a \\
6 & (4, 4, -2) & \tau^2 h_0 \cdot \tau^2 a + \tau^4 h_0 a \\
\hline
2 & (5, 4, 3) & h_1 a \\
3 & (5, 4, 2) & \tau h_1\cdot a \\
4 & (5, 4, 1) & h_1\cdot \tau^2 a + \rho^3 b \\
5 & (5, 4, 0) & \tau h_1\cdot \tau^2 a \\
\hline
4 & (8, 6, 4) & a^2+ h_0^2 b \\
6 & (8, 6, 2) & a \cdot \tau^2 a + \tau^2 h_0 \cdot h_0 b \\
8 & (8, 6, 0) & (\tau^2 a)^2+ \tau^4 h_0^2 b + \rho^2 \tau^4 h_1^2 b \\
 \hline
\end{tabular}
\end{center}
\end{table}

The horizontal lines in Table \ref{tbl:ExtA1Rrel} group the relations
into families.  The first family describes the $\rho^k$-torsion.
The remaining families are associated to the classical products
$h_0^2$, $h_0 h_1$, $h_1^3$, $h_0 a$, $h_1 a$, and 
$a^2 + h_0^2 b$ respectively.

\begin{proof}
The family of $\rho^k$-torsion relations follows from the
$\rho$-Bockstein differentials of Proposition \ref{prop:R-mot-diff}.

Many relations follow immediately from the
$\rho$-Bockstein $E_\infty$-page because there are no possible
additional terms.  

Corollary~\ref{ExtA1RRho} implies that 
$\tau^4 \cdot h_1^3$, is non-zero in $\Ext_\R(1)$.
It follows that there must be a hidden relation 
\[
\tau^4 \cdot h_1^3 = \rho\cdot \tau^2a.
\]
Similarly, there is a hidden relation
\[
h_1\cdot \tau^2a = \rho^3 b
\]
because $\tau^4 \cdot h_1^4$ is non-zero in $\Ext_\R(1)$.
This last relation then gives rise to the extra term
$\rho^2 \tau^4 h_1^2 b$ in the relation for
$(\tau^2 a)^2 + \tau^4 h_0^2 b$.

Shuffling relations for Massey products imply the remaining three
relations, namely
\[ h_0 \cdot \tau h_1 = h_0 \langle h_1, h_0, \rho\rangle = \langle h_0, h_1, h_0 \rangle \rho  = \rho h_1 \cdot \tau h_1,
\]
\[
\tau^2 h_0 \cdot h_1 = 
\langle \rho \tau h_1, \rho, h_0 \rangle h_1 = 
\rho \tau h_1 \langle \rho, h_0, h_1 \rangle =
\rho (\tau h_1)^2, 
\]
and
\[
\rho a = \rho \langle h_0, h_1, \tau h_1 \cdot h_1 \rangle =
\langle \rho, h_0, h_1 \rangle \tau h_1 \cdot h_1 =
h_1 (\tau h_1)^2.
\]
See Table \ref{tbl:ExtA1Massey} in Section \ref{sctn:Massey}
for more details on these
Massey products, whose indeterminacies are all zero.
\end{proof}

\begin{rmk}\label{ExtE1}
For comparison purposes, we recall from \cite[Theorem~3.1]{H} that
\[ \Ext_{\cE^\R(1)} \iso \F_2[\rho,\tau^4, h_0,\tau^2 h_0, v_1]/(\rho h_0, \rho^3  v_1, (\tau^2 h_0)^2+\tau^4 h_0^2).\]
The $\rho$-torsion is created by the Bockstein differentials $d_1(\tau) = \rho h_0$ and $d_3(\tau^2) = \rho^3 v_1$. The class $v_1$ is in degree $(s,f,w) = (2,1,1)$.
\end{rmk}

\begin{prop}\label{prop:ExtA1toE1}
The ring homomorphism $q_*:\Ext_{\cA^\R(1)} \rtarr \Ext_{\cE^\R(1)}$ induced by the quotient $q:\cA^R(1)_* \rtarr \cE^\R(1)_*$ of Hopf algebroids is given as in Table~\ref{tbl:ExtA1toE1}.
\end{prop}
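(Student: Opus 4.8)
The plan is to use that $q_*$ is a homomorphism of $\F_2$-algebras, so it is determined by its values on the nine generators of $\Ext_{\cA^\R(1)}$ listed in Table~\ref{tbl:ExtA1Rgen}, and to compute these values by naturality with respect to the $\rho$-Bockstein spectral sequence. The quotient $q \colon \cA^\R(1)_* \rtarr \cE^\R(1)_*$ respects the filtration by powers of $\rho$, so it induces a map of $\rho$-Bockstein spectral sequences. On $E_1$ this map is the $\rho$-linear extension of the $\C$-motivic quotient $\Ext_{\cA^\C(1)} \rtarr \Ext_{\cE^\C(1)}$ (cf.\ Proposition~\ref{prop:Bockstein-E1}), and by naturality it converges to $q_*$.

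First I would record the $\C$-motivic map. Setting $\rho = 0$ collapses $\cE^\R(1)_*$ to the exterior algebra $\bM_2^\C[\tau_0,\tau_1]/(\tau_0^2,\tau_1^2)$, whose cohomology is the polynomial ring $\bM_2^\C[h_0,v_1]$ with $h_0 = [\tau_0]$ and $v_1 = [\tau_1]$. Since $q$ kills $\xi_1$, the $\C$-motivic map sends $h_0 \mapsto h_0$ and $h_1 = [\xi_1] \mapsto 0$. The value $b \mapsto v_1^4$ is the (classical) periodicity statement, and it is forced because the target group in that tridegree is one-dimensional. Then $a \mapsto h_0 v_1^2$ follows from the relation $a^2 = h_0^2 b$: we get $q_*(a)^2 = h_0^2 v_1^4 = (h_0 v_1^2)^2$, and the Frobenius is injective on the integral domain $\bM_2^\C[h_0,v_1]$.

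Next I would lift to the $\R$-motivic map. For every generator other than $\tau h_1$, the $E_1$-image computed above already has a nonzero leading term in $\rho$-filtration $0$ (or the target group in that tridegree is exhausted by that term), and a direct inspection of tridegrees, using the presentation of $\Ext_{\cE^\R(1)}$ from Remark~\ref{ExtE1}, shows that there is no room for a correction in strictly higher $\rho$-filtration; hence $q_*$ equals the leading term. Explicitly this yields $\rho\mapsto\rho$, $h_0\mapsto h_0$, $h_1\mapsto 0$, $\tau^2 h_0\mapsto\tau^2 h_0$, $\tau^4\mapsto\tau^4$, $a\mapsto h_0 v_1^2$, $\tau^2 a\mapsto \tau^2 h_0\, v_1^2$, and $b\mapsto v_1^4$, filling all but one entry of Table~\ref{tbl:ExtA1toE1}.

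The hard part is the single exceptional generator $\tau h_1$. Its $E_1$-image is $\tau\cdot q_*(h_1) = 0$, so its value under $q_*$ is hidden by the spectral sequence, detected in positive $\rho$-filtration. Naturality through the Massey product $\tau h_1 = \langle h_1, h_0, \rho\rangle$ only shows $q_*(\tau h_1) \in \langle 0, h_0, \rho\rangle$, whose indeterminacy $\rho \cdot \Ext_{\cE^\R(1)}$ in tridegree $(1,1,0)$ is exactly $\{0,\rho v_1\}$ --- the very ambiguity to be resolved --- so the Massey product is not decisive. I would therefore settle this entry by an explicit cobar computation. The relation $\tau_0^2 = \rho\tau_1 + \tau\xi_1 + \rho\tau_0\xi_1$ shows that the cobar $1$-cocycle $[\tau_0^2]$ represents $\langle h_1, h_0, \rho\rangle$, hence represents $\tau h_1$: with bounding cochains $[\tau_1]$ for $h_1 h_0$ (since $\bar\psi(\tau_1) = \xi_1\otimes\tau_0$) and $\tau$ for $h_0\rho$ (since $d\tau = \rho\tau_0$), the representative $[\xi_1]\cdot\tau + [\tau_1]\cdot\rho = \xi_1\eta_R(\tau) + \rho\tau_1 = \tau\xi_1 + \rho\tau_0\xi_1 + \rho\tau_1$ equals $\tau_0^2$. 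Applying $q$ kills $\xi_1$, and in $\cE^\R(1)_*$ the relation reads $\tau_0^2 = \rho\tau_1$; therefore $q_*[\tau_0^2] = [\rho\tau_1] = \rho v_1$. This establishes $q_*(\tau h_1) = \rho v_1$ and completes the determination of $q_*$.
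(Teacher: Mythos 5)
Your proof is correct and follows essentially the same route as the paper: the values on all generators other than $\tau h_1$ are determined at the $\C$-motivic (associated graded) level together with a check that no correction in higher $\rho$-filtration is possible, and $q_*(\tau h_1) = \rho v_1$ is established by an explicit cobar representative. Your cocycle $\tau_0^2 = \tau\xi_1 + \rho\tau_0\xi_1 + \rho\tau_1$ differs from the paper's stated representative $\tau\xi_1 + \rho\tau_1$ only by $\rho\tau_0\xi_1$, which is killed by $q$, so the two computations agree (yours being the more carefully justified cocycle); the only other difference is cosmetic, namely that you deduce $q_*(a) = h_0 v_1^2$ from the relation $a^2 = h_0^2 b$ and injectivity of the Frobenius rather than from May spectral sequence names.
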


\begin{table}[h]
\captionof{table}{The homomorphism $\Ext_{\cA^\R(1)} \rtarr \Ext_{\cE^\R(1)}$
\label{tbl:ExtA1toE1}}
\begin{center}
\begin{tabular}{LLLL} 
\hline
mw & (s,f,w) & \text{$x\in\Ext_{\cA^\R(1)}$} & \text{$q_*x\in\Ext_{\cE^\R(1)}$}  \\ \hline
0 & (-1,0,-1) & \rho  & \rho\\   
0 & (0,1,0) & h_0  & h_0\\
0 & (1,1,1) & h_1 & 0\\
1 & (1,1,0) & \tau h_1 & \rho v_1 \\
2 & (0,1,-2) & \tau^2 h_0 & \tau^2 h_0 \\
2 & (4,3,2) & a  & h_0 v_1^2 \\
4 & (4,3,0) & \tau^2 a  & \tau^2 h_0 v_1^2\\
4 & (8,4,4) & b & v_1^4 \\
4 & (0,0,-4) & \tau^4 & \tau^4 \\
 \hline
\end{tabular}
\end{center}
\end{table}

\begin{pf}
Many of the values $q_*(x)$ are already true over $\C$ and follow easily from their descriptions in the May spectral sequence. For instance, $b$ is represented by $h_{2,1}^4$, and $v_1$ is represented by $h_{2,1}$.
On the other hand, the value $q_*(\tau h_1)$ is most easily seen using the cobar complex. The class
$\tau h_1$ in $\Ext_{\cA^\R(1)}$ is represented by $\tau \xi_1 + \rho \tau_1$. This maps to $\rho \tau_1$ in the cobar complex for $\cE^\R(1)$ and represents the class $\rho v_1$ there.
\end{pf}

\section{Bockstein differentials in the negative cone}
\label{sec:BockNegCone}

We finally come to the key step in our calculation 
of $\Ext_{C_2}(1)$.  We are now ready to analyze
the $\rho$-Bockstein differentials associated to the negative
cone, i.e., to the spectral sequence $E^-$ of 
Proposition \ref{prop:Bockstein-E1}.
We already analyzed the spectral sequence $E^+$
in Section \ref{sec:ExtAR1}.

\subsection{The structure of $E_1^-$}

First, we need some additional 
information about the algebraic structure of $E_1^-$.
Since $E_1 = E_1^+ \oplus E_1^-$ is defined in terms of $\Ext$ groups,
it is a ring and has higher structure in the form of Massey products.
The subobject $E_1^-$ is a module 
over $E_1^+$, and it possesses Massey products of the form
$\langle x_1, \ldots, x_n, y \rangle$,
where $x_1, \ldots, x_n$ belong to $E_1^+$ and
$y$ belongs to $E_1^-$.

\begin{defn}
Suppose that $x$ is a non-zero element of $\Ext_{\C}(1)$ 
such that $\tau x$ is zero.
According to Remark \ref{rmk:Tor-practical},
for each $s \geq 0$, the element
$x$ gives rise to a copy of 
$\bM_2^\C/\tau$ in 
$\Tor_{\bM_2^\C}\left(\frac{\bM_2^\C}{\tau^\infty},\Ext_{\C}(1) \right)
\left\{ \frac{\gamma}{\rho^s} \right\}$
that is infinitely divisible by $\rho$.
In particular, it gives a non-zero element of the 
$\Tor$ group.
Let $\frac{Q}{\rho^s} x$ be any lift to $E_1^-$ of this non-zero element.
\end{defn}

\begin{rmk}
There is indeterminacy in the choice of ${Q} x$ which arises
from the first term of the short exact sequence 
for $E_1^-$ in Proposition \ref{prop:Bockstein-E1}.
\end{rmk}

\begin{lemma}
\label{lem:Qx}
The element ${Q} x$ of $E_1^-$ is contained in the Massey product
$\left\langle x, \tau, \frac{\gamma}{\tau} \right\rangle$.
\end{lemma}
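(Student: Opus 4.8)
The plan is to compute both sides inside the cobar complex for the associated graded Hopf algebroid $\gr_{\!\rho}\cA^{C_2}(1)$, whose cohomology is $E_1 = E_1^+ \oplus E_1^-$, and to exhibit a single cocycle that simultaneously represents $Qx$ and a member of the bracket. First I would record the structural input behind Proposition~\ref{prop:Bockstein-E1}: in the associated graded, the right unit of Formula~(\ref{eq:etaR}) loses all of its positive-$\rho$-filtration terms, since every summand of $[\sum_{i}(\tfrac{\rho}{\tau}\tau_0)^i]^k$ beyond the leading one carries a factor $\tfrac{\rho}{\tau}\tau_0$ that raises the stem, hence the filtration. Thus $\gr_{\!\rho}NC = \bigoplus_{s\geq 0}\frac{\bM_2^\C}{\tau^\infty}\{\frac{\gamma}{\rho^s}\}$ is a \emph{trivial} $\cA^\C_*(1)$-comodule, so the $E_1^-$ summand of the cobar complex is $\gr_{\!\rho}NC \otimes_{\bM_2^\C}\coB^*(\bM_2^\C,\cA^\C_*(1))$ with differential $1\otimes d$. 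In particular $\frac{\gamma}{\tau}$ is primitive (for this element even before passing to $\gr_{\!\rho}$, as $\frac{\gamma}{\tau}\cdot\rho = 0$), hence a cocycle in cobar degree $0$, of tridegree $(s,f,w)=(0,0,-2)$.

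Next I would check the bracket is defined and pin down a representative. The product $x\cdot\tau = \tau x$ vanishes by hypothesis, and $\tau\cdot\frac{\gamma}{\tau}=0$ because its degree $(0,0,-1)$ is empty in $NC$ (equivalently, $\gamma$ is not an element of $\frac{\bM_2^\C}{\tau^\infty}\{\gamma\}$). Choosing a cocycle $\bar x$ representing $x$ in Adams filtration $f$, the relation $\tau x = 0$ in $\Ext_\C(1)$ means $\tau\bar x = d(u)$ for some $u\in\coB^{f-1}(\bM_2^\C,\cA^\C_*(1))$; this is exactly the content of $x$ being $\tau$-torsion, with $\tau\bar x$ a nonzero coboundary rather than a literally zero cochain. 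For the second nullhomotopy I would take $t=0$, since $\tau\cdot\frac{\gamma}{\tau}$ is the zero cochain. The standard triple-product formula then gives the representative $\bar x\cdot t + u\cdot\frac{\gamma}{\tau} = u\cdot\frac{\gamma}{\tau}$, which in the tensor model is $\frac{\gamma}{\tau}\otimes u$.

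It remains to identify $\frac{\gamma}{\tau}\otimes u$ with the cocycle defining $Qx$. I would first confirm it is a cocycle: $d(\frac{\gamma}{\tau}\otimes u) = \frac{\gamma}{\tau}\otimes d(u) = \frac{\gamma}{\tau}\otimes\tau\bar x = (\tau\cdot\frac{\gamma}{\tau})\otimes\bar x = 0$, moving $\tau$ across the tensor over $\bM_2^\C$. Sitting in cobar degree $f-1$, it carries precisely the degree shift predicted for a $\Tor^1$ class in Remark~\ref{rmk:Tor-practical}. To see it generates the relevant $\Tor$, I would resolve the cyclic $\tau$-torsion submodule $\bM_2^\C/\tau\cdot x\subset\Ext_\C(1)$ by $0\to\bM_2^\C\xrightarrow{\tau}\bM_2^\C\to\bM_2^\C/\tau\to 0$; tensoring with $\frac{\bM_2^\C}{\tau^\infty}\{\gamma\}$ identifies $\Tor^1_{\bM_2^\C}(\frac{\bM_2^\C}{\tau^\infty}\{\gamma\},\bM_2^\C/\tau)$ with $\ker(\tau\colon\frac{\bM_2^\C}{\tau^\infty}\{\gamma\}\to\frac{\bM_2^\C}{\tau^\infty}\{\gamma\})$, whose generator is $\frac{\gamma}{\tau}$, matching Lemma~\ref{lem:compute-Tor-combined}(2). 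The chain-level Künneth representative of this class, built from the boundary $\tau\bar x = d(u)$, is exactly $\frac{\gamma}{\tau}\otimes u$, so this cocycle is a valid choice of the lift $Qx$; as it also represents a member of $\langle x,\tau,\frac{\gamma}{\tau}\rangle$, the containment follows. The main obstacle is the bookkeeping in this last step, reconciling the chain-level representative of the Künneth $\Tor^1$ class with the Massey-product formula; but once triviality of the comodule $\gr_{\!\rho}NC$ collapses both computations onto the single cochain $u$ with $\tau\bar x = d(u)$, the identification is forced.
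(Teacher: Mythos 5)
Your proof is correct and follows essentially the same route as the paper: both arguments hinge on the single cocycle $\frac{\gamma}{\tau}\,u$ with $d(u)=\tau\bar{x}$, which is a cycle because $\tau\cdot\frac{\gamma}{\tau}=0$ and which represents both the Massey product and the $\Tor^1$ class defining $Qx$. You simply spell out in more detail the triviality of the associated graded comodule structure on $NC$ and the identification of the chain-level K\"unneth representative, which the paper leaves implicit.
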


\begin{proof}
If $d(u) = \tau\cdot x$ in the cobar complex for $\Ext_{\C}(1)$,
then $\frac{\gamma}{\tau} u$ is a cycle, since 
$\tau \frac{\gamma}{\tau} = 0$.
This cycle $\frac{\gamma}{\tau} u$ represents both
the Massey product as well as ${Q} x$.
\end{proof}

\begin{rmk}
\label{rmk:Qh1^3}
The most important example is the element
${Q} h_1^3$, which is defined because
$\tau h_1^3$ equals zero in $\Ext_\C(1)$.
Another possible name for ${Q} h_1^3$ is
$\frac{\gamma}{\tau} v_1^2$, since $v_1^2$ is the element of the
May spectral sequence that creates the relation $\tau h_1^3$.
\end{rmk}

\begin{rmk}
\label{rmk:Qx}
Beware that the Massey product description for ${Q} x$ holds
in $E_1^-$, not in $\Ext_{C_2}(1)$.  In fact, we have already seen
in Section \ref{sec:ExtAR1} that $\tau$ is not a permanent cycle in the
$\rho$-Bockstein spectral sequence.

Nevertheless, minor variations on these Massey products may exist
in $\Ext_{C_2}(1)$.  For example,
$\langle h_1^2, \tau h_1, \frac{\gamma}{\tau} \rangle$
equals ${Q} h_1^3$ in $\Ext_{C_2}(1)$.
\end{rmk}

We can now deduce a specific computational property of
$E_1^-$ that we will need later.

\begin{lemma}
\label{lem:h0-Qh1^3}
In $E_1^-$, there is a relation
$h_0 \cdot {Q} h_1^3 = \frac{\gamma}{\tau} a$.
\end{lemma}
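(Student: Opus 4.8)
The plan is to establish the relation by an explicit computation in the associated-graded cobar complex that computes $E_1^-$, leaning on the Massey-product description of $Q h_1^3$ supplied by Lemma~\ref{lem:Qx}. Recall from the proof of that lemma that $Q h_1^3$ is represented by the cochain $\frac{\gamma}{\tau}\,u$, where $u$ is a cochain in the cobar complex for $\cA^\C(1)$ satisfying $d(u)=\tau h_1^3$; such a $u$ exists precisely because $\tau h_1^3=0$ in $\Ext_\C(1)$. Since $h_0$ admits a cocycle representative and $\frac{\gamma}{\tau}$ is a cycle on the $E_0$-page, the product $h_0\cdot Q h_1^3$ is represented by $\frac{\gamma}{\tau}\,(h_0 u)$, and I would begin by analyzing this cochain.

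The first key point is that $h_0 u$ is itself a cycle: $d(h_0 u)=h_0\,d(u)=\tau\,(h_0 h_1)\,h_1^2=0$ by the relation $h_0 h_1=0$ in $\Ext_\C(1)$. Thus $h_0 u$ represents a genuine class of $\Ext_\C(1)$. Moreover, because $h_0 h_1^3=0$, multiplication by $h_0$ moves $Q h_1^3$ out of the $\Tor^1$ stratum of $E_1^-$ and into the negative-cone summand $\frac{\bM_2^\C}{\tau^\infty}\{\gamma\}\otimes_{\bM_2^\C}\Ext_\C(1)$ described in Remark~\ref{rmk:Tor-practical}. The computation therefore reduces to identifying the class of $h_0 u$ and showing that pairing it with $\frac{\gamma}{\tau}$ yields exactly $\frac{\gamma}{\tau}\,a$.

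To carry out this identification I would choose $u$ explicitly, using the specialization $\tau_i^2=\tau\xi_{i+1}$ (the $\rho=0$ case of the defining relation of $\cA^\R_*$) that is responsible for $\tau h_1^3=0$, and then rewrite $h_0 u$ modulo boundaries so as to recognize a representative of $a$. The crucial subtlety is that $\frac{\gamma}{\tau}$ annihilates every $\tau$-multiple, so $h_0 u$ need only be pinned down modulo $\tau$; the relation $\tau\cdot\frac{\gamma}{\tau}=0$ is simultaneously what removes the indeterminacy in the choice of $u$ and what prevents $\frac{\gamma}{\tau}\,(h_0 u)$ from being a boundary. A convenient independent check is to apply the quotient map $q_*$ of Proposition~\ref{prop:ExtA1toE1} to $\cE^{C_2}(1)$: there $a\mapsto h_0 v_1^2$ by Table~\ref{tbl:ExtA1toE1} and $Q h_1^3\mapsto \frac{\gamma}{\tau}v_1^2$, so the asserted relation becomes the transparent identity $h_0\cdot\frac{\gamma}{\tau}v_1^2=\frac{\gamma}{\tau}\,h_0 v_1^2$ in the comparatively simple $\cE$-calculation, after which one only needs that $q_*$ is injective in this tridegree.

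The main obstacle is exactly this negative-cone identification: it is in effect a hidden-extension computation, and all of its difficulty is concentrated in controlling the interaction of $\frac{\gamma}{\tau}$ with $\tau$-divisible terms. If one instead prefers a Massey-product argument, the natural juggle $h_0\langle h_1^3,\tau,\tfrac{\gamma}{\tau}\rangle=\langle h_0,h_1^3,\tau\rangle\tfrac{\gamma}{\tau}$ moves the crux to evaluating the bracket $\langle h_0,h_1^3,\tau\rangle$ against $\frac{\gamma}{\tau}$ at the cochain level and confirming that the resulting class is nonzero, together with a verification that the relevant indeterminacies vanish — just as in the Massey-product arguments already used for the relations of $\Ext_\R(1)$.
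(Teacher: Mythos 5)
Your argument is essentially the paper's: the proof there is the one-line shuffle $h_0 \cdot {Q}h_1^3 = h_0 \langle h_1^3, \tau, \frac{\gamma}{\tau} \rangle = \langle h_0, h_1^3, \tau \rangle \frac{\gamma}{\tau} = \frac{\gamma}{\tau}a$, using Lemma~\ref{lem:Qx}, and your cochain-level computation with $\frac{\gamma}{\tau}(h_0 u)$ is exactly that shuffle unwound --- the crux you correctly isolate, recognizing $h_0 u$ as a representative of $a$ modulo $\tau$, is precisely the evaluation $\langle h_0, h_1^3, \tau\rangle = a$, which the paper likewise takes as a known fact about $\Ext_\C(1)$. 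One caution: your proposed ``independent check'' via $q_*$ is circular as the paper is organized, since the value $q_*({Q}h_1^3) = \frac{\gamma}{\tau}v_1^2$ in Table~\ref{tbl:ExtNCA1toE1} is itself deduced from $h_0$-extensions resting on this lemma (through Propositions~\ref{d1ThetaRho} and~\ref{prop:Q/rho^4k h1^4k+3 1}), so it can only serve as confirmation if you first compute $q_*({Q}h_1^3)$ directly at the cochain level.
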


\begin{proof}
Use Lemma \ref{lem:Qx} and the Massey product shuffle
\[ 
h_0 \cdot {Q} h_1^3 = 
h_0 \left\langle h_1^3, \tau, \frac\gamma{\tau} \right\rangle = 
\langle h_0, h_1^3, \tau \rangle \frac{\gamma}{\tau} = 
\frac{\gamma}{\tau} a.
\]
\end{proof}

Table \ref{tbl:MultE1page} gives
multiplicative generators for the Bockstein
$E_1$-page.  
The elements above the horizontal line are multiplicative generators for
$E_1^+$.  The elements below the horizontal generate
$E_1^-$ in the following sense.  Every element of $E_1^-$
can be formed by starting with one of the these listed elements, multiplying
by elements of $E_1^+$, and then dividing by $\rho$.
The elements in Table \ref{tbl:MultEinfpage} are not multiplicative
generators for $\Ext_{C_2}(1)$ in the usual sense, because we allow
for division by $\rho$.  
The point of this notational approach is that the elements of
$E_1^-$ and of $\Ext_{NC}$ are most easily understood as families
of $\rho$-divisible elements.

\begin{table}[h]
\captionof{table}{Generators for the Bockstein $E_1$-page}
\label{tbl:MultE1page}
\begin{center}
\begin{tabular}{LLL} 
\hline
mw & (s,f,w) &  \text{element} \\ \hline
0 & (-1,0,-1) & \rho \\
0 & (0,1,0) & h_0  \\
0 & (1,1,1) & h_1  \\
1 & (0,0,-1) & \tau  \\
2 & (4,3,2) & a  \\
4 & (8,4,4) & b  \\
\hline
0 & (4,2,4) & {Q} h_1^3 \\
-k-1 & (0,0,k+1) & \frac{\gamma}{\tau^k} \\
\hline
\end{tabular}
\end{center}
\end{table}

\subsection{$\rho$-Bockstein differentials in $E^-$}
\label{subsctn:Bock-diff-E^-}

Our next goal is to analyze the
$\rho$-Bockstein differentials in $E^-$.
We will rely heavily on the $\rho$-Bockstein differentials
for $E^+$ established in Section \ref{sec:ExtAR1}, using that
$E^-$ is an $E^+$-module.

As an $E_1^+$-module,
$E_1^-$ is generated by the elements
$\frac{\gamma}{\rho^j \tau^k}$ and
$\frac{Q}{\rho^j} h_1^3$.
This arises from the observation that the
$\tau$ torsion in $\Ext_{\C}(1)$ is generated
as an $\Ext_{\C}(1)$-module by $h_1^3$.

Proposition \ref{d1ThetaRho} gives the values of the 
$\rho$-Bockstein $d_1$ differential on these generators
of $E_1^-$.  All other $d_1$ differentials can then be deduced
from the Leibniz rule and the $E_1^+$-module structure.

All of the differentials in $E^-$ are infinitely divisible by $\rho$,
in the following sense.
When we claim that $d_r(x) = y$, we also have differentials
$d_r \left( \frac{x}{\rho^j} \right) = \frac{y}{\rho^j}$
for all $j \geq 0$.
For example,
in Proposition \ref{d1ThetaRho},
the formula
$d_1\left(\frac{\gamma}{\rho \tau} \right) = 
\frac{\gamma}{\tau^{2}}h_0$ implies that
$d_1\left(\frac{\gamma}{\rho^{j+1} \tau} \right) = 
\frac{\gamma}{\rho^j \tau^{2}}h_0$ for all $j \geq 0$.

\begin{prop}\label{d1ThetaRho}
For all $k \geq 0$,
\begin{enumerate}
\item
$d_1\left(\frac{\gamma}{\rho \tau^{2k+1}} \right) = 
\frac{\gamma}{\tau^{2k+2}}h_0$.
\item
$d_1\left( \frac{Q}{\rho}h_1^3\right) = 
\frac{\gamma}{\tau^2}a$.
\end{enumerate}
These differentials are infinitely divisible by $\rho$.
\end{prop}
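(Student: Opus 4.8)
The plan is to read both differentials directly off the right-unit formula~\eqref{eq:etaR}, just as the positive-cone differentials in Proposition~\ref{prop:R-mot-diff} were seen to follow from $\eta_R$. The governing principle is that the $\rho$-Bockstein $d_1$ of a negative-cone class is the coefficient of the first nonvanishing power of $\rho$ in its reduced coaction, since this is exactly the part of the cobar differential that drops the $\rho$-filtration by one. I will compute in the cobar complex for $\cA^\C(1)$ and use that the cobar element $[\tau_0]$ represents $h_0$.

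For part~(1), expand
\[
\eta_R\!\left(\frac{\gamma}{\rho\tau^{2k+1}}\right)
= \frac{\gamma}{\rho\tau^{2k+1}}
\left(1 + \frac{\rho}{\tau}\tau_0 + \frac{\rho^2}{\tau^2}\tau_0^2 + \cdots\right)^{2k+1}
\]
and collect the term linear in $\rho$. Over $\F_2$ its coefficient is $\binom{2k+1}{1}=2k+1\equiv 1 \pmod 2$, so the leading correction is $\frac{\gamma}{\tau^{2k+2}}\tau_0$, while every later term carries $\rho^{\geq 2}$ and dies because $\frac{\gamma}{\rho\tau^{2k+1}}\cdot\rho^{2}=0$. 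Reading off the resulting cobar class gives $d_1\!\big(\tfrac{\gamma}{\rho\tau^{2k+1}}\big)=\tfrac{\gamma}{\tau^{2k+2}}h_0$. For an even power $\tau^{2k}$ the binomial coefficient is even and this correction vanishes, consistent with there being no $d_1$ in that case. Repeating the expansion with $\rho^{j+1}$ in the denominator yields the claimed infinite $\rho$-divisibility.

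For part~(2), I would use the representative of $\tfrac{Q}{\rho}h_1^3$ provided by Lemma~\ref{lem:Qx}: fix a $\C$-motivic cobar cochain $\tilde v$ with $d\tilde v = \tau h_1^3$, i.e.\ the nullhomotopy of the relation $\tau h_1^3=0$, which is the class named $v_1^2$ in Remark~\ref{rmk:Qh1^3}; then $\tfrac{Q}{\rho}h_1^3$ is represented by $\tfrac{\gamma}{\rho\tau}\tilde v$. Applying the $C_2$-equivariant cobar differential, the internal contribution $\tfrac{\gamma}{\rho\tau}\,d\tilde v=\tfrac{\gamma}{\rho\tau}\cdot\tau h_1^3$ vanishes because $\tau\cdot\tfrac{\gamma}{\rho\tau}=0$, while the external contribution is governed by the reduced coaction. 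Here~\eqref{eq:etaR} truncates to the finite expansion $\eta_R\!\big(\tfrac{\gamma}{\rho\tau}\big)=\tfrac{\gamma}{\rho\tau}+\tfrac{\gamma}{\tau^2}\tau_0$, so $d_1\!\big(\tfrac{Q}{\rho}h_1^3\big)$ is represented by the cocycle $\tfrac{\gamma}{\tau^2}[\tau_0]\,\tilde v$.

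The step I expect to be the main obstacle is the identification of this cocycle with $\tfrac{\gamma}{\tau^2}a$, which requires controlling the $\tau$-divisibility bookkeeping in the negative-cone module together with a Massey-product argument. The point is that $[\tau_0]\tilde v$, with $[\tau_0]$ representing $h_0$ and $\tilde v$ the chosen nullhomotopy of $h_1^3\cdot\tau$, is precisely the defining-system term for the Massey product $\langle h_0, h_1^3, \tau\rangle$, and $\langle h_0, h_1^3, \tau\rangle = a$ is the identity already extracted in the proof of Lemma~\ref{lem:h0-Qh1^3}. As an independent check I would apply the Leibniz rule to the relation $h_0\cdot\tfrac{Q}{\rho}h_1^3=\tfrac{\gamma}{\rho\tau}a$ obtained from Lemma~\ref{lem:h0-Qh1^3}: since $h_0$ and $a$ are $d_1$-cycles, part~(1) gives $h_0\,d_1\!\big(\tfrac{Q}{\rho}h_1^3\big)=a\cdot\tfrac{\gamma}{\tau^2}h_0=h_0\cdot\tfrac{\gamma}{\tau^2}a$, and a count in tridegree $(4,3,5)$ then pins down $d_1\!\big(\tfrac{Q}{\rho}h_1^3\big)=\tfrac{\gamma}{\tau^2}a$. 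Running this with $\rho^{j+1}$ in place of $\rho$ produces the infinitely $\rho$-divisible family $d_1\!\big(\tfrac{Q}{\rho^{j+1}}h_1^3\big)=\tfrac{\gamma}{\rho^{j}\tau^2}a$.
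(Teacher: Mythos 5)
Your proposal is correct and follows essentially the same route as the paper: part (1) is the paper's first argument (reading $d_1$ off the right-unit formula \eqref{eq:etaR}, where only the linear term in $\rho$ survives), and your ``independent check'' for part (2) — applying $d_1$ to the relation $h_0\cdot\frac{Q}{\rho}h_1^3=\frac{\gamma}{\rho\tau}a$ of Lemma \ref{lem:h0-Qh1^3} and using part (1) — is exactly the paper's proof of the second formula. Your additional direct cobar computation for part (2) is not in the paper, and you rightly flag that identifying $\frac{\gamma}{\tau^2}[\tau_0]\tilde v$ with $\frac{\gamma}{\tau^2}a$ needs the same Massey-product input $\langle h_0,h_1^3,\tau\rangle\ni a$ that underlies Lemma \ref{lem:h0-Qh1^3}, so it does not constitute an independent alternative.
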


\begin{pf}
We give three proofs for the first formula.
First, it follows from
$\Sq^1 \left( \frac{\gamma}{\rho \tau^{2k+1}} \right) = 
\frac{\gamma}{\tau^{2k+2}}$,
using the relationship between $d_1$ and the left and right units
of the Hopf algebroid.
Second, we have 
\[\begin{split}
 0 =d_1\left( \tau^{2k+1} \frac{\gamma}{\rho \tau^{2k+1}} \right) 
 &= \tau^{2k+1} d_1\left( \frac{\gamma}{\rho \tau^{2k+1}} \right)
   + \frac{\gamma}{\rho \tau^{2k+1}} \rho \tau^{2k} h_0 \\
 & = \tau^{2k+1} d_1\left( \frac{\gamma}{\rho \tau^{2k+1}}\right)
  + \frac{\gamma}{\tau} h_0. 
\end{split}\] 
Third, we can use Proposition \ref{prop:rho-divisible-Ext}
to conclude that 
the infinitely $\rho$-divisible elements
$\frac{\gamma}{ \tau^{2k+1}}$ cannot survive the 
$\rho$-Bockstein spectral sequence. The only possibility
is that they support a $d_1$ differential.

For the second formula, use the first formula to determine that
$d_1 \left( \frac{\gamma}{\rho \tau} a \right) = 
\frac{\gamma}{\tau^2}h_0 a$.
Then use the relation of Lemma \ref{lem:h0-Qh1^3}.
Alternatively, this differential is also forced by
Proposition \ref{prop:rho-divisible-Ext}.
\end{pf}

It is now straightforward to compute $E_2^-$, since the 
$\rho$-Bockstein $d_1$ differential is completely known.
The charts in Section \ref{sctn:chart}
depict $E_2^-$ graphically.

Next, Proposition \ref{d2ThetaRho2Tau} gives a
$\rho$-Bockstein $d_2$ differential in $E_2^-$.
This is the essential calculation, in the sense that
the $d_2$ differential is zero on all other $E_2^+$-module generators
of $E_2^-$.

\begin{prop}\label{d2ThetaRho2Tau}
$d_2\left(\frac{\gamma}{\rho^2 \tau^{4k+2}}\right) = 
\frac{\gamma}{\tau^{4k+3}} h_1$
 for all $k \geq 0$.
This differential is infinitely divisible by $\rho$.
\end{prop}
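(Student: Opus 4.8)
The plan is to establish the $d_2$ differential $d_2\left(\frac{\gamma}{\rho^2 \tau^{4k+2}}\right) = \frac{\gamma}{\tau^{4k+3}} h_1$ by the same three-pronged strategy used for the $d_1$ differentials in Proposition~\ref{d1ThetaRho}, with the cleanest argument being the one via infinitely $\rho$-divisible classes. First I would invoke Proposition~\ref{prop:rho-divisible-Ext}, which tells us that the only infinitely $\rho$-divisible families surviving to $\Ext_{C_2}(1)$ are the classes $\frac{\gamma}{\tau^{4k}} h_1^j$. In particular, the classes $\frac{\gamma}{\tau^{4k+3}} h_1$ are \emph{not} infinitely $\rho$-divisible in the $E_\infty$-page, so any infinitely $\rho$-divisible $d_1$-cycle mapping onto such a class must be truncated. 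Having already used up the $d_1$ differentials in Proposition~\ref{d1ThetaRho}, the surviving $\rho$-towers on $E_2^-$ must be cut down further, and a degree count should show that the only available source is $\frac{\gamma}{\rho^2 \tau^{4k+2}}$, forcing the stated $d_2$.

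To make this rigorous rather than merely a counting heuristic, I would run the same filtered-cobar computation that appears in the proof of Proposition~\ref{prop:rho-divisible-Ext}. There, after $\rho$-colocalizing and splitting off the $\F_2[h_1]$ factor, the essential computation is the cohomology of
\[
\coB^*_{\F_2[\tau]}\left( \frac{\F_2[\tau]}{\tau^\infty} \left\{ \gamma \right\}, \frac{\F_2[\tau,x]}{x^4} \right),
\]
filtered by powers of $x$, with comodule structure $\eta_R\left(\frac{\gamma}{\tau^k}\right) = \frac{\gamma}{\tau^k}\left(1 + \frac{x}{\tau} + \frac{x^2}{\tau^2} + \frac{x^3}{\tau^3}\right)^k$. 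In that spectral sequence the differential $d_2\left(\frac{\gamma}{\tau^{4k-2}}\right) = \frac{\gamma}{\tau^{4k}} v_1$ (with $v_1 = [x^2]$) is exactly the mechanism producing the $\tau^4$-periodicity of the surviving classes. Since $v_1$ corresponds to $h_1$ after the identification $x = \rho\tau_0$, and since the $x$-filtration is precisely a refinement of the $\rho$-Bockstein filtration on the $\rho$-colocalized piece, I would translate that internal $d_2$ back into a $\rho$-Bockstein $d_2$, matching $\frac{\gamma}{\rho^2 \tau^{4k+2}}$ to $\frac{\gamma}{\tau^{4k+3}} h_1$ after accounting for the weight and filtration shifts recorded in the degree conventions of Section~\ref{sec:notn}.

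As an independent confirmation, I would give the algebraic Leibniz-rule argument analogous to the second proof of Proposition~\ref{d1ThetaRho}. Starting from a known relation such as $0 = d_2\left(\tau^{4k+2} \cdot \frac{\gamma}{\rho^2 \tau^{4k+2}}\right)$ and expanding via the Leibniz rule using the already-established $E^+$ differential $d_2(\tau^2) = \rho^2 \tau h_1$ from Proposition~\ref{prop:R-mot-diff}, one should be able to solve for $d_2\left(\frac{\gamma}{\rho^2 \tau^{4k+2}}\right)$ directly, with the $\rho^2$ and the $\tau h_1$ combining to cancel the $\rho^2$ in the denominator and leave $\frac{\gamma}{\tau^{4k+3}} h_1$. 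Finally, the claim of infinite $\rho$-divisibility follows from the general principle stated before Proposition~\ref{d1ThetaRho}: differentials in $E^-$ are $\rho$-linear on the colimit system, so $d_2\left(\frac{x}{\rho^j}\right) = \frac{y}{\rho^j}$ whenever $d_2(x) = y$.

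The main obstacle I anticipate is bookkeeping rather than conceptual: one must verify that the internal $x$-filtration $d_2$ in the $\rho$-colocalization computation genuinely corresponds to the external $\rho$-Bockstein $d_2$, and that no other $E_2^+$-module generator of $E_2^-$ (such as the $\frac{Q}{\rho^j} h_1^3$ family) can also support or receive a $d_2$ in this degree. Ruling out the competing differentials is where the degree analysis via Proposition~\ref{prop:rho-divisible-Ext} and Corollary~\ref{ExtA1RRho} must be deployed carefully, so that the stated $d_2$ is the \emph{unique} possibility consistent with the known $\rho$-inverted and $\rho$-colocalized answers.
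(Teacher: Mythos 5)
Your proposal is correct and matches the paper's own argument: the paper proves this proposition by exactly the Leibniz-rule computation you give in your third paragraph (expanding $0 = d_2\bigl(\tau^{4k+2}\cdot\frac{\gamma}{\rho^2\tau^{4k+2}}\bigr)$ using $d_2(\tau^2)=\rho^2\tau h_1$), together with the forcing argument from Proposition~\ref{prop:rho-divisible-Ext} that you sketch in your first paragraph, plus a third route via $\Sq^2$ that you omit. The extra filtered-cobar verification in your second paragraph is consistent but not needed.
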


\begin{pf}
As for Proposition \ref{d1ThetaRho}, we give three proofs.
First, $\Sq^2\left(\frac{\gamma}{\rho^2 \tau^{4k+2}}\right) = 
\frac{\gamma}{\tau^{4k+3}}$. Second, we have
\[
\begin{split}
0 =
d_2\left( \tau^{4k+2} \frac{\gamma}{\rho^2 \tau^{4k+2}} \right) &=
\tau^{4k+2} d_2\left( \frac{\gamma}{\rho^2 \tau^{4k+2}}\right) +
\rho^2 \tau^{4k+1} \frac{\gamma}{\rho^2 \tau^{4k+2}} h_1 \\
& = \tau^{4k+2} d_2\left( \frac{\gamma}{\rho^2 \tau^{4k+2}}\right) + \frac{\gamma}{\tau} h_1.
\end{split} \]

Third, use Proposition \ref{prop:rho-divisible-Ext}
to conclude that 
the infinitely $\rho$-divisible elements
$\frac{\gamma}{ \tau^{4k+1}}$ cannot survive the 
$\rho$-Bockstein spectral sequence. The only possibility
is that they support a $d_2$ differential.
\end{pf}

At this point, the behavior of $E^-$ becomes qualitatively different
from $E^+$.  For $E^+$, there are non-zero $d_3$ differentials, and then
the $E_4^+$-page equals the $E_\infty^+$-page.  

For $E^-$, it turns out that the $d_r$ differential is non-zero
for infinitely many values of $r$.  This does not present a 
convergence problem, because there are only finitely many
non-zero differentials in any given degree.
One consequence is that the orders of the $\rho$-torsion
in $\Ext_{C_2}(1)$ are unbounded.  In other words,
for every $s$, there exists an element
$x$ of  such that $\rho^s x$ is non-zero
but $\rho^{s+t} x$ is zero for some $t > 0$.
This is fundamentally different from
$\Ext_\R(1)$, where $\rho^3 x$ is zero 
if $x$ is not $\rho$-torsion free.

Proposition \ref{d4kQh14} makes explicit these higher differentials.

\begin{prop}\label{d4kQh14} 
For all $k \geq 1$,
\begin{enumerate}
\item
$d_{4k}(\frac{Q}{\rho^{4k}}h_1^{4k}) = 
\frac{\gamma}{ \tau^{4k}}b^k$. 
\item
$d_{4k+1}(\frac{Q}{\rho^{4k+1}}h_1^{4k+3}) = 
\frac{\gamma}{\tau^{4k+2}}ab^k$. 
\end{enumerate}
These differentials are infinitely divisible by $\rho$.
\end{prop}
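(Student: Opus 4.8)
The plan is to force these differentials from Proposition~\ref{prop:rho-divisible-Ext}, exactly as in the third proofs of Propositions~\ref{d1ThetaRho} and~\ref{d2ThetaRho2Tau}. The point is that both the source and the target of each claimed differential are infinitely $\rho$-divisible classes that do \emph{not} appear among the survivors $\frac{\gamma}{\tau^{4m}}h_1^j$ listed in Proposition~\ref{prop:rho-divisible-Ext}, so each of them is obliged to die; I will argue that they can only die against one another. A useful preliminary observation is that a $d_r$ in the $\rho$-Bockstein spectral sequence consumes exactly $r$ powers of $\rho$ (compare $d_1(\tau)=\rho h_0$, $d_2(\tau^2)=\rho^2\tau h_1$, $d_3(\tau^3h_1^2)=\rho^3 a$ in Proposition~\ref{prop:R-mot-diff}); since the sources $\frac{Q}{\rho^{4k}}h_1^{4k}$ and $\frac{Q}{\rho^{4k+1}}h_1^{4k+3}$ sit at $\rho$-divisibility $4k$ and $4k+1$ while the stated targets sit at the top ($\rho$-divisibility $0$), the page on which the differential occurs is pinned down to be $4k$ and $4k+1$ respectively. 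A quick tridegree check confirms consistency: $\frac{Q}{\rho^{4k}}h_1^{4k}$ lies in $(8k+1,4k-1,8k+1)$ and $\frac{\gamma}{\tau^{4k}}b^k$ in $(8k,4k,8k+1)$, differing by $(-1,+1,0)$ as a differential must.

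First I would check that each source survives to the page on which it is claimed to support a differential; here the relations of $\Ext_\C(1)$ do the work through the Leibniz rule and the $E^+$-module structure. For instance, writing $\frac{Q}{\rho^{4k}}h_1^{4k}=\frac{Q h_1^3}{\rho^{4k}}\cdot h_1^{4k-3}$ with the first factor in $E^-$ and $h_1^{4k-3}\in E^+$, Proposition~\ref{d1ThetaRho}(2) gives
\[
d_1\!\left(\tfrac{Q}{\rho^{4k}}h_1^{4k}\right)=\tfrac{\gamma}{\rho^{4k-1}\tau^2}\,a\,h_1^{4k-3}=0,
\]
since $h_1 a=0$ in $\Ext_\C(1)$ and $4k-3\geq 1$. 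The same relation $h_1a=0$ kills $d_1$ on $\frac{Q}{\rho^{4k+1}}h_1^{4k+3}$, and I expect the higher vanishing ($d_2,\dots,d_{4k-1}$ on these classes) to follow similarly from $h_1 a=0$, $h_0 h_1=0$, and $a^2=h_0^2 b$ combined with the lower $\rho$-Bockstein differentials.

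Next I would identify the targets as cycles that persist to the relevant page but are doomed. The class $\frac{\gamma}{\tau^{4k}}$ is a permanent cycle by Proposition~\ref{prop:rho-divisible-Ext} and $b$ is a permanent cycle inherited from $E^+$, so $\frac{\gamma}{\tau^{4k}}b^k$ is a cycle on every page; since it carries a factor of $b^k$ it is not among the survivors, hence it must be hit. By the tridegree and $\rho$-divisibility bookkeeping above, the only class that can hit it is $\frac{Q}{\rho^{4k}}h_1^{4k}$, which yields the first differential. The class $\frac{\gamma}{\tau^{4k+2}}ab^k$ is handled the same way: the relation $h_1a=0$ shows it is not in the image of the $d_2$ of Proposition~\ref{d2ThetaRho2Tau}, so it survives to $E_{4k+1}^-$, and being a non-survivor it must then be hit by $\frac{Q}{\rho^{4k+1}}h_1^{4k+3}$.

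The main obstacle is the bookkeeping that keeps every source and target on its correct page, and in particular the verification that no source supports an \emph{earlier} differential. This is genuinely delicate: applying the Leibniz rule to the factorization $\frac{Q}{\rho^{4(k+1)}}h_1^{4(k+1)}=\frac{Qh_1^{4k}}{\rho^{4(k+1)}}\cdot h_1^4$ together with the $d_{4k}$ just established produces $\frac{\gamma}{\rho^4\tau^{4k}}b^k h_1^4$, so the pattern is consistent only if this $b$-multiple has already been killed before page $4k$. Such classes are reached neither by $d_1$ (because $h_0h_1=0$) nor by $d_2$ (wrong parity of the $\tau$-power), so they must be removed by $d_3$ and higher differentials on $E^-$ induced from the $E^+$-relation $d_3(\tau^3h_1^2)=\rho^3 a$ via the module structure. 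Organizing these induced differentials — equivalently, proving directly that the only infinitely $\rho$-divisible survivors are the $\frac{\gamma}{\tau^{4m}}h_1^j$ of Proposition~\ref{prop:rho-divisible-Ext} — is where the real work lies; once that accounting is in place, the forcing argument delivers the stated $d_{4k}$ and $d_{4k+1}$ with no remaining ambiguity.
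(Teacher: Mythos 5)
You are following the route that the paper itself only mentions in passing (``Alternatively, one can use Proposition~\ref{prop:rho-divisible-Ext} to obtain both differentials''), whereas its primary argument is different and more local: it uses the hidden extension $\tau^4 h_1^4 = \rho^4 b$, already established for $\Ext_\R(1)$, to rewrite $\frac{\gamma}{\tau^{4k}}b^k$ as $h_1\cdot\bigl(\tau^4\frac{\gamma}{\rho^4\tau^{4k}}h_1^3b^{k-1}\bigr)$, observes that the would-be cofactor lies in a degree with no surviving class (and, for the second family, that $h_1a=0$), concludes that the targets are literally zero in $\Ext_{C_2}(1)$, and only then runs a degree count to identify the unique differential that can kill them. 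That argument requires no global control of the intermediate Bockstein pages.

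Your version has two gaps. First, the inference ``$\frac{\gamma}{\tau^{4k}}b^k$ carries a factor of $b^k$, so it is not among the survivors of Proposition~\ref{prop:rho-divisible-Ext}, hence it must be hit'' is not valid as stated: that proposition computes only the infinitely $\rho$-divisible part of $\Ext_{C_2}(1)$, and $\Ext_{NC}$ contains many classes that survive without being infinitely $\rho$-divisible (all the finite $\rho$-towers visible in the charts). What the colocalization actually gives is that the tower $\{\frac{\gamma}{\rho^j\tau^{4k}}b^k\}_{j\geq 0}$ of permanent cycles cannot survive intact; to conclude that the top element $\frac{\gamma}{\tau^{4k}}b^k$ itself dies, you must add that differentials in $E^-$ are $\rho$-linear and infinitely divisible by $\rho$, so that if any member of the tower is hit then every member above it in filtration is hit as well. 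Second, and more seriously, you explicitly defer the verification that the sources survive to pages $4k$ and $4k+1$, that the targets are not hit earlier, and that no other pairing of sources with targets is possible; on the route you have chosen, that accounting \emph{is} the proof, and as you have set it up it is also circular, since ruling out earlier differentials on $\frac{Q}{\rho^{4k}}h_1^{4k}$ requires already knowing the pattern of differentials you are trying to establish. This is exactly the global bookkeeping that the paper's $h_1$-divisibility argument is designed to bypass, so to complete your proof you would either need to carry out that induction over all pages in full, or fall back on a local argument of the paper's type.
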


\begin{pf}
We know that $\frac{\gamma}{\tau^{4k}}$ and $b$ are permanent cycles. On the other hand,
in $\Ext_{C_2}(1)$ the relation $\tau^4 h_1^4 = \rho^4 b$ gives
\[
\frac{\gamma}{\tau^{4k}} b^k = 
\rho^4 \frac{\gamma}{\rho^4\tau^{4k}} b^k  = 
\tau^4 \frac{\gamma}{\rho^4\tau^{4k}} h_1^4 b^{k-1}. \]
Thus $\frac{\gamma}{\tau^{4k}} b^k$ is $h_1$-divisible, which implies that it must be zero in $\Ext_{C_2}(1)$, 
as there is no surviving class in the appropriate degree to support the $h_1$-multiplication. The only Bockstein differential that could hit $\frac{\gamma}{\tau^{4k}} b^k$ is the claimed one.

For the second formula,
the classes $\frac{\gamma}{\tau^{4k+2}}a$ and $b$ are permanent cycles, yet 
\[ \frac{\gamma}{\tau^{4k+2}}a b^k = 
\rho^4 \frac{\gamma}{\rho^4\tau^{4k+2}}a b^{k}  = 
\tau^4 \frac{\gamma}{\rho^4\tau^{4k+2}}a h_1^4 b^{k-1}  \]
in $\Ext_{C_2}(1)$.
But $h_1 a = 0$, so 
$\frac{\gamma}{\tau^{4k+2}}ab^k$ must be zero in $\Ext_{C_2}(1)$, 
forcing the claimed differential.

Alternatively, one can use Proposition
\ref{prop:rho-divisible-Ext} to obtain both differentials.
\end{pf}

Table \ref{tbl:BockDiff} summarizes the Bockstein differentials
that we computed in Sections \ref{sec:ExtAR1} and 
\ref{subsctn:Bock-diff-E^-}.
The differentials above the horizontal line occur in $E^+$,
while the differentials below the horizontal line occur
in $E^-$ and are infinitely divisible by $\rho$.

\begin{table}[h]
\captionof{table}{Bockstein differentials}
\label{tbl:BockDiff}
\begin{center}
\begin{tabular}{LLLLLl} 
\hline
mw &  (s,f, w) & \text{element} & r & d_r  & \text{proof} \\ \hline
1 & (0, 0, -1) & \tau & 1 & \rho h_0
  & Prop. \ref{prop:R-mot-diff} \\
2 & (0, 0, -2) & \tau^2 & 2 & \rho^2 \tau h_1 
  & Prop. \ref{prop:R-mot-diff} \\
3 & (2, 2, -1) & \tau^3 h_1^2 & 3 & \rho^3 a 
  & Prop. \ref{prop:R-mot-diff} \\
\hline  
-2k-2 & (1, 0, 2k+3) & \frac{\gamma}{\rho \tau^{2k+1}} & 1
  & \frac{\gamma}{\tau^{2k+2}} h_0 
  & Prop. \ref{d1ThetaRho} \\
0 & (5, 2, 5) & \frac{Q}{\rho} h_1^3 & 1 
  & \frac{\gamma}{\tau^2} a 
  & Prop. \ref{d1ThetaRho} \\
-4k-3 & (2, 0, 4k+5) & \frac{\gamma}{\rho^2 \tau^{4k+2}} & 2 
  & \frac{\gamma}{\tau^{4k+3}} h_1 
  & Prop. \ref{d2ThetaRho2Tau} \\
0 & (8k+1, 4k-1, 8k+1) & \frac{Q}{\rho^{4k}} h_1^{4k} & 4k 
  & \frac{\gamma}{\tau^{4k}}b^k 
  & Prop. \ref{d4kQh14} \\
0 & (8k+5, 4k+2, 8k+5) & \frac{Q}{\rho^{4k+1}} h_1^{4k+3} & 4k+1
  & \frac{\gamma}{\tau^{4k+2}}ab^k 
  & Prop. \ref{d4kQh14} \\
\hline
\end{tabular}
\end{center}
\end{table}

The $\rho$-Bockstein differentials of Sections \ref{sec:ExtAR1}
and \ref{sec:BockNegCone} allow us to completely compute
the $E_\infty$-page of the $\rho$-Bockstein spectral sequence
for $\Ext_{C_2}(1)$.

\subsection{$\rho$-Bockstein differentials in $E^{-}$ for $\cE^{C_2}(1)$}\label{sec:BockE1NC}

For comparison, we also carry out the analogous but easier computation over $\cE^{C_2}(1)$ rather than $\cA^{C_2}(1)$. 
Besides $d_1 \left( \frac{\gamma}{\rho \tau^{2k+1}}\right) = \frac{\gamma}{ \tau^{2k+2}} h_0$, the only other Bockstein differential is given in the following result.

\begin{prop}\label{d3ThetaRho2Tau2}
$d_3 \left( \frac{\gamma}{\rho^3 \tau^{4k+2}}\right) = \frac{\gamma}{ \tau^{4k+4}} v_1$
 for all $k \geq 0$.
This differential is infinitely divisible by $\rho$.
\end{prop}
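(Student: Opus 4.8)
The plan is to mirror the arguments for Propositions~\ref{d1ThetaRho} and~\ref{d2ThetaRho2Tau}, with the positive-cone input now being the differential $d_3(\tau^2)=\rho^3 v_1$ recorded in Remark~\ref{ExtE1}. First I would record the degrees as a sanity check: $\frac{\gamma}{\rho^3\tau^{4k+2}}$ lies in tridegree $(s,f,w)=(3,0,4k+6)$, and since $d_3$ shifts $(s,f,w)$ to $(s-1,f+1,w)$ its target sits in $(2,1,4k+6)$, which is exactly the tridegree of $\frac{\gamma}{\tau^{4k+4}}v_1$ (here $\frac{\gamma}{\tau^{4k+4}}$ contributes $(0,0,4k+5)$ and $v_1$ contributes $(2,1,1)$). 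So the asserted formula is at least degree-consistent.

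The main computation I would run is a Leibniz argument on the $E_3$-page. Over $\cE^{C_2}(1)$ one has $d_2\equiv 0$, so both $(\tau^2)^{2k+1}=\tau^{4k+2}$ and $\frac{\gamma}{\rho^3\tau^{4k+2}}$ survive to $E_3$; the latter is neither a source nor a target of $d_1$, since the $d_1$-differential $d_1\left(\frac{\gamma}{\rho\tau^{2k+1}}\right)=\frac{\gamma}{\tau^{2k+2}}h_0$ recorded at the start of this subsection (cf.\ Proposition~\ref{d1ThetaRho}) involves odd powers of $\tau$ together with $h_0$. Because $\tau^{4k+2}\cdot\frac{\gamma}{\rho^3\tau^{4k+2}}=0$ (the final $\tau$-multiplication pushes us out of the negative cone) and $d_3$ satisfies the Leibniz rule with respect to the $E_3^+$-module structure on $E_3^-$, we get
\begin{equation*}
0=d_3\left(\tau^{4k+2}\cdot\frac{\gamma}{\rho^3\tau^{4k+2}}\right)
 = d_3(\tau^{4k+2})\cdot\frac{\gamma}{\rho^3\tau^{4k+2}}
 + \tau^{4k+2}\cdot d_3\left(\frac{\gamma}{\rho^3\tau^{4k+2}}\right).
\end{equation*}
Using $d_3(\tau^{4k+2})=d_3\big((\tau^2)^{2k+1}\big)=\rho^3\tau^{4k}v_1$ together with the negative-cone multiplications $\rho^3\cdot\frac{\gamma}{\rho^3\tau^{4k+2}}=\frac{\gamma}{\tau^{4k+2}}$ and $\tau^{4k}\cdot\frac{\gamma}{\tau^{4k+2}}=\frac{\gamma}{\tau^2}$, the first summand equals $\frac{\gamma}{\tau^2}v_1$. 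Hence $\tau^{4k+2}\cdot d_3\left(\frac{\gamma}{\rho^3\tau^{4k+2}}\right)=\frac{\gamma}{\tau^2}v_1$, which should force $d_3\left(\frac{\gamma}{\rho^3\tau^{4k+2}}\right)=\frac{\gamma}{\tau^{4k+4}}v_1$.

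The one delicate point I anticipate is this last step: the Leibniz relation only pins down the $\tau^{4k+2}$-multiple of the differential, so I must check that $\frac{\gamma}{\tau^{4k+4}}v_1$ is the unique class of $E_3^-$ in tridegree $(2,1,4k+6)$ with that $\tau^{4k+2}$-multiple, i.e.\ that multiplication by $\tau^{4k+2}$ is injective there. I would settle this either by directly enumerating $E_3^-$ in that tridegree, or, more conceptually, by the colocalization route used as the ``third proof'' in Propositions~\ref{d1ThetaRho}, \ref{d2ThetaRho2Tau}, and~\ref{d4kQh14}. Rerunning the proof of Proposition~\ref{prop:rho-divisible-Ext} over $\cE^{C_2}(1)$ (the computation is even simpler, since $\cE^\R_*(1)$ has no $\xi_1$ and hence no $\F_2[h_1]$ factor) should show that the only infinitely $\rho$-divisible families are the $\frac{\gamma}{\tau^{4k}}$; the towers through $\frac{\gamma}{\tau^{4k+2}}$ then cannot survive, and the only available differential that can kill them is the asserted $d_3$.

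Finally, infinite $\rho$-divisibility of the differential follows by applying the same Leibniz computation to each $\rho$-divided generator $\frac{\gamma}{\rho^{j+3}\tau^{4k+2}}$, exactly as in the earlier propositions. As a further confirmation, one can check via the right-unit formula~(\ref{eq:etaR}) that the Milnor primitive $Q_1$, which detects $v_1$, carries $\frac{\gamma}{\rho^3\tau^{4k+2}}$ to $\frac{\gamma}{\tau^{4k+4}}$, giving the differential directly from the relationship between $d_3$ and the units of the Hopf algebroid.
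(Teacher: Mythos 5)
Your proposal is correct and its core argument is exactly the paper's proof: apply the Leibniz rule to $0 = d_3\bigl(\tau^{4k+2}\cdot\frac{\gamma}{\rho^3\tau^{4k+2}}\bigr)$ using $d_3(\tau^2)=\rho^3 v_1$ from Remark~\ref{ExtE1}, obtaining $\tau^{4k+2}\,d_3\bigl(\frac{\gamma}{\rho^3\tau^{4k+2}}\bigr) = \frac{\gamma}{\tau^2}v_1$ and hence the stated differential. The additional care you take with the injectivity of $\tau^{4k+2}$-multiplication (and the colocalization and $Q_1$ cross-checks) goes beyond what the paper records but does not change the approach.
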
 

\begin{pf}
The differential $d_3(\tau^2) = \rho^3 v_1$ of Remark~\ref{ExtE1} gives
\[
\begin{split}
0 =
d_3\left( \tau^{4k+2} \frac{\gamma}{\rho^3 \tau^{4k+2}} \right) &=
\tau^{4k+2} d_3\left( \frac{\gamma}{\rho^3 \tau^{4k+2}}\right) +
\rho^3 \tau^{4k} \frac{\gamma}{\rho^3 \tau^{4k+2}} v_1 \\
& = \tau^{4k+2} d_3\left( \frac{\gamma}{\rho^3 \tau^{4k+2}}\right) + \frac{\gamma}{\tau^2} v_1.
\end{split} \]
\end{pf}

\section{Some Massey products}
\label{sctn:Massey}

The final step in the computation of
$\Ext_{C_2}(1)$ is to determine multiplicative extensions
that are hidden in the $\rho$-Bockstein $E_\infty$-page.
In order to do this, we will need some
Massey products in $\Ext_{C_2}(1)$. 
Table \ref{tbl:ExtA1Massey}
summarizes the information that we will need.  

\begin{thm}
Some Massey products in $\Ext_{C_2}(1)$ are given
in Table \ref{tbl:ExtA1Massey}.  All have zero indeterminacy.
\end{thm}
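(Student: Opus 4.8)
The plan is to establish each Massey product in Table~\ref{tbl:ExtA1Massey} by working in the cobar complex for $\cA^{C_2}(1)$, reducing where possible to computations already available over $\C$ or $\R$. Since the statement asserts that all indeterminacies vanish, the first task for each entry is to verify this vanishing. For a Massey product $\langle x,y,z\rangle$, the indeterminacy is $x\cdot\Ext_{C_2}(1) + \Ext_{C_2}(1)\cdot z$ in the appropriate degree; I would read off the relevant tridegrees from the charts in Section~\ref{sctn:chart} and the generator/relation tables (Tables~\ref{tbl:ExtA1Rgen} and~\ref{tbl:ExtA1Rrel}, together with the negative-cone generators in Table~\ref{tbl:MultE1page}) to confirm that no nonzero classes occupy those degrees. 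This is largely bookkeeping, but it must be done entry by entry.

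For the actual bracket values I would split the table into the positive-cone Massey products (those living in $\Ext_\R(1)\subseteq\Ext_{C_2}(1)$) and the negative-cone ones (those involving a factor such as $\frac{\gamma}{\tau^k}$). The positive-cone brackets — for instance $\langle h_1,h_0,\rho\rangle = \tau h_1$, $\langle\rho,h_0,h_1\rangle$, and $\langle h_0,h_1,\tau h_1\cdot h_1\rangle = a$ — are precisely the ones invoked in the proof of the $\Ext_\R(1)$ theorem, so I would justify them by exhibiting explicit cobar representatives: the key inputs are the relation $\tau h_1$ being represented by $\tau\xi_1 + \rho\tau_1$ (as in Proposition~\ref{prop:ExtA1toE1}) and the Bockstein differentials $d_1(\tau)=\rho h_0$, $d_2(\tau^2)=\rho^2\tau h_1$, $d_3(\tau^3 h_1^2)=\rho^3 a$ from Proposition~\ref{prop:R-mot-diff}, which supply the nullhomotopies witnessing each bracket. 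The negative-cone brackets are governed by Lemma~\ref{lem:Qx} and Lemma~\ref{lem:h0-Qh1^3}: the defining bracket $\langle x,\tau,\frac{\gamma}{\tau}\rangle$ containing $Q x$, together with shuffles against $E^+$-classes, produces the entries involving $\frac{\gamma}{\tau}$. For these I would again write the cobar cycle $\frac{\gamma}{\tau}u$ where $d(u)=\tau x$ and track it through the relevant multiplications.

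The main obstacle, as usual with Massey products, is the sign/convergence subtlety that the Massey-product description established in $E_1^-$ need not descend verbatim to $\Ext_{C_2}(1)$, since $\tau$ is \emph{not} a permanent cycle in the $\rho$-Bockstein spectral sequence — this is exactly the warning of Remark~\ref{rmk:Qx}. Consequently, for the negative-cone entries I expect the honest computation to require replacing $\tau$ by a class that \emph{is} a permanent cycle; Remark~\ref{rmk:Qx} already indicates the fix, namely that $\langle h_1^2,\tau h_1,\frac{\gamma}{\tau}\rangle$ computes $Q h_1^3$ in $\Ext_{C_2}(1)$, and I would systematically use such $\tau h_1$-for-$\tau$ substitutions. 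The hard part is ensuring these rewritten brackets are strictly defined (the relevant triple products vanish) and have the claimed values, which means carefully chasing the cobar nullhomotopies rather than arguing formally; once the representatives are in hand, the shuffling identities finish each case.
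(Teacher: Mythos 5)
Your plan identifies the same two classes of entries that the paper's proof uses --- brackets witnessed by a $\rho$-Bockstein differential, and brackets that already hold classically --- but it routes the argument differently. The paper's proof is very short: for each entry whose last column displays a Bockstein differential, it invokes May's Convergence Theorem, which (under a ``crossing differentials'' hypothesis that the paper explicitly warns must be checked) lets one read the Massey product in $\Ext_{C_2}(1)$ directly off that differential in the $\rho$-Bockstein spectral sequence; the remaining entries are justified by observing that the same bracket already occurs in $\Ext_{\cla}$. Your direct cobar computation is May's theorem unpacked by hand --- a differential $d_r(u)=xy$ supplies exactly the null-homotopy you propose to exhibit --- so your route would work, and it has the advantage of not needing the crossing-differentials hypothesis, at the cost of substantially more explicit bookkeeping per entry. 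Two adjustments: first, every bracket in Table \ref{tbl:ExtA1Massey} is formed from permanent cycles, so the difficulty you import from Remark \ref{rmk:Qx} about $\tau$ failing to survive does not arise for the table entries themselves (it is only relevant to the $E_1^-$-level brackets of Lemma \ref{lem:Qx}, none of which appear in the table); second, your explicit verification of zero indeterminacy by degree inspection is the right move and is, if anything, more complete than the paper, which asserts vanishing indeterminacy without comment.
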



\begin{table}[h]
\captionof{table}{Some Massey products in  $\Ext_{C_2}(1)$
\label{tbl:ExtA1Massey}}
\begin{center}
\begin{tabular}{LLLLl} 
\hline
mw & (s,f,w) & \text{bracket} & \text{contains} & \text{proof}
\\ \hline
1 & (1, 1, 0) & \bracket{\rho, h_0, h_1} & \tau h_1 
  & $d_1(\tau) = \rho h_0$ \\
1 & (2, 2, 1) & \bracket{ h_0, h_1, h_0 } & \tau h_1^2 & classical \\
2 & (4, 3, 2) & \bracket{ \tau h_1 \cdot h_1, h_1,  h_0} & a 
  & classical  \\
2 & (0, 1, -2) & \bracket{ \rho \tau h_1, \rho, h_0 } & \tau^2 h_0 &
$d_2(\tau^2) = \rho^2 \tau h_1$ \\
4 & (8, 5, 4) & \bracket{ a, h_1, \tau h_1^2 } & h_0 b &
  classical \\
\hline
-4 & (0, 0, 4) & 
  \bracket{\tau^2 h_0, \rho, \frac{\gamma}{\tau^6} } &
  \frac{\gamma}{\tau^3} & $d_1 (\tau^3) = \rho \tau^2 h_0$ \\
-4 & (0, 0, 4) & 
  \bracket{h_0, \rho, \frac{\gamma}{\tau^4} } &
  \frac{\gamma}{\tau^3} & $d_1 (\tau) = \rho h_0$ \\
-3 & (1, 0, 4) & 
  \bracket{\rho, \frac{\gamma}{\tau^4}, \tau h_1} & 
   \frac{\gamma}{\rho \tau^2} 
  & $d_2 \left( \frac{\gamma}{\rho^2 \tau^2} \right) = 
   \frac{\gamma}{\tau^3} h_1$  \\
-3 & (0, 0, 3) &
   \bracket{ \rho \tau h_1, \rho, \frac{\gamma}{\tau^4} } &
   \frac{\gamma}{\tau^2}  &
   $d_2(\tau^2) = \rho^2 \tau h_1$ \\
-2 & (4, 2, 6) & 
 \bracket{ \frac{\gamma}{\tau^3}, h_1, \tau h_1 \cdot h_1 }  &
  \frac{\gamma}{\rho^2 \tau} h_1^2  &
  $d_2 \left( \frac{\gamma}{\rho^2 \tau^2} \right) = 
  \frac{\gamma}{\tau^3} h_1$  \\
-2 & (0, 0, 2) & 
  \bracket{ \tau^2 h_0, \rho, \frac{\gamma}{\tau^4} } &
  \frac{\gamma}{\tau}  & 
  $d_1(\tau^3) = \rho \tau^2 h_0$ \\
-2 & (0,0,2) &  \bracket{h_0, \rho, \frac{\gamma}{\tau^2}}  & 
  \frac{\gamma}{\tau}   & $d_1(\tau) = \rho h_0$ \\
-2 & (2, 1, 4) &  
  \bracket{ h_1, h_0, \frac{\gamma}{\tau^2}}  & 
   \frac{\gamma}{\rho\tau} h_1 & 
   $d_1 \left( \frac{\gamma}{\rho \tau} \right) = 
  \frac{\gamma}{ \tau^2} h_0$ \\
\hline
0 & (4, 2, 4) +  & 
   \bracket{ \rho, \frac{\gamma}{\tau^{4k+2}}, a b^k } &
  \frac{Q}{\rho^{4k}} h_1^{4k+3}  & 
  $d_{4k+1} \left( \frac{Q}{\rho^{4k+1}} h_1^{4k+3} \right) =$  \\
  & +(8k, 4k, 8k) & & & $= \frac{\gamma}{\tau^{4k+2}} a b^k$ \\
0 & (8, 3, 8) + &
  \bracket{ \rho, \frac{\gamma}{\tau^{4k+4}}, b^{k+1} } &
  \frac{Q}{\rho^{4k+3}} h_1^{4k+4} & 
  $d_{4k+4} \left( \frac{Q}{\rho^{4k+4}} h_1^{4k+4} \right) =$ \\
  & +(8k, 4k, 8k) & & &  $= \frac{\gamma}{\tau^{4k+4}} b^{k+1}$ \\
 \hline
\end{tabular}
\end{center}
\end{table}

\begin{proof}
For some Massey products in Table \ref{tbl:ExtA1Massey}, a
$\rho$-Bockstein differential is displayed in the last column.
In these cases, May's Convergence Theorem 
\cite{matric} \cite{Isstems}*{Chapter 2.2} applies, and the Massey
product can be computed with the given differential.
Roughly speaking, May's Convergence Theorem
says that Massey products in $\Ext_{C_2}(1)$ can be computed
with any $\rho$-Bockstein differential.  Beware that May's Convergence
Theorem requires technical hypotheses involving ``crossing 
differentials" that are not always satisfied.
Failure to check these conditions can lead to mistaken calculations.

The proofs for other Massey products in Table \ref{tbl:ExtA1Massey} 
are described as ``classical".  In these cases, the Massey product
already occurs in $\Ext_{\cla}$.  
\end{proof}

\begin{rmk}
The eight Massey products in the middle Section of 
Table \ref{tbl:ExtA1Massey} are only the first examples of 
infinite families that are $\tau^4$-periodic.  For example, 
$\bracket{\tau^2 h_0, \rho, \frac{\gamma}{\tau^{4k+6}} }$
equals $\frac{\gamma}{\tau^{4k+3}}$ for all $k \geq 0$,
and
$\bracket{ \rho, \frac{\gamma}{\tau^{4k+4}}, \tau h_1 }$
equals $\frac{\gamma}{\tau^{4k+3}}$ for all $k \geq 0$.
\end{rmk}

\section{Hidden extensions}
\label{sec:hidden}

We now
determine multiplicative extensions
that are hidden in the $\rho$-Bockstein $E_\infty$-page.
We have already determined some of these hidden extensions
in Section \ref{sec:ExtAR1}.
In this section, we establish additional hidden relations
on elements associated with the negative cone.
We have not attempted a completely exhaustive analysis of the
ring structure of $\Ext_{C_2}(1)$.

Recall that $\Ext_{C_2}(1)$ is a square-zero extension of 
$\Ext_\R(1)$. This eliminates many possible hidden extensions.
For example, $({Q} h_1^3)^2$ is zero in $\Ext_{C_2}(1)$.

\begin{prop}\label{prop:Q/rho^4k h1^4k+3 1}
For all $k \geq 0$,
\begin{enumerate}
\item 
$h_0 \cdot \frac{Q}{\rho^{4k}} h_1^{4k+3}= 
\frac{\gamma}{\tau^{4k+1}} ab^k$.
\item 
$a\cdot \frac{Q}{\rho^{4k}} h_1^{4k+3} = 
\frac{\gamma}{\tau^{4k+1}} h_0 b^{k+1}$. 
\end{enumerate}
\end{prop}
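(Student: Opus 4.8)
The plan is to compute both products by shuffling the Massey product presentation
\[
\frac{Q}{\rho^{4k}} h_1^{4k+3} = \left\langle \rho, \frac{\gamma}{\tau^{4k+2}}, a b^k \right\rangle
\]
of Table~\ref{tbl:ExtA1Massey} (a single element, since its indeterminacy vanishes) against the two auxiliary brackets $\left\langle h_0, \rho, \frac{\gamma}{\tau^{4k+2}} \right\rangle$ and $\left\langle \rho, \frac{\gamma}{\tau^{4k+2}}, h_0 \right\rangle$, both of which equal $\frac{\gamma}{\tau^{4k+1}}$. The first is detected by $\tau \cdot \frac{\gamma}{\tau^{4k+2}}$ through May's Convergence Theorem applied to the differential $d_1(\tau) = \rho h_0$ of Proposition~\ref{prop:R-mot-diff}, and the second by $\rho \cdot \frac{\gamma}{\rho \tau^{4k+1}}$ through the differential $d_1\!\left( \frac{\gamma}{\rho \tau^{4k+1}} \right) = \frac{\gamma}{\tau^{4k+2}} h_0$ of Proposition~\ref{d1ThetaRho}; these generalize the $\frac{\gamma}{\tau^2}$ and $\frac{\gamma}{\tau^4}$ entries already recorded in Table~\ref{tbl:ExtA1Massey}. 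For $k = 0$ the computation reduces to Lemma~\ref{lem:h0-Qh1^3}.

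For part (1), I would slide $h_0$ through the bracket,
\[
h_0 \cdot \frac{Q}{\rho^{4k}} h_1^{4k+3} = h_0 \left\langle \rho, \frac{\gamma}{\tau^{4k+2}}, a b^k \right\rangle = \left\langle h_0, \rho, \frac{\gamma}{\tau^{4k+2}} \right\rangle a b^k = \frac{\gamma}{\tau^{4k+1}} a b^k ,
\]
using the sliding relation $w \langle x, y, z \rangle = \langle w, x, y \rangle z$. This is legitimate once one checks $h_0 \rho = 0$, $\rho \cdot \frac{\gamma}{\tau^{4k+2}} = 0$, and $\frac{\gamma}{\tau^{4k+2}} \cdot a b^k = 0$ in $\Ext_{C_2}(1)$; the first is the $\rho$-torsion relation coming from $d_1(\tau) = \rho h_0$, and the last is exactly the vanishing that makes the bracket of Table~\ref{tbl:ExtA1Massey} defined.

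For part (2), because $a \rho \neq 0$ one cannot slide $a$ past $\rho$, so instead I would append $a$ on the right and substitute the relation $a^2 = h_0^2 b$ of Table~\ref{tbl:ExtA1Rrel},
\[
a \cdot \frac{Q}{\rho^{4k}} h_1^{4k+3} \in \left\langle \rho, \frac{\gamma}{\tau^{4k+2}}, a^2 b^k \right\rangle = \left\langle \rho, \frac{\gamma}{\tau^{4k+2}}, h_0^2 b^{k+1} \right\rangle ,
\]
and then factor $h_0 b^{k+1}$ back out,
\[
\left\langle \rho, \frac{\gamma}{\tau^{4k+2}}, h_0^2 b^{k+1} \right\rangle \supseteq \left\langle \rho, \frac{\gamma}{\tau^{4k+2}}, h_0 \right\rangle h_0 b^{k+1} = \frac{\gamma}{\tau^{4k+1}} h_0 b^{k+1} .
\]
Both the appended and the factored bracket are defined because $\frac{\gamma}{\tau^{4k+2}} h_0 = 0$ (it is the $d_1$-boundary of Proposition~\ref{d1ThetaRho}), which also forces $\frac{\gamma}{\tau^{4k+2}} a^2 b^k = \frac{\gamma}{\tau^{4k+2}} h_0^2 b^{k+1} = 0$.

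The main obstacle will be promoting these shuffles from containments to honest equalities. This means verifying that the indeterminacy of every bracket in each chain vanishes — in particular that of $\left\langle \rho, \frac{\gamma}{\tau^{4k+2}}, h_0^2 b^{k+1} \right\rangle$, so that the factorization in part (2) is exact — and confirming each pairwise-product hypothesis in $\Ext_{C_2}(1)$ itself rather than merely on the $E_1$-page of the $\rho$-Bockstein spectral sequence, where the relevant brackets take a different form (cf.\ Remark~\ref{rmk:Qx}). In addition, the two evaluations producing $\frac{\gamma}{\tau^{4k+1}}$ rely on May's Convergence Theorem, so one must check that the differentials $d_1(\tau) = \rho h_0$ and $d_1\!\left( \frac{\gamma}{\rho \tau^{4k+1}} \right) = \frac{\gamma}{\tau^{4k+2}} h_0$ have no crossing differentials, the technical hypothesis whose failure is flagged as a hazard in Section~\ref{sctn:Massey}.
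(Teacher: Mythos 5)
Your part (1) is exactly the paper's argument: shuffle $h_0$ across the bracket $\bracket{\rho, \frac{\gamma}{\tau^{4k+2}}, ab^k}$ of Table \ref{tbl:ExtA1Massey} and evaluate $\bracket{h_0,\rho,\frac{\gamma}{\tau^{4k+2}}} = \frac{\gamma}{\tau^{4k+1}}$, which is the $\tau^4$-periodic extension of the table entries. For part (2), however, you take a genuinely different route. The paper does not touch Massey products there at all: it multiplies the unknown product by $h_0$, applies part (1) to get $h_0 a\cdot \frac{Q}{\rho^{4k}}h_1^{4k+3} = \frac{\gamma}{\tau^{4k+1}}h_0^2 b^{k+1} \neq 0$, concludes that $a\cdot\frac{Q}{\rho^{4k}}h_1^{4k+3}$ is nonzero, and finishes by observing that $\frac{\gamma}{\tau^{4k+1}}h_0 b^{k+1}$ is the only nonzero class in tridegree $(8k+8,4k+5,8k+6)$. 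Your alternative --- absorbing $a$ into the third slot, rewriting $a^2b^k = h_0^2 b^{k+1}$, and refactoring --- is a legitimate chain of containments, but it converts the problem into computing the indeterminacy of $\bracket{\rho,\frac{\gamma}{\tau^{4k+2}}, h_0^2 b^{k+1}}$, which is $\rho\cdot\Ext_{C_2}(1)$ in tridegree $(8k+9,4k+5,8k+7)$. That group is not zero (it contains $\frac{\gamma}{\tau}h_1 b^{k+1}$), so you must further argue that $\rho$ kills it; this does hold, since $\rho\cdot\frac{\gamma}{\tau}=0$ already in $\Ext^0$, but establishing it requires reading off the $E_\infty$-page in that tridegree anyway. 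At that point the paper's ``nonzero, and there is only one nonzero class in the target tridegree'' argument is doing the same chart inspection with less Massey-product overhead. So: your strategy works, provided you actually carry out the indeterminacy computation you flag at the end (and, for the auxiliary bracket $\bracket{\rho,\frac{\gamma}{\tau^{4k+2}},h_0}$, either the crossing-differential check or simply the symmetry with the table entry); as written it is a correct plan with one substantive verification still outstanding in part (2).
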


\begin{pf}
\mbox{}
\begin{enumerate}
\item\label{h0Qh17}  
\[
h_0 \bracket{ \rho, \frac{\gamma}{\tau^{4k+2}}, a b^k } =
\bracket{ h_0, \rho, \frac{\gamma}{\tau^{4k+2}} } a b^k.  
\]
\item
Using part (1),
we have that
\[
h_0 a \cdot \frac{Q}{\rho^{4k} h_1^{4k+3}} =
a \cdot \frac{\gamma}{\tau^{4k+1}} a b^k = 
\frac{\gamma}{\tau^{4k+1}} h_0^2 b^{k+1}, 
\]
which is non-zero.  Therefore,
$a \cdot \frac{Q}{\rho^{4k}} h_1^{4k+3}$ must also be non-zero,
and $\frac{\gamma}{\tau^{4k+1}} h_0 b^{k+1}$ is the only nonzero 
class in the appropriate tridegree.
\end{enumerate}
\end{pf}

\begin{prop}\label{prop:Q/rho^4k h1^4k+3 2}
For all $k \geq 1$,
\begin{enumerate}
\item
$\tau^2 a \cdot \frac{Q}{\rho^{4k}} h_1^{4k+3} = 
\frac{\gamma}{\tau^{4k-1}} h_0 b^{k+1} + \frac{Q}{\rho^{4k-3}} h_1^{4k+2} b$.
\item
$\tau^4 \cdot \frac{Q}{\rho^{4k}} h_1^{4k+3} = 
\frac{Q}{\rho^{4k-4}} h_1^{4k-1} b$.
\item
$\tau^2 h_0 \cdot \frac{Q}{\rho^{4k}} h_1^{4k+3} = 
\frac{\gamma}{\tau^{4k-1}} ab^k$.
\end{enumerate}
\end{prop}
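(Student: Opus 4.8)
The three relations all live in the single tridegree $(s,f,w)=(8k+8,4k+5,8k+4)$, i.e.\ Milnor--Witt degree $4$, and I would establish them in the order (3), (2), (1), saving (1) for last since it is the only one with a two-term answer. Throughout I would work in the $\Ext_\R(1)$-module $\Ext_{NC}(1)$, freely using the $\rho$-divisibility of the $Q$-classes and $\gamma$-classes, the $\Ext_\R(1)$-relations of Table~\ref{tbl:ExtA1Rrel}, and Proposition~\ref{prop:Q/rho^4k h1^4k+3 1}.

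For (3) I would exploit the Massey presentation $\frac{Q}{\rho^{4k}}h_1^{4k+3}=\bracket{\rho,\frac{\gamma}{\tau^{4k+2}},ab^k}$ from Table~\ref{tbl:ExtA1Massey}. Because $\rho\cdot\tau^2 h_0=0$, the shuffle $\tau^2 h_0\cdot\bracket{\rho,\frac{\gamma}{\tau^{4k+2}},ab^k}=\bracket{\tau^2 h_0,\rho,\frac{\gamma}{\tau^{4k+2}}}\cdot ab^k$ is legitimate, and the $\tau^4$-periodic family recorded in the Remark following Table~\ref{tbl:ExtA1Massey} evaluates $\bracket{\tau^2 h_0,\rho,\frac{\gamma}{\tau^{4k+2}}}=\frac{\gamma}{\tau^{4k-1}}$ for $k\ge 1$ (reindex $4k'+6=4k+2$). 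This delivers $\tau^2 h_0\cdot\frac{Q}{\rho^{4k}}h_1^{4k+3}=\frac{\gamma}{\tau^{4k-1}}ab^k$ at once.

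For (2) I would use only the relation $\tau^4 h_1^4=\rho^4 b$ of $\Ext_\R(1)$ (itself a consequence of $\tau^4 h_1^3=\rho\,\tau^2 a$ and $h_1\cdot\tau^2 a=\rho^3 b$) together with $\rho$-divisibility. Factoring $h_1^{4k+3}=h_1^{4k-1}\cdot h_1^4$ and pushing $\tau^4 h_1^4=\rho^4 b$ through the module action gives
\[ \tau^4\cdot\tfrac{Q}{\rho^{4k}}h_1^{4k+3}=(\tau^4 h_1^4)\cdot\tfrac{Q}{\rho^{4k}}h_1^{4k-1}=\rho^4 b\cdot\tfrac{Q}{\rho^{4k}}h_1^{4k-1}=\tfrac{Q}{\rho^{4k-4}}h_1^{4k-1}b, \]
the last equality being $\rho^4\cdot\frac{Q}{\rho^{4k}}=\frac{Q}{\rho^{4k-4}}$.

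Part (1) is the crux, and I would handle it in two stages. First, multiply by $\rho$ and use $\rho\,\tau^2 a=\tau^4 h_1^3$; then the computation of (2) runs again (now with $h_1^{4k+6}=h_1^{4k+2}\cdot h_1^4$) to give $\rho\cdot\big(\tau^2 a\cdot\frac{Q}{\rho^{4k}}h_1^{4k+3}\big)=\frac{Q}{\rho^{4k-4}}h_1^{4k+2}b=\rho\cdot\frac{Q}{\rho^{4k-3}}h_1^{4k+2}b$. Hence $\tau^2 a\cdot\frac{Q}{\rho^{4k}}h_1^{4k+3}$ agrees with $\frac{Q}{\rho^{4k-3}}h_1^{4k+2}b$ modulo a class killed by $\rho$, and $\frac{\gamma}{\tau^{4k-1}}h_0 b^{k+1}$ is exactly such a $\rho$-torsion class; the only remaining point is whether it is present. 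Second, I would detect it by multiplying by $h_0$: on the one hand $h_0\cdot\frac{Q}{\rho^{4k-3}}h_1^{4k+2}=\frac{\gamma}{\rho^{4k-3}\tau}a\,h_1^{4k-1}=0$ by Lemma~\ref{lem:h0-Qh1^3} ($\rho$-divided) and $h_1 a=0$, so the $Q$-summand is invisible to $h_0$; on the other hand $h_0\cdot\tau^2 a=\tau^2 h_0\cdot a$ and Proposition~\ref{prop:Q/rho^4k h1^4k+3 1}(2) give $h_0\cdot\big(\tau^2 a\cdot\frac{Q}{\rho^{4k}}h_1^{4k+3}\big)=\tau^2 h_0\cdot\frac{\gamma}{\tau^{4k+1}}h_0 b^{k+1}$. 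The main obstacle is precisely this last nonvanishing: I must show $\tau^2 h_0\cdot\frac{\gamma}{\tau^{4k+1}}h_0 b^{k+1}=h_0^2\,\frac{\gamma}{\tau^{4k-1}}b^{k+1}\ne 0$, which rests on the $\Ext_\R(1)$-module structure of the negative cone---the actions of $\tau^2 h_0$ and $h_0$ on the $\frac{\gamma}{\tau^m}$-classes as recorded on the $\rho$-Bockstein $E_\infty$-page. Granting that nonvanishing, $h_0$ detects the $\gamma$-summand, forcing its coefficient to be $1$ and completing the two-term identity (1).
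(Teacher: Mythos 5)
Your part (3) is exactly the paper's proof (the same shuffle $\tau^2 h_0 \bracket{\rho,\frac{\gamma}{\tau^{4k+2}},ab^k} = \bracket{\tau^2 h_0,\rho,\frac{\gamma}{\tau^{4k+2}}}ab^k$ with the $\tau^4$-periodic bracket family), and that part is fine. The trouble is in (2), and it propagates into (1). Your key step there is the refactorization $\frac{Q}{\rho^{4k}}h_1^{4k+3} = h_1^4\cdot\frac{Q}{\rho^{4k}}h_1^{4k-1}$, followed by applying $\tau^4h_1^4=\rho^4 b$. But $\frac{Q}{\rho^{4k}}h_1^{4k-1}$ is not an element of $\Ext_{C_2}(1)$: writing $4k-1=4(k-1)+3$, Proposition \ref{d4kQh14} shows that $\frac{Q}{\rho^{j}}h_1^{4(k-1)+3}$ survives the $\rho$-Bockstein spectral sequence only for $j\leq 4(k-1)=4k-4$, so the class you want to factor through is dead. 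The symbol $\frac{Q}{\rho^j}h_1^n$ is a name for a single surviving class satisfying $\rho^j\cdot\frac{Q}{\rho^j}h_1^n = Qh_1^n$; it is not a literal product that can be re-bracketed, and "dividing by $h_1^4$" is not an available operation. What your manipulation legitimately proves is only that $\rho^4$ times both sides of (2) agree, which leaves the answer undetermined up to $\rho^4$-torsion in that tridegree. The same invalid factorization (now through $\frac{Q}{\rho^{4k}}h_1^{4k+2}$, which also does not survive) is what powers stage one of your argument for (1). The paper avoids this entirely: for (2) it multiplies by $h_0$, uses Proposition \ref{prop:Q/rho^4k h1^4k+3 1}(1) to see $h_0\cdot\tau^4\cdot\frac{Q}{\rho^{4k}}h_1^{4k+3} = \frac{\gamma}{\tau^{4k-3}}ab^k\neq 0$, and then observes there is a unique possible nonzero value; for (1) it detects the $Q$-summand with $h_1$ via $h_1\cdot\tau^2a = \rho^3 b$ rather than with $\rho$.

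A second, independent error sits in your stage two of (1): you compute $h_0\cdot\frac{Q}{\rho^{4k-3}}h_1^{4k+2}$ by "$\rho$-dividing" Lemma \ref{lem:h0-Qh1^3} and commuting $h_0$ past the powers of $h_1$ to get $\frac{\gamma}{\rho^{4k-3}\tau}a\,h_1^{4k-1}=0$. That procedure cannot be right in general: applied to $\frac{Q}{\rho^{4k}}h_1^{4k+3}$ it would give $h_0\cdot\frac{Q}{\rho^{4k}}h_1^{4k+3}=\frac{\gamma}{\rho^{4k}\tau}a\,h_1^{4k}=0$, contradicting Proposition \ref{prop:Q/rho^4k h1^4k+3 1}(1), which records the nonzero value $\frac{\gamma}{\tau^{4k+1}}ab^k$. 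The $h_0$-multiplications on the $\rho$-divided $Q$-classes are hidden extensions precisely because these classes are not $h_1$-multiples of more $\rho$-divisible classes, so $h_0h_1=0$ gives you no leverage. Your $h_0$-detection of the $\gamma$-summand (via $h_0\cdot\tau^2a=\tau^2h_0\cdot a$ and Proposition \ref{prop:Q/rho^4k h1^4k+3 1}(2)) is sound and essentially parallel to the paper's, but as it stands neither summand of (1) is correctly pinned down: the $Q$-summand rests on the broken factorization from (2), and the argument that the $Q$-summand is invisible to $h_0$ rests on the invalid commutation.
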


\begin{pf}
\mbox{}
\begin{enumerate}
\item
Using Proposition \ref{prop:Q/rho^4k h1^4k+3 1}(1),
we have that
\[ 
h_0 \cdot \tau^2 a \cdot \frac{Q}{\rho^{4k}} h_1^{4k+3} =
\tau^2 a \cdot \frac{\gamma}{\tau^{4k+1}} a b^k =
\frac{\gamma}{\tau^{4k-1}} h_0^2 b^{k+1},
\]
which is non-zero.  This shows that
$\tau^2 a \cdot \frac{Q}{\rho^{4k}} h_1^{4k+3}$ is either
$\frac{\gamma}{\tau^{4k-1}} h_0 b^{k+1}$ or 
$\frac{\gamma}{\tau^{4k-1}} h_0 b^{k+1} + 
\frac{Q}{\rho^{4k-3}} h_1^{4k+2} b$. 

On the other hand, 
\[ 
h_1\cdot \tau^2 a \cdot \frac{Q}{\rho^{4k}} h_1^{4k+3} =
\rho^3 b \cdot \frac{Q}{\rho^{4k}} h_1^{4k+3} =
\frac{Q}{\rho^{4k-3}} h_1^{4k+3} b. 
\]
Therefore, 
$\tau^2 a \cdot \frac{Q}{\rho^{4k}} h_1^{4k+3}$ must equal
$\frac{\gamma}{\tau^{4k-1}} h_0 b^{k+1} + 
\frac{Q}{\rho^{4k-3}} h_1^{4k+2} b$.  
\item
Using Proposition \ref{prop:Q/rho^4k h1^4k+3 1}(1),
we have that 
\[
h_0 \cdot \tau^4 \cdot \frac{Q}{\rho^{4k}} h_1^{4k+3} =
\tau^4 \frac{\gamma}{\tau^{4k+1}} a b^k =
\frac{\gamma}{\tau^{4k-3}} a b^k,
\]
which is non-zero.  This shows that
$\tau^4 \cdot \frac{Q}{\rho^{4k}} h_1^{4k+3}$ is also 
non-zero, and there is just one possible value.
\item
\[ 
\tau^2 h_0 \bracket{ \rho, \frac{\gamma}{\tau^{4k+2}}, ab^{k} } =
\bracket{ \tau^2 h_0, \rho, \frac{\gamma}{\tau^{4k+2}} } ab^{k}.  
\]
\end{enumerate}
\end{pf}

\begin{prop}\label{prop:Q/rho^4k+3 h1^4k+4}
For all $k \geq 0$,
\begin{enumerate}
\item $ h_0 \cdot \frac{Q }{\rho^{4k+3}} h_1^{4k+4} = 
\frac{\gamma}{\tau^{4k+3}} b^{k+1}$.
\item 
$\tau h_1 \cdot \frac{Q}{\rho^{4k+3}} h_1^{4k+4} = 
 \frac{\gamma}{\rho\tau^{4k+2}} b^{k+1}$.
\item 
$\tau^2 h_0 \cdot \frac{Q}{\rho^{4k+3}} h_1^{4k+4} = 
 \frac{\gamma}{\tau^{4k+1}} b^{k+1}$.
\item 
$a \cdot \frac{Q}{\rho^{4k+3}} h_1^{4k+4} = 
\frac{\gamma}{\rho^2 \tau^{4k+1}} h_1^2 b^{k+1}$.
\item 
$\tau^2 a \cdot \frac{Q}{\rho^{4k+3}} h_1^{4k+4} = 
\frac{Q}{\rho^{4k}} h_1^{4k+3} b$.
\end{enumerate} 
\end{prop}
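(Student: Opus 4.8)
The plan is to exploit the Massey product description
$\frac{Q}{\rho^{4k+3}} h_1^{4k+4} \in
\bracket{ \rho, \frac{\gamma}{\tau^{4k+4}}, b^{k+1} }$
from Table~\ref{tbl:ExtA1Massey}, together with the square-zero structure of
$\Ext_{C_2}(1)$ over $\Ext_\R(1)$ and the hidden relations of
Section~\ref{sec:ExtAR1} (in particular $h_1 a = 0$, $a^2 = h_0^2 b$, and
$h_1 \cdot \tau^2 a = \rho^3 b$, whence $\tau^4 h_1^4 = \rho^4 b$).
The five products split into two groups according to whether the left factor
$g$ annihilates $\rho$.

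For parts (1) and (3), the generators $g = h_0$ and $g = \tau^2 h_0$ satisfy
$\rho g = 0$, so the Massey product shuffle applies:
\[
g \cdot \frac{Q}{\rho^{4k+3}} h_1^{4k+4} =
g \bracket{ \rho, \frac{\gamma}{\tau^{4k+4}}, b^{k+1} } =
\bracket{ g, \rho, \frac{\gamma}{\tau^{4k+4}} } b^{k+1}.
\]
It then remains to evaluate the inner bracket. These are instances of the
$\tau^4$-periodic families
$\bracket{h_0, \rho, \frac{\gamma}{\tau^{4k+4}}} = \frac{\gamma}{\tau^{4k+3}}$
and
$\bracket{\tau^2 h_0, \rho, \frac{\gamma}{\tau^{4k+4}}} = \frac{\gamma}{\tau^{4k+1}}$,
each computed by May's Convergence Theorem from the differentials
$d_1(\tau) = \rho h_0$ and $d_1(\tau^3) = \rho \tau^2 h_0$ exactly as in
Table~\ref{tbl:ExtA1Massey}. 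Substituting yields the stated values
$\frac{\gamma}{\tau^{4k+3}} b^{k+1}$ and $\frac{\gamma}{\tau^{4k+1}} b^{k+1}$,
and the shuffle is an equality because all the relevant brackets have zero
indeterminacy.

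For parts (2), (4), and (5), the left factors $\tau h_1$, $a$, and
$\tau^2 a$ do not annihilate $\rho$, so no direct shuffle is available.
Instead I would argue as in the proofs of
Propositions~\ref{prop:Q/rho^4k h1^4k+3 1} and
\ref{prop:Q/rho^4k h1^4k+3 2}: multiply the proposed identity by a detecting
class, simplify using part~(1) and the relations above, and conclude by
uniqueness of the non-zero class in the target tridegree. For example,
multiplying part~(5) by $h_1$ and applying $h_1 \cdot \tau^2 a = \rho^3 b$
shows that both sides have the common $h_1$-multiple
$b \cdot \frac{Q}{\rho^{4k}} h_1^{4k+4}$ (equivalently, multiplication by
$\rho^3$ gives equal $\rho^3$-multiples), so the two sides agree up to torsion.
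In part~(4) the factor $h_1^2$ on the right interacts with $h_1 a = 0$, and in
part~(2) the $\rho$ in the denominator of the target records that the identity
is genuinely $\rho$-divisible.

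The hard part will be the torsion bookkeeping for the $\rho$-divisible targets
in parts~(2) and (4) and the self-referential target in part~(5). The natural
detecting multiplications frequently land on classes that have themselves been
truncated or hit in the $\rho$-Bockstein spectral sequence — for instance
$\frac{Q}{\rho^{4k}} h_1^{4k}$ and $\frac{Q}{\rho^{4k+1}} h_1^{4k+3}$ both support
differentials (Proposition~\ref{d4kQh14}) — so one must check against the
explicit $E_\infty$-page of Sections~\ref{sec:ExtAR1} and
\ref{subsctn:Bock-diff-E^-} both that the chosen detector does not vanish and
that the target tridegree contains a unique non-zero class. The square-zero
structure is what closes the argument: any correction term would be a product
of two negative-cone classes and hence zero, which removes the remaining
ambiguity in each case.
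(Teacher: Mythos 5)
Your treatment of parts (1) and (3) matches the paper exactly: shuffle the bracket $\bracket{\rho,\frac{\gamma}{\tau^{4k+4}},b^{k+1}}$ against $h_0$ and $\tau^2 h_0$, then evaluate $\bracket{h_0,\rho,\frac{\gamma}{\tau^{4k+4}}}$ and $\bracket{\tau^2 h_0,\rho,\frac{\gamma}{\tau^{4k+4}}}$ via the Bockstein differentials. Your strategy for (4) and (5) --- multiply by a detecting class, reduce to part (1), and invoke uniqueness of the nonzero class in the target tridegree --- is also the paper's argument, though the paper uses $h_0$ as the detector in both cases (getting $\frac{\gamma}{\tau^{4k+3}}ab^{k+1}$ and $\frac{\gamma}{\tau^{4k+1}}ab^{k+1}$ respectively, both visibly nonzero), which sidesteps the injectivity question your $h_1$-detector for (5) would raise.

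The genuine gap is part (2). You assert that no shuffle is available because $\tau h_1$ does not annihilate $\rho$, and then offer only the remark that the $\rho$ in the denominator of the target ``records'' divisibility --- but that is a description of the answer, not a derivation of it. The paper's proof of (2) \emph{is} a shuffle: since $\rho^2\cdot\tau h_1=0$ in $\Ext_\R(1)$, the class $\rho\tau h_1$ does annihilate $\rho$, so
\[
\rho\,\tau h_1 \cdot \bracket{\rho,\tfrac{\gamma}{\tau^{4k+4}},b^{k+1}}
=\bracket{\rho\tau h_1,\rho,\tfrac{\gamma}{\tau^{4k+4}}}\,b^{k+1}
=\tfrac{\gamma}{\tau^{4k+2}}\,b^{k+1},
\]
using the bracket $\bracket{\rho\tau h_1,\rho,\frac{\gamma}{\tau^{4k+4}}}=\frac{\gamma}{\tau^{4k+2}}$ from Table \ref{tbl:ExtA1Massey} (computed from $d_2(\tau^2)=\rho^2\tau h_1$). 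This identifies $\rho$ times the product in question, and the product itself is then the unique class $\frac{\gamma}{\rho\tau^{4k+2}}b^{k+1}$ with that $\rho$-multiple. Without this step (or an equivalent detecting computation), part (2) is unproved in your write-up; the square-zero structure does not rescue it, since the ambiguity here is not a product of two negative-cone classes but the usual filtration ambiguity in a $\rho$-divisible degree.
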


\begin{pf}
\mbox{}
\begin{enumerate}
\item\label{h0Qh14} 
\[
h_0 \bracket{ \rho, \frac{\gamma}{\tau^{4k+4}}, b^{k+1} } =
\bracket{ h_0, \rho, \frac{\gamma}{\tau^{4k+4}} } b^{k+1}.
\]
\item 
\[
\rho \tau h_1 \bracket{ \rho, \frac{\gamma}{\tau^{4k+4}}, b^{k+1} } =
\bracket{\rho \tau h_1, \rho, \frac{\gamma}{\tau^{4k+4}} } b^{k+1}.
\]
\item 
\[
\tau^2 h_0 \bracket{ \rho, \frac{\gamma}{\tau^{4k+4}}, b^{k+1} } =
\bracket{ \tau^2 h_0, \rho, \frac{\gamma}{\tau^{4k+4}} } b^{k+1}.
\]
\item 
Using part (1), we have that
\[
h_0 a \cdot \frac{Q}{\rho^{4k+3}} h_1^{4k+4} =
a \cdot \frac{\gamma}{\tau^{4k+3}} b^{k+1} =
\frac{\gamma}{\tau^{4k+3}} a b^{k+1},
\]
which is non-zero.  Therefore,
$a \cdot \frac{Q}{\rho^{4k+3}} h_1^{4k+4}$ must also be non-zero,
and there is just one possibility.
\item
Using part (1), we have that
\[
h_0 \cdot \tau^2 a \cdot \frac{Q}{\rho^{4k+3}} h_1^{4k+4} =
\tau^2 a \cdot \frac{\gamma}{\tau^{4k+3}} b^{k+1} =
\frac{\gamma}{\tau^{4k+1}} a b^{k+1},
\]
which is non-zero.  This shows that
$\tau^2 a \cdot \frac{Q}{\rho^{4k+3}} h_1^{4k+4}$ is also
non-zero, and there is just one possible value.
\end{enumerate} 
\end{pf}

\begin{prop}
\label{prop:theta/rho^2 tau^4k+1 h1^2}
For all $k \geq 0$,
\begin{enumerate}
\item 
$h_0 \cdot \frac{\gamma}{\rho^2 \tau^{4k+1}} h_1^2 = 
\frac{\gamma}{\tau^{4k+3}} a$.
\item 
$a \cdot \frac{\gamma}{\rho^2\tau^{4k+1}} h_1^2 = \frac{\gamma}{\tau^{4k+3}} h_0 b$.
\item
$\tau^2 a \cdot \frac{\gamma}{\rho^2\tau^{4k+1}} h_1^2 =
\frac{\gamma}{\tau^{4k+1}} h_0 b$.
\end{enumerate} 
\end{prop}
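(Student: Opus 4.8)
The plan is to establish part (1) by a Massey product shuffle and then to deduce parts (2) and (3) from it by the now-familiar strategy of multiplying by $h_0$, invoking the relation $a^2 = h_0^2 b$, and reading off the unique nonzero class in the relevant tridegree. Note that the naive uniqueness argument cannot start from part (1) itself, since the obvious multipliers $h_0$ and $h_1$ both annihilate $h_0 \cdot \frac{\gamma}{\rho^2\tau^{4k+1}} h_1^2$ (because $h_0 h_1 = 0$); this is precisely why a Massey-product computation is needed for the first statement.

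For part (1) I would first represent the factor $\frac{\gamma}{\rho^2 \tau^{4k+1}} h_1^2$ by the Massey product $\bracket{\frac{\gamma}{\tau^{4k+3}}, h_1, \tau h_1^2}$, the $\tau^4$-periodic extension of the bracket recorded in Table~\ref{tbl:ExtA1Massey} whose defining $\rho$-Bockstein differential is the $d_2$ of Proposition~\ref{d2ThetaRho2Tau}. The products $\frac{\gamma}{\tau^{4k+3}} \cdot h_1$ (a $d_2$-boundary, hence zero), $h_1 \cdot \tau h_1^2 = \tau h_1^3$, and $\tau h_1^2 \cdot h_0$ all vanish in $\Ext_{C_2}(1)$, so I can multiply by $h_0$ and shuffle to obtain $h_0 \cdot \frac{\gamma}{\rho^2 \tau^{4k+1}} h_1^2 = \frac{\gamma}{\tau^{4k+3}} \bracket{h_1, \tau h_1^2, h_0}$. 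It then remains to identify $\bracket{h_1, \tau h_1^2, h_0}$ with $a$, using the classical bracket $\bracket{\tau h_1^2, h_1, h_0} \ni a$ from Table~\ref{tbl:ExtA1Massey}.

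This identification is where I expect the main difficulty to lie. The shuffle naturally produces the triple product with $\tau h_1^2$ in the middle, whereas the tabulated classical expression for $a$ carries $h_1$ in the middle. Since all three pairwise products among $h_0$, $h_1$, and $\tau h_1^2$ vanish, these orderings are related by the Jacobi (juggling) relation for triple Massey products, and pinning the value down to exactly $a$ reduces to showing that the remaining cyclic bracket $\bracket{h_1, h_0, \tau h_1^2}$ is zero in the one-dimensional group in tridegree $(4,3,2)$. I would verify this either directly in the cobar complex or by computing the bracket via May's Convergence Theorem from a $\rho$-Bockstein differential, taking care with the crossing-differential hypotheses flagged in Section~\ref{sctn:Massey}; I would also confirm that all the indeterminacies involved vanish.

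With part (1) in hand, parts (2) and (3) are routine. For part (2), multiplying by $h_0$ and using part (1) gives $h_0 \cdot \bigl(a \cdot \frac{\gamma}{\rho^2\tau^{4k+1}}h_1^2\bigr) = a \cdot \frac{\gamma}{\tau^{4k+3}} a = \frac{\gamma}{\tau^{4k+3}} a^2 = \frac{\gamma}{\tau^{4k+3}} h_0^2 b$, which is nonzero as one reads off the $E_\infty$-page; hence $a \cdot \frac{\gamma}{\rho^2\tau^{4k+1}}h_1^2$ is itself nonzero, and $\frac{\gamma}{\tau^{4k+3}} h_0 b$ is the only nonzero class in that tridegree. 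Part (3) is identical, with $\tau^2 a \cdot \frac{\gamma}{\tau^{4k+3}} a = \frac{\gamma}{\tau^{4k+1}} h_0^2 b \neq 0$ forcing $\tau^2 a \cdot \frac{\gamma}{\rho^2\tau^{4k+1}}h_1^2 = \frac{\gamma}{\tau^{4k+1}} h_0 b$. Both uniqueness claims rely only on the explicit description of the $\rho$-Bockstein $E_\infty$-page (equivalently, the charts in Section~\ref{sctn:chart}), which shows that each target tridegree contains a single nonzero class.
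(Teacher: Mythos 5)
Your strategy is the paper's: part (1) via the shuffle
$\bracket{\frac{\gamma}{\tau^{4k+3}}, h_1, \tau h_1 \cdot h_1} h_0 = \frac{\gamma}{\tau^{4k+3}} \bracket{h_1, \tau h_1 \cdot h_1, h_0}$,
and parts (2) and (3) by applying $h_0$, using $a^2 = h_0^2 b$, and identifying the result. Your extra attention to the ordering of the triple product in part (1) is a genuine improvement in rigor over the paper, which passes silently between $\bracket{\tau h_1 \cdot h_1, h_1, h_0}$ (the tabulated bracket containing $a$) and $\bracket{h_1, \tau h_1 \cdot h_1, h_0}$ (the one produced by the shuffle); your reduction to the vanishing of $\bracket{h_1, h_0, \tau h_1 \cdot h_1}$ is the right move, and that bracket does vanish, e.g.\ because it contains $\bracket{h_1, h_0, h_1}\cdot \tau h_1$ and the group in degree $(3,2,2)$ is zero. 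Part (3) is correct as written.

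The gap is in part (2), and it occurs exactly at $k=0$. The product $a \cdot \frac{\gamma}{\rho^2\tau}h_1^2$ lies in tridegree $(8,5,8)$, i.e.\ in Milnor--Witt stem $0$, and that group is \emph{not} one-dimensional: besides $\frac{\gamma}{\tau^{3}}h_0 b$ it contains a second class belonging to the infinitely $\rho$-divisible family on ${Q}h_1^3$ (the second dot at $(8,5)$ in the Milnor--Witt $0$ chart, namely $\frac{Q}{\rho}h_1^6$). That second class is annihilated by $h_0$, since $h_0 \cdot {Q}h_1^3 = \frac{\gamma}{\tau}a$ and $h_1 a = 0$; so your computation $h_0 \cdot \bigl(a \cdot \frac{\gamma}{\rho^2\tau}h_1^2\bigr) = \frac{\gamma}{\tau^{3}}h_0^2 b \neq 0$ pins down only the $\frac{\gamma}{\tau^3}h_0 b$-component and cannot exclude an additional summand from the ${Q}h_1^3$ family. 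This is precisely the ``possible error term involving higher powers of $\rho$'' that the paper's proof flags. It is eliminated by the observation that $h_1 \cdot \bigl(a \cdot \frac{\gamma}{\rho^2\tau}h_1^2\bigr) = (h_1 a)\cdot \frac{\gamma}{\rho^2\tau}h_1^2 = 0$, whereas the extraneous class supports a nonzero $h_1$-multiplication and $\frac{\gamma}{\tau^3}h_0 b$ does not; you need to add this $h_1$-argument. For $k \geq 1$ the product lies in Milnor--Witt stem $-4k<0$, where the ${Q}h_1^3$ family is absent, and your uniqueness argument is valid as stated.
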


\begin{pf}
\mbox{}
\begin{enumerate}
\item
\label{templabel} 
\[
\bracket{ \frac{\gamma}{\tau^{4k+3}}, h_1, \tau h_1 \cdot h_1 } h_0 =
\frac{\gamma}{\tau^{4k+3}} \bracket{ h_1, \tau h_1 \cdot h_1, h_0 }.
\]
\item
Using part (1), we have that
\[
h_0 a \cdot \frac{\gamma}{\rho^2 \tau^{4k+1}} h_1^2 =
a \cdot \frac{\gamma}{\tau^{4k+3}} a,
\]
which equals $\frac{\gamma}{\tau^{4k+3}} h_0^2 b$
modulo a possible error term involving higher powers of $\rho$.
Using that $h_1 a = 0$, we conclude that the error term is zero.
\item
Using part (1), we have that
\[
h_0 \cdot \tau^2 a \cdot \frac{\gamma}{\rho^2 \tau^{4k+1}} h_1^2 =
\tau^2 a \cdot \frac{\gamma}{\tau^{4k+3}} a =
\frac{\gamma}{\tau^{4k+1}} h_0^2 b,
\]
which is non-zero.  This shows that
$\tau^2 a \cdot \frac{\gamma}{\rho^2 \tau^{4k+1}} h_1^2$ is also
non-zero, and there is just one possible value.
\end{enumerate} 
\end{pf}

\begin{prop}\label{prop:h0extn}
For all $k \geq 0$,
\begin{enumerate}
\item 
$h_0 \cdot \frac{\gamma}{\rho \tau^{4k+1}} h_1 = 
\frac{\gamma}{\tau^{4k+1}} h_1^2$.
\item $h_0 \cdot \frac{\gamma}{\rho \tau^{4k+2}} = 
\frac{\gamma}{\tau^{4k+2}} h_1$.
\end{enumerate}
\end{prop}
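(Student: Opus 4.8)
The plan is to derive both identities as Massey-product shuffles, in exactly the style of the proofs of Propositions~\ref{prop:Q/rho^4k h1^4k+3 1}--\ref{prop:theta/rho^2 tau^4k+1 h1^2}. The only inputs are the brackets recorded in Table~\ref{tbl:ExtA1Massey}, extended along the $\tau^4$-periodic families noted after that table, together with the classical bracket $\bracket{h_0,h_1,h_0} = \tau h_1^2$ and the $\tau$-divisibility relations in the negative cone.

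For part (1) I would first use the $\tau^4$-periodic extension of the entry $\bracket{h_1,h_0,\frac{\gamma}{\tau^2}} \ni \frac{\gamma}{\rho\tau}h_1$ to identify $\frac{\gamma}{\rho\tau^{4k+1}}h_1$ as an element of $\bracket{h_1,h_0,\frac{\gamma}{\tau^{4k+2}}}$. Shuffling $h_0$ across this bracket, in the same direction used in Lemma~\ref{lem:h0-Qh1^3}, then gives
\[
h_0 \cdot \frac{\gamma}{\rho\tau^{4k+1}}h_1 =
h_0 \bracket{h_1,h_0,\tfrac{\gamma}{\tau^{4k+2}}} =
\bracket{h_0,h_1,h_0}\,\frac{\gamma}{\tau^{4k+2}} =
\tau h_1^2 \cdot \frac{\gamma}{\tau^{4k+2}} =
\frac{\gamma}{\tau^{4k+1}}h_1^2,
\]
where the last step is $\tau\cdot\frac{\gamma}{\tau^{4k+2}} = \frac{\gamma}{\tau^{4k+1}}$. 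For part (2) I would likewise place $\frac{\gamma}{\rho\tau^{4k+2}}$ in $\bracket{\rho,\frac{\gamma}{\tau^{4k+4}},\tau h_1}$, the $\tau^4$-periodic extension of the $k=0$ entry $\bracket{\rho,\frac{\gamma}{\tau^4},\tau h_1}\ni \frac{\gamma}{\rho\tau^2}$, and shuffle $h_0$ across, invoking the periodic extension $\bracket{h_0,\rho,\frac{\gamma}{\tau^{4k+4}}}\ni\frac{\gamma}{\tau^{4k+3}}$ of the corresponding table entry:
\[
h_0 \cdot \frac{\gamma}{\rho\tau^{4k+2}} =
h_0 \bracket{\rho,\tfrac{\gamma}{\tau^{4k+4}},\tau h_1} =
\bracket{h_0,\rho,\tfrac{\gamma}{\tau^{4k+4}}}\,\tau h_1 =
\frac{\gamma}{\tau^{4k+3}} \cdot \tau h_1 =
\frac{\gamma}{\tau^{4k+2}}h_1.
\]

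The main obstacle is the bookkeeping around the shuffle hypotheses rather than any genuinely new computation. In each display I must check that the relevant three-fold products vanish so that every bracket is defined --- namely $h_0h_1=0$ and $h_0\cdot\frac{\gamma}{\tau^{4k+2}}=0$ in part (1), and $\rho h_0=0$, $\rho\cdot\frac{\gamma}{\tau^{4k+4}}=0$, and $\frac{\gamma}{\tau^{4k+4}}\cdot\tau h_1=0$ in part (2) --- and that the brackets involved have zero indeterminacy, so that the a priori set-containment supplied by the shuffle becomes an honest equality. The first two vanishings in each case are immediate from the relations $h_0h_1=0$, $\rho h_0=0$, and $\rho\cdot\frac{\gamma}{\tau^j}=0$; the remaining vanishing and the indeterminacy claims are exactly those already certified for the $k=0$ brackets in Table~\ref{tbl:ExtA1Massey}. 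The one point requiring care is that these conditions, and the stated values, persist along each $\tau^4$-periodic family; I would verify this uniformly in $k$ by the same May's Convergence Theorem argument (applied to the displayed $\rho$-Bockstein differentials) that established the base cases, so that the brackets continue to have zero indeterminacy and the equalities above hold for every $k\geq 0$.
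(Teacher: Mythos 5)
Your proposal is correct and follows essentially the same route as the paper: both identities are obtained from exactly the shuffles $h_0\bracket{h_1,h_0,\frac{\gamma}{\tau^{4k+2}}}=\bracket{h_0,h_1,h_0}\frac{\gamma}{\tau^{4k+2}}$ and $h_0\bracket{\rho,\frac{\gamma}{\tau^{4k+4}},\tau h_1}=\bracket{h_0,\rho,\frac{\gamma}{\tau^{4k+4}}}\tau h_1$, using the brackets of Table~\ref{tbl:ExtA1Massey} and their $\tau^4$-periodic extensions. The extra bookkeeping you supply (definedness of the brackets, zero indeterminacy, persistence in $k$) is implicit in the paper's one-line argument but is verified the same way.
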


\begin{pf}
All of these extensions follow from Massey product shuffles:
\begin{enumerate}
\item
\[ 
h_0 \bracket{ h_1, h_0, \frac{\gamma}{\tau^{4k+2}} }
= \bracket{ h_0, h_1, h_0 } \frac{\gamma}{\tau^{4k+2}}.
\]
\item 
\[ 
h_0 \bracket{\rho,\frac{\gamma}{\tau^{4k+4}},\tau h_1} = 
\bracket{h_0, \rho, \frac{\gamma}{\tau^{4k+4}}} \tau h_1.
\]
\end{enumerate}  
\end{pf}

\begin{prop}
For all $k \geq 0$,
\begin{enumerate}
\item $h_1 \cdot \frac{\gamma}{\rho \tau^{4k+4}} h_1^2 = \frac{\gamma}{\tau^{4k+6}} a$.
\item $h_1 \cdot \frac{\gamma}{\rho^3 \tau^{4k+6}}a = \frac{\gamma}{\tau^{4k+8}} b$.
\end{enumerate}
\end{prop}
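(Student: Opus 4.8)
The plan is to recognize both equalities as multiplicative extensions that are hidden in the $\rho$-Bockstein $E_\infty$-page, and to resolve them by Massey product shuffles of the type used in Propositions~\ref{prop:theta/rho^2 tau^4k+1 h1^2} and~\ref{prop:h0extn}. First I would check that both products vanish on $E_\infty$, so that a genuine bracket argument is actually required: since $\tau h_1^3 = 0$ in $\Ext_{\C}(1)$ and the negative cone is infinitely $\tau$-divisible, one has $\frac{\gamma}{\tau^m}h_1^3 = \frac{\gamma}{\tau^{m+1}}(\tau h_1^3) = 0$ in $E_1^-$, which kills the $E_\infty$-product in part~(1); likewise $h_1 a = 0$ in $\Ext_{\C}(1)$ kills the $E_\infty$-product in part~(2). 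Thus neither relation is visible before passing to $\Ext_{C_2}(1)$.

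For part~(1) the governing positive-cone relation is $h_1(\tau h_1)^2 = \rho a$ from Table~\ref{tbl:ExtA1Rrel} (equivalently $\tau^2 h_1^3 = \rho a$). I would first express $\frac{\gamma}{\rho\tau^{4k+4}}h_1^2$ as a three-fold Massey product built from the $\tau^4$-periodic families in the middle block of Table~\ref{tbl:ExtA1Massey}, with a pure negative-cone generator $\frac{\gamma}{\tau^{\bullet}}$ as one entry and $\rho$ together with $\tau h_1$ (or $(\tau h_1)^2$) as the others. Shuffling the remaining factor of $h_1$ across the bracket then replaces the inner positive-cone bracket by one evaluating to $a$ — the same bracket $\langle h_0, h_1^3, \tau\rangle = a$ underlying Lemma~\ref{lem:h0-Qh1^3} — and leaves $\frac{\gamma}{\tau^{4k+6}}a$ in tridegree $(4,3,4k+9)$. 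As an alternative that avoids some bookkeeping, I would instead show the product is nonzero by multiplying into a degree where a previously computed extension such as Proposition~\ref{prop:theta/rho^2 tau^4k+1 h1^2} detects it, and then invoke uniqueness of the nonzero class in that tridegree.

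Part~(2) runs in parallel, now with the relation $h_1\cdot \tau^2 a = \rho^3 b$ of Table~\ref{tbl:ExtA1Rrel} playing the role of $h_1(\tau h_1)^2 = \rho a$. I would present $\frac{\gamma}{\rho^3\tau^{4k+6}}a$ as a Massey product and shuffle $h_1$ through so that the inner positive-cone bracket evaluates to $b$, producing $\frac{\gamma}{\tau^{4k+8}}b$ in tridegree $(8,4,4k+13)$; the $\rho^3$ in the denominator is exactly what the three factors of $\rho$ in $\rho^3 b$ consume upon $\rho$-multiplication. The main obstacle in both parts is the bracket bookkeeping: verifying that each Massey product is defined (the adjacent products vanish), that its indeterminacy is zero, and — whenever a bracket is evaluated through a $\rho$-Bockstein differential — that the crossing-differential hypotheses of May's Convergence Theorem hold, since these $\tau^4$-periodic families sit precisely where the higher differentials of Proposition~\ref{d4kQh14} accumulate.
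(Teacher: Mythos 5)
Your overall strategy for part (1) — a Massey product shuffle using the $\tau^4$-periodic brackets of Table \ref{tbl:ExtA1Massey} — is exactly the paper's, but the specific bracket you describe would not shuffle. If you present $\frac{\gamma}{\rho\tau^{4k+4}}$ via $\bracket{\rho, \frac{\gamma}{\tau^{4k+6}}, \tau h_1}$ and try to move $h_1^3$ across it, the shuffle requires $\rho\cdot h_1^3 = 0$, which is false in $\Ext_\R(1)$ (the $h_1$-tower is $\rho$-torsion free). The shuffle that works is
\[
\tau h_1 \cdot h_1 \bracket{h_1, h_0, \tfrac{\gamma}{\tau^{4k+6}}} =
\bracket{\tau h_1 \cdot h_1, h_1, h_0} \tfrac{\gamma}{\tau^{4k+6}},
\]
where $\bracket{h_1, h_0, \frac{\gamma}{\tau^{4k+6}}}$ contains $\frac{\gamma}{\rho\tau^{4k+5}}h_1$ and the classical bracket $\bracket{\tau h_1\cdot h_1, h_1, h_0}$ contains $a$; the bracket evaluating to $a$ is this classical one, not $\bracket{h_0, h_1^3, \tau}$ from Lemma \ref{lem:Qx}, which lives only in $E_1^-$ since $\tau$ is not a permanent cycle. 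Your fallback of detecting the product by a further multiplication is legitimate and is how the paper resolves several neighboring extensions; the paper also notes this particular extension is forced by the $\rho$-colocalization computation.

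For part (2) you are over-engineering: no Massey product is needed. You have correctly identified the governing relation $h_1\cdot\tau^2 a = \rho^3 b$, and that relation plus the infinite $\tau$-divisibility of the negative cone is already the whole proof. Since $\tau^2 a$ is an honest element of $\Ext_\R(1)$ and $\frac{\gamma}{\rho^3\tau^{4k+6}}a = \frac{\gamma}{\rho^3\tau^{4k+8}}\cdot\tau^2 a$ by the module structure, one computes directly
\[
h_1 \cdot \tfrac{\gamma}{\rho^3 \tau^{4k+6}} a =
\tfrac{\gamma}{\rho^3 \tau^{4k+8}}\, h_1 \cdot \tau^2 a =
\tfrac{\gamma}{\rho^3 \tau^{4k+8}}\, \rho^3 b =
\tfrac{\gamma}{\tau^{4k+8}} b,
\]
with no brackets, no indeterminacy, and no crossing-differential hypotheses to verify. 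Note that the analogous shortcut is not available in part (1) precisely because rewriting $\frac{\gamma}{\rho\tau^{4k+4}}h_1^2$ in terms of $(\tau h_1)^2$ is itself a hidden identification, which is why that case genuinely needs the shuffle.
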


\begin{pf}
\mbox{}
\begin{enumerate}
\item 
\[
\tau h_1 \cdot h_1 \bracket{h_1, h_0, \frac{\gamma}{\tau^{4k+6}}} =
\bracket{\tau h_1 \cdot h_1, h_1, h_0} \frac{\gamma}{\tau^{4k+6}}.
\]
Alternatively, this $h_1$ extension is forced
by Proposition~\ref{RhoColoc}.
\item 
We have
\[
h_1 \cdot \frac{\gamma}{\rho^3 \tau^{4k+6}} a =
\frac{\gamma}{\rho^3 \tau^{4k+8}} h_1 \cdot \tau^2 a =
\frac{\gamma}{\rho^3 \tau^{4k+8}} \rho^3 b =
\frac{\gamma}{\tau^{4k+8}},
\]
where the second equality follows from Table \ref{tbl:ExtA1Rrel}.
\end{enumerate} 
\end{pf}

Over $\cE^{C_2}(1)$, the only hidden multiplication is

\begin{prop}
In $\Ext_{\cE^{C_2}(1)}$, we have 
$h_0 \cdot \frac{\gamma}{\rho^2\tau^{4k+2}}v_1^n = \frac{\gamma}{ \tau^{4k+3}}v_1^{n+1}$.
for all $k,n \geq 0$.
\end{prop}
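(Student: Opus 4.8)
The plan is to reduce to the case $n=0$ and prove that case by a Massey product shuffle, exactly parallel to the hidden $h_0$-extensions of Proposition~\ref{prop:h0extn}. Multiplying the $n=0$ relation through by $v_1^n$ then gives the general statement, so the whole proposition rests on the single identity $h_0 \cdot \frac{\gamma}{\rho^2 \tau^{4k+2}} = \frac{\gamma}{\tau^{4k+3}} v_1$.

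First I would record the two Massey products that feed the shuffle. For the outer bracket I claim
\[
\frac{\gamma}{\rho^2 \tau^{4k+2}} \in \bracket{ \rho, \frac{\gamma}{\tau^{4k+4}}, v_1 }.
\]
This bracket is defined since $\rho \cdot \frac{\gamma}{\tau^{4k+4}} = 0$ and $\frac{\gamma}{\tau^{4k+4}} v_1 = 0$ in $\Ext_{\cE^{C_2}(1)}$, the latter because $\frac{\gamma}{\tau^{4k+4}} v_1$ is the target of the differential $d_3\bigl(\frac{\gamma}{\rho^3 \tau^{4k+2}}\bigr) = \frac{\gamma}{\tau^{4k+4}} v_1$ of Proposition~\ref{d3ThetaRho2Tau2}. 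Applying May's Convergence Theorem \cite{matric} \cite{Isstems} with this $d_3$ then identifies the bracket: the nullhomotopy of $\frac{\gamma}{\tau^{4k+4}} v_1$ is $\frac{\gamma}{\rho^3\tau^{4k+2}}$, and $\rho \cdot \frac{\gamma}{\rho^3 \tau^{4k+2}} = \frac{\gamma}{\rho^2 \tau^{4k+2}}$. This is the $\cE^{C_2}(1)$-analogue of the entry $\bracket{\rho, \frac{\gamma}{\tau^4}, \tau h_1} \ni \frac{\gamma}{\rho \tau^2}$ of Table~\ref{tbl:ExtA1Massey}, with $(v_1, d_3)$ playing the role of $(\tau h_1, d_2)$; a degree count places both sides in tridegree $(2,0,4k+5)$, matching Table~\ref{tbl:BockDiff}.

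For the inner bracket I would use the positive-cone differential $d_1(\tau) = \rho h_0$, which holds unchanged over $\cE^{C_2}(1)$ (Remark~\ref{ExtE1}) and exhibits $\tau$ as a nullhomotopy of the relation $\rho h_0 = 0$. May's Convergence Theorem then gives
\[
\bracket{ h_0, \rho, \frac{\gamma}{\tau^{4k+4}} } = \tau \cdot \frac{\gamma}{\tau^{4k+4}} = \frac{\gamma}{\tau^{4k+3}},
\]
the $\tau^4$-periodic family extending the entry $\bracket{h_0, \rho, \frac{\gamma}{\tau^4}} \ni \frac{\gamma}{\tau^3}$ of Table~\ref{tbl:ExtA1Massey}, which has zero indeterminacy. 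With both brackets in hand, the shuffle
\[
h_0 \cdot \frac{\gamma}{\rho^2 \tau^{4k+2}} = h_0 \bracket{ \rho, \frac{\gamma}{\tau^{4k+4}}, v_1 } = \bracket{ h_0, \rho, \frac{\gamma}{\tau^{4k+4}} } v_1 = \frac{\gamma}{\tau^{4k+3}} v_1
\]
is valid because the three consecutive products $h_0 \rho$, $\rho \cdot \frac{\gamma}{\tau^{4k+4}}$, and $\frac{\gamma}{\tau^{4k+4}} v_1$ all vanish. Multiplying by $v_1^n$ completes the proof.

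The main obstacle is verifying the crossing-differential hypotheses of May's Convergence Theorem for the outer bracket, whose justifying differential is a $d_3$; as emphasized in Section~\ref{sctn:Massey}, failure to check these can produce incorrect Massey products. I would confirm that no shorter differential $d_1$ or $d_2$ crosses the relevant jump in $\rho$-filtration, so that the $d_3$ genuinely detects the bracket. The inner bracket and the shuffle are routine, since they reduce to the $d_1(\tau) = \rho h_0$ pattern already used repeatedly in Table~\ref{tbl:ExtA1Massey} and Proposition~\ref{prop:h0extn}.
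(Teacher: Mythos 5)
Your proposal is correct and follows essentially the same route as the paper: the paper's proof is exactly the shuffle $h_0\bracket{\rho,\frac{\gamma}{\tau^{4}},v_1}=\bracket{h_0,\rho,\frac{\gamma}{\tau^{4}}}v_1$, with the outer bracket detected by the $d_3$ of Proposition~\ref{d3ThetaRho2Tau2} and the inner one by $d_1(\tau)=\rho h_0$. You simply spell out the May Convergence Theorem justifications and the reduction to $n=0$ in more detail than the paper does.
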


\begin{pf}
\[
h_0 \cdot \frac{\gamma}{\rho^2\tau^{2}} = h_0 \bracket{ \rho, \frac{\gamma}{\tau^{4}}, v_1 } =
\bracket{ h_0, \rho, \frac{\gamma}{\tau^{4}} }v_1 = \frac{\gamma}{\tau^3} v_1.
\]
\end{pf}

\subsection{$\Ext_{C_2}(1)$}

The charts in Section \ref{sctn:chart}
depict  $\Ext_{C_2}(1)$ graphically.
Table \ref{tbl:MultEinfpage} gives 
generators for $\Ext_{C_2}(1)$.
The elements above the horizontal line are multiplicative generators for
$\Ext_\R(1)$.  The elements below the horizontal generate
$\Ext_{NC}$ in the following sense.  Every element of $\Ext_{NC}$
can be formed by starting with one of these listed elements, multiplying
by elements of $\Ext_\R(1)$, and then dividing by $\rho$.

The elements in Table \ref{tbl:MultEinfpage} are not multiplicative
generators for $\Ext_{C_2}(1)$ in the usual sense, because we allow
for division by $\rho$.  
For example, $\frac{\gamma}{\rho^2\tau}h_1^2$ is indecomposable
in the usual sense, yet it does not appear in Table \ref{tbl:MultEinfpage}
because 
$\rho^2 \cdot \frac{\gamma}{\rho^2\tau}h_1^2 = \frac{\gamma}{\tau}h_1^2$
is decomposable.

The point of this notational approach is that the elements of
$\Ext_{NC}$ are most easily understood as families
of $\rho$-divisible elements.

\begin{table}[h]
\captionof{table}{Generators for $\Ext_{C_2}(1)$}
\label{tbl:MultEinfpage}
\begin{center}
\begin{tabular}{LLL} 
\hline
mw & (s,f,w) & \text{element}  \\ \hline
0 & (-1,0,-1) & \rho   \\
0 & (0,1,0) & h_0   \\
0 & (1,1,1) & h_1  \\
1 & (1, 1, 0) & \tau h_1  \\
2 & (0, 1, -2) & \tau^2 h_0  \\
2 & (4,3,2) & a   \\
4 & (0,0,-4) & \tau^4  \\
4 & (4,3,0) & \tau^2 a  \\
4 & (8,4,4) & b   \\
\hline
-k-1 & (0, 0, k+1) & \frac{\gamma}{\tau^k}  \\
0 & (4, 2, 4) & {Q} h_1^3 \\
\hline
\end{tabular}
\end{center}
\end{table}

\subsection{The ring homomorphism $q_*:\Ext_{\cA^{C_2}(1)} \rtarr \Ext_{\cE^{C_2}(1)}$}

It is worthwhile to consider the comparison to $\Ext_{\cE^{C_2}(1)}$. We already described the map on the 
summand arising from the positive cone in Proposition~\ref{prop:ExtA1toE1}. The map on the summand for the negative cone is given as follows.

\begin{prop}\label{prop:ExtNCA1toE1}
The homomorphism $q_*:\Ext_{\cA^{\R}(1)}(NC, \bM_2^\R) \rtarr \Ext_{\cE^{\R}(1)}(NC, \bM_2^\R)$ induced by the quotient $q:\cA^R(1)_* \rtarr \cE^\R(1)$ of Hopf algebroids is given as in Table~\ref{tbl:ExtNCA1toE1}.
\end{prop}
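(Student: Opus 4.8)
The plan is to exploit naturality of the negative-cone $\rho$-Bockstein spectral sequence $E^-$ of Proposition~\ref{prop:Bockstein-E1}. The Hopf algebroid quotient $q\colon \cA^\R(1)_* \rtarr \cE^\R(1)_*$ respects the $\rho$-adic filtration on $\bM_2^{C_2}$ that defines $E^-$, so it induces a map of spectral sequences $E^-(\cA^{C_2}(1)) \rtarr E^-(\cE^{C_2}(1))$ converging to $q_*$ on $\Ext_{NC}$. Because $\Ext_{NC}$ is generated as a module over $\Ext_\R(1)$ (allowing division by $\rho$) by the classes $\frac{\gamma}{\tau^k}$ and $Q h_1^3$ listed in Table~\ref{tbl:MultEinfpage}, and $q_*$ is a map of modules over the positive-cone homomorphism of Proposition~\ref{prop:ExtA1toE1}, it suffices to evaluate $q_*$ on these generators and then propagate using the positive-cone values $q_*(a) = h_0 v_1^2$ and $q_*(b) = v_1^4$ from Table~\ref{tbl:ExtA1toE1}.

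On the $E_1$-page the map is controlled by the induced homomorphism $\Ext_\C(1) \rtarr \Ext_{\cE^\C(1)}$ (the reduction mod $\rho$ of Table~\ref{tbl:ExtA1toE1}), inserted into the $\Tor$ description of Lemma~\ref{lem:compute-Tor-combined}. First I would treat the classes $\frac{\gamma}{\rho^j\tau^k}$: these arise from the $\Tor^0$ summand $\frac{\bM_2^\C}{\tau^\infty}\{\frac{\gamma}{\rho^s}\} \otimes_{\bM_2^\C} \Ext_\C(1)$, they come from the coefficient ring $\bM_2^{C_2}$ on which $q$ is the identity, and hence are preserved. Combined with the module structure this fixes $q_*$ on the whole $\Tor^0$-part, the positive-cone factors being transformed according to Proposition~\ref{prop:ExtA1toE1}; one then checks compatibility of these $E_1$-level assignments with the differentials of Propositions~\ref{d1ThetaRho} and \ref{d2ThetaRho2Tau} and their $\cE$-analogue in Proposition~\ref{d3ThetaRho2Tau2}.

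The main obstacle is the family of $Q$-classes $\frac{Q}{\rho^j} h_1^m$, which in $E_1^-(\cA^{C_2}(1))$ are lifts of the $\Tor^1$ summand generated by the $\tau$-torsion class $h_1^3$ (Remark~\ref{rmk:Tor-practical}). Since $\Ext_{\cE^\C(1)} \iso \F_2[\tau, h_0, v_1]$ is $\tau$-torsion free, its $\Tor^1$ vanishes, so under the surjection $E_1^- \twoheadrightarrow \Tor^1$ in the short exact sequence of Proposition~\ref{prop:Bockstein-E1} the image $q_*(Q h_1^3)$ maps to zero; thus it lands in the $\Tor^0$ sub-summand rather than being itself zero, and the real content is identifying which $\Tor^0$ class it hits. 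I would do this at the cobar level: by Lemma~\ref{lem:Qx} and Remark~\ref{rmk:Qh1^3}, $Q h_1^3$ is represented by $\frac{\gamma}{\tau} v_1^2$, where $v_1^2$ is the May-spectral-sequence witness for $\tau h_1^3 = 0$, and applying the chain-level map $q$, under which this witness becomes the honest permanent cycle $v_1 = [\tau_1]$ of $\cE$, yields $q_*(Q h_1^3) = \frac{\gamma}{\tau} v_1^2$. For the higher families $\frac{Q}{\rho^{4k}} h_1^{4k+3}$ and $\frac{Q}{\rho^{4k+3}} h_1^{4k+4}$ I would instead apply naturality to the Massey-product descriptions $\bracket{\rho, \frac{\gamma}{\tau^{4k+2}}, ab^k}$ and $\bracket{\rho, \frac{\gamma}{\tau^{4k+4}}, b^{k+1}}$ of Table~\ref{tbl:ExtA1Massey}, substitute $q_*(a) = h_0 v_1^2$ and $q_*(b) = v_1^4$, and evaluate the resulting $\cE$-brackets using May's Convergence Theorem with the $\cE$-differentials of Section~\ref{sec:BockE1NC}. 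Any residual ambiguity between $E_\infty$-representatives would then be removed using the hidden extensions of Section~\ref{sec:hidden}, in particular the relation $h_0 \cdot \frac{\gamma}{\rho^2\tau^{4k+2}} v_1^n = \frac{\gamma}{\tau^{4k+3}} v_1^{n+1}$, matching the named generators recorded in Table~\ref{tbl:ExtNCA1toE1}.
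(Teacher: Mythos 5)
Your proposal is correct in outline and would reach the entries of Table~\ref{tbl:ExtNCA1toE1}, but for the $Q$-classes it takes a genuinely different and heavier route than the paper. The paper's proof is short: the classes $\frac{\gamma}{\rho^j\tau^k}$ are handled on the cobar complex exactly as you say; the values on $\frac{Q}{\rho^j}h_1^n$ are then read off from the already-established hidden $h_0$-extensions $h_0\cdot\frac{Q}{\rho^{4k}}h_1^{4k+3}=\frac{\gamma}{\tau^{4k+1}}ab^k$ and $h_0\cdot\frac{Q}{\rho^{4k+3}}h_1^{4k+4}=\frac{\gamma}{\tau^{4k+3}}b^{k+1}$ (Propositions~\ref{prop:Q/rho^4k h1^4k+3 1} and~\ref{prop:Q/rho^4k+3 h1^4k+4}) together with $q_*(a)=h_0v_1^2$, since $h_0$-multiplication out of the relevant tridegrees of $\Ext_{\cE^\R(1)}(NC,\bM_2^\R)$ is injective; the value on $\frac{\gamma}{\rho^2\tau}h_1^2$ comes the same way from Proposition~\ref{prop:theta/rho^2 tau^4k+1 h1^2}, and the value on $\frac{\gamma}{\rho\tau}h_1$ from $q_*(\tau h_1)=\rho v_1$. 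Your alternative --- a chain-level identification of $q_*(Qh_1^3)$ via the witness for $\tau h_1^3=0$, and naturality of the brackets $\bracket{\rho,\frac{\gamma}{\tau^{4k+2}},ab^k}$ and $\bracket{\rho,\frac{\gamma}{\tau^{4k+4}},b^{k+1}}$ evaluated by May's Convergence Theorem --- is legitimate, and has the advantage of not presupposing the hidden extensions of Section~\ref{sec:hidden}; its cost is that you must verify the target brackets are defined, control their indeterminacy, and check the crossing-differential hypotheses in the $\cE$-Bockstein spectral sequence, none of which is needed in the paper's argument. One caution about your claim that the module structure "fixes $q_*$ on the whole $\Tor^0$-part": since $q_*(h_1)=0$, the classes $\frac{\gamma}{\rho\tau}h_1$ and $\frac{\gamma}{\rho^2\tau}h_1^2$ are \emph{not} determined by naively pushing $h_1$ through Proposition~\ref{prop:ExtA1toE1}; you must first rewrite them as $\frac{\gamma}{\rho^2\tau^2}\cdot\rho\tau h_1$ and $\frac{\gamma}{\rho^2\tau^3}\cdot(\tau h_1)^2$ and use $q_*(\tau h_1)=\rho v_1$ (or invoke the hidden extensions as the paper does); otherwise one would wrongly conclude these classes map to zero.
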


\begin{table}[h]
\captionof{table}{The homomorphism $\Ext_{\cA^\R(1)}(NC) \rtarr \Ext_{\cE^\R(1)}(NC)$
\label{tbl:ExtNCA1toE1}}
\begin{center}
\begin{tabular}{LLLL} 
\hline
mw & (s,f,w) & \text{$x\in\Ext_{\cA^\R(1)}NC$} & \text{$q_*x\in\Ext_{\cE^\R(1)}NC$}  \\ \hline
0 & (4,2,4) + k(8,4,4) & \frac{Q}{\rho^{4k}} h_1^{4k+3}  & \frac{\gamma}{\tau}v_1^{4k+2} \\   
0 & (8,3,8) + k(8,4,4) & \frac{Q}{\rho^{4k+4}} h_1^{4k+3}  & \frac{\gamma}{\rho^2\tau^2}v_1^{4k+3} \\   
-2 & (0,0,2) & \frac{\gamma}{\tau}  & \frac{\gamma}{\tau} \\   
-2 & (2,1,4) & \frac{\gamma}{\rho\tau}h_1  & \frac{\gamma}{\tau^2} v_1 \\   
-2 & (4,2,6) & \frac{\gamma}{\rho^2\tau}h_1^2  & \frac{\gamma}{\tau^3} v_1^2 \\   
-3 & (1,0,4) & \frac{\gamma}{\rho\tau^2}  & \frac{\gamma}{\rho\tau^2} \\   
-5 & (0,0,5) & \frac{\gamma}{\tau^4}  & \frac{\gamma}{\tau^4} \\   
 \hline
\end{tabular}
\end{center}
\end{table}

\begin{pf}
For the classes of the form $\frac{\gamma}{\rho^j\tau^k}$, this is true on the cobar complex. For the classes of the form $\frac{Q}{\rho^j}h_1^n$, this follows from the $h_0$-extension given in Proposition~\ref{prop:Q/rho^4k h1^4k+3 1} and the value $q_*(a) = h_0v_1^2$. Similarly, the value on $\frac{\gamma}{\rho^2\tau}h_1^2$ is obtained by combining Proposition~\ref{prop:theta/rho^2 tau^4k+1 h1^2} with the value of $q_*(a)$. Lastly, the value on $\frac{\gamma}{\rho\tau}h_1$ follows from $q_*(\tau h_1) = \rho v_1$.
\end{pf}

\begin{rmk}
Note that, on the other hand, the hidden $h_0$-extensions on classes in $\Ext_{\cA^{C_2}(1)}$, such as ${Q}h_1^3$, can also be deduced from the homomorphism $q_*$ if its values are determined by other means.
\end{rmk}

\section{The spectrum $ko_{C_2}$}
\label{sctn:koC2}

Let $\Sp$ denoted the category of spectra, and let $\SpC$ denote the
category of ``geniune'' $C_2$-spectra \cite{Alaska}*{Chapter~XII},
obtained from the category of based $C_2$-spaces by inverting
suspension with respect to the one-point compactification
$S^{2,1}$ of the regular representation $(\C,z\mapsto \overline{z})$.
There are restriction and fixed-point functors
\[ \iota^*:\mathrm{Ho}(\SpC) \rtarr \mathrm{Ho}(\Sp), 
 \qquad (-)^{C_2}:\mathrm{Ho}(\SpC) \rtarr \mathrm{Ho}(\Sp)
\]
which detect the homotopy theory of $C_2$-spectra, meaning that a map
$f$ in $\mathrm{Ho}(\SpC)$ is an equivalence if and only if
$\iota^*(f)$ and $f^{C_2}$ are equivalences in $\mathrm{Ho}(\Sp)$.
Moreover, a sequence $X\rtarr Y \rtarr Z$ is a cofiber sequence in
$\mathrm{Ho}(\SpC)$ if and only if applying both functors $\iota^*$
and $(-)^{C_2}$ yield cofiber sequences. Both statements follow from
the fact \cite{SS}*{Example~3.4(i)} that the pair of $C_2$-spectra $\{
\Sigma^\infty_{C_2} S^0, \Sigma^\infty_{C_2} C_2\,_+\}$ give a compact
generating set for $\mathrm{Ho}(\SpC)$.  Beware that we are discussing
categorical fixed-point spectra here, not geometric fixed-point spectra.

Recall (cf. \cite{L}*{Proposition~3.3}) that for a $C_2$-spectrum $X$,
the equivariant connective cover $X\langle 0 \rangle\xrtarr{q}X$ is a
$C_2$-spectrum such that:
\begin{enumerate}
\item
$\iota(q)$ is the connective cover of the underlying spectrum $X$, and
\item
$q^{C_2}$ is the connective cover of $X^{C_2}$.
\end{enumerate}

Recall that $KO_{C_2}$ is the $C_2$-spectrum representing the
$\bK$-theory of $C_2$-equivariant real vector bundles
\cite{Alaska}*{Ch.~XIV}.

\begin{defn}\label{def:koC2} Let $ko_{C_2}$ be the equivariant
connective cover $KO_{C_2} \langle 0 \rangle$ of $KO_{C_2}$.
\end{defn}

We also have a description from the point of view of equivariant
infinite loop space theory.

\begin{thm}\cite{Me}*{Theorem 7.1} $ko_{C_2}\simeq \bK_{C_2}(\R)$,
where $\R$ is considered as a topological ring with trivial
$C_2$-action.
\end{thm}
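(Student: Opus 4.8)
The plan is to apply the detection principle recalled at the beginning of this section: a map in $\Ho(\SpC)$ is an equivalence if and only if it induces equivalences after applying both $\iota^*$ and $(-)^{C_2}$. It therefore suffices to produce a natural comparison map relating $\bK_{C_2}(\R)$ and $ko_{C_2}$ and to verify that it is both an underlying equivalence and a fixed-point equivalence. The comparison comes from the universal property of the equivariant $K$-theory machine of \cite{Me}: the symmetric monoidal $C_2$-category of finitely generated free topological $\R$-modules, with $C_2$ acting trivially since $\R$ carries the trivial action, maps into the $C_2$-category of $C_2$-equivariant real vector bundles over a point, and applying the machine yields a map $\bK_{C_2}(\R) \rtarr KO_{C_2}$. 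Because algebraic $K$-theory is connective on both the underlying and the fixed-point levels, this map factors through the equivariant connective cover $KO_{C_2}\langle 0 \rangle = ko_{C_2}$ by the two properties of the equivariant connective cover recalled above.

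First I would treat the underlying spectrum. Since $\iota^*$ forgets the equivariant structure, $\iota^* \bK_{C_2}(\R)$ is the ordinary direct-sum $K$-theory spectrum of the topological category of finite-dimensional real vector spaces, which is classically identified with connective real $K$-theory $ko$. On the other side, property (1) of the connective cover gives $\iota^* ko_{C_2} \iso (\iota^* KO_{C_2})\langle 0 \rangle = KO\langle 0 \rangle \iso ko$. A direct inspection shows that the comparison map realizes precisely this identification, so $\iota^*$ of the map is an equivalence.

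The fixed-point comparison is where the real work lies, and is the main obstacle. Here one computes $\bK_{C_2}(\R)^{C_2}$ through the equivariant machine and compares it with $(ko_{C_2})^{C_2}$, which by property (2) is the connective cover of $(KO_{C_2})^{C_2}$. For the trivial action, the genuine fixed points of the machine output are governed by modules over the group ring $\R[C_2]$, and the splitting $\R[C_2] \iso \R \times \R$ into the trivial and sign isotypic summands produces $ko \times ko$. On the topological side, $(KO_{C_2})^{C_2}$ is the $K$-theory spectrum of real $C_2$-representations, with $\pi_0 \iso RO(C_2)$ reflecting the same pair of isotypic pieces, so its connective cover is again $ko \times ko$. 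The content of \cite{Me}*{Theorem~7.1} is that the comparison map matches these two splittings \emph{compatibly}, at the level of genuine rather than merely naive fixed points, and as spectra rather than only on homotopy groups. Establishing this compatibility --- reconciling the point-set model of the equivariant algebraic $K$-theory machine with the representing spectrum for equivariant topological $K$-theory --- is the delicate step, and it is precisely what is carried out in the cited reference.
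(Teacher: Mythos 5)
The statement you are proving is not proved in the paper at all: it is quoted verbatim from \cite{Me}*{Theorem~7.1}, and the authors treat it as a black box. So there is no internal argument to compare against. Your outline is nevertheless the natural one, and it is consistent with how the paper handles the adjacent material: the detection principle (a map of genuine $C_2$-spectra is an equivalence if and only if $\iota^*$ and $(-)^{C_2}$ of it are equivalences) is exactly the tool the authors set up at the start of Section~\ref{sctn:koC2}, and your identification of the fixed points via $\bK_{C_2}(\R)^{C_2}\simeq \bK(\R[C_2])$ with $\R[C_2]\iso\R\times\R$ is precisely the computation the paper performs in the lemma immediately following the theorem.

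That said, as a freestanding proof your proposal has a genuine gap, and you are candid about where it sits: everything of substance is concentrated in the comparison map. You assert its existence ``by the universal property of the equivariant $K$-theory machine'' without constructing it, and the claim that it induces the correct equivalence on \emph{genuine} (not naive) fixed points --- matching the algebraic splitting coming from $\R[C_2]\iso\R\times\R$ against the representation-theoretic splitting of $(KO_{C_2})^{C_2}$, as spectra and compatibly with the map --- is exactly the content of Merling's theorem, which you then defer back to the reference. In other words, the proposal reduces the theorem to itself. This is a perfectly reasonable way to explain \emph{why} the theorem is true and how one would go about proving it, but if the goal were an independent proof, the construction of the zig-zag of equivalences between the output of the algebraic $K$-theory machine and the Bott-style representing spectrum for $KO_{C_2}$ would have to be supplied; that is the delicate point-set work that cannot be waved through.
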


The underlying spectrum of $ko_{C_2}$ is $ko$.

\begin{lemma}
The fixed-point spectrum of $ko_{C_2}$ is $(ko_{C_2})^{C_2} \simeq ko\vee ko$.
\end{lemma}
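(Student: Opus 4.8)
The plan is to combine the connective-cover description with the identification $ko_{C_2}\simeq \bK_{C_2}(\R)$ of the theorem just cited. By the second property of the equivariant connective cover recalled above \cite{L}, the fixed-point functor carries a connective cover to an ordinary connective cover, so that $(ko_{C_2})^{C_2} = (KO_{C_2}\langle 0\rangle)^{C_2}$ is the non-equivariant connective cover of $(KO_{C_2})^{C_2}$. It therefore suffices to understand the fixed points of the periodic theory, or, working directly with the connective model $\bK_{C_2}(\R)$, to understand $(\bK_{C_2}(\R))^{C_2}$ on the nose. The key input is the general principle of equivariant infinite loop space theory underlying \cite{Me}: the $C_2$-fixed points of the equivariant $K$-theory of a symmetric monoidal $C_2$-category $\sC$ are the $K$-theory $\bK(\sC^{C_2})$ of the category of $C_2$-equivariant objects.

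Applying this to the category of finite-dimensional real vector spaces, the category of equivariant objects is the category of finite-dimensional real $C_2$-representations, i.e. finitely generated $\R[C_2]$-modules. First I would split the group ring: since $C_2 = \{1,g\}$ and $2$ is invertible in $\R$, the idempotents $\tfrac{1}{2}(1\pm g)$ give a ring isomorphism $\R[C_2]\cong \R\times\R$. Consequently the module category splits as a product of two copies of finite-dimensional $\R$-vector spaces, and $K$-theory sends this product of exact categories to a product, hence to a finite wedge, of spectra:
\[
(\bK_{C_2}(\R))^{C_2} \simeq \bK(\R[C_2]) \simeq \bK(\R\times\R)\simeq \bK(\R)\vee \bK(\R).
\]
Since $\bK(\R)\simeq ko$ is already connective and agrees with the underlying spectrum recorded above, this yields $(ko_{C_2})^{C_2}\simeq ko\vee ko$ directly, with no further connective cover required. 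I would note explicitly that this is only an equivalence of spectra: on $\pi_0$ it records the additive isomorphism $RO(C_2)\cong\Z\oplus\Z$, not the ring structure, since the tensor product of representations does not match the componentwise product on $\R\times\R$.

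An alternative route, phrased purely in terms of $KO_{C_2}$, is to identify $(KO_{C_2})^{C_2}\simeq KO\vee KO$, with the two wedge summands corresponding to the trivial and sign representations (equivalently, to the two factors of $\R[C_2]\cong\R\times\R$), and then to pass to connective covers, using that the connective cover of $KO\vee KO$ is $ko\vee ko$ and that this is compatible with the fixed-point functor by \cite{L}. The main obstacle in either approach is justifying the fixed-point formula precisely: one must verify that the genuine categorical $C_2$-fixed points produced by the machine of \cite{Me} agree with the $K$-theory of the representation category $\sC^{C_2}$, rather than with a Borel or homotopy-fixed-point variant. Once that comparison is in hand, the remaining ingredients — the elementary ring splitting $\R[C_2]\cong\R\times\R$ and the additivity of $K$-theory over a product — are routine.
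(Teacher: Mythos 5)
Your proposal is correct and matches the paper's own argument: the paper's proof also invokes $\bK_{C_2}(\R)^{C_2}\simeq \bK(\R[C_2])$ from Merling's theorem together with the splitting $\R[C_2]\iso\R\times\R$, and offers as its first variant exactly your alternative route via $KO_G(X)\iso RO(G)\otimes KO(X)$ for trivial actions. Both of your arguments appear in the paper's proof, so there is nothing further to add.
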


\begin{pf}
This is a specialization of the statement that, if $X$ is any space equipped with a trivial $G$-action, then $KO_G(X)$ is 
isomorphic to $RO(G) \otimes KO(X)$ \cite{Alaska}*{Section XIV.2}. 
Alternatively, from the point of view of algebraic $\bK$-theory, we have $\bK_{C_2}(\R)^{C_2}\simeq \bK(\R[C_2])$ \cite{Me}*{Theorem 1.2}, and $\R[C_2]\iso \R\times \R$. It follows that 
\[ (ko_{C_2})^{C_2} \simeq \bK_{C_2}(\R)^{C_2} \simeq \bK(\R)\times \bK(\R) \simeq ko\vee ko.\]
\end{pf}

We are working towards a description of
the $C_2$-equivariant cohomology of $ko_{C_2}$
as the quotient $\cA^{C_2} /\!/ \cA^{C_2}(1)$.  
This will allow us to express the $E_2$-page of the Adams spectral
sequence for $ko_{C_2}$ in terms of 
the cohomology of
$\cA^{C_2}(1)$.
The main step will be to establish the cofiber sequence of
Proposition~\ref{EquivWoodProp}. In preparation, we first prove some
auxiliary results.

\begin{defn}\label{DefnRho} Let $\rho$ be the element of 
$\pi_{-1,-1}$
determined by the inclusion
$S^{0,0} \into S^{1,1}$ 
of fixed points. 
\end{defn}

Note that the element 
$\rho\in\pi_{-1,-1}$
induces multiplication by $\rho$ in cohomology
under the Hurewicz homomorphism.

Recall that the real $C_2$-representation ring $RO(C_2)$ is
a rank two free abelian group. Generators are given by the trivial
 one-dimensional representation $1$ and the sign representation
 $\sigma$. 
Let $A(C_2)$ denote the Burnside ring of $C_2$, defined as the 
Grothendieck group associated to the monoid of finite $C_2$-sets. 
This is also a rank two free abelian group, with generators the trivial 
one-point $C_2$-set $1$ and the free $C_2$-set $C_2$. As a ring, 
$A(C_2)$ is isomorphic to $\Z[C_2]/(C_2^2 - 2C_2)$.

The linearization map $A(C_2)\rtarr RO(C_2)$ sending a $C_2$-set
to the induced permutation representation is an isomorphism, sending
the free orbit $C_2$ to the regular representation $1\oplus \sigma$.
Recall that the Euler characteristic moreover gives an isomorphism 
from $A(C_2)$ to $\pi_0(S^{0,0})$ \cite[Corollary to Proposition~1]{Se}.

\begin{lemma}\label{FxdPtsko}
The $C_2$-fixed point spectrum of $\Sigma^{1,1}ko_{C_2}$ is equivalent
to $ko$.
\end{lemma}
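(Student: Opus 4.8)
The plan is to identify $\Sigma^{1,1}$ with smashing by the sign-representation sphere $S^\sigma$ (under the motivic convention, $(1,1)$ names the class $\sigma\in RO(C_2)$) and then to extract its fixed points from the Euler-class cofiber sequence for $\sigma$. Since the unit sphere $S(\sigma)$ is the free $C_2$-set $C_2$, the cofiber sequence $S(\sigma)_+ \rtarr S^0 \rtarr S^\sigma$ becomes
\[ (C_2)_+ \rtarr S^0 \xrtarr{\rho} S^{1,1}, \]
whose second map is the Euler class $\rho$ of Definition~\ref{DefnRho}. Smashing with $ko_{C_2}$ then yields a cofiber sequence
\[ (C_2)_+ \wedge ko_{C_2} \rtarr ko_{C_2} \xrtarr{\rho} \Sigma^{1,1} ko_{C_2} \]
in $\mathrm{Ho}(\SpC)$.

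Next I would apply the fixed-point functor $(-)^{C_2}$, which preserves cofiber sequences by the detection property recorded at the beginning of this section. The middle term becomes $(ko_{C_2})^{C_2} \simeq ko \vee ko$ by the preceding lemma. For the left-hand term I would use that the genuine fixed points of a free (induced) $C_2$-spectrum recover the underlying spectrum: the untwisting equivalence $(C_2)_+ \wedge ko_{C_2} \simeq \mathrm{Ind}_e^{C_2}\,\iota^* ko_{C_2}$ gives $\big((C_2)_+ \wedge ko_{C_2}\big)^{C_2} \simeq \iota^* ko_{C_2} \simeq ko$. Since the collapse map $(C_2)_+ \rtarr S^0$ induces the transfer on fixed points, the resulting cofiber sequence of spectra is
\[ ko \xrtarr{\ \mathrm{tr}\ } ko \vee ko \xrtarr{\rho} \big(\Sigma^{1,1} ko_{C_2}\big)^{C_2}, \]
so that $\big(\Sigma^{1,1} ko_{C_2}\big)^{C_2}$ is identified as the cofiber of the transfer.

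It remains to show that this cofiber is $ko$, which is the heart of the matter. The claim is that $\mathrm{tr}\colon ko \rtarr ko \vee ko$ is the diagonal $(\id,\id)$, each component an equivalence; being the diagonal, it is, up to a self-equivalence of the target, the inclusion of a wedge summand, whose cofiber is the complementary $ko$. To justify this I would work through the algebraic $\bK$-theory description, under which $(ko_{C_2})^{C_2}\simeq\bK(\R[C_2])$ and $\R[C_2]\iso\R\times\R$ splits the target as $\bK(\R)\times\bK(\R)\simeq ko\vee ko$. The transfer for the inclusion $e\hookrightarrow C_2$ is induced by the induction functor $M\mapsto \R[C_2]\otimes_\R M$; under the splitting $\R[C_2]\iso\R\times\R$ this functor is $M\mapsto M\times M$, i.e.\ the diagonal, so both components of $\mathrm{tr}$ are the identity. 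The main obstacle is precisely this identification of the transfer: it is the only step that uses geometric input about $ko_{C_2}$ beyond the two formal facts (exactness of fixed points and fixed points of an induced spectrum), and some care is needed to match the topological transfer on $\big((C_2)_+\wedge ko_{C_2}\big)^{C_2}$ with the $\bK$-theoretic induction functor of \cite{Me}*{Theorem~1.2}.
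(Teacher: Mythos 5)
Your argument is correct and follows the same skeleton as the paper's proof: smash the Euler cofiber sequence $C_2{}_+ \to S^{0,0} \xrightarrow{\rho} S^{1,1}$ with $ko_{C_2}$, pass to fixed points, and identify the resulting map $ko \to ko \vee ko$ as a split inclusion of a wedge summand. The only real difference is how that map is identified as the diagonal. You work through the algebraic $K$-theory model, using $(ko_{C_2})^{C_2} \simeq \mathbb{K}(\R[C_2])$ and the fact that induction $M \mapsto \R[C_2]\otimes_\R M$ becomes $M \mapsto M \times M$ under $\R[C_2] \iso \R\times\R$; as you note, this requires matching the topological transfer with $K$-theoretic induction. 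The paper instead compares with the analogous sequence for the sphere spectrum, where the transfer on $\pi_0$ is the classical inclusion $\Z \to A(C_2)$, $1 \mapsto [C_2]$, and then transports this across the linearization isomorphism $A(C_2) \iso RO(C_2)$ to see that $\pi^{C_2}$ is induced by $1 \mapsto 1\oplus\sigma$, i.e.\ the diagonal in the basis $\{1,\sigma\}$ of $RO(C_2)\otimes ko$. The two identifications agree (the regular representation corresponds to the sum of the two idempotent factors of $\R[C_2]$), so both routes are valid; the paper's version trades your compatibility check for a naturality argument along the unit $S^{0,0} \to ko_{C_2}$.
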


\begin{pf}
Recall the cofiber sequence $C_2\,_+ \xrtarr{\pi} S^{0,0}
\xrtarr{\rho} S^{1,1}$ of $C_2$-spaces.  This yields a cofiber
sequence
\[ 
C_2\,_+\smsh ko_{C_2} \xrtarr{\pi} ko_{C_2} \xrtarr{\rho} \Sigma^{1,1} ko_{C_2}
\]
of equivariant spectra.
Passing to fixed point spectra gives the cofiber sequence
\[ ko \xrtarr{\pi^{C_2}} ko \vee ko \xrtarr{\rho^{C_2}} (\Sigma^{1,1} ko_{C_2})^{C_2}.
\]
In the analogous sequence for the sphere $S^{0,0}$, the map $\pi^{C_2}$
is induced by the split inclusion $\Z \rtarr A(C_2)$ sending $1$ to the free orbit $C_2$.
It follows that
the map $\pi^{C_2}$ is induced by the split inclusion $\Z\rtarr RO(C_2)$
that takes $1$ to the regular representation $\rho_{C_2}$, and this induces a splitting of the cofiber sequence.
Therefore, $(\Sigma^{1,1} ko_{C_2})^{C_2}$
is equivalent to $ko$. 
\end{pf}

Recall that $k\R$ denotes the equivariant connective cover
$K\R \langle 0 \rangle$
 of Atiyah's $K$-theory `with reality' spectrum $K\R$ \cite{At}. 
The latter theory classifies complex vector bundles equipped with
a conjugate-linear action of $C_2$.
The underlying spectrum of $k\R$ is $ku$, and its fixed-point spectrum
is $ko$.

\begin{thm}\cite{Me}*{Theorem 7.2} $k\R\simeq \bK_{C_2}(\C)$, where $\C$ is considered as a topological ring with $C_2$-action given by complex conjugation.
\end{thm}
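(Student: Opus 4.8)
The plan is to mirror the proof of the corresponding statement for $ko_{C_2}$ (Theorem~7.1 of \cite{Me}) together with its fixed-point computation. Since the restriction and fixed-point functors jointly detect equivalences in $\mathrm{Ho}(\SpC)$, it suffices to produce a natural comparison map of $C_2$-spectra relating $k\R$ and $\bK_{C_2}(\C)$ and then to verify that it is an equivalence after applying both $\iota^*$ and $(-)^{C_2}$. The comparison map itself should arise by feeding the symmetric monoidal $C_2$-category of finite-dimensional complex vector spaces—with $C_2$ acting by complex conjugation and the symmetric monoidal structure given by direct sum—into the equivariant infinite loop space machine of \cite{Me}, and identifying its geometric model with the $C_2$-space $BU$ carrying the complex-conjugation action that represents Atiyah's $K\R$.

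For the underlying spectrum, the $C_2$-action is forgotten, so the machine is applied to the ordinary symmetric monoidal category of finite-dimensional $\C$-vector spaces. Its algebraic $K$-theory is connective complex $K$-theory $ku$, which agrees with the underlying spectrum $\iota^* k\R \simeq ku$.

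For the fixed points, I would apply the fixed-point formula $\bK_{C_2}(R)^{C_2}\simeq \bK(R\rtimes C_2)$ of \cite{Me}*{Theorem~1.2}, exactly as in the computation that $(ko_{C_2})^{C_2}\simeq ko\vee ko$; in the trivial-action case used there this crossed product is the group ring $\R[C_2]\iso \R\times\R$, while here $R=\C$ with the conjugation action. The relevant crossed product $\C\rtimes C_2$ is the four-dimensional $\R$-algebra generated by $i$ and a reflection $g$ with $g^2=1$ and $gi=-ig$; it contains the zero divisor $(1+g)(1-g)=0$, so by Wedderburn it is isomorphic to $M_2(\R)$. As $M_2(\R)$ is Morita equivalent to $\R$, we obtain $\bK_{C_2}(\C)^{C_2}\simeq \bK(M_2(\R))\simeq \bK(\R)=ko$, matching the stated fixed-point spectrum $(k\R)^{C_2}\simeq ko$. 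The comparison map is compatible with these identifications because both sides record the $K$-theory of the same real algebra at the fixed points.

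The main obstacle is the first step: constructing the comparison map and verifying that the machine of \cite{Me}, applied to the $C_2$-category of complex vector spaces with conjugation, produces a \emph{genuine} $C_2$-spectrum equivalent to connective $K\R$, rather than merely one with the correct underlying and fixed-point homotopy types. This is precisely the content of the equivariant recognition principle behind \cite{Me}*{Theorem~7.2}: one must identify the equivariant classifying space of the $C_2$-category of free $\C$-modules with the $C_2$-space representing $K\R$, and check that the two genuine $C_2$-spectrum structures agree. Once this identification is established, the two homotopy-type checks above upgrade to a genuine equivariant equivalence by the detection property.
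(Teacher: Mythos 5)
The paper does not prove this statement: it is imported verbatim from Merling's work \cite{Me}*{Theorem 7.2}, so there is no internal proof to compare against. Judged on its own terms, your consistency checks do match what the paper uses elsewhere. The underlying spectrum is $ku$, and the fixed-point computation via the twisted group ring is correct: $\C\rtimes C_2\iso M_2(\R)$ (the crossed product of the Galois extension $\C/\R$ is $\mathrm{End}_\R(\C)$; note that your Wedderburn step needs the additional observation that this crossed product is \emph{simple}, since a four-dimensional semisimple $\R$-algebra with zero divisors could a priori also be $\C\times\C$ or $\R^4$), and Morita invariance gives $\bK(M_2(\R))\simeq \bK(\R)=ko$, matching $(k\R)^{C_2}\simeq ko$.

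The genuine gap is the one you flag yourself: you never construct the comparison map. The joint-detection principle for $\iota^*$ and $(-)^{C_2}$ only upgrades a \emph{map} to an equivalence; knowing that both sides have underlying spectrum $ku$ and fixed points $ko$ establishes nothing in the absence of a specific map of genuine $C_2$-spectra relating the output of the equivariant infinite loop space machine on conjugation-equivariant complex vector spaces to the connective cover of Atiyah's $K\R$, compatible with those two identifications. Deferring that construction to ``the content of \cite{Me}*{Theorem 7.2}'' makes the argument circular as a proof of that theorem. If the goal is only to use the statement, citing Merling as the paper does is the right move; if the goal is to prove it, the missing step is exactly the identification of the machine's output with $K\R\langle 0\rangle$ as a genuine $C_2$-spectrum, which is the entire content of the result.
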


\begin{defn}\label{DefnEta}
The $C_2$-equivariant Hopf map $\eta$ is
\[ 
\C^2 - \{0\} \rightarrow \C\bP^1: 
 (x,y) \mapsto [x:y], 
\]
where both source and target are given the complex conjugation action.
\end{defn}

As $\C\iso \R[C_2]$, the punctured representation $\C^2 - \{0\}$ 
is homotopy equivalent to $S^{3,2}$,
and $\C\bP^1$ 
is homeomorphic to $S^{2,1}$.
It follows that $\eta$ gives rise to a stable homotopy class in
$\pi_{1,1}$.

\begin{rmk} The element $\eta$ only defines a specific element of 
$\pi_{1,1}$ after choosing isomorphisms $\C^2-\{0\}\iso S^{3,2}$ 
and $\C\bP^1\iso S^{2,1}$ in the homotopy category. We follow the 
choices of \cite{DIHopf}*{Example~2.12}. By Proposition~C.5 of 
\cite{DIHopf}, with these choices, the induced map 
$\eta^{C_2}:S^1 \rtarr S^1$ on fixed points
is a map of degree $-2$.
\end{rmk}

\begin{lemma}\label{EtaRhoSphere} The element 
$\rho\eta$ in $\pi_{0,0}$ corresponds to 
the element $C_2-2$ of $A(C_2)$.
\end{lemma}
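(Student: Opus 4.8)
\emph{The plan.} I would identify $\rho\eta$ by evaluating the two ring homomorphisms (the Burnside marks) that jointly detect $A(C_2)$: the underlying degree, obtained by applying $\iota^*$, and the genuine fixed-point degree, obtained by applying geometric fixed points $\Phi^{C_2}$. Under the isomorphism $A(C_2)\iso \pi_{0,0}$, an element $a\cdot 1 + b\cdot C_2$ has underlying mark $a+2b$ (its cardinality) and fixed-point mark $a$ (the cardinality of its fixed points), and the resulting homomorphism $A(C_2)\into \Z\times\Z$ is injective. Since $C_2 - 2$ has marks $\bigl(|C_2|-2,\ |C_2^{C_2}|-2\bigr) = (0,-2)$, it suffices to show that $\rho\eta$ has the same pair of marks.

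\emph{Underlying mark.} First I would note that $\iota^*\rho$ lies in $\pi_{-1}(\bS)$, the $(-1)$-stem of the nonequivariant sphere, which vanishes. Since $\iota^*$ is monoidal, $\iota^*(\rho\eta) = \iota^*(\rho)\,\iota^*(\eta) = 0$, so the underlying mark of $\rho\eta$ is $0$, matching $C_2-2$.

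\emph{Fixed-point mark.} For the second mark I would use geometric fixed points $\Phi^{C_2}$, which is symmetric monoidal, so that $\Phi^{C_2}(\rho\eta) = \Phi^{C_2}(\rho)\cdot\Phi^{C_2}(\eta)$ with no monoidality caveat. Since $\rho$ (Definition~\ref{DefnRho}) is the inclusion of fixed points $S^{0,0}\into S^{1,1}$ and $(S^{1,1})^{C_2} = S^0$, applying $\Phi^{C_2}$ sends $S^{1,1}$ to $S^0$ and carries $\rho$ to the identity; thus $\Phi^{C_2}(\rho)$ has degree $1$. For $\eta$ (Definition~\ref{DefnEta}), geometric fixed points of a suspension spectrum agree with honest space-level fixed points, so $\Phi^{C_2}(\eta)$ is exactly the fixed-point map $\eta^{C_2}$, which the preceding remark identifies as a degree $-2$ self-map. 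Therefore $\Phi^{C_2}(\rho\eta)$ has degree $-2$, the fixed-point mark of $\rho\eta$ is $-2$, and combined with the previous paragraph the injectivity of the marks forces $\rho\eta = C_2 - 2$.

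\emph{Main obstacle.} The genuinely delicate point is bookkeeping with fixed-point functors: the categorical functor $(-)^{C_2}$ used elsewhere in this section is only lax monoidal, so it cannot be applied naively to the product $\rho\eta$. The fix is to route the second mark through the (monoidal) geometric fixed-point functor, while being careful that the degree $-2$ of the remark is a statement about space-level fixed points, which is precisely $\Phi^{C_2}(\eta)$ rather than the categorical fixed points; one must also confirm $\Phi^{C_2}(\rho) = \mathrm{id}$, which is exactly where the definition of $\rho$ as a fixed-point inclusion is used.
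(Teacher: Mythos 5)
Your proof is correct, but it takes a genuinely different route from the paper's. The paper invokes Morel's relation $(\eta\rho)^2=-2\eta\rho$ in $\pi_{0,0}$, solves the quadratic $x^2=-2x$ in $A(C_2)$ to get the candidates $-2$, $-C_2$, and $C_2-2$, and then uses restriction to the trivial subgroup (together with the nonvanishing of $\eta\rho$) to single out $C_2-2$. You instead compute both tom Dieck marks of $\rho\eta$ directly: the underlying mark vanishes because $\iota^*\rho\in\pi_{-1}(\mathbb{S})=0$, and the geometric fixed-point mark is $\Phi^{C_2}(\rho)\cdot\Phi^{C_2}(\eta)=1\cdot(-2)=-2$, matching the marks $(0,-2)$ of $C_2-2$; injectivity of the marks homomorphism $A(C_2)\into\Z\times\Z$ finishes the argument. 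Your attention to the monoidality issue is well placed: the degree $-2$ recorded in the remark preceding the lemma is a space-level fixed-point computation, which is exactly $\Phi^{C_2}(\eta)$ for a suspension spectrum, and $\Phi^{C_2}(\rho)=\mathrm{id}$ is immediate from the definition of $\rho$ as a fixed-point inclusion. What each approach buys: the paper's argument needs no identification of the geometric fixed-point mark with $|X^{C_2}|$ and leans only on Morel plus the restriction map, but it must separately rule out $x=0$ (i.e.\ know $\eta\rho\neq 0$, e.g.\ from its detection by $\rho h_1$); your argument determines the element, including its sign, in one pass from the two marks, at the cost of invoking the standard tom Dieck splitting/marks formalism and the sign convention for $\eta^{C_2}$ from \cite{DIHopf}.
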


\begin{pf}
In $\pi_{0,0}$, 
we have $(\eta\rho)^2=-2\eta\rho$ (\cite[Lemma~6.1.2]{Mo}). 
The nonzero solutions to $x^2=-2x$ in $A(C_2)$ are $x=-2$, $x=C_2-2$, and $x=-C_2$. 
The only such solution which restricts to zero at the trivial subgroup is $x=C_2-2$.
\end{pf}

\begin{lemma}\label{EtaFixed} 
The induced map $\eta^{C^2}:(\Sigma^{1,1}ko_{C_2})^{C_2} \rtarr (ko_{C_2})^{C_2}$ is equivalent to $ko\xrtarr{(-1,1)} ko\vee ko$.
\end{lemma}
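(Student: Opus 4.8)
The plan is to reduce the statement to a computation on $\pi_0$, using that $\eta^{C_2}$ is a module map over the fixed-point ring. First I would record the identifications already in hand: $(\Sigma^{1,1}ko_{C_2})^{C_2}\iso ko$ from Lemma~\ref{FxdPtsko}, and $(ko_{C_2})^{C_2}\iso RO(C_2)\otimes_\Z ko$ coming from $KO_G(\mathrm{pt})=RO(G)\otimes KO(\mathrm{pt})$ together with the connective cover, so that the target is free of rank two over $ko$ on the basis $\{1,\sigma\}$ of $RO(C_2)$ and $ko\vee ko$ is this wedge. Because $\eta^{C_2}$ is the fixed points of the $ko_{C_2}$-linear map given by multiplication by $\eta\in\pi_{1,1}$, it is a map of $(ko_{C_2})^{C_2}$-modules; restricting scalars along the unit $ko\rtarr(ko_{C_2})^{C_2}$ makes it a map of free $ko$-modules. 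Such a map is determined by the image of the generator, i.e.\ by the single element $\eta^{C_2}(1)\in\pi_0\big((ko_{C_2})^{C_2}\big)=RO(C_2)$. Thus it suffices to compute this $\pi_0$-value.

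To pin down $\eta^{C_2}(1)$ I would use the split cofiber sequence from the proof of Lemma~\ref{FxdPtsko},
\[ ko\xrtarr{\pi^{C_2}}(ko_{C_2})^{C_2}\xrtarr{\rho^{C_2}}(\Sigma^{1,1}ko_{C_2})^{C_2}, \]
in which $\pi^{C_2}$ sends the generator to the regular representation $1+\sigma$ and $\rho^{C_2}$ is the resulting projection, given on $\pi_0$ by $a+b\sigma\mapsto a-b$. The key identity is $\eta^{C_2}\circ\rho^{C_2}=(\rho\eta)\cdot(-)$, since $\rho^{C_2}$ and $\eta^{C_2}$ are multiplication by $\rho$ and by $\eta$ on the fixed points of the $ko_{C_2}$-module $ko_{C_2}$. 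By Lemma~\ref{EtaRhoSphere}, $\rho\eta=C_2-2$, whose image in $RO(C_2)$ is $\sigma-1$; hence $\eta^{C_2}\circ\rho^{C_2}$ is multiplication by $\sigma-1$. Evaluating on a section of the split surjection $\rho^{C_2}$---for instance the inclusion of the trivial summand, on which $\rho^{C_2}$ restricts to the identity---gives $\eta^{C_2}(1)=(\sigma-1)\cdot 1=\sigma-1$, the class $(-1,1)$ in the basis $\{1,\sigma\}$; this is independent of the section because $(\sigma-1)(1+\sigma)=0$ kills $\mathrm{im}\,\pi^{C_2}$. Combined with the reduction above, this identifies $\eta^{C_2}$ with $ko\xrtarr{(-1,1)}ko\vee ko$.

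The step I expect to be the crux is the $ko$-linearity reduction, i.e.\ the passage from $\pi_0$ to an honest map of spectra. A self-map of $ko$ that is $\pm1$ on $\pi_0$ need not equal $\pm\mathrm{id}$ (Adams operations are the standard counterexamples), so the argument genuinely depends on $\eta^{C_2}$ being $ko$-linear and on $(ko_{C_2})^{C_2}$ being free over $ko$ with the $RO(C_2)$-action by $ko$-linear integral matrices; both must be extracted carefully from $KO_G(\mathrm{pt})=RO(G)\otimes KO(\mathrm{pt})$ and the connective-cover description. A related subtlety to dispatch is that it is this topological $RO(C_2)$-structure, not the product-ring structure visible through the algebraic $K$-theory model $\bK(\R\times\R)$ of the alternative proof of the fixed-point lemma, that controls the sphere action and hence $\rho\eta$. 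This is in fact forced: $\ker\rho^{C_2}$ equals $\mathrm{im}\,\pi^{C_2}=(1+\sigma)$, which is a genuine ideal in $RO(C_2)$ but is only the non-ideal diagonal in $\Z\times\Z$, so $R$-linearity of $\rho^{C_2}$ already selects the $RO(C_2)$-structure.
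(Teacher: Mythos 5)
Your proof is correct, and it reaches the answer by a genuinely different reduction than the paper, even though the decisive computation is the same. The paper's proof has two steps: first it maps the fixed-point square to the underlying-spectrum square, where $\nabla \circ \eta^{C_2} = (\iota^*\eta)\circ 0 = 0$ forces $\eta^{C_2}$ to factor through the fiber of the fold map and hence to be of the form $(k,-k)$; second it uses the diagram over the sphere, Lemma~\ref{EtaRhoSphere}, and the linearization $A(C_2)\iso RO(C_2)$ to see that $\eta^{C_2}\circ\rho^{C_2}$ is multiplication by $\sigma-1$, which pins down $k=-1$. You skip the first step entirely and replace it with the observation that $\eta^{C_2}$ is the fixed points of a $ko_{C_2}$-linear map, hence a map of $(ko_{C_2})^{C_2}$-modules, hence of free $ko$-modules, and so is determined by the single class $\eta^{C_2}(1)\in\pi_0 = RO(C_2)$; the second step of your argument (the identity $\eta^{C_2}\circ\rho^{C_2} = (\rho\eta)\cdot(-) = (\sigma-1)\cdot(-)$ evaluated on a lift of the generator, with independence of the lift because $(\sigma-1)(1+\sigma)=0$) is exactly the paper's second diagram in module-theoretic clothing. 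What your route buys is precision on a point the paper elides: as you note, a self-map of $ko$ is not determined by its effect on $\pi_0$, so the paper's conclusion "$\eta^{C_2}$ must be of the form $(k,-k)$" tacitly treats the map as an integer matrix, whereas your $ko$-linearity argument actually justifies that $[ko, ko\vee ko]_{ko} = \pi_0(ko\vee ko) = RO(C_2)$. The cost is that you must carefully set up the lax-monoidal/module structure on categorical fixed points and verify that $(\Sigma^{1,1}ko_{C_2})^{C_2}$ is free of rank one over $ko$ via the splitting of Lemma~\ref{FxdPtsko}; your sketch of this is right but is the part that would need to be written out. Your closing remark distinguishing the $RO(C_2)$ ring structure from the product decomposition coming from $\bK(\R\times\R)$ is a legitimate and worthwhile caution (the regular representation is the diagonal $(1,1)$, not an ideal, in the product coordinates), though the phrasing conflates the two identifications of the underlying group with $\Z\times\Z$ and could be tightened.
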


\begin{pf}
To determine the fixed map $\eta^{C_2}$, we use that a map $X\xrtarr{\varphi} Y$ of $C_2$-spectra induces a commutative diagram 
\[ 
\xymatrix @R=1.5em @C=2em{
X^{C_2} \ar[r]^-{\varphi^{C_2}} \ar[d] & Y^{C_2} \ar[d] \\
X^e \ar[r]_{\varphi^e}  & Y^e, \\
  }\]
in which the vertical maps are the inclusions of fixed points. In the case of $\eta$ on $ko_{C_2}$,
this gives the diagram
\[ 
\xymatrix @R=1.5em @C=2em{
ko\simeq(\Sigma^{1,1}ko_{C_2})^{C_2} \ar[r]^-{\eta^{C_2}} \ar[d]_0 & ko\vee ko \simeq (ko_{C_2})^{C_2} \ar[d]^\nabla \\
\Sigma^{1} ko \ar[r]_{\iota^*\eta}  & ko, \\
  }\]
where $\nabla$ is the fold map,
as both the sign representation $\sigma$ and the trivial 
representation $1$ of $C_2$ restrict to the 1-dimensional trivial 
representation of the trivial group.
This shows that $\eta^{C_2}$ 
factors through the fiber of $\nabla$, so that
$\eta^{C_2}$ must be of the form $(k,-k)$ for some integer $k$. 
On the other hand, we have the commutative diagram
\[ 
\xymatrix{
ko \otimes RO(C_2) \ar[r] & ko \ar[r] & ko\otimes RO(C_2) \\
(ko_{C_2})^{C_2} \ar[r]^{\rho^{C_2}} \ar[u]^\iso & (\Sigma^{1,1} ko_{C_2})^{C_2} \ar[r]^{\eta^{C_2}} \ar[u]^\iso & (ko_{C_2})^{C_2} \ar[u]^\iso \\
(S^{0,0})^{C_2} \ar[r]^{\rho^{C_2}} \ar[u] & (S^{1,1})^{C_2} \ar[r]^{\eta^{C_2}} \ar[u] & (S^{0,0})^{C_2} \ar[u] \\
}\]
According to Lemma~\ref{EtaRhoSphere}, on the sphere $\eta\rho$ induces multiplication by $(C_2-2)$ under the isomorphism $\pi_{0,0} \iso A(C_2)$. 
The outer vertical compositions induce the linearization isomorphism $A(C_2) \iso RO(C_2)$ on $\pi_0$.
It follows that the top row induces multiplication by $(\sigma-1)$ on homotopy. We conclude that $\eta^{C_2}$ is $(-1,1)$.
\end{pf}

\begin{defn}\label{cMap} The complexification map $KO_{C_2}\xrtarr{c}K\R$
assigns to an equivariant real bundle $E\rtarr X$ the associated bundle
$\C\otimes_{\R} E\rtarr X$, where $C_2$ acts on $\C$ via complex conjugation.
We denote by $ko_{C_2}\xrtarr{c}k\R$ the associated map on connective covers.
\end{defn}

\begin{rmk}
Alternatively, from the point of view algebraic $\bK$-theory, the complexification map can be described as $\bK_{C_2}(\iota)$, where $\R\xrtarr{\iota}\C$ is the inclusion of $C_2$-equivariant topological rings.
\end{rmk}

\begin{prop}\label{EquivWoodProp} The Hopf map $\eta$ induces a cofiber sequence
\begin{equation} \Sigma^{1,1} ko_{C_2} \xrtarr{\eta} ko_{C_2} \xrtarr{c} k\R.\label{EquivWood}\end{equation}
\end{prop}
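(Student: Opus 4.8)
The plan is to verify \eqref{EquivWood} by means of the detection principle recorded at the start of this section: a sequence of $C_2$-spectra is a cofiber sequence if and only if its images under $\iota^*$ and $(-)^{C_2}$ are cofiber sequences. Thus it suffices to analyze \eqref{EquivWood} after restricting to the underlying spectrum and after passing to $C_2$-fixed points. Both maps $\eta$ and $c$ are already in hand, so no further constructions are needed; the work is entirely in identifying the two resulting sequences.

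Applying $\iota^*$ collapses the weights, so that $\iota^*(\Sigma^{1,1} ko_{C_2}) \simeq \Sigma ko$, $\iota^* ko_{C_2} \simeq ko$, and $\iota^* k\R \simeq ku$, while $\iota^* \eta$ and $\iota^* c$ are the classical Hopf map and the classical complexification. Hence the restricted sequence is the classical Wood cofiber sequence $\Sigma ko \xrtarr{\eta} ko \xrtarr{c} ku$, which I would cite as known.

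For the fixed points, I would assemble the three terms from results already proved: Lemma~\ref{FxdPtsko} gives $(\Sigma^{1,1} ko_{C_2})^{C_2} \simeq ko$, the fixed-point computation of $ko_{C_2}$ gives $(ko_{C_2})^{C_2}\simeq ko\vee ko$ with $\pi_0$ identified with $RO(C_2) = \Z\{1,\sigma\}$, and $(k\R)^{C_2}\simeq ko$. By Lemma~\ref{EtaFixed}, the first map is $\eta^{C_2} = (-1,1)$. The remaining input is the identification of $c^{C_2}$: complexification with reality sends a real $C_2$-representation $V$ to $\C\otimes_\R V$ with the diagonal (conjugation-linear) action, and on $K\R^{C_2}(pt) = KO^0(pt) = \Z\{[\R]\}$ one records the real dimension of its real points. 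Both the trivial representation $1$ and the sign representation $\sigma$ map to a one-dimensional real space, so $c^{C_2}$ is the fold map $(1,1)\colon ko\vee ko \to ko$. It then remains to observe that $ko \xrtarr{(-1,1)} ko\vee ko \xrtarr{(1,1)} ko$ induces, on each homotopy group, the split short exact sequence $0 \to \pi_* ko \to \pi_* ko \oplus \pi_* ko \to \pi_* ko \to 0$; the long exact sequence of $\mathrm{cofib}(\eta^{C_2})$ then forces $\mathrm{cofib}(\eta^{C_2}) \xrtarr{\sim} (k\R)^{C_2}$, so this is a cofiber sequence.

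The main obstacle is the computation of $c^{C_2}$: everything else is either quoted (classical Wood) or assembled from Lemmas~\ref{FxdPtsko} and \ref{EtaFixed}. The delicate point is to pin down the effect of complexification-with-reality on the representation-ring generators $1$ and $\sigma$ and to check that both land on the generator of $KO^0(pt)$, so that $c^{C_2}$ is exactly the fold map; once this is known, exactness on homotopy groups, and hence the fixed-point cofiber sequence, is immediate.
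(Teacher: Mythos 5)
Your proposal is correct and follows essentially the same route as the paper: check the sequence on underlying spectra (where it is the classical Wood cofiber sequence) and on categorical fixed points, using Lemma~\ref{FxdPtsko} and Lemma~\ref{EtaFixed} to identify $\eta^{C_2}$ as $(-1,1)$ and the dimension-only dependence of $\C\otimes_\R V$ to identify $c^{C_2}$ as the fold map. Your added remark that the resulting sequence $ko \to ko\vee ko \to ko$ is split short exact on homotopy groups just makes explicit a step the paper leaves implicit.
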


\begin{pf} 
On underlying spectra, this is the classical cofiber sequence 
\[\Sigma ko \xrtarr{\eta} ko \rtarr ku.\]
On fixed points, according to
Lemma~\ref{FxdPtsko} the sequence \eqref{EquivWood} induces a sequence 
\[ ko\xrtarr{\eta^{C_2}} ko\vee ko \xrtarr{c^{C_2}} ko.\]
By Lemma~\ref{EtaFixed}, the map $\eta^{C_2}$ is of the form $(-1,1)$.
For any real $C_2$-representation $V$, the construction $\C\otimes_\R V$ only depends on the dimension of $V$, which implies that $c^{C_2}$ is the fold map.
So the fixed points sequence is also a cofiber sequence.
\end{pf}

\begin{rmk} From the point of view of spectral Mackey functors \cites{GM,Ba}, the cofiber sequence \eqref{EquivWood} is the cofiber sequence of Mackey functors
\[
{\xymatrix{
 ko \ar@/_2ex/[d]_{0}  \ar[r]^-{(1,-1)}  & ko\vee ko \ar@/_2ex/[d]_{\nabla} \ar[r]^-\nabla & ko \ar@/_2ex/[d]_{c}\\
 \Sigma^{1,1} ko  \ar@/_2ex/[u]_{\eta}   \ar[r]^\eta \ar@(dl,dr)_{\mathrm{sign}} & ko \ar[r]^c \ar@/_2ex/[u]_{\Delta} \ar@(dl,dr)_{\mathrm{triv}}& ku \ar@/_2ex/[u]_{r} \ar@(dl,dr)_{\mathrm{conj}}
} }
\]
where $ku\xrtarr{r} ko$ considers a rank $n$ complex bundle as a rank $2n$ real bundle.
\end{rmk}

\begin{thm}\label{Hko} The $C_2$-equivariant cohomology of $ko_{C_2}$, 
as a module over $\cA^{C_2}$, is
\[ H^{*,*}_{C_2}(ko_{C_2}; \F_2) \iso \cA^{C_2}\!/\!/ \cA^{C_2}(1).\]
\end{thm}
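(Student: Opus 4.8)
The plan is to derive the computation from the Wood-type cofiber sequence
\[ \Sigma^{1,1} ko_{C_2} \xrtarr{\eta} ko_{C_2} \xrtarr{c} k\R \]
of Proposition~\ref{EquivWoodProp}, by comparing $H^{*,*}_{C_2}(ko_{C_2})$ with $H^{*,*}_{C_2}(k\R)$. The first input I would establish is the ``Real'' analogue of the classical fact $H^*(ku) \iso \cA^\cla /\!/ E(Q_0,Q_1)$, namely
\[ H^{*,*}_{C_2}(k\R) \iso \cA^{C_2} /\!/ \cE^{C_2}(1), \]
which is cyclic over $\cA^{C_2}$ on the fundamental class $\iota_{k\R} \in H^{0,0}$, annihilated by $Q_0 = \Sq^1$ and $Q_1$. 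I would obtain this either by Betti realization of the corresponding $\R$-motivic statement, or self-containedly by bootstrapping along the Bott cofiber sequence $\Sigma^{2,1} k\R \xrtarr{\beta} k\R \rtarr H\underline{\Z}$, starting from the known cohomology of $H\underline{\Z}$.

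Granting this, the second step is to apply $H^{*,*}_{C_2}(-)$ to the cofiber sequence. Since $\eta$ has positive Adams filtration (it is detected by $h_1$), multiplication by $\eta$ induces the zero map on mod~$2$ cohomology, so $\eta^* = 0$. The long exact sequence of the cofiber sequence therefore collapses into a short exact sequence of $\cA^{C_2}$-modules
\[ 0 \rtarr \Sigma^{2,1} H^{*,*}_{C_2}(ko_{C_2}) \rtarr \cA^{C_2}/\!/\cE^{C_2}(1) \xrtarr{c^*} H^{*,*}_{C_2}(ko_{C_2}) \rtarr 0. \]
Because $c$ is a map of ring spectra, $c^*$ is surjective and sends $\iota_{k\R}$ to the fundamental class $\iota_{ko}$; hence $H^{*,*}_{C_2}(ko_{C_2})$ is cyclic on $\iota_{ko}$, with $\Sq^1 \iota_{ko} = c^*(\Sq^1 \iota_{k\R}) = 0$. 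For $\Sq^2$, the kernel of $c^*$ in bidegree $(2,1)$ is the one-dimensional space $\Sigma^{2,1}\bigl(H^{0,0}_{C_2}(ko_{C_2})\bigr)$, while $(\cA^{C_2}/\!/\cE^{C_2}(1))^{(2,1)}$ is itself one-dimensional (spanned by $[\Sq^2]$, since $\bM^{C_2}_2$ vanishes in bidegree $(2,1)$); thus $H^{(2,1)}_{C_2}(ko_{C_2}) = 0$ and $\Sq^2 \iota_{ko} = 0$. As $\cA^{C_2}(1)$ is generated by $\Sq^1$ and $\Sq^2$, these relations produce a surjection $\psi\colon \cA^{C_2}/\!/\cA^{C_2}(1) \twoheadrightarrow H^{*,*}_{C_2}(ko_{C_2})$.

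To see that $\psi$ is an isomorphism, I would build a purely algebraic short exact sequence with the same middle term. Using the splitting~\eqref{eq:AC2-split} one checks that $\cA^{C_2}$ is free over $\cA^{C_2}(1)$ and that $\cA^{C_2}(1)$ is free over $\cE^{C_2}(1)$ on the basis $\{1,\Sq^2\}$, so $\cA^{C_2}(1)/\!/\cE^{C_2}(1)$ sits in a short exact sequence $0 \to \Sigma^{2,1}\F_2 \to \cA^{C_2}(1)/\!/\cE^{C_2}(1) \to \F_2 \to 0$ of $\cA^{C_2}(1)$-modules. Applying the exact functor $\cA^{C_2}\otimes_{\cA^{C_2}(1)}(-)$ yields
\[ 0 \rtarr \Sigma^{2,1}\,\cA^{C_2}/\!/\cA^{C_2}(1) \rtarr \cA^{C_2}/\!/\cE^{C_2}(1) \rtarr \cA^{C_2}/\!/\cA^{C_2}(1) \rtarr 0. \]
All modules in sight are finitely generated free $\bM^{C_2}_2$-modules, hence finite-dimensional in each bidegree. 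Writing $[\,\cdot\,]$ for the graded dimension, the two short exact sequences give $(1+\Sigma^{2,1})[H^{*,*}_{C_2}(ko_{C_2})] = [\cA^{C_2}/\!/\cE^{C_2}(1)] = (1+\Sigma^{2,1})[\cA^{C_2}/\!/\cA^{C_2}(1)]$, while the surjection $\psi$ gives $[\cA^{C_2}/\!/\cA^{C_2}(1)] = [\ker \psi] + [H^{*,*}_{C_2}(ko_{C_2})]$. Combining these yields $(1+\Sigma^{2,1})[\ker\psi] = 0$; since graded dimensions are non-negative, this forces $\ker\psi = 0$, so $\psi$ is an isomorphism.

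I expect the main obstacle to be the first step, the identification $H^{*,*}_{C_2}(k\R) \iso \cA^{C_2}/\!/\cE^{C_2}(1)$, together with the verification that the relevant quotient modules are free over $\bM^{C_2}_2$ (so that the graded-dimension bookkeeping is legitimate despite the negative cone). Once the cohomology of $k\R$ is in hand, the passage to $ko_{C_2}$ along the $\eta$-cofiber sequence is essentially formal.
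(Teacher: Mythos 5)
Your proposal is correct, and its skeleton is the same as the paper's: the Wood cofiber sequence of Proposition~\ref{EquivWoodProp}, the input $H^{*,*}_{C_2}(k\R)\iso \cA^{C_2}\!/\!/\cE^{C_2}(1)$ (which the paper simply cites from Ricka rather than reproving), the collapse of the long exact sequence to a short exact sequence because $\eta$ acts by zero on cohomology, and the comparison with the algebraic short exact sequence $0\to \Sigma^{2,1}\cA^{C_2}\!/\!/\cA^{C_2}(1)\to \cA^{C_2}\!/\!/\cE^{C_2}(1)\to \cA^{C_2}\!/\!/\cA^{C_2}(1)\to 0$ coming from $\cA^{C_2}(1)\iso \cE^{C_2}(1)\oplus\Sigma^{2,1}\cE^{C_2}(1)$. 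You diverge only in the endgame. The paper identifies the composite $ij$ around the topological short exact sequence as multiplication by $\Sq^2$ on $\cA^{C_2}\!/\!/\cE^{C_2}(1)$, using that $C\eta$ supports a $\Sq^2$; this yields a map of short exact sequences whose outer vertical maps differ by a $(2,1)$-suspension, and a snake-lemma argument forces both to be isomorphisms. You instead show $\Sq^1\iota=\Sq^2\iota=0$ directly (your bidegree-$(2,1)$ dimension count playing the role of the paper's $C\eta$ input), obtain a surjection $\psi\colon \cA^{C_2}\!/\!/\cA^{C_2}(1)\twoheadrightarrow H^{*,*}_{C_2}(ko_{C_2})$, and kill $\ker\psi$ by an Euler-characteristic comparison. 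Both are valid; the paper's version avoids all counting and pins down the extension explicitly, while yours avoids identifying the connecting map.

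One correction to the bookkeeping step: $\cA^{C_2}\!/\!/\cE^{C_2}(1)$ and $\cA^{C_2}\!/\!/\cA^{C_2}(1)$ are free over $\bM^{C_2}_2$ but not finitely generated (already classically $\cA^{\cla}\!/\!/\cA^{\cla}(1)$ has infinitely many generators). What your argument actually requires is finite dimensionality in each bidegree, and that does hold: a basis monomial in degree $(s_\alpha,w_\alpha)$ contributes to bidegree $(s,w)$ through the positive cone only if $s_\alpha\le s$, and through the negative cone only if $s_\alpha-w_\alpha\le s-w-2$; each condition is met by finitely many monomials because every generator of the dual Steenrod algebra has positive topological degree and positive Milnor--Witt degree. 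Also, the surjectivity of $c^*$ comes from the short exact sequence (i.e., from $\eta^*=0$), not from $c$ being a ring map; the ring structure is only needed to see that $c^*$ carries the generator to the fundamental class. With these adjustments your proof goes through.
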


\begin{pf}
According to \cite{Ricka}*{Corollary 6.19}, we have 
$H^{*,*}_{C_2}(k\R)\iso \cA^{C_2}\!/\!/ \cE^{C_2}(1)$. 
Since $\eta$ induces the trivial map on 
equivariant cohomology, the sequence (\ref{EquivWood}) induces a short exact sequence
\begin{equation} 
0 \rtarr H^{*-2,*-1}_{C_2}(ko_{C_2}) \xrtarr{i} 
  \cA^{C_2}\!/\!/\cE^{C_2}(1) \xrtarr{j} 
  H^{*,*}_{C_2}(ko_{C_2}) \rtarr 0 \label{CohomWood}
\end{equation}
of $A^{C_2}$-modules. 

The cofiber $C\eta$ is a 2-cell complex that supports a $\Sq^2$
in cohomology.
It follows that
the composition 
\[ 
k\R\simeq ko_{C_2}\smsh C(\eta) \rtarr \Sigma^{2,1} ko_{C_2} \into \Sigma^{2,1} ko_{C_2}\smsh C(\eta)
\]
induces the map
\[ \cA^{C_2}\!/\!/\cE^{C_2}(1) \xrtarr{ij}  \cA^{C_2}\!/\!/\cE^{C_2}(1): 
1 \mapsto \Sq^2.\]
In particular, the composition 
$\cA^{C_2} \rtarr \cA^{C_2}\!/\!/\cE^{C_2}(1) \xrtarr{j} 
H^{*,*}_{C_2}(ko_{C_2})$ 
factors through $\cA^{C_2}\!/\!/\cA^{C_2}(1)$.
Given the right $\cE^{C_2}(1)$-module decomposition 
$\cA^{C_2}(1) \iso \cE^{C_2}(1) \oplus \Sigma^{2,1}\cE^{C_2}(1)$, 
it follows that the sequence (\ref{CohomWood}) sits in a diagram
\[ 
\xymatrix{
0 \ar[r] & H^{*-2,*-1}_{C_2}(ko_{C_2}) \ar[r] &  
  \cA^{C_2}\!/\!/\cE^{C_2}(1) \ar[r] \ar@{=}[d] &   
  H^{*,*}_{C_2}(ko_{C_2}) \ar[r] & 0
\\
0 \ar[r] & \Sigma^{2,1} \cA^{C_2}\!/\!/ \cA^{C_2}(1) 
  \ar[r] \ar@{^{(}->}[u] &  \cA^{C_2}\!/\!/\cE^{C_2}(1) \ar[r]  &
  \cA^{C_2}\!/\!/\cA^{C_2}(1) \ar@{->>}[u]  \ar[r] & 0.
}
\]
The outer two maps agree up to suspension, 
so they are both isomorphisms.
\end{pf}

\begin{cor} The $E_2$-page of the Adams spectral sequence for 
$ko_{C_2}$ is 
\[ 
E_2 \iso \Ext_{\cA^{C_2}}(H^{*,*}_{C_2}(ko_{C_2}),\bM_2^{C_2}) \iso 
\Ext_{C_2}(1).
\]
\end{cor}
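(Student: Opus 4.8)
The plan is to establish the two displayed isomorphisms in turn. The first is the standard identification of the $E_2$-page of the $C_2$-equivariant Adams spectral sequence. Because the equivariant Eilenberg--Mac~Lane spectrum $H\underline{\F}_2$ is flat \cite{HK}*{Corollary~6.45}, the $H\underline{\F}_2$-based Adams spectral sequence for any $C_2$-spectrum $X$ has
\[ E_2 \iso \Ext_{\cA^{C_2}}\big( H^{*,*}_{C_2}(X), \bM_2^{C_2}\big),\]
exactly as in the classical, $\C$-motivic, and $\R$-motivic settings. Specializing to $X = ko_{C_2}$ yields the first isomorphism.

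For the second isomorphism, I would substitute the computation of Theorem~\ref{Hko}, namely $H^{*,*}_{C_2}(ko_{C_2}) \iso \cA^{C_2}\!/\!/\cA^{C_2}(1)$, and then apply a change-of-rings isomorphism
\[ \Ext_{\cA^{C_2}}\big( \cA^{C_2}\!/\!/\cA^{C_2}(1), \bM_2^{C_2}\big) \iso \Ext_{\cA^{C_2}(1)}\big(\bM_2^{C_2}, \bM_2^{C_2}\big) = \Ext_{C_2}(1).\]
This is the equivariant analogue of the familiar fact that the $\Ext$ over $\cA$ of a quotient $\cA\!/\!/\cB$ agrees with the $\cB$-cohomology of the ground ring.

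The one point requiring care is the justification of this change of rings. Writing $\cA^{C_2}\!/\!/\cA^{C_2}(1) = \cA^{C_2}\otimes_{\cA^{C_2}(1)}\bM_2^{C_2}$, the isomorphism follows from the restriction--extension adjunction along $\cA^{C_2}(1)\into\cA^{C_2}$, provided that $\cA^{C_2}$ is flat (indeed free) as a module over $\cA^{C_2}(1)$: flatness ensures that extension of scalars carries a resolution of $\bM_2^{C_2}$ over $\cA^{C_2}(1)$ to a resolution over $\cA^{C_2}$, so the adjunction descends to $\Ext$. The required freeness can be read off from the splittings \eqref{eq:AC2-split} and \eqref{eq:AnC2-split}: the $\R$-motivic Steenrod algebra $\cA^\R$ is free over its subalgebra $\cA^\R(1)$, as in the classical Milnor--Moore situation, and applying $\bM_2^{C_2}\otimes_{\bM_2^\R}(-)$ preserves this freeness, so that $\cA^{C_2}$ is free over $\cA^{C_2}(1)$. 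With this established, the change of rings is formal, giving $E_2 \iso \Ext_{C_2}(1)$.
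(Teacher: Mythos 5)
Your proposal is correct and follows essentially the same route as the paper: substitute Theorem~\ref{Hko} and apply the change-of-rings isomorphism of \cite[Theorem~A1.3.12]{Rav}. The one place where your sketch is too quick is the freeness of $\cA^{C_2}$ over $\cA^{C_2}(1)$: since $\bM_2^{\R}$ and $\bM_2^{C_2}$ are not fields, the classical Milnor--Moore argument does not apply directly, and passing from the splitting of the \emph{dual} Steenrod algebra to freeness of the algebra over its subalgebra requires an argument; this is exactly the hypothesis the paper outsources to \cite[Corollary~6.15]{Ricka} rather than proving in place.
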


\begin{pf} 
This is a standard change of rings isomorphism
\cite[Theorem~A1.3.12]{Rav}, using that $H^{*,*}_{C_2}(ko_{C_2})$ is
isomorphic to $\cA^{C_2} \!/\!/ \cA^{C_2}(1)$.  Note that the
change of rings theorem applies by \cite[Corollary 6.15]{Ricka}.
\end{pf}

\begin{rmk} \label{TateApproach}
Working in the $2$-complete category,
it is also possible to build $ko_{C_2}$ using the ``Tate diagram''
approach. See, for example, \cite{G2} for a nice description of this
approach.  According to this approach, one specifies a $C_2$-spectrum
$X$ by giving three pieces of data:
\begin{enumerate}
\item an underlying spectrum $X^e$ with $C_2$-action, 
\item a geometric fixed points spectrum 
$X^{g{C_2}}$, and 
\item a map $X^{g{C_2}}\rtarr (X^e)^{tC_2}$
from the geometric fixed points to the Tate construction.
\end{enumerate} 

In our case, 
the underlying spectrum is
$ko$ with trivial $C_2$-action.
The rest of the Tate diagram information is given by the following result. 
\end{rmk}

\begin{prop}\label{koTateDiag}
The geometric fixed points of $ko_{C_2}$ is $\bigvee_{k\geq 0} \Sigma^{4k}H\hat{\Z}_2$,
and the map $(ko_{C_2})^{g{C_2}} \rtarr ko^{tC_2}$ is the connective cover.
\end{prop}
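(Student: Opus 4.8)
The plan is to compute the geometric fixed points $(ko_{C_2})^{gC_2}$ by applying the geometric fixed-points functor $\Phi^{C_2}=(-)^{gC_2}$ to the cofiber sequence \eqref{EquivWood} of Proposition~\ref{EquivWoodProp}. Recall that $\Phi^{C_2}$ is symmetric monoidal, preserves cofiber sequences, and carries the representation sphere $S^{1,1}$ to $S^0$ (the fixed points of the sign representation are trivial); equivalently, it inverts $\rho$. Hence $\Phi^{C_2}(\Sigma^{1,1} ko_{C_2})\simeq \Phi^{C_2}(ko_{C_2})$, and \eqref{EquivWood} becomes a cofiber sequence
\[ \Phi^{C_2}(ko_{C_2}) \xrtarr{\Phi^{C_2}\eta} \Phi^{C_2}(ko_{C_2}) \xrtarr{\Phi^{C_2}c} \Phi^{C_2}(k\R). \]
To identify $\Phi^{C_2}\eta$ I would use Lemma~\ref{EtaRhoSphere}: in $\pi_{0,0}$ one has $\rho\eta=C_2-2$. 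Since $\Phi^{C_2}$ sends $\rho$ to a unit and a finite $C_2$-set $T$ to $T^{C_2}$, it sends $C_2-2$ to $0-2=-2$ in $\pi_0 S^0=\Z$. Therefore $\Phi^{C_2}\eta=-2$, which after $2$-completion is multiplication by $2$ up to a unit.

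Next I would compute the homotopy groups directly. Writing $\widetilde{EC_2}=\colim_n S^{n,n}$, we have $\Phi^{C_2}(ko_{C_2})=(\widetilde{EC_2}\smsh ko_{C_2})^{C_2}$, whose $n$-th homotopy group is the weight-zero, stem-$n$ part of the $\rho$-inverted homotopy $\pi_{*,*}(ko_{C_2})[\rho^{-1}]$. By Corollary~\ref{ExtA1RRho} the $\rho$-inverted Adams $E_2$-page is $\Ext_{C_2}(1)[\rho^{-1}]\iso \F_2[\rho^{\pm 1},\tau^4,h_1]$ (the negative cone is $\rho$-torsion and contributes nothing), and the Adams spectral sequence collapses. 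Since $\rho$ and $h_1$ have Milnor-Witt degree $0$ while $\tau^4$ has Milnor-Witt degree $4$, every class lies in Milnor-Witt degree divisible by $4$; by $\rho$-periodicity, reading off weight $0$ in stem $4k$ with $k\geq 0$ leaves a single infinite $h_1$-tower. Because $h_1$ detects $\eta$ and $\Phi^{C_2}\eta=-2$, this is a multiplication-by-$2$ tower, which assembles in the $2$-completed homotopy into a copy of $\hat\Z_2$. Thus $\pi_n\Phi^{C_2}(ko_{C_2})\iso \hat\Z_2$ for $n\equiv 0\pmod 4$ with $n\geq 0$, and vanishes otherwise, matching $\bigvee_{k\geq 0}\Sigma^{4k}H\hat\Z_2$ additively.

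To promote this to an actual splitting I would compute the $\cA^\cla$-module $H^*(\Phi^{C_2} ko_{C_2};\F_2)$ from the identification $H^{*,*}_{C_2}(ko_{C_2})\iso \cA^{C_2}\!/\!/\cA^{C_2}(1)$ of Theorem~\ref{Hko}, using that geometric fixed points corresponds on mod-$2$ cohomology to inverting $\rho$ together with the resulting comparison of $\cA^{C_2}[\rho^{-1}]$ with the classical Steenrod algebra $\cA^\cla$. The goal is to recognize the answer as $\bigoplus_{k\geq 0}\Sigma^{4k}\,\cA^\cla\!/\!/\cA^\cla(0)=\bigoplus_{k\geq 0}\Sigma^{4k}H^*(H\hat\Z_2;\F_2)$, a direct sum of cyclic modules with generators in degrees $4k$. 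The classical Adams spectral sequence then collapses with no room for extensions, forcing the equivalence $\Phi^{C_2}(ko_{C_2})\simeq \bigvee_{k\geq 0}\Sigma^{4k}H\hat\Z_2$; note this step is genuinely necessary, since the homotopy alone does not rule out nonzero $k$-invariants (for instance $H^5(H\hat\Z_2;\hat\Z_2)$ need not vanish).

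Finally, for the comparison map I would place $(ko_{C_2})^{gC_2}\rtarr ko^{tC_2}$ as the right-hand vertical map in the Tate square for the trivial $C_2$-action on the underlying spectrum $ko$, and compute $\pi_* ko^{tC_2}$ via the Tate spectral sequence $\hat{H}^s(C_2;\pi_t ko)\Rightarrow \pi_{t-s} ko^{tC_2}$, obtaining $\hat\Z_2$ in every degree divisible by $4$, now including negative degrees. Since $\Phi^{C_2}(ko_{C_2})$ is connective and the ring map is an isomorphism on $\pi_n$ for $n\geq 0$ (it matches the multiplicative generators identified above), it exhibits $\Phi^{C_2}(ko_{C_2})$ as the connective cover of $ko^{tC_2}$. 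I expect the main obstacle to be precisely these last two computations: extracting the $\cA^\cla$-module structure on $H^*(\Phi^{C_2} ko_{C_2};\F_2)$ from the equivariant Steenrod module sharply enough to guarantee the wedge splitting, and controlling the Tate spectral sequence (its extensions and $2$-completion) well enough to confirm that the map is exactly the connective cover rather than merely a $\pi_{\geq 0}$-isomorphism.
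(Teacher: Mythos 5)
Your route is genuinely different from the paper's. The paper never touches the Wood cofiber sequence or the $\rho$-inverted Adams spectral sequence at this point; instead it uses the isotropy separation sequence $ko_{hC_2}\rtarr (ko_{C_2})^{C_2}\rtarr (ko_{C_2})^{gC_2}$, reduces to the cofiber sequence $ko\smsh\R\bP^\infty\xrtarr{ko\smsh t} ko\rtarr (ko_{C_2})^{gC_2}$ with $t$ the Kahn--Priddy transfer, identifies $(ko_{C_2})^{gC_2}\simeq ko\smsh R$ where $R$ is the cofiber of $t$, and invokes Adams' filtration of $H^*(R)$ by $\cA^\cla(1)$-modules with associated graded $\bigoplus_{k\geq 0}\Sigma^{4k}\cA^\cla(1)/\!/\cA^\cla(0)$ to obtain the wedge decomposition; the connective-cover claim then follows because $R\rtarr\holim_n\Sigma\R\bP^\infty_{-n}$ is surjective on cohomology, so the induced map on homotopy is a split inclusion. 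Your additive computation of $\pi_*(ko_{C_2})^{gC_2}$ from $\Ext_{C_2}(1)[\rho^{-1}]\iso\F_2[\rho^{\pm1},\tau^4,h_1]$ is correct --- it is exactly the consistency check recorded in the remark following the proposition --- and your identification of $\Phi^{C_2}\eta$ with $-2$ via $\rho\eta=C_2-2$ is right.

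However, the two steps you defer are precisely where the content lies, and as written they are gaps. First, for the splitting you correctly observe that the homotopy groups alone do not rule out nonzero $k$-invariants and that you need $H^*((ko_{C_2})^{gC_2};\F_2)\iso\bigoplus_{k\geq 0}\Sigma^{4k}\cA^\cla/\!/\cA^\cla(0)$; but extracting this from Theorem~\ref{Hko} requires a theorem relating the mod $2$ cohomology of geometric fixed points to the $\rho$-inverted equivariant cohomology (note that $(H\underline{\F}_2)^{gC_2}$ is not $H\F_2$ but has homotopy a polynomial algebra on a degree-one class, so this is not a formal base change), together with a module-level analogue of Proposition~\ref{prop:RhoLocAn}. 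All of this is true, but none of it is carried out in your write-up. Second, and more seriously, your argument that the comparison map is the connective cover rests on the unproved assertion that it ``matches the multiplicative generators'': a ring map $\hat{\Z}_2[x]\rtarr\hat{\Z}_2[y^{\pm1}]$ with $|x|=|y|=4$ automatically sends $1$ to $1$, but nothing you have said prevents $x\mapsto 2y$, in which case the map would be injective on $\pi_n$ for $n\geq 0$ without being the connective cover. Showing that $x$ hits a unit multiple of $y$ is the whole point of the second assertion, and it is exactly what the paper's cohomological surjectivity argument (or some equivalent comparison of the $\rho$-inverted Adams spectral sequence with the Tate spectral sequence) is there to supply.
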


\begin{pf}
The Tate construction $ko^{tC_2}$ was computed by Davis-Mahowald to be 
$\bigvee_{n\in \Z} \Sigma^{4n}H\hat\Z_2$ \cite{DM}*{Theorem 1.4}.
For the interpretation of the Davis-Mahowald calculation in terms of the Tate construction, see \cite{Alaska}*{Section XXI.3}.

The geometric fixed points sit in a cofiber sequence
\[ ko \smsh \R\bP^\infty_+ \simeq ko_{hC_2} \rtarr (ko_{C_2})^{C_2} \rtarr (ko_{C_2})^{gC_2},\]
which we can write as
\[ko \vee (ko \smsh \R\bP^\infty) \rtarr ko\vee ko \rtarr (ko_{C_2})^{gC_2}.\]
The left map is a map of $ko$-modules, and we consider the simpler cofiber sequence
\[ ko \smsh \R\bP^\infty \xrtarr{ko\smsh t}  ko \rtarr (ko_{C_2})^{gC_2},\]
where $t:\R\bP^\infty \rtarr S^0$ is the Kahn-Priddy transfer. As in \cite[Section~1.5]{Rav}, we write $R$ for the cofiber of $t$, so that $(ko_{C_2})^{gC_2}\simeq ko\smsh R$. As Adams explained in \cite{NthKind}, the cohomology of $R$ has a filtration as $\cA^\cla(1)$-modules in which the associated graded object is $\bigoplus_{k\geq 0} \Sigma^{4k} \cA^\cla(1)/\!/ \cA^\cla(0)$. It follows that $ko\smsh R\simeq \bigvee_{k\geq 0} \Sigma^{4k} H\hat{\Z}_2$.

Similarly, the associated graded for $\colim_n \rH^*(\Sigma \R\bP^\infty_{-n})$ is $\bigoplus_{k\in \Z} \Sigma^{4k} \cA^\cla(1)/\!/ \cA^\cla(0)$. The map $R \rtarr \holim_n \Sigma \R\bP^\infty_{-n}$ is surjective on cohomlogy, and the same is true for the induced map $R\smsh ko \rtarr  \holim_n  (\R\bP^\infty_{-n}\smsh \Sigma ko)$. We conclude that the map 
\[ \bigvee_{k\geq 0} \Sigma^{4k}H\hat{\Z}_2 \simeq (ko_{C_2})^{gC_2} \rtarr ko^{tC_2} \simeq \holim_n  (\R\bP^\infty_{-n}\smsh \Sigma ko)\]
is a split inclusion in homotopy and therefore a connective cover.
\end{pf}

\begin{rmk}
Note that the description of geometric fixed points given here is confirmed by 
Proposition~\ref{ExtA1RRho}. That is, the geometric fixed points of a $C_2$-spectrum
$X$ are given by the categorical fixed points of $S^{\infty, \infty}\smsh X$, where 
$S^{\infty,\infty} =\colim(S^{n,n}\xrtarr{\rho}S^{n+1,n+1})$. Thus the geometric
fixed points are computed by the $\rho$-inverted Adams spectral sequence. 
As we recall in the next section, the homotopy element $2$ is detected by
the element $h_0+\rho h_1$ in $\Ext$. Thus the element
$\rho^k h_1^k \tau^{4j}$ of Proposition~\ref{ExtA1RRho} detects $2^k$ in
the $4j$-stem of the geometric fixed points.
\end{rmk}

\section{The homotopy ring}
\label{sec:homotopy}

In this section, we will describe the bigraded homotopy ring 
$\pi_{*,*}(ko_{C_2})$ of $ko_{C_2}$.
We are implicitly completing the homotopy groups at $2$ so that
the Adams spectral sequence converges \cite[Corollary 6.47]{HK}.

It turns out that the Adams spectral
sequence collapses, so that $\Ext_{C_2}(1)$ 
is an associated graded object of 
$\pi_{*,*}(ko_{C_2})$.  Nevertheless, the Adams spectral sequence
hides much of the multiplicative structure.  

Recall that the Milnor-Witt stem of $X$ is defined (cf. \cite{MWLow}) as the direct sum
\[ \Pi_n(X) \iso \bigoplus_i \pi_{n+i,i}(X).\]

\begin{prop} There are no non-zero differentials in the
Adams spectral sequence for $ko_{C_2}$.
\end{prop}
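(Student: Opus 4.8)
The plan is to show that every nonzero class on the $E_2$-page $\Ext_{C_2}(1)$ is a permanent cycle, by combining multiplicativity with a sparseness (``for degree reasons'') argument. Recall that an Adams differential has the form $d_r \colon E_r^{s,f,w} \to E_r^{s-1,f+r,w}$ with $r \ge 2$, so it preserves the weight $w$, lowers the stem $s$ by one, and raises the Adams filtration by $r \ge 2$; equivalently it lowers the Milnor--Witt degree $s-w$ by one. Since $ko_{C_2} \simeq \bK_{C_2}(\R)$ is a ring spectrum, the Adams spectral sequence is multiplicative and each $d_r$ is a derivation. Using the splitting $\Ext_{C_2}(1) \iso \Ext_\R(1) \oplus \Ext_{NC}$ of Proposition~\ref{ExtSplits}, in which $\Ext_{NC}$ is a square-zero $\Ext_\R(1)$-module, it therefore suffices to treat the two summands separately. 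One should note at the outset that multiplicativity alone does not finish the argument, because $\Ext_{NC}$ is generated over $\Ext_\R(1)$ only after allowing division by $\rho$, and $d_r(x/\rho^j)=y$ merely forces $d_r(x)=\rho^j y$, which $\rho$-torsion need not make zero. Thus the core of the argument is a direct inspection of the explicit additive structure recorded in Table~\ref{tbl:MultEinfpage} and the charts of Section~\ref{sctn:chart}.

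For the positive cone $\Ext_\R(1)$ one is looking at the established $\R$-motivic $ko$ pattern (cf. \cite{H}, \cite{MWLow}), and the collapse there is the same phenomenon as in the classical case. The only families of unbounded Adams filtration are the $h_0$-towers (and their $b$-periodic translates), and these are permanent cycles because $h_0$ detects $2$; away from these towers, $\Ext_\R(1)$ obeys a vanishing line of slope $\le 1/2$, so a single leftward step in the stem combined with a filtration jump of at least $2$ always leaves the nonzero region. Hence no $d_r$ can connect two nonzero classes of the positive cone.

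The substance of the proof is the negative cone $\Ext_{NC}$, whose additive structure is the families $\frac{\gamma}{\rho^j \tau^k}$ together with the $\frac{Q}{\rho^j}h_1^n$. Here the filtration-$0$ generators $\frac{\gamma}{\tau^k}$ are handled by degree: a differential would land in $(-1,f+r,k{+}1)$ with $f+r\ge 2$, a tridegree one checks is empty from the additive description. The class $Q h_1^3$ at $(4,2,4)$ is most cleanly seen to be a permanent cycle via its Massey product description $Q h_1^3 \in \bracket{h_1^2, \tau h_1, \frac{\gamma}{\tau}}$ (Remark~\ref{rmk:Qx}): the three entries are permanent cycles of zero indeterminacy, so by Moss's convergence theorem the bracket consists of permanent cycles. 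Finally, the infinitely $\rho$-divisible part of $\Ext_{C_2}(1)$ was identified in Proposition~\ref{prop:rho-divisible-Ext} as $\F_2[\rho^{\pm 1},h_1]\otimes \frac{\F_2[\tau^4]}{\tau^\infty}\{\gamma\}$; these classes detect powers of $2$ in the geometric fixed points and are permanent cycles, while the remaining, only finitely $\rho$-divisible classes satisfy a vanishing line analogous to the positive cone.

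The main obstacle is precisely the negative cone, because it carries two infinite directions, $\rho$-divisibility and $\tau$-divisibility, which block any naive reduction to finitely many generators. The delicate point is to rule out differentials into and out of these families uniformly across \emph{all} weights rather than a finite window. My expectation is that the cleanest rigorous route is to record, weight by weight, the vanishing line bounding the finitely $\rho$-divisible part, combine it with the identification of the infinitely $\rho$-divisible part as permanent cycles, and then observe that a $d_r$ with $r \ge 2$ (preserving weight, dropping Milnor--Witt degree by one, raising filtration) always has empty target. This uniform bookkeeping is exactly what the charts of Section~\ref{sctn:chart} make transparent, and it is the step I would expect to absorb essentially all of the work.
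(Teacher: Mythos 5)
Your overall strategy (weight-preservation plus multiplicativity plus inspection) is in the right spirit, but as written the argument has both a factual error and a structural gap. The factual error: in $\Ext_\R(1)$ the class $h_1$ is non-nilpotent (Corollary~\ref{ExtA1RRho}), so besides the $h_0$-towers there are infinite $h_1$-towers of slope one (e.g.\ $h_1^n$ at $(n,n,n)$, and $\F_2[\rho^{\pm 1},h_1]\otimes\frac{\F_2[\tau^4]}{\tau^\infty}\left\{\gamma\right\}$ in the negative cone). Consequently there is no vanishing line of slope $\le 1/2$ away from the $h_0$-towers, and the inference that a filtration jump of at least $2$ ``always leaves the nonzero region'' fails: a differential can a priori land \emph{on} one of the infinite towers (for instance $d_r(h_1 b^k)$ landing on the $h_0$-tower above $b^k$ is not excluded by any vanishing line; it is excluded only because the weights differ, equivalently because the target lies in a Milnor--Witt stem $\equiv 3 \pmod 4$, which vanishes in positive Milnor--Witt degrees). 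Separately, your claim that the infinitely $\rho$-divisible classes ``detect powers of $2$ in the geometric fixed points'' conflates $\lim_\rho$ with $\rho$-inversion: it is the $\rho$-torsion-free classes $\rho^k h_1^k \tau^{4j}$ of Proposition~\ref{prop:RhoLocAn} that see the geometric fixed points, not the classes of Proposition~\ref{prop:rho-divisible-Ext}, so this gives no reason for the latter to be permanent cycles.

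The structural gap is that you defer exactly the part of the argument that needs a uniform mechanism --- ruling out differentials supported on the infinite families across all weights --- to ``bookkeeping with the charts.'' The paper supplies two such mechanisms, organized by Milnor--Witt stem modulo $4$: (i) every class in Milnor--Witt stem $\equiv 3$ is infinitely $\rho$-divisible while no class in Milnor--Witt stem $\equiv 2$ is, and since $d_r$ commutes with $\rho$ this kills all differentials out of the $(4k+3)$-stems; (ii) every class in Milnor--Witt stem $\equiv 0$ supports an infinite $h_0$- or $h_1$-tower while no class in Milnor--Witt stem $\equiv 1$ does, so the Leibniz rule kills all differentials out of the $(4k+1)$-stems; only the remaining two residues are handled by direct inspection. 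Your Moss-theorem treatment of $Q h_1^3$ is a legitimate alternative for that one class (modulo verifying that the corresponding Toda bracket is defined in homotopy), but it does not replace the uniform arguments above; and your proposed reduction to the two summands of Proposition~\ref{ExtSplits} is itself not automatic, since Adams differentials need not respect that algebraic splitting of the $E_2$-page.
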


\begin{pf}
This follows by inspection of the $E_2$-page,
shown in the charts in Section \ref{sctn:chart}.

Adams $d_r$ differentials decrease the stem by 1, increase the filtration by $r$, and preserve the weight. It follows that
 Adams differentials decrease the Milnor-Witt stem by 1. Every class in Milnor-Witt stem congruent to $3$ modulo $4$ is infinitely $\rho$-divisible. As there are no infinitely $\rho$-divisble classes in Milnor-Witt stem congruent to $2$ modulo $4$, it follows that there are no non-zero differentials supported in the Milnor-Witt $(4k+3)$-stem. 

Every class in Milnor-Witt stem $4k$ supports an infinite tower of either $h_0$-multiples or $h_1$-multiples, while there are no such towers in Milnor-Witt stem $4k+1$.  It follows that there cannot be any non-zero differentials emanating from the $(4k+1)$-Milnor-Witt-stem. 
Finally, direct inspection shows there cannot be any non-zero
differentials starting in the Milnor-Witt $(4k+2)$ or $4k$-stems. 
\end{pf}

The structure of the Milnor-Witt $n$-stem $\Pi_n(ko_{C_2})$
of course depends on $n$.  The description of these Milnor-Witt
stems naturally breaks into cases, depending on the
value of $n \pmod 4$.

Table \ref{tbl:HtpyGen} summarizes the notation that we will
use for specific elements of $\pi_{*,*}(ko_{C_2})$.
The definition of each element is discussed in detail
in the following sections.

\begin{table}
\captionof{table}{Notation for $\pi_{*,*}(ko_{C_2})$}
\label{tbl:HtpyGen}
\begin{center}
\begin{tabular}{LLLLL} 
\hline
mw & (s,w) & \text{element} & \text{detected by} & \text{defining relation} \\ \hline
0 &  (-1,-1) & \rho &  \rho & \\
0 &  (1,1) & \eta & h_1 & \\
0 & (4,4) & \alpha & Q h_1^3 & \rho\alpha = \eta^3\\
0 & (0,0) & \omega & h_0 & \omega = \eta \rho + 2 \\
4 & (0,-4) & \tau^4 &   \tau^4 &  \\
0 & (8,8) & \beta & \frac{Qh_1^4}{\rho^3} & 4\beta = \alpha^2 \\
2 & (0,-2) & \tau^2\omega & \tau^2 h_0 
  & (\tau^2\omega)^2 = 2 \omega \cdot \tau^4 \\
-2 & (0,2) & \tau^{-2}\omega & \frac{\gamma}{\tau}
  & \tau^4 \cdot \tau^{-2}\omega = \tau^2 \omega \\
-4 & (0,4) & \tau^{-4}\omega & \frac{\gamma}{\tau^3} 
  & \tau^4 \cdot \tau^{-4}\omega = \omega \\
-5-4k & (0,5+4k) & \frac{\Gamma}{\tau^{4+4k}} & \frac{\gamma}{\tau^{4+4k}}
  & \tau^4 \cdot \frac{\Gamma}{\tau^{4+4k}} = \frac{\Gamma}{\tau^{4+4(k-1)}} \\
1 & (1,0) & \tau\eta & \tau h_1 & \\
2 & (4,2) & \tau^2 \alpha & a & 2 \tau^2 \alpha = \alpha\cdot \tau^2\omega\\
\hline
\end{tabular}
\end{center}
\end{table}

\subsection{The Milnor-Witt $0$-stem}

Our first task is to describe the Milnor-Witt $0$-stem
$\Pi_0(ko_{C_2})$.
The other Milnor-Witt stems are modules over
$\Pi_0(ko_{C_2})$, and we will use this module structure
heavily in order to understand them.

\begin{prop}
\label{prop:underlying-rho}
Let $X$ be a $C_2$-equivariant spectrum, and 
let $\alpha$ belong to $\pi_{n,k}(X)$.
The element $\alpha$ is divisible by $\rho$ if and only if
its underlying class $\iota^*(\alpha)$ in $\pi_n (\iota^* X)$
is zero.
\end{prop}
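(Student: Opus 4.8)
The plan is to realize $\rho$-divisibility as a single lifting problem against a cofiber sequence, and then to recognize the resulting obstruction as the restriction map $\iota^*$. Recall the Euler-class cofiber sequence of based $C_2$-spaces $(C_2)_+\xrightarrow{\pi}S^{0,0}\xrightarrow{\rho}S^{1,1}$ used in the proof of Lemma~\ref{FxdPtsko}; here $\pi$ is the collapse map and $S^{1,1}=S^\sigma$. Viewing $\alpha$ as a map $S^{n,k}\to X$, divisibility of $\alpha$ by $\rho$ means precisely that $\alpha$ extends along $\rho\smsh 1_{S^{n,k}}\colon S^{n,k}\to S^{n+1,k+1}$ to some $\beta\colon S^{n+1,k+1}\to X$, since then $\alpha=\rho\beta=\beta\circ(\rho\smsh 1)$.

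First I would smash the Euler-class cofiber sequence with $S^{n,k}$ to obtain the cofiber sequence $(C_2)_+\smsh S^{n,k}\xrightarrow{\pi\smsh 1}S^{n,k}\xrightarrow{\rho\smsh 1}S^{n+1,k+1}$ in $\Ho(\SpC)$. Applying the contravariant functor $[-,X]^{C_2}$ yields an exact sequence, so that $\alpha$ extends along $\rho\smsh 1$ if and only if the composite $\alpha\circ(\pi\smsh 1)\in[(C_2)_+\smsh S^{n,k},X]^{C_2}$ is zero. Thus $\alpha$ is divisible by $\rho$ if and only if $\alpha\circ(\pi\smsh 1)=0$.

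The key step is to identify this obstruction with $\iota^*\alpha$. I would use that $(C_2)_+\smsh S^{n,k}$ is the induced spectrum $\iota_!\iota^* S^{n,k}$, because $\iota^* S^{n,k}\simeq S^n$, and that $\pi\smsh 1$ is precisely the counit $\epsilon\colon\iota_!\iota^* S^{n,k}\to S^{n,k}$ of the induction--restriction adjunction $\iota_!\dashv\iota^*$ (Wirthm\"uller). Under the natural bijection $[\iota_!\iota^* S^{n,k},X]^{C_2}\cong[\iota^* S^{n,k},\iota^* X]=\pi_n(\iota^* X)$, which sends $g\mapsto\iota^*(g)\circ u$ for the unit $u$, the triangle identity $\iota^*(\epsilon)\circ u=\mathrm{id}$ shows that $\alpha\circ\epsilon$ corresponds to $\iota^*\alpha$. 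Since this bijection is injective, $\alpha\circ(\pi\smsh 1)=0$ if and only if $\iota^*\alpha=0$, which together with the previous paragraph proves the biconditional in one stroke. As a sanity check, the forward implication also follows at once from $\iota^*\rho=0$: if $\alpha=\rho\beta$ then $\iota^*\alpha=\iota^*(\rho)\,\iota^*(\beta)=0$, because $\iota^*\rho\in\pi_{-1}(\iota^* S^{0,0})=\pi_{-1}^s=0$.

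The main obstacle is bookkeeping rather than anything conceptual: one must match the first map of the smashed Euler sequence with the adjunction counit and apply the triangle identity with the correct variances, while observing that exactness of $[-,X]^{C_2}$ on the cofiber sequence is exactly what converts the lifting problem into the vanishing of $\alpha\circ(\pi\smsh 1)$. Notably, this route requires no computation of a connecting homomorphism and no tracking of a multiplicity or sign, which is precisely what makes it clean.
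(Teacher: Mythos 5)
Your argument is correct and is essentially the paper's proof: both use the cofiber sequence $C_2\,_+ \rightarrow S^{0,0} \xrightarrow{\rho} S^{1,1}$ and the exactness of the induced sequence on $[-,X]^{C_2}$, with the identification $[C_2\,_+ \smsh S^{n,k}, X]^{C_2} \iso \pi_n(\iota^* X)$ carrying the obstruction to $\iota^*(\alpha)$. The paper simply states the resulting long exact sequence, whereas you spell out the adjunction and triangle-identity bookkeeping that justifies it; the content is the same.
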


\begin{pf}
The $C_2$-equivariant cofiber sequence
\[
C_2\,_+ \rtarr S^{0,0} \xrtarr{\rho} S^{1,1}
\]
induces a long exact sequence
\[
\cdots \rtarr \pi_{n+1,k+1}(X) \xrtarr{\rho} 
\pi_{n,k}(X) \xrtarr{\iota^*} \pi_n(\iota^* X) \rtarr 
\pi_{n+2,k+1}(X) \xrtarr{\rho} \cdots.
\]
\end{pf}

\begin{cor}
\label{prop:rho-Qh1^3}
There is a hidden $\rho$ extension from ${Q} h_1^3$ to $h_1^3$
in the Adams spectral sequence.
\end{cor}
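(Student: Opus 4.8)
The plan is to obtain the extension as a direct consequence of Proposition~\ref{prop:underlying-rho}, by passing to underlying homotopy. The class $h_1^3$ detects $\eta^3$ in $\pi_{3,3}(ko_{C_2})$, where $\eta \in \pi_{1,1}$ is the class detected by $h_1$. Since the underlying spectrum $\iota^* ko_{C_2}$ is the classical connective real $K$-theory spectrum $ko$, the restriction functor carries $\eta$ to the classical Hopf class, so that $\iota^*(\eta^3) = \eta^3$ in $\pi_3(ko)$.

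First I would invoke the classical computation $\pi_3(ko) = 0$, which gives $\iota^*(\eta^3) = 0$. Proposition~\ref{prop:underlying-rho} then shows at once that $\eta^3$ is divisible by $\rho$, so there is a nonzero class $\alpha \in \pi_{4,4}(ko_{C_2})$ with $\rho\alpha = \eta^3$. Next I would pin down the detecting class of $\alpha$ by inspecting the $(s,w) = (4,4)$ column of the $E_\infty$-page (Section~\ref{sctn:chart}). The classes there are $Q h_1^3$ in filtration $2$, the class $\frac{\gamma}{\tau}a = h_0 \cdot Q h_1^3$ in filtration $3$, and $h_1^4$ in filtration $4$, with no class in filtration $0$ or $1$. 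The filtration-$4$ class is too high to $\rho$-multiply onto the filtration-$3$ target $h_1^3$, while $\frac{\gamma}{\tau}a$ is $\rho$-torsion, since $\rho h_0 = 0$ in $\Ext_\R(1)$, and so cannot be the leading term of a $\rho$-divisor of $\eta^3$. Hence $\alpha$ must be detected by $Q h_1^3$.

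Finally, because multiplication by $\rho$ preserves Adams filtration on the $E_\infty$-page, the visible product $\rho \cdot Q h_1^3$ lies in filtration $2$ and hence cannot equal the filtration-$3$ class $h_1^3$; the relation $\rho\alpha = \eta^3$ is therefore invisible on $E_\infty$ and records a hidden $\rho$-extension from $Q h_1^3$ to $h_1^3$. The main obstacle is the middle step of forcing the divisor $\alpha$ into filtration $2$: this needs the chart inspection together with the observation that the intervening filtration-$3$ class $\frac{\gamma}{\tau}a = h_0\cdot Q h_1^3$ is annihilated by $\rho$, so no filtration-$3$ detecting class for $\alpha$ can $\rho$-multiply onto $h_1^3$. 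Everything else is formal, resting on $\pi_3(ko) = 0$ and Proposition~\ref{prop:underlying-rho}.
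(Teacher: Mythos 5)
Your argument is correct and follows the same route as the paper: use $\pi_3(ko)=0$ together with Proposition~\ref{prop:underlying-rho} to conclude that $\eta^3$ is $\rho$-divisible, and then observe that a divisor detected by $Qh_1^3$ is the only possibility. The paper leaves the last step as a one-line assertion, whereas you usefully spell out the chart inspection that rules out the filtration~$3$ and filtration~$\geq 4$ classes in the $(4,4)$ column.
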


\begin{pf}
Recall that $\eta^3$ is zero in $\pi_3(ko)$.
Proposition \ref{prop:underlying-rho}
implies that $\eta^3$ in $\pi_{3,3}(ko_{C_2})$ is divisible by $\rho$.
The only possibility is that there is a hidden extension from
${Q} h_1^3$ to $h_1^3$.
\end{pf} 

\begin{prop} The element $\eta$ in $\pi_{1,1}(ko_{C_2})$ is detected by $h_1$.
\end{prop}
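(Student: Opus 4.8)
The plan is to compare the equivariant Adams spectral sequence for $ko_{C_2}$ with the classical Adams spectral sequence for its underlying spectrum $ko$ via the restriction functor $\iota^*$. Since the underlying spectrum of $ko_{C_2}$ is $ko$ and $\eta$ is the $C_2$-equivariant Hopf map of Definition~\ref{DefnEta}, restricting to the trivial subgroup identifies $\C^2-\{0\}\simeq S^3$ and $\C\bP^1\simeq S^2$, so that $\iota^*\eta$ is the classical complex Hopf map in $\pi_1(ko)$. Classically, this element is detected by $h_1$ in Adams filtration exactly $1$; I will take this standard fact as input.

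Next I would use that $\iota^*$ induces a map of Adams spectral sequences. Because restriction is a map of Adams towers, it cannot decrease Adams filtration: if $\eta$ has equivariant Adams filtration $f$, then $\iota^*\eta$ has classical filtration at least $f$. As $\iota^*\eta$ has classical filtration $1$, this gives the bound that $\eta$ has equivariant Adams filtration at most $1$. Moreover $\eta$ is nonzero, since its restriction is the nonzero classical $\eta$, so its filtration is finite and lies in $\{0,1\}$.

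It then remains to read off the answer from the chart of $\Ext_{C_2}(1)$ in Section~\ref{sctn:chart}. Inspection shows that the only nonzero $E_\infty$-class in stem $1$ and weight $1$ is $h_1$, which lies in filtration $1$, and that there is no class in this stem and weight in filtration $0$. Combined with the filtration bound and the nonvanishing of $\eta$ established above, this forces $\eta$ to be a class of filtration exactly $1$ in stem $1$ and weight $1$, hence detected by $h_1$.

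The main point to be careful about is the interaction between restriction and Adams filtration, namely that $\iota^*$ only increases filtration, so that the underlying computation supplies an \emph{upper} bound on the equivariant filtration of $\eta$. The sparsity of the chart then does the rest: since the tridegree of $\eta$ contains no filtration-zero class, the upper bound of $1$ pins down detection precisely by $h_1$ rather than by a lower-filtration class. Identifying the map $\iota^*$ on $E_2$ as sending the equivariant $h_1$ to the classical $h_1$ is consistent with this conclusion but is not strictly needed, since the uniqueness of the class in the relevant tridegree already determines the detecting element.
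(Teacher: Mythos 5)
Your proof is correct and follows essentially the same route as the paper: restrict to the underlying spectrum, use that the classical $\eta$ is detected by $h_1$ in filtration $1$, and observe that every other class of $\Ext_{C_2}(1)$ in stem $1$ and weight $1$ lies in higher filtration. You have merely made explicit the filtration-comparison step (that $\iota^*$ cannot decrease Adams filtration) that the paper leaves implicit.
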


\begin{pf}
The restriction $\iota^*(\eta)$ of $\eta$ is the classical $\eta$, 
which is detected by the classical element $h_1$.
As all other elements of $\Ext_{\cA^{C_2}(1)}$ in the 1-stem and weight 1 all live in higher filtration, the result follows.
\end{pf}

\begin{defn}
Let $\alpha$ be an element in $\pi_{4,4} (ko_{C_2})$ 
detected by ${Q} h_1^3$
such that
$\rho \alpha = \eta^3$.
\end{defn}

Corollary \ref{prop:rho-Qh1^3} guarantees
that such an element $\alpha$ exists.

There are many elements of $\pi_{4,4}$ 
detected by ${Q} h_1^3$ because of the presence of
elements in higher Adams filtration.  The condition
$\rho \alpha = \eta^3$ narrows the possibilities, but still
does not determine a unique element because of
the elements $\frac{\gamma}{\tau} h_0^k a$ in higher Adams
filtration.  For our purposes, 
this remaining choice makes no difference.

\begin{defn}
Let $\omega$ be the element 
$\eta\rho + 2$
of $\pi^{C_2}_{0,0}(ko_{C_2})$.
\end{defn}

As for $\rho$ and $\eta$,
the element $\omega$ comes from the homotopy groups of the
equivariant sphere spectrum.  Strictly speaking, there is no
need for the notation $\omega$ since it can be written 
in terms of other elements.  Nevertheless, it is convenient
because $\omega$ plays a central role.
According to Lemma \ref{EtaRhoSphere}, 
$\omega$ corresponds to the element $C_2$ of the Burnside
ring $A(C_2)$.

Note that $\omega$ is detected by $h_0$, while
$2$ is detected by $h_0 + \rho h_1$.
For this reason, $\omega$, rather than $2$, plays
the role of the zeroth Hopf map in the equivariant
(and $\R$-motivic) context.
Also note that $\omega$ equals $1 - \epsilon$,
where $\epsilon$ is the twist
\[
S^{1,1} \smsh S^{1,1} \rtarr S^{1,1} \smsh S^{1,1}.
\]

\begin{prop} 
\label{prop:2-eta^5}
The homotopy class $\eta^5$ is divisible by $2$.
\end{prop}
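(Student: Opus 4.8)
The plan is to reduce the claim to the two defining relations $\eta^3 = \rho\alpha$ and $\omega = \eta\rho + 2$, plus one extra input: the vanishing of $\eta\omega$. First I would write $\eta^5 = \eta^2\cdot\eta^3$ and substitute $\eta^3 = \rho\alpha$, which is available from the definition of $\alpha$ together with Corollary~\ref{prop:rho-Qh1^3}. This gives $\eta^5 = \rho\eta^2\cdot\alpha$, so it suffices to show that $\rho\eta^2$ is divisible by $2$; concretely, I will show $\rho\eta^2 = -2\eta$, whence $\eta^5 = -2\eta\alpha$.

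The key relation is $\eta\omega = 0$. I would justify it from the description $\omega = 1 - \epsilon$, where $\epsilon$ is the switch on $S^{1,1}\smsh S^{1,1}$: since the equivariant Hopf map satisfies $\eta\epsilon = \eta$ (the class $\eta$ is fixed by the switch, exactly as in the motivic setting), one gets $\eta\omega = \eta(1-\epsilon) = 0$. Equivalently, $\omega$ is the image of the Milnor--Witt hyperbolic element $h = 2 + \rho\eta$ under the unit map from the sphere, and $\eta h = 0$ is one of the defining relations of Milnor--Witt $K$-theory. Given $\eta\omega = 0$ and $\omega = \eta\rho + 2$, expanding $0 = \eta\omega = \eta^2\rho + 2\eta$ yields $\rho\eta^2 = -2\eta$. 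Combining the two steps gives $\eta^5 = \rho\eta^2\cdot\alpha = (-2\eta)\alpha = -2\eta\alpha$, so $\eta^5$ is divisible by $2$.

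As an internal check that does not use $\eta\omega=0$, the Burnside relation $\omega^2 = 2\omega$ coming from $A(C_2) = \Z[C_2]/(C_2^2 - 2C_2)$ (Lemma~\ref{EtaRhoSphere}) gives $\omega\cdot\eta\rho = 0$, hence $(\eta\rho)^2 = -2\eta\rho$; multiplying $\eta^5 = \rho\eta^2\alpha$ by $\rho$ then produces $\rho(\eta^5 + 2\eta\alpha) = 0$. Moreover $\iota^*(\eta^5) = 0$ and $\iota^*(2\eta\alpha) = 0$ (since $\eta^3 = 0$ and $\pi_5(ko) = 0$ classically), so both classes are $\rho$-divisible by Proposition~\ref{prop:underlying-rho}, consistent with $\eta^5 = -2\eta\alpha$.

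The main obstacle is pinning down $\eta\omega = 0$ rigorously inside $\pi_{*,*}(ko_{C_2})$ rather than merely $\omega\eta\rho = 0$: the Burnside computation only controls $\eta\omega$ after one multiplication by $\rho$, so removing that factor of $\rho$ genuinely requires the switch (equivalently Milnor--Witt) relation. Alternatively, one could bypass $\eta\omega=0$ and instead establish $\eta^5 = -2\eta\alpha$ as a hidden $2$-extension directly on the Adams $E_\infty$-page, verifying that the leading term of $2\cdot\eta\alpha$ against $h_1\cdot Q h_1^3$ vanishes so that $2\eta\alpha$ is forced up in filtration to be detected by $h_1^5$; this route is more calculational and is the harder of the two.
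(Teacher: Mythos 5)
Your argument is correct and is essentially the paper's proof: both reduce the claim to the relation $\omega\eta=0$ (equivalently $\rho\eta^2=-2\eta$) together with the defining relation $\rho\alpha=\eta^3$, yielding $\eta^5=\rho\eta^2\cdot\alpha=-2\eta\alpha$. The only difference is in how $\omega\eta=0$ is sourced: the paper cites Morel's computation of the $\R$-motivic stable stems plus the agreement of equivariant and $\R$-motivic stems in the relevant degrees, and explicitly mentions the geometric switch argument you lead with as an equally valid alternative.
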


\begin{pf} 
The relation $\omega \eta=0$ was established by Morel \cite{Mo} in the $\R$-motivic stable stems, and the equivariant stems agree with the $\R$-motivic ones in the relevant degrees \cite{C2RStems}*{Theorem 4.1}.
(See also \cite{DIHopf} for a geometric argument for this relation
given in the motivic context.  This geometric argument 
works just as well equivariantly.)

Using the defining relation for $\alpha$,
it follows that
\[
-2 \eta \alpha = \rho \eta^2 \alpha = \eta^5.
\] 
\end{pf}

Proposition \ref{prop:2-eta^5} 
was already known to be true in the homotopy of the 
$C_2$-equivariant sphere spectrum \cite{B}.
The divisibility of the elements $\eta^k$ is very much related
to work of Landweber \cite{Landweber}.

\begin{defn} 
Let $\tau^4$ be an element of $\pi_{0,-4}(ko_{C_2})$ 
that is detected by $\tau^4$.
\end{defn}

The element $\tau^4$ is not uniquely determined because of elements
in higher Adams filtration.  
For our purposes, we may choose an arbitrary such element.

\begin{prop}\label{prop:BasicPhiMult}
\mbox{}
\begin{enumerate}
\item 
\label{part:tau^4-Qh1^3}
There is a hidden $\tau^4$ extension from ${Q} h_1^3$ to $\tau^2 a$.
\item 
\label{part:tau^4-Q/rho^3 h1^4}
There is a hidden $\tau^4$ extension from $\frac{Q}{\rho^3} h_1^4$
to $b$.
\end{enumerate} 
\end{prop}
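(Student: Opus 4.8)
The plan is to treat both parts as hidden Adams extensions that become visible only after multiplying by a suitable power of $\rho$. The common mechanism is this: the source classes ${Q} h_1^3$ and $\frac{Q}{\rho^3} h_1^4$ both lie in the negative cone and are $\tau$-torsion, so their naive $\tau^4$-multiples vanish in $\Ext_{C_2}(1)$; this forces the homotopy products $\tau^4\alpha$ and $\tau^4\beta$ into strictly higher Adams filtration. I would then pin down the exact filtration and detecting class by multiplying by $\rho$ (resp.\ $\rho^3$), identifying the result through a known relation in $\Ext_\R(1)$, and squeezing the filtration from both sides. Throughout I use that the Adams spectral sequence collapses, so that $\Ext_{C_2}(1)$ is literally the $E_\infty$-page.

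For part~(1), first I would record that $\tau \cdot {Q} h_1^3 = 0$ in $\Ext_{C_2}(1)$: by Remark~\ref{rmk:Qh1^3} we may write ${Q} h_1^3 = \frac{\gamma}{\tau} v_1^2$, and $\tau \cdot \frac{\gamma}{\tau} = 0$ in $\bM_2^{C_2}$. Hence $\tau^4 \cdot {Q} h_1^3 = 0$, so $\tau^4\alpha$ has Adams filtration at least $3$. Next I would compute, using the defining relation $\rho\alpha = \eta^3$,
\[
\rho\cdot(\tau^4\alpha) = \tau^4\cdot\rho\alpha = \tau^4\eta^3.
\]
Since $\eta^3$ is detected by $h_1^3$ and $\tau^4$ by $\tau^4$, and $\tau^4 h_1^3 = \rho\,\tau^2 a \neq 0$ in $\Ext_\R(1)$ by Table~\ref{tbl:ExtA1Rrel}, the product $\tau^4\eta^3$ is detected by $\rho\,\tau^2 a$ in filtration $3$. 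Because multiplication by $\rho$ cannot lower filtration, $\tau^4\alpha$ has filtration at most $3$; combined with the lower bound this gives filtration exactly $3$, and the only class in tridegree $(4,3,0)$ is $\tau^2 a$. Thus $\tau^4\alpha$ is detected by $\tau^2 a$.

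For part~(2) I would run the same scheme one filtration higher, feeding in part~(1). The identical $\tau$-torsion argument gives $\tau \cdot \frac{Q}{\rho^3} h_1^4 = 0$, hence $\tau^4\beta$ has filtration at least $4$. To identify it, multiply the conclusion of part~(1) by $\eta$:
\[
\eta\cdot(\tau^4\alpha) = \tau^4\cdot(\eta\alpha),
\]
which, since $\tau^4\alpha$ is detected by $\tau^2 a$ and $h_1\cdot\tau^2 a = \rho^3 b \neq 0$ by Table~\ref{tbl:ExtA1Rrel}, is detected by $\rho^3 b$ in filtration $4$. It remains to recognize $\eta\alpha$ as $\rho^3\beta$: both are detected by $h_1\cdot{Q} h_1^3 = {Q} h_1^4 = \rho^3\cdot\frac{Q}{\rho^3} h_1^4$ (the first equality via the Massey product of Lemma~\ref{lem:Qx}), so they agree modulo higher filtration in tridegree $(5,3,5)$, giving $\eta\alpha = \rho^3\beta$. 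Then
\[
\rho^3\cdot(\tau^4\beta) = \tau^4\cdot\rho^3\beta = \tau^4\cdot\eta\alpha = \eta\cdot(\tau^4\alpha)
\]
is detected by $\rho^3 b$ in filtration $4$, and the filtration squeeze forces $\tau^4\beta$ into filtration exactly $4$, detected by the unique class $b$ in tridegree $(8,4,4)$.

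The main obstacle will be the two ``division by $\rho$'' steps, which are valid only once one knows that the relevant target tridegrees — $(4,3,0)$ for part~(1) and $(8,4,4)$ for part~(2) — each contain a single nonzero class, and that the auxiliary identification $\eta\alpha = \rho^3\beta$ has no higher-filtration indeterminacy in tridegree $(5,3,5)$. All three facts are read off from the charts in Section~\ref{sctn:chart}. A secondary point requiring care is the claim that the ${Q}$-classes are genuinely $\tau$-torsion in $\Ext_{C_2}(1)$ itself, rather than merely on the $\rho$-Bockstein $E_\infty$-page; this follows from their origin as $\Tor^1$-classes built from copies of $\bM_2^\C/\tau$ (Remark~\ref{rmk:Tor-practical}), on which $\tau$ acts as zero.
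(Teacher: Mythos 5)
Your argument is correct and follows the same route as the paper: multiply by $\rho$ (resp.\ $\rho^3$), use $\rho\alpha=\eta^3$ together with the $\Ext_\R(1)$ relations $\tau^4 h_1^3=\rho\,\tau^2a$ and $h_1\cdot\tau^2a=\rho^3 b$, and then divide the detected class by the appropriate power of $\rho$. The only difference is that you make the filtration squeeze and the uniqueness of the target classes explicit, which the paper leaves implicit.
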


\begin{pf}
\mbox{}
\begin{enumerate}
\item
The product $\rho \alpha \cdot \tau^4$ equals $\tau^4 \cdot \eta^3$,
which is detected by $\tau^4 \cdot h_1^3$.
This last expression equals $\rho \cdot \tau^2 a$ in $\Ext$.
\item
Part (1) implies that there is a hidden
$\tau^4$ extension from
${Q} h_1^4$ to $\rho^3 b$,
since $h_1 \cdot \tau^2 a$ equals $\rho^3 b$ in $\Ext$.
This means that there is a hidden $\tau^4$ extension
from $\frac{Q}{\rho^3} h_1^4$ to $b$,
since $\rho^3 \cdot \frac{Q}{\rho^3} h_1^4$ 
equals ${Q} h_1^4$ in $\Ext$.
\end{enumerate}
\end{pf}

\begin{lemma} 
\label{lem:alpha^2-div-4}
The class $\alpha^2$ in $\pi_{8,8}(ko_{C_2})$ 
is divisible by $4$.
\end{lemma}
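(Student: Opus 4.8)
The plan is to locate $\alpha^2$ on the $\Ext_{C_2}(1)$-chart in tridegree $(s,w)=(8,8)$ and read off its $4$-divisibility from an $h_0$-tower. First I would record that, by Proposition~\ref{ExtSplits}, $\Ext_{C_2}(1)$ is a square-zero extension of $\Ext_\R(1)$, so $({Q}h_1^3)^2=0$. Since $\alpha$ is detected by ${Q}h_1^3$ in Adams filtration $2$, this pushes $\alpha^2$ into filtration at least $5$, strictly above the naive filtration $4$. Next I would check that $\alpha^2$ is nonzero by squaring the defining relation $\rho\alpha=\eta^3$ to get $\rho^2\alpha^2=\eta^6$, which is nonzero because it is detected by $h_1^6$ (equivalently $\eta^6=-2\eta^2\alpha$ by Proposition~\ref{prop:2-eta^5}).

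The heart of the argument is inspection of the charts in Section~\ref{sctn:chart}. I would show that in tridegree $(8,8)$ the classes of filtration at least $3$ assemble into a single $h_0$-tower based at $\frac{Q}{\rho^3}h_1^4$ in filtration $3$, whose first step is the extension $h_0\cdot\frac{Q}{\rho^3}h_1^4=\frac{\gamma}{\tau^3}b$ of Proposition~\ref{prop:Q/rho^4k+3 h1^4k+4}(1). Because multiplication by $2$ is detected by $h_0+\rho h_1$, this tower realizes a copy of $\Z_2$ on which $2^j\beta$ is detected in filtration $3+j$, where $\beta$ denotes the class detected by $\frac{Q}{\rho^3}h_1^4$. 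Consequently every nonzero element of filtration at least $5$ in this tridegree is of the form $2^j\beta$ with $j\geq 2$, hence divisible by $4$. Combining this with the previous paragraph, $\alpha^2$ is precisely such a class, and so is divisible by $4$ (this is what licenses the subsequent definition of $\beta$ via $4\beta=\alpha^2$).

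The main obstacle is the chart-level input: one must verify that the filtration-$\geq 5$ part of $\pi_{8,8}(ko_{C_2})$ consists exactly of the $4$-divisible classes $2^j\beta$, which genuinely relies on the hidden $h_0$- and $\rho$-extensions rather than on the $E_\infty$-page alone. In particular one must account for the remaining high-filtration classes $\eta^8=h_1^8$ and $\eta^4\alpha$: here the relation $\omega\eta=0$ gives $\rho^2\eta^8=(\rho\eta)^2\eta^6=(\omega-2)^2\eta^6=4\eta^6$, while $\rho^2\alpha^2=\eta^6$, so $\alpha^2$ cannot lie in the $\eta^8$-summand and must have its leading term on the $\beta$-tower at the filtration-$5$ spot $4\beta$. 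Confirming that no extraneous filtration-$5$ or filtration-$6$ class survives, and that the hidden $\rho^2$-extension carrying $\alpha^2$ to $\eta^6$ is compatible with placing $\alpha^2$ at $4\beta$, is the delicate step that the full proof must pin down.
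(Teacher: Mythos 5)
Your strategy---bound the Adams filtration of $\alpha^2$ from below and then read off $4$-divisibility from the chart in tridegree $(8,8)$---differs from the paper's, which instead uses the hidden $\tau^4$-extensions of Proposition~\ref{prop:BasicPhiMult} to transport the question to $\pi_{8,0}$, where $(\tau^4)^2\alpha^2$ is detected by $(\tau^2 a)^2 = (h_0+\rho h_1)^2\tau^4 b$ (Table~\ref{tbl:ExtA1Rrel}) and $h_0+\rho h_1$ detects $2$. Your route has a genuine gap: the chart-level claim it rests on is false. In tridegree $(s,f,w)=(8,*,8)$ the $E_\infty$-page is not a single $h_0$-tower on $\frac{Q}{\rho^3}h_1^4$. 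Besides that class (filtration $3$) and the tower $\frac{\gamma}{\tau^3}h_0^j b$ (filtration $4+j$), there are the classes $\frac{Q}{\rho^2}h_1^5$, $\frac{Q}{\rho}h_1^6$, $Qh_1^7$, $h_1^8$, $\rho h_1^9,\dots$ in filtrations $4,5,6,8,9,\dots$, and these detect the second summand of $\pi_{8,8}(ko_{C_2})\iso \Z_2\{\beta\}\oplus\Z_2\{\rho\eta\beta\}$. Indeed $\rho\eta\beta$ is detected by $\rho h_1\cdot\frac{Q}{\rho^3}h_1^4=\frac{Q}{\rho^2}h_1^5$ in filtration $4$, and since $h_0\cdot\frac{Q}{\rho^2}h_1^5=\rho b\cdot h_1\frac{\gamma}{\tau^3}=0$, the element $2\rho\eta\beta$ is detected by $\frac{Q}{\rho}h_1^6$ in filtration $5$ --- yet it is not divisible by $4$. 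So ``filtration at least $5$'' does not imply ``divisible by $4$'' in this degree; the filtration-$\geq 5$ subgroup is $4\Z_2\{\beta\}\oplus 2\Z_2\{\rho\eta\beta\}$, not $4\Z_2\{\beta\}$.

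Your proposed patch does not close this gap. Writing $\alpha^2=4a\beta+2b\,\rho\eta\beta$ (the general filtration-$\geq 5$ element), the relations $\rho^3\beta=\eta\alpha$ and $\eta^2\alpha=(\eta\rho)\rho^2\beta=-2\rho^2\beta$ give $\rho^2\alpha^2=4(a-b)\rho^2\beta$, so comparing with $\eta^6=4\rho^2\beta$ only forces $a-b$ to be odd. That is consistent both with $(a,b)=(1,0)$ and with $(a,b)=(0,-1)$, i.e.\ with $\alpha^2=-2\rho\eta\beta$, which is not divisible by $4$; so the parity of $b$ --- the thing you actually need --- is not determined by this computation, and ruling out ``the $\eta^8$-summand'' is not accomplished by it. To finish along your lines you would need an input that sees the $h_0$-component of the class detecting $\alpha^2$, which is exactly what the paper extracts from the relation $(\tau^2 a)^2=(h_0+\rho h_1)^2\tau^4 b$ after applying the $\tau^4$-isomorphisms of Proposition~\ref{prop:BasicPhiMult} twice.
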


\begin{pf} 
By Proposition~\ref{prop:BasicPhiMult}, the multiplication map
\[ 
\tau^4: \pi_{8,8}(ko_{C_2}) \xrightarrow{\iso}\pi_{8,4}(ko_{C_2})
\]
is an isomorphism. By considering the effect of multiplication by $\tau^4$ in $\Ext$, we see that
\[ 
\tau^4:\pi_{8,4}(ko_{C_2}) \xrightarrow{\iso}\pi_{8,0}(ko_{C_2})
\]
is also an isomorphism.
Thus it suffices to show that $(\tau^4)^2 \alpha^2$ is $4$-divisible
in $\pi_{8,0}(ko_{C_2})$.
But
$(\tau^4)^2 \cdot \alpha^2$
is detected by $(\tau^2 a)^2$ by 
Proposition \ref{prop:BasicPhiMult} (\ref{part:tau^4-Qh1^3}),
which equals $(h_0 + \rho h_1)^2 \tau^4 b$ in $\Ext$.
Finally, observe that $h_0 + \rho h_1$ detects $2$.
\end{pf}

\begin{defn}
Let $\beta$ be the element of $\pi_{8,8}(ko_{C_2})$ 
detected by $\frac{Q}{\rho^3} h_1^4$ and satisfying
$4\beta = \alpha^2$.
\end{defn}

Note that $\beta$ is uniquely determined by $\alpha$,
even though there are elements of higher Adams filtration,
because there is no $2$-torsion in $\pi_{8,8}(ko_{C_2})$.

\begin{prop} 
\label{prop:rho^3beta}
$\rho^3\beta = \eta \alpha$.
\end{prop}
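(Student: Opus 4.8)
The plan is to prove the relation by detection in the Adams $E_\infty$-page and then to control the single higher-filtration ambiguity that remains.

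First I would do the degree bookkeeping and identify the detecting classes. Both sides lie in tridegree $(s,w)=(5,5)$. Since $\beta$ is detected by $\frac{Q}{\rho^3}h_1^4$ and the $\rho$-divided bookkeeping gives $\rho^3\cdot\frac{Q}{\rho^3}h_1^4 = {Q} h_1^4$, the product $\rho^3\beta$ is detected by ${Q}h_1^4$. On the other side $\eta$ is detected by $h_1$ and $\alpha$ by ${Q}h_1^3$, so $\eta\alpha$ is detected by $h_1\cdot {Q}h_1^3 = {Q}h_1^4$. (Here $\eta\alpha$ is independent of the ambiguity in $\alpha$, since the correction classes $\frac{\gamma}{\tau}h_0^k a$ are annihilated by $h_1$ using $h_0h_1=0$ and $h_1a=0$.) Thus $\rho^3\beta$ and $\eta\alpha$ have the same leading term, so $\rho^3\beta-\eta\alpha$ has strictly larger Adams filtration, or is zero.

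Next I would pin down the ambiguity. Inspecting $\Ext_{C_2}(1)$ in tridegree $(5,f,5)$, the only class above the filtration of ${Q}h_1^4$ is $h_1^5$, which detects $\eta^5$. By Proposition~\ref{prop:2-eta^5} we have $\eta^5=-2\eta\alpha$, so $h_1^5$ is the target of a hidden $2$-extension from ${Q}h_1^4$: the relevant part of $\pi_{5,5}(ko_{C_2})$ is cyclic of order $4$, generated by the class detected by ${Q}h_1^4$, with $2\eta\alpha=-\eta^5$. Hence $\rho^3\beta$ equals either $\eta\alpha$ or $\eta\alpha+\eta^5$. As a consistency check one records that the difference is at most $2$-torsion: using $4\beta=\alpha^2$, $\rho\alpha=\eta^3$, and $\omega\eta=0$ (so that $\rho\eta=\omega-2$ acts as $-2$ on $\eta$-multiples), one computes
\[
4\rho^3\beta = \rho^3\alpha^2 = \rho(\rho\alpha)^2 = \rho\eta^6 = -2\eta^5 = 4\eta\alpha,
\]
so $4(\rho^3\beta-\eta\alpha)=0$. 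This is compatible with both possibilities and does not by itself decide the sign.

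The hard part is therefore to eliminate the correction term $\eta^5$. My preferred route is to compare with geometric fixed points via the $\rho$-inverted calculation of Corollary~\ref{ExtA1RRho} and Proposition~\ref{koTateDiag}: Milnor--Witt degree $0$ is exactly where $(ko_{C_2})^{gC_2}\simeq\bigvee_{k\geq 0}\Sigma^{4k}H\hat{\Z}_2$ contributes a torsion-free $\hat{\Z}_2$, and after inverting $\rho$ the positive-cone class $h_1^5$ detecting $\eta^5$ survives while the negative-cone class ${Q}h_1^4$ (which is $\rho$-torsion) does not. Computing the images of $\rho^3\beta$ and $\eta\alpha$ in this $\hat{\Z}_2$ separates $\eta\alpha$ from $\eta\alpha+\eta^5$ and forces the former. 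An alternative is a Massey-product shuffle based on the description $\frac{Q}{\rho^3}h_1^4\in\bracket{\rho,\frac{\gamma}{\tau^4},b}$ of Table~\ref{tbl:ExtA1Massey} together with $\rho\alpha=\eta^3$. In either approach the only genuine obstacle is the hidden $2$-extension to $h_1^5$; once its interaction with the $\rho^3$-divisible class $\rho^3\beta$ is fixed, the identity $\rho^3\beta=\eta\alpha$ follows.
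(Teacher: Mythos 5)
Your proposal contains the paper's entire argument as a ``consistency check'' and then discards it because of a miscount of the Adams $E_\infty$-page. The computation
\[
4\rho^3\beta \;=\; \rho^3\alpha^2 \;=\; \rho^2\eta^3\alpha \;=\; 4\eta\alpha
\]
\emph{is} the paper's proof; the only remaining input is that $\pi_{5,5}(ko_{C_2})$ has no $2$-torsion, so that $4(\rho^3\beta-\eta\alpha)=0$ forces $\rho^3\beta=\eta\alpha$. Your claim that ``the only class above the filtration of $Qh_1^4$ is $h_1^5$'' and that the relevant part of $\pi_{5,5}(ko_{C_2})$ is cyclic of order $4$ is incorrect: in tridegree $(5,f,5)$ the $E_\infty$-page also contains the infinite tower $\rho^k h_1^{5+k}$ in filtration $5+k$ for every $k\geq 1$. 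These classes are nonzero because $\rho$ and $h_1$ survive $\rho$-localization (Corollary~\ref{ExtA1RRho}), and since $2$ is detected by $h_0+\rho h_1$ while $h_0h_1=0$, the class $\rho^kh_1^{5+k}$ detects $2^k\eta^5$. Hence $\eta^5$ is non-torsion, and $\pi_{5,5}(ko_{C_2})\iso \hat{\Z}_2$ is torsion-free, generated by $\eta\alpha$ with $\eta^5=-2\eta\alpha$; this is also what the Milnor--Witt chart for $\Pi_0$ records at stem $5$.

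Once this is corrected, your ``hard part'' evaporates: there is no correction term to eliminate, and $4(\rho^3\beta-\eta\alpha)=0$ in a torsion-free group already gives the result. The alternative routes you sketch are not carried out far enough to stand on their own --- you never actually compute the image of $\beta$ under the map to geometric fixed points, nor do you perform the proposed shuffle with $\bracket{\rho,\frac{\gamma}{\tau^4},b}$ --- and both are unnecessary. The degree bookkeeping, the identification of $Qh_1^4$ as the common detecting class, and the observation that $\eta\alpha$ is independent of the indeterminacy in $\alpha$ are all fine; the error is confined to the description of the target group, but it is exactly the point on which the proof turns.
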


\begin{pf} 
The defining relation for $\beta$ implies that 
$4 \rho^3 \beta$ equals $\rho^3 \alpha^2$, which equals
$\rho^2 \eta^3 \alpha$ by the defining relation for $\alpha$.
Using the relation
$(\eta\rho + 2) \eta=0$, this element equals
$4 \eta \alpha$.
Finally, there is no $2$-torsion in $\pi_{5,5}(ko_{C_2})$.
\end{pf}

\begin{prop} The (2-completed) Milnor-Witt $0$-stem of $ko_{C_2}$ is
\[ \Pi_0(ko_{C_2}) \iso 
\Z_2[\eta,\rho,\alpha,\beta]/
(\rho (\eta \rho + 2),\eta (\eta \rho + 2),
\rho \alpha - \eta^3, \rho^3 \beta-\eta \alpha, \alpha^2-4 \beta),\]
where the generators have degrees
$(1,1)$, $(-1,-1)$, $(4,4)$, and $(8,8)$ respectively.
These homotopy classes are detected by $h_1$, $\rho$, 
${Q} h_1^3$, and 
$\frac{Q h_1^4}{\rho^3}$ in the Adams spectral sequence.
\end{prop}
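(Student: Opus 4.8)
The plan is to use that the Adams spectral sequence for $ko_{C_2}$ collapses (established in the preceding proposition), so that the Milnor-Witt degree $0$ part of $\Ext_{C_2}(1)$ is precisely the associated graded of $\Pi_0(ko_{C_2})$ for the Adams filtration. First I would describe this associated graded explicitly, using the splitting $\Ext_{C_2}(1)\iso\Ext_\R(1)\oplus\Ext_{NC}$ of Proposition~\ref{ExtSplits}. In Milnor-Witt degree $0$ the positive-cone summand $\Ext_\R(1)$ is generated as a ring by the Milnor-Witt degree $0$ classes $\rho$, $h_0$, $h_1$ subject to $\rho h_0 = 0 = h_0 h_1$, while the square-zero negative-cone summand $\Ext_{NC}$ contributes exactly the families built from $Q h_1^3$ and $\frac{Q h_1^4}{\rho^3}$ (every other negative-cone generator of Table~\ref{tbl:ExtNCA1toE1} carries negative Milnor-Witt degree). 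I would then define the comparison homomorphism $\phi$ from the presented ring into $\Pi_0(ko_{C_2})$ by sending $\eta,\rho,\alpha,\beta$ to the named homotopy classes, and prove the proposition by showing $\phi$ is a well-defined isomorphism.

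Next I would verify that each defining relation holds in $\Pi_0(ko_{C_2})$. The relations $\rho\alpha = \eta^3$ and $\alpha^2 = 4\beta$ are the defining properties of $\alpha$ and $\beta$ (Corollary~\ref{prop:rho-Qh1^3} and Lemma~\ref{lem:alpha^2-div-4}), and $\rho^3\beta = \eta\alpha$ is Proposition~\ref{prop:rho^3beta}. The relation $\eta\omega=0$ is Morel's relation $\omega\eta=0$ recorded in Proposition~\ref{prop:2-eta^5}. The remaining relation $\rho\omega=0$ I would obtain from the cofiber sequence $C_2\,_+ \rtarr S^{0,0}\xrtarr{\rho}S^{1,1}$: since $\omega$ corresponds to the induced class $C_2\in A(C_2)$ under the identification of Lemma~\ref{EtaRhoSphere}, it lies in the image of the transfer from $C_2\,_+\smsh ko_{C_2}$, which $\rho$ annihilates; this is consistent with the $\Ext$ relation $\rho h_0 = 0$. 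Establishing these relations makes $\phi$ well defined.

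Finally I would prove $\phi$ is an isomorphism by a filtration argument. Surjectivity reduces, via the collapse, to checking that the detecting classes $h_1$, $\rho$, $Q h_1^3$, and $\frac{Q h_1^4}{\rho^3}$ generate the Milnor-Witt degree $0$ part of $\Ext_{C_2}(1)$ as a ring; this follows from the explicit description above, noting that $h_0$ is recovered as the class detecting $\omega=\eta\rho+2$, so it already lies in the image of $\phi$. Injectivity and completeness of the presentation I would verify by comparing, stem by stem, the $\F_2$-dimension of the Adams associated graded of the presented ring with that of $\Ext_{C_2}(1)$ in Milnor-Witt degree $0$. The main obstacle is exactly this bookkeeping: one must account for the infinite $2$-adic filtration of each $\Z_2$-summand (for instance $\pi_{0,0}\iso\Z_2\{1,\omega\}$ with $\omega^2=2\omega$ contributes two classes in every positive filtration, matching $h_0^f$ and $\rho^f h_1^f$), and one must see that the square-zero structure of $\Ext_{NC}$ forces the hidden $2$-extensions encoded by $\alpha^2=4\beta$ and $\rho^3\beta=\eta\alpha$ rather than producing spurious new generators in higher filtration. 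Checking that no further relations or generators are needed in the negative $\rho$-towers and in the $\alpha,\beta$-families is the delicate step, but the $\rho$-inverted computation of Corollary~\ref{ExtA1RRho} pins down the $\rho$-torsion-free behaviour and leaves only finitely much to verify in each degree.
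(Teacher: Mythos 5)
Your verification of the five relations is fine and matches the paper: the two relations involving $\eta\rho+2$ hold already in the sphere, $\rho\alpha=\eta^3$ and $\alpha^2=4\beta$ are the defining properties of $\alpha$ and $\beta$, and $\rho^3\beta=\eta\alpha$ is Proposition~\ref{prop:rho^3beta}. The gap is in your third paragraph. You claim surjectivity reduces to checking that $h_1$, $\rho$, ${Q}h_1^3$, $\frac{Q}{\rho^3}h_1^4$ (together with $h_0$) generate the Milnor--Witt degree~$0$ part of $\Ext_{C_2}(1)$ as a ring, and that ``this follows from the explicit description above.'' It does not: $\Ext_{NC}$ is square-zero and its elements are only reached by \emph{dividing} by $\rho$, so the classes $\frac{Q}{\rho^{4k-1}}h_1^{4k}$ and $\frac{Q}{\rho^{4k-1}}h_1^{4k+4}$ for $k\geq 2$ are genuinely indecomposable in $\Ext$; in particular $\bigl(\frac{Q}{\rho^3}h_1^4\bigr)^2=0$ in $\Ext_{C_2}(1)$, so the class detecting $\beta^2$ is \emph{not} the square of the class detecting $\beta$. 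Thus one must prove, as a filtration-jumping statement about the homotopy ring, that $\beta^k$ is detected by $\frac{Q}{\rho^{4k-1}}h_1^{4k}$ (Adams filtration $4k-1$, not $3k$) and $\alpha\beta^k$ by $\frac{Q}{\rho^{4k-1}}h_1^{4k+4}$. Without this, neither your surjectivity claim nor your dimension count can be completed, and you correctly flag this as ``the delicate step'' but supply no mechanism for it.

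The paper's proof consists essentially of exactly this missing induction: assuming $\beta^k$ is detected by $\frac{Q}{\rho^{4k-1}}h_1^{4k}$, it uses the $h_0$-extension $h_0\cdot\frac{Q}{\rho^{4k-1}}h_1^{4k}=\frac{\gamma}{\tau^{4k-1}}b^k$ to see that $\omega\beta^k$ is detected by $\frac{\gamma}{\tau^{4k-1}}b^k$, then the hidden $\tau^4$-extension from $\frac{Q}{\rho^3}h_1^4$ to $b$ (Proposition~\ref{prop:BasicPhiMult}) to conclude that $\omega\beta^{k+1}$ is detected by $\frac{\gamma}{\tau^{4k-1}}b^{k+1}=\tau^4\cdot h_0\cdot\frac{Q}{\rho^{4k+3}}h_1^{4k+4}$, whence $\beta^{k+1}$ is detected by $\frac{Q}{\rho^{4k+3}}h_1^{4(k+1)}$. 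If you insert this inductive detection argument (and its analogue for $\alpha\beta^k$) in place of your appeal to ring generation, the rest of your outline goes through; the $\rho$-inverted computation alone does not supply it, since these classes all live in the $\rho$-torsion (indeed infinitely $\rho$-divisible) part.
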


\begin{pf} 
The relations $\rho (\eta \rho + 2)$ and
$\eta (\eta \rho + 2)$ are already true in the sphere
\cite{Mo} \cite{DIHopf}.
The third and fifth relations are part of the definitions of 
$\alpha$ and $\beta$, while the fourth relation is
Proposition \ref{prop:rho^3beta}.

It remains to show that $\beta^k$ is detected by 
$\frac{Q}{\rho^{4k-1}} h_1^{4k}$ and that $\alpha \beta^k$ 
is detected by $\frac{Q}{\rho^{4k-1}}h_1^{4k+4}$. 

We assume for induction on $k$ that $\beta^k$
is detected by $\frac{Q}{\rho^{4k-1}} h_1^{4k}$.
We have the relation
$h_0 \cdot \frac{Q}{\rho^{4k-1}} h_1^{4k} = 
\frac{\gamma}{\tau^{4k-1}} b^k$ in $\Ext$,
so $\omega \beta^k$ is detected by
$\frac{\gamma}{\tau^{4k-1}} b^k$ in $\Ext$.
Now $b$ detects $\tau^4 \cdot \beta$ 
by Proposition \ref{prop:BasicPhiMult} (\ref{part:tau^4-Q/rho^3 h1^4}),
so $\omega \beta^{k+1}$ is detected by
$\frac{\gamma}{\tau^{4k-1}} b^{k+1}$.
Finally,
$\frac{\gamma}{\tau^{4k-1}} b^{k+1}$ equals
$\tau^4 \cdot \frac{\gamma}{\tau^{4k+3}} b^{k+1}$ in $\Ext$,
which equals
$\tau^4 \cdot h_0 \cdot \frac{Q}{\rho^{4k+3}} h_1^{4k+4}$.

We have now shown that 
$\tau^4 \cdot h_0 \cdot \frac{Q}{\rho^{4k+3}} h_1^{4k+4}$
detects $\tau^4 \cdot \omega \beta^{k+1}$.
It follows that 
$\frac{Q}{\rho^{4k+3}} h_1^{4k+4}$ detects $\beta^{k+1}$.

A similar argument handles the case of $\alpha\beta^k$.
\end{pf}

\subsection{$\tau^4$-periodicity}\label{sec:t4Per}

Before analyzing the other Milnor-Witt stems of $ko_{C_2}$,
we will explore a piece of the global structure involving
the element $\tau^4$ of $\pi_{0,-4}(ko_{C_2})$.

\begin{prop}\label{prop:PhiMult}
There are hidden $\tau^4$ extensions:
\begin{enumerate}
\item from $\frac{\gamma}{\tau}$ to $\tau^2 h_0$.
\item from $\frac{\gamma}{\rho^2\tau} h_1^2$ to $a$.
\item from $\frac{\gamma}{\tau^3}$ to $h_0$.
\item from $\frac{\gamma}{\rho\tau^2}$ to $\tau h_1$.
\end{enumerate}
\end{prop}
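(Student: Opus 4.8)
The plan is to read all four statements as hidden multiplicative extensions by the homotopy class $\tau^4$ in the (collapsing) Adams spectral sequence for $ko_{C_2}$, and to prove them by a single mechanism with one exceptional case. In each case the product of the two detecting classes already vanishes in $\Ext_{C_2}(1)$: for instance $\tau^4\cdot\frac\gamma\tau=0$ and $\tau^4\cdot\frac\gamma{\tau^3}=0$ because $\tau\cdot\frac\gamma{\tau^k}=0$ in $\bM_2^{C_2}$, and likewise for $\frac\gamma{\rho\tau^2}$ and $\frac\gamma{\rho^2\tau}h_1^2$. Hence each target is forced into strictly higher Adams filtration, and a check of $(s,f,w)$ shows that source and target share stem and weight with the filtration rising by one. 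These are exactly the defining relations recorded in Table~\ref{tbl:HtpyGen} for $\tau^{-2}\omega$, $\tau^{-4}\omega$, and their decorated companions, so the content is that the $\tau^4$-tower on $\omega$ crosses the weight-zero axis into the negative cone. In each case uniqueness of the answer is immediate: chart inspection shows a single nonzero class of the required filtration in the target tridegrees $(0,1,-2)$, $(4,3,2)$, $(0,1,0)$, and $(1,1,0)$. So the whole problem reduces to proving each product is nonzero.

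For extensions (1), (3), and (4) I would argue by restriction to the underlying spectrum. The essential observation is that the source classes $\frac\gamma\tau$, $\frac\gamma{\tau^3}$, and $\frac\gamma{\rho\tau^2}$ are not $\rho$-divisible in $\Ext_{C_2}(1)$: their only possible $\rho$-divisors are $\frac\gamma{\rho\tau}$, $\frac\gamma{\rho\tau^3}$, and $\frac\gamma{\rho^2\tau^2}$, each of which supports an early $\rho$-Bockstein differential (Propositions~\ref{d1ThetaRho} and~\ref{d2ThetaRho2Tau}) and therefore does not survive to $\Ext_{C_2}(1)$. By Proposition~\ref{prop:underlying-rho} the corresponding homotopy classes then have nonzero restriction $\iota^*$. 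Since $\tau^4$ restricts to a unit of $\pi_0(ko)$ (the weight collapses and $\tau\mapsto 1$ under realization), multiplication by $\tau^4$ is injective on the relevant underlying homotopy, so $\iota^*(\tau^4\cdot x)\neq 0$ whenever $\iota^*(x)\neq 0$. This proves nonvanishing, and combined with uniqueness of the target (confirmed by chart inspection, and consistent with the targets themselves restricting nontrivially) it identifies the products with the classes detected by $\tau^2h_0$, $h_0$, and $\tau h_1$, respectively.

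For extension (2) the source $\frac\gamma{\rho^2\tau}h_1^2$ is $\rho$-divisible and restricts to zero, so I would instead deduce it from extension (3) using Toda brackets. By the convergence theorem for the Adams spectral sequence \cite{matric}, \cite{Isstems}, the $\Ext$ relation $\frac\gamma{\rho^2\tau}h_1^2\in\bracket{\frac\gamma{\tau^3},h_1,\tau h_1\cdot h_1}$ of Table~\ref{tbl:ExtA1Massey} lifts to a Toda bracket $\bracket{\tau^{-4}\omega,\eta,\tau\eta\cdot\eta}$ among the detected homotopy classes. Shuffling the ring element $\tau^4$ into the first slot and applying extension (3), which replaces the $\frac\gamma{\tau^3}$-detected class $\tau^{-4}\omega$ by the $h_0$-detected class $\omega$, puts the product inside $\bracket{\omega,\eta,\tau\eta\cdot\eta}$. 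This last bracket is detected by the classical Massey product $\bracket{h_0,h_1,\tau h_1\cdot h_1}\ni a$ of Table~\ref{tbl:ExtA1Massey}, whence $\tau^4\cdot\frac\gamma{\rho^2\tau}h_1^2$ is detected by $a$.

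The main obstacle is the bracket step for (2): one must verify the hypotheses of the convergence theorem — in particular the absence of crossing differentials — in the presence of the infinitely many $\rho$-towers of the negative cone, and control the Toda-bracket indeterminacy so that a single class is named. As an independent safeguard I would cross-check all four products by naturality along the complexification map $c\colon ko_{C_2}\to k\R$: by Tables~\ref{tbl:ExtA1toE1} and~\ref{tbl:ExtNCA1toE1} the relevant source and target detecting classes all map to nonzero classes in $\Ext_{\cE^{C_2}(1)}$, where the analogous and simpler $\tau^4$-extensions can be read off directly, forcing nonvanishing in $ko_{C_2}$.
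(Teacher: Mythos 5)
Your overall strategy is genuinely different from the paper's. The paper proves all four extensions by pure algebraic propagation from Proposition~\ref{prop:BasicPhiMult}: it multiplies the unknown product by a judiciously chosen class ($a$ for part (1), $b$ for part (3), $h_1^2$ for part (2), and a factorization through $\tau h_1$ for part (4)) so that the answer is already known and nonzero, and then reads off the only consistent value of the original product. Your route via $\iota^*$ and the exact sequence of Proposition~\ref{prop:underlying-rho} is attractive and does work cleanly for parts (1) and (4); it is essentially the mechanism the paper itself uses later for $k\R$ (Proposition~\ref{T4extnskR}) and endorses as an alternative in the closing remark of Section~\ref{sec:homotopy}.

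However, there are two concrete problems. First, in part (3) your uniqueness claim is false: the tridegree $(0,1,0)$ contains \emph{two} independent classes, $h_0$ and $\rho h_1$, hence three nonzero elements. Since $\iota^*(\omega)=\iota^*(2)=2$ while $\omega$ is detected by $h_0$ and $2$ by $h_0+\rho h_1$, knowing that $\iota^*\bigl(\tau^4\cdot\tau^{-4}\omega\bigr)=2$ only shows the product is detected by $h_0$ or by $h_0+\rho h_1$; it cannot establish the asserted target $h_0$. Multiplication by $\rho$, $\eta$, or $\iota^*$ all fail to separate $\omega$ from $2$ (their difference $\rho\eta$ is killed by all three), and the same ambiguity survives your $k\R$ safeguard because $q_*(\rho h_1)=0$. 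The paper's multiplication by $b$ is precisely what resolves this, since $h_0b$ and $(h_0+\rho h_1)b$ are distinct in $(8,5,4)$. Second, your premise for treating part (2) separately is incorrect: the class detected by $\frac{\gamma}{\rho^2\tau}h_1^2$ is \emph{not} $\rho$-divisible and does \emph{not} restrict to zero. Its potential $\rho$-divisors would live in $\pi_{5,7}$, i.e.\ in stem $5$ of the Milnor--Witt $(-2)$-stem, which is zero (the $\rho$-tower on $\frac{\gamma}{\tau}h_1^2$ terminates at $\frac{\gamma}{\rho^2\tau}h_1^2$ in the $E_\infty$-page, and there is nothing else in that stem). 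So by Proposition~\ref{prop:underlying-rho} the class injects into $\pi_4(ko)\cong\Z$. This actually means your restriction argument could be applied to part (2) after all, but as written your justification rests on a false statement, and the Toda-bracket replacement you propose leaves the Moss-convergence hypotheses and the indeterminacy unchecked, as you acknowledge. The paper's one-line derivation of (2) from (1) via $h_1^2\cdot\tau^2h_0=\rho^2a$ avoids all of this machinery.
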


\begin{pf}
\mbox{}
\begin{enumerate}
\item\label{PhiTheta1} 
Recall that 
$\frac{\gamma}{\tau} \cdot a$ equals
$h_0 \cdot {Q} h_1^3$ in $\Ext$, so the
hidden $\tau^4$ extension on ${Q} h_1^3$ from 
Proposition \ref{prop:BasicPhiMult} (\ref{part:tau^4-Qh1^3})
implies that there 
is a hidden $\tau^4$ extension
from 
$\frac{\gamma}{\tau} \cdot a$ to $\tau^2 h_0 a$.
It follows that there is a hidden
$\tau^4$ extension from $\frac{\gamma}{\tau}$ to $\tau^2 h_0$. 
\item
\label{PhiEta2Theta1} 
Using that $h_1^2 \cdot \tau^2 h_0$ equals $\rho^2 a$ in $\Ext$,
part (1) implies that there is a hidden
$\tau^4$ extension from
$\frac{\gamma}{\tau} h_1^2$ to $\rho^2 a$.
\item
\label{PhiTheta3} 
Recall that 
$\frac{\gamma}{\tau^3} \cdot b$ equals 
$h_0 \cdot \frac{Q}{\rho^3} h_1^4$ in $\Ext$,
so the hidden $\tau^4$ extension on $\frac{Q}{\rho^3} h_1^4$
from Proposition \ref{prop:BasicPhiMult} (\ref{part:tau^4-Q/rho^3 h1^4})
implies that there
is a hidden $\tau^4$ extension from
$\frac{\gamma}{\tau^3} \cdot b$ to $h_0 b$.
It follows that there is a hidden
$\tau^4$ extension from $\frac{\gamma}{\tau^3}$
to $h_0$.
\item\label{PhiTheta2} 
Using that $\rho a$ equals $h_1 (\tau h_1)^2$ in $\Ext$,
part (2) implies that there is a hidden $\tau^4$ extension from
$\frac{\gamma}{\rho \tau} h_1^2$ to $h_1 (\tau h_1)^2$.
Now $\frac{\gamma}{\rho \tau} h_1^2$ equals
$\frac{\gamma}{\rho \tau^2} h_1 \cdot \tau h_1$, so
there is also a hidden $\tau^4$ extension on
$\frac{\gamma}{\rho \tau^2}$.
\end{enumerate}
\end{pf}

The homotopy of $ko_{C_2}$ is nearly $\tau^4$-periodic, 
in the following sense.

\begin{thm}\label{thm:PhiPerd}
Multiplication by $\tau^4$ gives a homomorphism on Milnor-Witt stems
\[\Pi_n(ko_{C_2})\rtarr \Pi_{n+4}(ko_{C_2})\]
 which is
\begin{enumerate}
\item injective if $n=-4$.
\item surjective (and zero) if $n=-5$.
\item bijective in all other cases.
\end{enumerate}
\end{thm}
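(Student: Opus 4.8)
The plan is to read off the behavior of multiplication by $\tau^4$ from the $E_\infty$-page $\Ext_{C_2}(1)$ of the Adams spectral sequence, corrected by the hidden $\tau^4$-extensions of Propositions~\ref{prop:BasicPhiMult} and~\ref{prop:PhiMult}. Since $\tau^4$ lies in Adams filtration $0$, multiplication by it is filtration-preserving, so the induced map of associated graded groups is multiplication by $\tau^4$ on $\Ext_{C_2}(1)$; because the spectral sequence collapses, the map on $\pi_{*,*}(ko_{C_2})$ is determined by this together with the hidden extensions via a standard filtration (successive-approximation) argument. As $\tau^4$ has Milnor-Witt degree $4$, it preserves the residue of the Milnor-Witt degree modulo $4$, and I would treat the four residues separately, using the splitting $\Ext_{C_2}(1)\iso\Ext_\R(1)\oplus\Ext_{NC}(1)$ of Proposition~\ref{ExtSplits}.

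The positive cone $\Ext_\R(1)$ is $\tau^4$-torsion free (visible from Tables~\ref{tbl:ExtA1Rgen}--\ref{tbl:ExtA1Rrel}), so $\tau^4$ is injective there, while the negative cone $\Ext_{NC}(1)$ is infinitely $\tau$-divisible, so $\tau^4$ acts as a downward shift whose $E_\infty$-kernel consists of the $\tau$-torsion bottom classes (the families $\frac{\gamma}{\tau^k}$ with $k\leq 4$ and the classes $Q h_1^3$, $\frac{Q}{\rho^3}h_1^4$, whose $\tau^4$-multiple vanishes on $E_\infty$). The decisive input is that the hidden extensions reconnect these: each bottom class of the negative cone does not die in homotopy but maps onto a bottom class of the positive cone, e.g.\ $\frac{\gamma}{\tau^3}\mapsto h_0$, $\frac{\gamma}{\rho\tau^2}\mapsto\tau h_1$, $\frac{\gamma}{\tau}\mapsto\tau^2 h_0$, $\frac{\gamma}{\rho^2\tau}h_1^2\mapsto a$, $Q h_1^3\mapsto\tau^2 a$, and $\frac{Q}{\rho^3}h_1^4\mapsto b$. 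Splicing each negative-cone $\tau^4$-tower onto the positive-cone tower it maps into produces, in homotopy, bi-infinite $\tau^4$-towers on which $\tau^4$ is an isomorphism; this is what forces bijectivity of $\tau^4\colon\Pi_n\to\Pi_{n+4}$ away from the two exceptional degrees.

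The two exceptional cases arise from the towers that fail to be bi-infinite, and these are located by the $\rho$-inverted and $\rho$-divisible computations. By Corollary~\ref{ExtA1RRho} the $\rho$-torsion-free classes are $\F_2[\rho^{\pm1},\tau^4,h_1]$, all in Milnor-Witt degree $\equiv 0$; the towers $1,\tau^4,\tau^8,\dots$ and their $\rho$- and $h_1$-multiples are bounded below at Milnor-Witt degree $0$ and receive no hidden extension from the negative cone, so their bottom classes are not $\tau^4$-divisible. Hence $\tau^4\colon\Pi_{-4}\to\Pi_0$ is injective but not surjective, giving case (1). Dually, by Proposition~\ref{prop:rho-divisible-Ext} the infinitely $\rho$-divisible classes $\F_2[\rho^{\pm1},h_1]\otimes\frac{\F_2[\tau^4]}{\tau^\infty}\left\{\gamma\right\}$ are all in Milnor-Witt degree $\equiv 3$ and occur only for degrees $\leq -5$; the top of each such tower is $\frac{\gamma}{\tau^4}\cdot(\cdots)$, whose $\tau^4$-multiple would be the nonexistent phantom $\gamma$, and there is no positive cone in Milnor-Witt degree $\equiv 3$ to splice onto. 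Since every class in Milnor-Witt degree $\equiv 3$ is infinitely $\rho$-divisible, $\Pi_{-1}=0$, so $\tau^4\colon\Pi_{-5}\to\Pi_{-1}=0$ is vacuously surjective and zero, giving case (2). All remaining degrees fall under case (3).

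The main obstacle I anticipate is the bookkeeping required to pass from $E_\infty$ to homotopy without missing or inventing a hidden extension. Concretely, I must verify that the listed extensions account for the \emph{entire} $\tau^4$-kernel on $\Ext_{C_2}(1)$ outside the two exceptional spots, so that no class of $\Pi_n$ is unexpectedly annihilated, and symmetrically that they account for the cokernel so that surjectivity is not blocked by a class of higher filtration. A subtlety here is that the generators in Table~\ref{tbl:MultEinfpage} are recorded only up to division by $\rho$, so the hidden extensions stated on these generators must be propagated across all their $\rho$-multiples: for instance the extension $\frac{\gamma}{\rho\tau^2}\mapsto\tau h_1$ forces $\frac{\gamma}{\tau^2}\mapsto\rho\,\tau h_1$, which is nonzero precisely because $\rho^2\cdot\tau h_1=0$ but $\rho\cdot\tau h_1\neq 0$. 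Controlling this propagation, together with the module structure over $\Pi_0(ko_{C_2})$ and the explicit chart of Section~\ref{sctn:chart}, is what would close the successive-approximation and filtration-jump arguments and complete the proof.
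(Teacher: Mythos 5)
Your proposal is correct and follows essentially the same route as the paper: the paper's proof also splits by the residue of $n$ modulo $4$, observes that the claim is ``already true in $\Ext$'' wherever the $E_\infty$-page has no $\tau^4$-kernel or cokernel, and invokes exactly the hidden $\tau^4$-extensions of Propositions~\ref{prop:BasicPhiMult} and~\ref{prop:PhiMult} (together with $\Pi_{-1}(ko_{C_2})=0$ for case (2)) to splice the negative-cone towers onto the positive-cone ones in the remaining degrees. Your extra remarks about propagating the hidden extensions across $\rho$-multiples are sound and just make explicit the bookkeeping the paper leaves to the charts.
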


\begin{pf}
\mbox{}
\begin{enumerate}
\item This is already true in $\Ext$, except in the $0$-stem. But the $0$-stem is handled by Proposition~\ref{prop:PhiMult}(\ref{PhiTheta3}).
\item There is nothing to prove here, given that $\Pi_{-1}(ko_{C_2})=0$.
\item We give arguments depending on the residue of $n$ modulo 4. 
\begin{itemize}
\item 
$n\equiv 0 \pmod{4}$: If $n < -4$, this is already true in $\Ext$. For $n\geq 0$, this follows from the relation $\rho\alpha=\eta^3$
and the hidden $\tau^4$ extensions on $\alpha$ and $\beta$ given 
in Proposition \ref{prop:BasicPhiMult}.

\item 
$n\equiv 1\pmod{4}$: For $n<-3$, this is already true in $\Ext$. For $n\geq -3$, this follows from Proposition~\ref{prop:PhiMult}(\ref{PhiTheta2}).

\item 
$n\equiv 2\pmod{4}$: For $n<-2$, this is already true in $\Ext$. For $n\geq -2$, this follows from Proposition~\ref{prop:PhiMult}(\ref{PhiTheta1}) and (\ref{PhiEta2Theta1}).

\item 
$n\equiv 3\pmod{4}$: This is already true in $\Ext$. 
\end{itemize}
\end{enumerate}
\end{pf}

\begin{rmk}
Another way to view the $\tau^4$-periodicity is via the Tate diagram (Proposition~\ref{koTateDiag}). We have a cofiber sequence
\[ {EC_2}_+ \smsh ko \rtarr ko_{C_2} \rtarr S^{\infty,\infty}\smsh ko_{C_2}.\]
The homotopy orbit spectrum therefore captures the $\rho$-torsion. If $x\in \pi_{*,*}ko_{C_2}$ is $\rho$-torsion, then so is $\tau^4\cdot x$. But multiplication by $\tau^4$ is an equivalence on underlying spectra and therefore gives an equivalence on homotopy orbits. This implies the $\tau^4$-periodicity in the $\rho$-torsion.
\end{rmk}

\subsection{The Milnor-Witt $n$-stem with $n \equiv 0 \pmod 4$}

Theorem \ref{thm:PhiPerd} indicates that $\tau^4$ multiplications
are useful in describing the structure of the homotopy groups
of $ko_{C_2}$.  Therefore, 
our next task is to build on our understanding of
$\Pi_0 (ko_{C_2})$ and to describe the subring
$\bigoplus_{k \in \Z} \Pi_{4k} (ko_{C_2})$
of $\pi_{*,*} ko_{C_2}$. 

The $\Ext$ charts indicate that the behavior of these groups
differs for $k \geq 0$ and for $k < 0$.

\begin{prop}
$\bigoplus_{k \geq 0} \Pi_{4k}(ko_{C_2})$ is isomorphic to
$\Pi_0(ko_{C_2})[\tau^4]$.
\end{prop}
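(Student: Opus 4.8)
The plan is to reduce the entire statement to the $\tau^4$-periodicity already recorded in Theorem~\ref{thm:PhiPerd}. First I would note that $\bigoplus_{k\geq 0}\Pi_{4k}(ko_{C_2})$ is a subring of $\pi_{*,*}(ko_{C_2})$: the Milnor-Witt degree $s-w$ is additive under multiplication, so a product of two classes whose Milnor-Witt degrees are nonnegative and divisible by $4$ again has this property, and the unit lies in $\Pi_0(ko_{C_2})$. Thus it makes sense to compare this subring with the polynomial ring $\Pi_0(ko_{C_2})[\tau^4]$.

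Next I would define the comparison map as the $\Pi_0(ko_{C_2})$-algebra homomorphism
\[ \psi\colon \Pi_0(ko_{C_2})[\tau^4] \rtarr \bigoplus_{k\geq 0}\Pi_{4k}(ko_{C_2}) \]
that is the identity on $\Pi_0(ko_{C_2})$ and sends the polynomial generator $\tau^4$ to the chosen homotopy class $\tau^4 \in \pi_{0,-4}(ko_{C_2})$. Being the $\Pi_0(ko_{C_2})$-algebra map out of a free algebra on one generator, $\psi$ is automatically a ring homomorphism. Since $\tau^4$ has Milnor-Witt degree $4$ while $\Pi_0(ko_{C_2})$ is concentrated in Milnor-Witt degree $0$, the monomial $x\,(\tau^4)^k$ lands in Milnor-Witt degree $4k$. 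Hence $\psi$ is graded, and in Milnor-Witt degree $4k$ it is exactly the iterated multiplication map $(\tau^4)^k\colon \Pi_0(ko_{C_2}) \to \Pi_{4k}(ko_{C_2})$.

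To finish I would invoke Theorem~\ref{thm:PhiPerd}(3): for each $j\geq 0$ the map $\tau^4\colon \Pi_{4j}(ko_{C_2}) \xrightarrow{\iso} \Pi_{4(j+1)}(ko_{C_2})$ is a bijection, since $4j \neq -4,-5$. Composing these bijections shows that $(\tau^4)^k\colon \Pi_0(ko_{C_2}) \to \Pi_{4k}(ko_{C_2})$ is a bijection for every $k\geq 0$, so $\psi$ is an isomorphism in each Milnor-Witt degree and therefore a ring isomorphism. Surjectivity encodes that every class in $\Pi_{4k}$ is $(\tau^4)^k$ times a class in $\Pi_0$, while injectivity encodes that there are no hidden $\Pi_0(ko_{C_2})$-linear relations among the powers of $\tau^4$; both follow immediately from the bijectivity above. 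The genuine content has already been absorbed into Theorem~\ref{thm:PhiPerd}, whose proof used the hidden $\tau^4$-extensions of Propositions~\ref{prop:BasicPhiMult} and~\ref{prop:PhiMult}. Given that input, the only point one must be careful about here is the freeness claim, namely that $\tau^4$ is a genuine polynomial generator rather than satisfying a relation with coefficients in $\Pi_0(ko_{C_2})$; but this is precisely the injectivity of iterated $\tau^4$-multiplication, so no new argument is needed, and the choice of lift of $\tau^4$ modulo higher Adams filtration is immaterial.
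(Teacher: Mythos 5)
Your proof is correct and takes the same route as the paper, which simply observes that the statement follows immediately from Theorem~\ref{thm:PhiPerd}; you have filled in the (short) bookkeeping of assembling the bijections $\tau^4\colon \Pi_{4j}(ko_{C_2})\to\Pi_{4(j+1)}(ko_{C_2})$ for $j\geq 0$ into a graded ring isomorphism. Nothing further is needed.
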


\begin{proof}
This follows immediately from Theorem \ref{thm:PhiPerd}.
\end{proof}

\begin{defn} 
\label{defn:t^2omega}
Define 
$\tau^2\omega$ to be an element in $\pi_{0,-2}(ko_{C_2})$ 
that is detected by $\tau^2 h_0$ such that
$(\tau^2 \omega)^2 = 2 \omega \cdot \tau^4$.
\end{defn}

An equivalent way to specify a choice of $\tau^2 \omega$ is to require
that the underlying map $\iota^*(\tau^2 \omega)$ equals $2$ in
$\pi_0(ko)$.

\begin{defn}
For $k \geq 1$,  
let $\frac{\Gamma}{\tau^k}$ be an element of $\pi_{0,k+1}$ detected by
$\frac{\gamma}{\tau^k}$ such that:
\begin{enumerate}
\item
$\tau^4 \cdot \frac{\Gamma}{\tau} = \tau^2 \omega$.
\item
$\tau^4 \cdot \frac{\Gamma}{\tau^3} = \omega$.
\item
$\tau^4 \cdot \frac{\Gamma}{\tau^k} = \frac{\Gamma}{\tau^{k-4}}$
when $k \geq 5$.
\end{enumerate}
\end{defn}

According to Theorem~\ref{thm:PhiPerd}, the elements $\frac{\Gamma}{\tau^k}$ are uniquely determined by
the stated conditions.
Proposition \ref{prop:PhiMult} (\ref{PhiTheta1}) and (\ref{PhiTheta3}) allow
us to choose $\frac{\Gamma}{\tau}$ and 
$\frac{\Gamma}{\tau^3}$ with the desired properties.
As suggested by the defining relations for these elements, we will often write $\tau^{-2-4k}\omega$ for $\frac{\Gamma}{\tau^{1+4k}}$ and $\tau^{-4-4k}\omega$ for $\frac{\Gamma}{\tau^{3+4k}}$.

\begin{prop}
As a $\pi_0(ko_{C_2})[\tau^4]$-module, 
$\bigoplus_{k \in \Z} \Pi_{4k}(ko_{C_2})$ is isomorphic to the
$\pi_0(ko_{C_2})[\tau^4]$-module generated by $1$ 
and the elements  $\tau^{-4-4k}\omega$ for $k\geq 0$,
subject to the relations
\begin{enumerate}
\item
$\tau^4\cdot \tau^{-4-4k}\omega = \tau^{-4k}\omega$.
\item
$\rho \cdot \tau^{-4-4k}\omega = 0$.
\item
$\eta \cdot \tau^{-4-4k}\omega = 0$.
\item
$\tau^4 \cdot \tau^{-4}\omega = \omega$.
\end{enumerate} 
\end{prop}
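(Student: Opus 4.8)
The plan is to leverage the collapse of the Adams spectral sequence together with the $\tau^4$-periodicity of Theorem~\ref{thm:PhiPerd}, so that the entire statement reduces to a single Milnor-Witt stem. Since there are no Adams differentials, $\Ext_{C_2}(1)$ is the associated graded of $\pi_{*,*}(ko_{C_2})$, and the preceding proposition already identifies $\bigoplus_{k\geq 0}\Pi_{4k}(ko_{C_2})$ with $\Pi_0(ko_{C_2})[\tau^4]$, i.e. the free summand on the generator $1$. I would therefore first construct the evident $\Pi_0(ko_{C_2})[\tau^4]$-module map from the presented module to $\bigoplus_{k\in\Z}\Pi_{4k}(ko_{C_2})$ sending $1\mapsto 1$ and the $k$-th generator to $\tau^{-4-4k}\omega=\frac{\Gamma}{\tau^{3+4k}}$, and then check that it is well defined and an isomorphism.

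For well-definedness I must verify the four families of relations. Relations (1) and (4) are immediate from the defining properties of the classes $\frac{\Gamma}{\tau^{3+4k}}$ given in the Definition preceding the statement: property (2) there is exactly relation (4), and property (3) there is exactly relation (1) for $k\geq 1$. The torsion relations (2) and (3) follow cleanly from $\tau^4$-periodicity. By Theorem~\ref{thm:PhiPerd} the map $\tau^4\colon\Pi_{-4}(ko_{C_2})\rtarr\Pi_0(ko_{C_2})$ is injective, and in $\Pi_0(ko_{C_2})$ we have $\rho\omega=\rho(\eta\rho+2)=0$ and $\eta\omega=\eta(\eta\rho+2)=0$. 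Hence $\tau^4(\rho\cdot\tau^{-4}\omega)=\rho\omega=0$ and $\tau^4(\eta\cdot\tau^{-4}\omega)=\eta\omega=0$ force $\rho\cdot\tau^{-4}\omega=\eta\cdot\tau^{-4}\omega=0$; the general $k$ cases follow by the same argument together with the bijectivity of $\tau^4$ on the lower negative stems.

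The isomorphism claim reduces, again by $\tau^4$-periodicity, to a single computation. Because $\tau^4\colon\Pi_{4k}\rtarr\Pi_{4(k+1)}$ is bijective for every $k\leq -2$ and injective for $k=-1$, the negative part $\bigoplus_{k\leq -1}\Pi_{4k}(ko_{C_2})$ is the infinitely $\tau^4$-divisible tower sitting below $\omega\in\Pi_0(ko_{C_2})$, and it is completely determined by the $\Pi_0(ko_{C_2})$-module structure of $\Pi_{-4}(ko_{C_2})$. Thus it suffices to prove that $\Pi_{-4}(ko_{C_2})$ is cyclic on $\tau^{-4}\omega$ with annihilator $(\rho,\eta)$, i.e. that the surjection $\Pi_0(ko_{C_2})/(\rho,\eta)\twoheadrightarrow\Pi_{-4}(ko_{C_2})$, $1\mapsto\tau^{-4}\omega$, is an isomorphism.

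This last step is where the real work lies, and it is the step I expect to be the main obstacle. Using our description of $\Pi_0(ko_{C_2})$ one computes $\Pi_0(ko_{C_2})/(\rho,\eta)\iso\Z_2[\alpha,\beta]/(\alpha^2-4\beta)$, and I would match its monomial basis against the negative-cone classes of $\Ext_{C_2}(1)$ in the Milnor-Witt $(-4)$-stem. The difficulty is that these identifications are mostly hidden multiplicative extensions rather than honest products in $\Ext$: since $\Ext_{C_2}(1)$ is a square-zero extension of $\Ext_\R(1)$, products such as ${Q}h_1^3\cdot\frac{\gamma}{\tau^3}$ vanish, so for instance $\alpha\cdot\tau^{-4}\omega$ and $\beta\cdot\tau^{-4}\omega$ must be detected by genuinely higher-filtration negative-cone classes. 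I would establish these using the hidden $\tau^4$-extensions of Proposition~\ref{prop:PhiMult}, the hidden extensions catalogued in Section~\ref{sec:hidden}, and the $\tau^4$-isomorphism of $\Pi_{-4}(ko_{C_2})$ onto its image in $\Pi_0(ko_{C_2})$ to transport the already-known multiplicative structure of $\Pi_0(ko_{C_2})$ downward; a final count against the associated graded $\Ext_{C_2}(1)$, legitimate because the Adams spectral sequence collapses, confirms that there are neither extra generators nor extra relations.
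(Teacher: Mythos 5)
Your proposal is correct and is in substance a careful elaboration of the paper's own one-line proof, which cites inspection of the $\Ext$ charts together with the defining relations for $\tau^{-4-4k}\omega$: both arguments use Theorem~\ref{thm:PhiPerd} and the relations $\tau^4\cdot\tau^{-4-4k}\omega=\tau^{-4k}\omega$ to reduce everything to the Milnor--Witt $(-4)$-stem, and then identify $\Pi_{-4}(ko_{C_2})$ with $\Pi_0(ko_{C_2})/(\rho,\eta)\iso \Z_2[\alpha,\beta]/(\alpha^2-4\beta)$ using the hidden extensions encoded in the charts. Your device of transporting the multiplicative structure of $\Pi_0(ko_{C_2})$ downward along the injection $\tau^4\colon\Pi_{-4}(ko_{C_2})\hookrightarrow\Pi_0(ko_{C_2})$ is exactly how the hidden $\alpha$- and $\beta$-extensions (and the torsion relations (2) and (3)) are justified, so nothing is missing.
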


\begin{proof}
This follows by inspection of the $\Ext$ charts,
together with the defining relations
for 
$\tau^{-4-4k}\omega$.
\end{proof}

\subsection{The Milnor-Witt $n$-stem with $n \equiv 1 \pmod 4$}

\begin{defn} Denote by $\tau\eta$ an element of $\pi_{1,0}(ko_{C_2})$ 
that is detected by $\tau h_1$.
\end{defn}

Note that $\tau \eta$ is not uniquely determined because of elements
in higher Adams filtration, but the choice makes no practical
difference.  One way to specify a choice of $\tau \eta$ is to 
use the composition
\[
S^{1,0} \rtarr S^{0,0} \rtarr ko_{C_2},
\]
where the first map is the image of the classical Hopf map
$\eta: S^1 \rtarr S^0$, and the second map is the unit.

\begin{prop} 
As a $\Pi_0(ko_{C_2})[\tau^4]$-module, 
there is an isomorphism
\[ 
\bigoplus_{k\in \Z} \Pi_{1+4k}(ko_{C_2}) \iso 
\left(\Pi_0(ko_{C_2})[(\tau^4)^{\pm 1}]/
(2,\rho^2,\eta^2,\alpha)\right)\{\tau \eta\}.\]
\end{prop}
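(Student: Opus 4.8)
The plan is to leverage the $\tau^4$-periodicity from Theorem~\ref{thm:PhiPerd}. Since the exceptional stems $n=-4,-5$ are congruent to $0$ and $3$ modulo $4$, multiplication by $\tau^4$ is bijective on every $\Pi_{1+4k}(ko_{C_2})$. Hence $\bigoplus_{k\in\Z}\Pi_{1+4k}(ko_{C_2})$ is a module over $\Pi_0(ko_{C_2})[(\tau^4)^{\pm 1}]$, and it suffices to analyze a single period together with the action of the generators $\eta,\rho,\alpha,\beta$. Throughout I would use that the Adams spectral sequence collapses, so $\Ext_{C_2}(1)$ is the associated graded of $\pi_{*,*}(ko_{C_2})$ and the $\F_2$-dimension in each tridegree can be read off the charts.

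First I would set up the surjection $\Pi_0(ko_{C_2})[(\tau^4)^{\pm 1}]\{\tau\eta\}\to\bigoplus_k\Pi_{1+4k}(ko_{C_2})$ carrying the generator to $\tau\eta$. That $\tau\eta$ generates is checked on one fundamental domain: inspecting the Milnor--Witt $1$-stem shows every class is the leading term of a $\Pi_0$-multiple of $\tau\eta$, the four classes per $\beta$- and $\tau^4$-tower being $\tau\eta$, $\rho\tau\eta$, $\eta\tau\eta$, $\rho\eta\tau\eta$, detected respectively by $\tau h_1$, $\rho\tau h_1$, $\tau h_1^2$, and $\rho\tau h_1^2$. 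Combined with $\tau^4$-periodicity this gives surjectivity.

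Next I would verify that $2,\rho^2,\eta^2,\alpha$ annihilate $\tau\eta$. Three reduce to relations in Table~\ref{tbl:ExtA1Rrel} (valid in $\Ext_{C_2}(1)$ since $\tau h_1,h_0,h_1,\rho$ lie in the positive cone): $\rho^2\cdot\tau h_1=0$, $h_1^2\cdot\tau h_1=0$, and $(h_0+\rho h_1)\cdot\tau h_1=0$, the last because $2$ is detected by $h_0+\rho h_1$. Each forces the leading term of $\rho^2\tau\eta$, $\eta^2\tau\eta$, resp.\ $2\tau\eta$ to vanish. One then excludes higher-filtration corrections: the tridegrees of $\rho^2\tau\eta$ and $\eta^2\tau\eta$ (stem/weight $(-1,-2)$ and $(3,2)$) contain no surviving classes, while $2\tau\eta$ lands in $\pi_{1,0}$ with Adams filtration $\geq 3$. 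Here Proposition~\ref{prop:underlying-rho} is decisive: since $\iota^*(\tau\eta)=\eta$ and $2\eta=0$ in $\pi_1(ko)$, the class $2\tau\eta$ is $\rho$-divisible, but the only $\rho$-divisible class in $\pi_{1,0}$ (namely $\rho\eta\tau\eta$, detected by $\rho\tau h_1^2$) sits in filtration $2$, so a filtration-$\geq 3$ element cannot equal it and $2\tau\eta=0$. For $\alpha\tau\eta\in\pi_{5,4}$ I would check that $\Ext_{C_2}(1)$ vanishes in total degree $(5,*,4)$, forcing the product to be zero.

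Finally, to see that $(2,\rho^2,\eta^2,\alpha)$ is the entire kernel, I would compare sizes. Modulo this ideal $\Pi_0(ko_{C_2})$ becomes $\F_2[\eta,\rho,\beta]/(\rho^2,\eta^2)$, so the proposed module has $\F_2$-basis $\{\rho^a\eta^b\beta^c(\tau^4)^d\,\tau\eta : a,b\in\{0,1\},\ c\geq 0,\ d\in\Z\}$, and a degree-by-degree count matches exactly the classes appearing in the Milnor--Witt $(1+4k)$-stems of the chart. I expect the main obstacle to be this final bookkeeping together with the exclusion of hidden higher-filtration terms in $2\tau\eta=0$ and $\alpha\tau\eta=0$: these are the only places where higher-filtration classes or square-zero negative-cone contributions could survive, and ruling them out rests on careful chart reading rather than a purely formal argument.
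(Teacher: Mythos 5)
Your proposal is correct and follows essentially the same route as the paper, whose proof consists precisely of the two ingredients you use: Theorem~\ref{thm:PhiPerd} to reduce to a single $\tau^4$-period, and inspection of the $\Ext$ charts (the relations $\rho^2\cdot\tau h_1$, $h_1^2\cdot\tau h_1$, and $h_0\cdot\tau h_1+\rho h_1\cdot\tau h_1$ from Table~\ref{tbl:ExtA1Rrel}, plus a count of classes per period) to identify the annihilator of $\tau\eta$. The paper leaves these verifications implicit; your write-up simply makes them explicit, including the correct handling of possible higher-filtration corrections to $2\tau\eta$ and $\alpha\tau\eta$.
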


\begin{pf}
This follows from inspection of the 
$\Ext$ charts, together with Theorem \ref{thm:PhiPerd}.
\end{pf}

\subsection{The Milnor-Witt $n$-stem with $n \equiv 2 \pmod 4$}

Recall from Definition \ref{defn:t^2omega}
that $\tau^2 \omega$ is an element of $\pi_{0,-2}(ko_{C_2})$
that is detected by $\tau^2 h_0$.

\begin{lemma} The product $\alpha \cdot \tau^2\omega$ in 
$\pi_{4,2}(ko_{C_2})$ is detected by $h_0 a$.
\end{lemma}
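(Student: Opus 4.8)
The plan is to show that the naive detecting element $Q h_1^3\cdot\tau^2 h_0$ vanishes, so that the product is forced into higher Adams filtration, and then to pin down its detecting class by multiplying with $\tau^4$. Since $\alpha$ is detected by $Q h_1^3$ and $\tau^2\omega$ by $\tau^2 h_0$, the leading term of $\alpha\cdot\tau^2\omega$ in the Adams $E_\infty$-page is $Q h_1^3\cdot\tau^2 h_0$, an element of the negative-cone summand $\Ext_{NC}$ in tridegree $(4,3,2)$. First I would check that this group vanishes, so that $\alpha\cdot\tau^2\omega$ has Adams filtration at least $4$. The two possible generators of $\Ext_{NC}$ in tridegree $(4,3,2)$ are $Q h_1^3\cdot\tau^2 h_0$ and $\frac{\gamma}{\tau}\cdot\tau^2 a$, and both already vanish on the $E_1^-$-page of Proposition~\ref{prop:Bockstein-E1}: inside $\frac{\bM_2^\C}{\tau^\infty}\{\gamma\}\otimes_{\bM_2^\C}\Ext_\C(1)$ one has $\tau^2\cdot\frac{\gamma}{\tau}=0$, which kills $\frac{\gamma}{\tau}\cdot\tau^2 a$, while the $\Tor$-class $Q h_1^3\cdot\tau^2 h_0$ is governed by $\tau^2 h_0\cdot h_1^3 = \tau h_0\cdot(\tau h_1^3)=0$, using the relation $\tau h_1^3=0$ in $\Ext_\C(1)$. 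Equivalently, $\Ext_{NC}^{(4,3,2)}=0$ can be read off the charts in Section~\ref{sctn:chart}.

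To identify the detecting class I would compute $\tau^4\cdot(\alpha\cdot\tau^2\omega)$ directly. By the hidden $\tau^4$-extension of Proposition~\ref{prop:BasicPhiMult}(\ref{part:tau^4-Qh1^3}), the element $\tau^4\cdot\alpha$ is detected by $\tau^2 a$, so $\tau^4\cdot\alpha\cdot\tau^2\omega$ is detected by $\tau^2 a\cdot\tau^2 h_0=\tau^4 h_0 a$, which is a nonzero class of $\Ext_\R(1)$ in filtration $4$ (see Table~\ref{tbl:ExtA1Rrel}). Hence $\tau^4\cdot\alpha\cdot\tau^2\omega$ has Adams filtration exactly $4$. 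Because $\tau^4$ has Adams filtration $0$, multiplication by it cannot lower filtration, so $\alpha\cdot\tau^2\omega$ has filtration at most $4$; together with the lower bound from the first step, its filtration is exactly $4$. Since $h_0 a$ is the unique class of $\Ext_{C_2}(1)$ in tridegree $(4,4,2)$ (the negative cone again vanishing there), the product $\alpha\cdot\tau^2\omega$ is detected by $h_0 a$.

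The main obstacle is the first step, namely the filtration jump: verifying that $Q h_1^3\cdot\tau^2 h_0$ — and, to be safe, the entire group $\Ext_{NC}^{(4,3,2)}$ — vanishes, which is exactly what moves the detecting class from the expected filtration $3$ up to $h_0 a$ in filtration $4$. A related subtlety is that one must rule out detection by a negative-cone class in filtration $3$ or $4$ (such as $\frac{\gamma}{\tau}\cdot\tau^2 a$), which is why I phrase the vanishing at the level of the whole groups $\Ext_{NC}^{(4,3,2)}$ and $\Ext_{NC}^{(4,4,2)}$ rather than for a single product. Once that vanishing is secured, the remaining arguments are routine multiplicative bookkeeping with the $\tau^4$-extension and the known relations.
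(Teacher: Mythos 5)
Your proposal is correct and its core step is exactly the paper's one-line argument: use the hidden $\tau^4$-extension of Proposition~\ref{prop:BasicPhiMult}(\ref{part:tau^4-Qh1^3}) to see that $\tau^4\cdot\alpha\cdot\tau^2\omega$ is detected by $\tau^2 a\cdot\tau^2 h_0=\tau^4 h_0 a$, and then divide back by $\tau^4$. The additional bookkeeping you supply (the vanishing of $\Ext_{NC}$ in tridegree $(4,3,2)$ forcing the filtration jump, and the uniqueness of $h_0a$ in tridegree $(4,4,2)$) is left implicit in the paper but is correct and makes the deduction airtight.
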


\begin{pf} 
The product $\tau^4 \cdot \alpha \cdot \tau^2 \omega$
is detected by $\tau^4 h_0 a$ by 
Proposition \ref{prop:BasicPhiMult} (\ref{part:tau^4-Qh1^3}).
\end{pf}

\begin{defn}
Define $\tau^2\alpha$ to be an element of $\pi_{4,2}(ko_{C_2})$ 
that is detected by $a$ such that 
$2 \cdot \tau^2\alpha$ equals $\alpha\cdot \tau^2\omega$.
\end{defn}

\begin{prop} 
As a $\Pi_0(ko_{C_2})[\tau^4]$-module, 
$\bigoplus_{k\in \Z} \Pi_{2+4k}(ko_{C_2})$
is isomorphic to the free
$\Pi_0(ko_{C_2})[(\tau^4)^{\pm 1}]$-module generated by
$\tau^2\omega$, 
$(\tau\eta)^2$, and 
$\tau^2\alpha$, subject to the relations
\begin{enumerate}
\item $\rho\cdot \tau^2\omega = 0$.
\item $\alpha \cdot \tau^2\omega = 2\cdot \tau^2\alpha$.
\item $\rho (\tau\eta)^2 = \eta\cdot\tau^2\omega$.
\item $2 (\tau \eta)^2 = 0$.
\item $\eta (\tau \eta)^2 = \rho \cdot \tau^2\alpha$.
\item $ \alpha (\tau\eta)^2 = 0$.
\item $\eta \cdot \tau^2\alpha = 0$.
\item\label{LastReln} $\alpha \cdot \tau^2\alpha = 2 \beta \cdot \tau^2\omega$.
\end{enumerate}
\end{prop}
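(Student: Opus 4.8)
The plan is to leverage the $\tau^4$-periodicity of Theorem~\ref{thm:PhiPerd} to reduce the problem to a single Milnor-Witt stem, and then to identify the three generators and the eight relations using the collapse of the Adams spectral sequence together with the hidden extensions of Section~\ref{sec:hidden}. First I would observe that $2+4k$ is never equal to $-4$ or $-5$, so Theorem~\ref{thm:PhiPerd} shows that multiplication by $\tau^4$ is a bijection $\Pi_{2+4k}(ko_{C_2}) \to \Pi_{2+4(k+1)}(ko_{C_2})$ for every $k\in\Z$. Hence $\tau^4$ acts invertibly on $\bigoplus_{k\in\Z}\Pi_{2+4k}(ko_{C_2})$, making it a module over $R := \Pi_0(ko_{C_2})[(\tau^4)^{\pm 1}]$. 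Since the Adams spectral sequence collapses, $\Ext_{C_2}(1)$ is the associated graded of $\pi_{*,*}(ko_{C_2})$, and inspecting the charts of Section~\ref{sctn:chart} in Milnor-Witt degree $\equiv 2 \pmod 4$ shows that every class is accounted for, over $R$, by $\tau^2 h_0$, $(\tau h_1)^2$, and $a$, which detect $\tau^2\omega$, $(\tau\eta)^2$, and $\tau^2\alpha$ respectively. This establishes generation.

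Next I would dispatch the relations in three groups. Relation (2) is the defining property of $\tau^2\alpha$, and relation (8) then follows formally: multiplying relation (2) by $\alpha$ gives $\alpha^2\cdot\tau^2\omega = 2\,\alpha\cdot\tau^2\alpha$, and substituting $\alpha^2 = 4\beta$ yields $4\,\beta\cdot\tau^2\omega = 2\,\alpha\cdot\tau^2\alpha$; canceling a factor of $2$, justified by the absence of $2$-torsion in the relevant bidegree as read off the chart, gives relation (8). Relations (1), (3), (4), (5), and (7) are each detected in $\Ext$ by an identity already present in $\Ext_\R(1)$ (Table~\ref{tbl:ExtA1Rrel}) or by the detection of $2$ by $h_0+\rho h_1$: for instance (3) and (5) come from $\tau^2 h_0\cdot h_1 = \rho(\tau h_1)^2$ and $h_1(\tau h_1)^2 = \rho a$, while (7) comes from $h_1 a = 0$ and (1) from $\rho\cdot\tau^2 h_0 = 0$. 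In each case the asserted identity holds at the level of leading terms, so the remaining work is to exclude corrections in strictly higher Adams filtration, which is a finite chart check in the single target bidegree.

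The genuinely hidden relation is (6), namely $\alpha\cdot(\tau\eta)^2 = 0$, because $\alpha$ is a negative-cone class detected by $Q h_1^3$. Here I would first show $\rho\cdot\alpha\,(\tau\eta)^2 = 0$ by combining $\rho\alpha = \eta^3$ with relations (5) and (7): explicitly $\rho\,\alpha(\tau\eta)^2 = \eta^3(\tau\eta)^2 = \eta^2\cdot\rho\,\tau^2\alpha = \rho\,\eta^2\tau^2\alpha = 0$, the last step using $\eta\cdot\tau^2\alpha = 0$. To upgrade this to the vanishing of $\alpha(\tau\eta)^2$ itself, I would identify the product $Q h_1^3\cdot(\tau h_1)^2$ in $\Ext_{NC}$, using the square-zero structure of $\Ext_{C_2}(1)$ over $\Ext_\R(1)$ and a Massey-product shuffle of the type recorded in Section~\ref{sctn:Massey}, and then verify that no class survives in the bidegree $(s,w)=(6,4)$, forcing $\alpha(\tau\eta)^2=0$.

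Finally, to conclude that the presentation is complete, with no further generators and no further relations, I would compare ranks bidegree by bidegree: because the Adams spectral sequence collapses, the $\F_2$-dimension of each graded piece of $\pi_{*,*}(ko_{C_2})$ equals that of $\Ext_{C_2}(1)$, and one checks that the $R$-module presented by the three generators and the eight relations has exactly these dimensions in each tridegree. I expect the main obstacle to be relation (6), specifically pinning down the negative-cone product $Q h_1^3\cdot(\tau h_1)^2$ and ruling out higher-filtration contributions; the bookkeeping needed to certify that the listed relations are exhaustive is the secondary delicate point.
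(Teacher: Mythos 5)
Your proposal is correct and follows essentially the same route as the paper: relations (1)--(7) are read off from the $\Ext$ charts together with the $\tau^4$-periodicity of Theorem~\ref{thm:PhiPerd}, and relation (8) is derived exactly as in the paper by multiplying the defining relation $2\cdot\tau^2\alpha = \alpha\cdot\tau^2\omega$ by $\alpha$, substituting $\alpha^2 = 4\beta$, and cancelling $2$ using the absence of $2$-torsion. The only divergence is that you treat relation (6) as a delicate hidden extension requiring a Massey-product analysis, when in fact the target group $\pi_{6,4}(ko_{C_2})$ is zero by inspection of the Milnor--Witt $2$-stem charts, so the final step of your own argument (checking that bidegree is empty) already suffices and the preliminary $\rho$-multiplication step is unnecessary.
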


\begin{pf}
Except for the last relation,
this follows from inspection of the $\Ext$ charts,
together with Theorem \ref{thm:PhiPerd}.

For the last relation, use that
$2 \alpha \cdot \tau^2 \alpha$ equals
$\tau^2 \omega \cdot \alpha^2$ by the definition
of $\tau^2 \alpha$,
and that $\tau^2 \omega \cdot \alpha^2$ 
equals $4 \beta \cdot \tau^2 \omega$
by the defining relation for $\beta$.
As there is no $2$-torsion in this degree, 
relation~(\ref{LastReln}) follows.
\end{pf}

\subsection{The Milnor-Witt $n$-stem with $n \equiv 3 \pmod 4$}

The structure of
$\bigoplus_{k\in \Z} \Pi_{4k+3}(ko_{C_2})$
is qualitatively different than the other cases because
it contains elements that are infinitely divisible by $\rho$.
The $\Ext$ charts show that 
$\bigoplus_{k \in \Z} \Pi_{4k+3}(ko_{C_2})$
is concentrated in the range $k \leq -2$.

The elements $\frac{\Gamma}{\tau^{4k}}$ 
are infinitely divisible by both $\rho$ and $\tau^4$.
We write $\frac{\Gamma}{\rho^{j}\tau^{4k} }$ for an element such that
$\rho^j \cdot \frac{\Gamma}{\rho^{j}\tau^{4k}}$ equals
$\frac{\Gamma}{\tau^{4k}}$.  

By inspection of the $\Ext$ charts, we see that 
$\bigoplus_{k\leq 0} \Pi_{4k-5}(ko_{C_2})$ is generated 
as an abelian group by the elements $\frac{\Gamma}{\rho^{j}\tau^{4k} }$.
The $\Pi_0(ko_{C_2})[\tau^4]$-module structure on 
$\bigoplus_{k\leq 0} \Pi_{4k-5}(ko_{C_2})$ 
is then governed by the 
orders of these elements, together with the relations 
\[
\alpha \cdot \frac{\Gamma}{\tau^{4k}} =
-8 \frac{\Gamma}{\rho^4 \tau^{4k}}
\]
and
\[
\beta\cdot \frac{\Gamma}{\tau^{4k}} = 16\frac{\Gamma}{\rho^8 \tau^{4k} }.
\]
The first relation follows from the calculation
\[
\alpha \cdot \frac{\Gamma}{\tau^{4k}} = 
\rho\alpha \cdot \frac{\Gamma}{\rho\tau^{4k}} = 
\eta^3 \cdot \frac{\Gamma}{\rho\tau^{4k}} =
(\eta \rho)^3 \cdot \frac{\Gamma}{\rho^4\tau^{4k}} = (-2)^3 \cdot \frac{\Gamma}{\rho^4\tau^{4k}} = 
-8 \frac{\Gamma}{\rho^4 \tau^{4k} }.
\] 
The second relation follows from a similar argument, 
using that $\rho^3 \beta = \eta \alpha$.

\begin{prop} 
\label{prop:theta-torsion}
The order of $\frac{\Gamma}{\tau^{4k} \rho^{j}}$ is $2^{\varphi(j)+1}$, where $\varphi(j)$ is the number of positive integers $0<i\leq j$ such that $i\equiv 0,1,2$ or $4 \pmod8$.
\end{prop}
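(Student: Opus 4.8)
The plan is to compute the additive order of $x_j := \frac{\Gamma}{\rho^j \tau^{4k}}$ by studying multiplication by $2$ along the infinite $\rho$-divisibility tower
\[
\cdots \xrtarr{\rho} x_{j} \xrtarr{\rho} x_{j-1} \xrtarr{\rho} \cdots \xrtarr{\rho} x_0 \xrtarr{\rho} 0,
\]
in which $\rho\, x_j = x_{j-1}$ and $\rho\, x_0 = 0$. For fixed $k$ the whole tower lies in the single Milnor--Witt stem $\Pi_{-4k-1}(ko_{C_2})$, and for $k \geq 2$ this stem satisfies $-4k-1 \leq -9$, so Theorem~\ref{thm:PhiPerd} shows that $\tau^4$ acts bijectively on it and identifies the tower for index $k$ with the one for index $k-1$. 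As multiplication by $\tau^4$ commutes with multiplication by $2$, the order is independent of $k$, as the formula demands, and I may fix $k$. Note also that $\rho\, x_0 = 0$ makes each $x_j$ a $\rho$-torsion class, so that $\mathrm{ord}(x_{j-1}) \mid \mathrm{ord}(x_j)$ and the orders are non-decreasing in $j$.

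The mechanism is the identity $2 = \omega - \eta\rho$ of Section~\ref{sec:homotopy}. From the $\Ext$ computation of Section~\ref{sec:hidden} one checks that $\omega$, which is detected by $h_0$, annihilates the tower: all nonzero hidden $h_0$-extensions on negative-cone classes (Proposition~\ref{prop:h0extn} and its companions) involve a factor of $h_1$ or a $\tau$-exponent $\not\equiv 0 \pmod 4$, whereas $h_0\cdot\frac{\gamma}{\rho^j\tau^{4k}}=0$. Hence $2\,x_j = -\eta\,x_{j-1}$; in particular $2\,x_0 = \omega\,x_0 - \eta\rho\,x_0 = 0$, so $\mathrm{ord}(x_0)=2$, matching $\varphi(0)=0$.

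For the upper bounds I would combine the two identities displayed just before the statement,
\[
\alpha\cdot\frac{\Gamma}{\tau^{4k}} = -8\,\frac{\Gamma}{\rho^4\tau^{4k}}, \qquad
\beta\cdot\frac{\Gamma}{\tau^{4k}} = 16\,\frac{\Gamma}{\rho^8\tau^{4k}},
\]
with the vanishing $\rho\, x_0 = 0$ and $2\,x_0 = 0$. Because $x_j = \rho^{\,i-j}x_i$ for $i \geq j$, applying powers of $\rho$ gives, for instance, $8\,x_3 = \rho(8\,x_4) = -\alpha\rho\, x_0 = 0$ and $16\,x_7 = \rho\,\beta\, x_0 = 0$, while $2\,x_0 = 0$ promotes the $\beta$-identity to $32\,x_8 = \beta(2\,x_0) = 0$. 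Iterating with higher powers of $\beta$, using the multiplicativity of $\Pi_0(ko_{C_2})$ (and the relations $\rho\alpha = \eta^3$, $\alpha^2 = 4\beta$), rewrites each $2^{\varphi(j)+1}x_j$ as a multiple of $\rho^{\geq 1}x_0$ and hence as zero; the factor $2^4$ contributed by $\beta$ over each block of $8$ indices reproduces the period-$8$ jump $\varphi(j+8) = \varphi(j)+4$.

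The main obstacle is the matching lower bounds: one must show $2^{\varphi(j)}x_j \neq 0$, equivalently that the order genuinely doubles precisely when $j \equiv 0,1,2,4 \pmod 8$ and is unchanged when $j \equiv 3,5,6,7$. This is where the irregular Adams pattern defining $\varphi$ must be produced, and it cannot come from formal manipulation of the relations alone; it requires the exact truncation lengths of the negative-cone $\eta$-towers, which are governed by the higher $\rho$-Bockstein differentials $d_{4k}$ and $d_{4k+1}$ of Proposition~\ref{d4kQh14}, together with a verification that no hidden extension revives a class that the $\rho$-Bockstein $E_\infty$-page records as zero. As a conceptual check and a guard against miscounting, the $\rho$-torsion here is exactly the contribution of the homotopy-orbit term $EC_2{}_+ \wedge ko_{C_2}$, whose fixed points are $ko_{hC_2} \simeq ko \wedge \R\bP^\infty_+$, of the Tate cofiber sequence of Proposition~\ref{koTateDiag}; the function $\varphi$ is precisely the classical one computing the $2$-primary orders in $\widetilde{KO}$ of projective spaces, which accounts both for its appearance and for the shift by $1$.
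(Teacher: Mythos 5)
Your mechanism is the paper's: since $2$ is detected by $h_0+\rho h_1$ and $h_0$ acts trivially on the classes $\frac{\gamma}{\rho^j\tau^{4k}}$, multiplication by $2$ climbs the $\eta$-tower one Adams filtration at a time, so the order of $\frac{\Gamma}{\rho^j\tau^{4k}}$ is $2^h$, where $h$ is the number of nonzero classes in the column above $\frac{\gamma}{\rho^j\tau^{4k}}$ in the Milnor--Witt $(-5-4k)$-stem. The problem is that you then declare the lower bound $2^{\varphi(j)}\cdot\frac{\Gamma}{\rho^j\tau^{4k}}\neq 0$ to be ``the main obstacle'' and leave it unresolved, offering only a consistency check against the classical computation of $ko^*(\R\bP^n)$ (which the paper relegates to a remark as an independent verification, not a proof). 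Since the function $\varphi$ enters the statement only through this lower bound, a proof that stops here has not proved the proposition.

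The missing step is not an obstacle but a finite count against data already established earlier in the paper. The group $\pi_{j,j+5+4k}(ko_{C_2})$ is exactly the column at stem $j$ of the $\Ext_{NC}$ chart for $mw\equiv 3\pmod 4$: the positive cone contributes nothing to these Milnor--Witt stems and the Adams spectral sequence collapses. On that chart the row in filtration $f$ begins at stem $0,1,2,4,8,9,10,12,16,\dots$ for $f=0,1,2,3,4,\dots$, so the set of starting stems is $\{0\}\cup\{i>0: i\equiv 0,1,2,4\pmod 8\}$ and the column over stem $j$ has exactly $\varphi(j)+1$ entries; this top edge is exactly what Propositions \ref{d1ThetaRho}, \ref{d2ThetaRho2Tau}, and \ref{d4kQh14} compute, and its $(8,4)$-periodicity handles all $j$ at once. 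Because each column is a contiguous tower with nothing above it in that stem and weight, there is no room for a hidden extension to lengthen it; this simultaneously disposes of your worry about ``revived'' classes and is the clean justification that $\omega$ annihilates $\frac{\Gamma}{\rho^j\tau^{4k}}$. Once this count is made, your upper-bound manipulations with $\alpha$ and $\beta$ become redundant; note also that the relations $\alpha\cdot\frac{\Gamma}{\tau^{4k}}=-8\,\frac{\Gamma}{\rho^4\tau^{4k}}$ and $\beta\cdot\frac{\Gamma}{\tau^{4k}}=16\,\frac{\Gamma}{\rho^8\tau^{4k}}$ are themselves derived from $\omega$-triviality on these classes, so if you do keep that route you must establish $\omega\cdot\frac{\Gamma}{\rho^j\tau^{4k}}=0$ first, not merely cite the absence of a listed hidden $h_0$-extension.
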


\begin{pf} 
Since $h_0+\rho h_1$ detects the element $2$, the result is represented by
the chart on page~\pageref{E-dummy}, in stems zero to sixteen. As the top edge of the 
region is $(8,4)$-periodic, this gives the result in higher stems as well.
\end{pf}

\begin{rmk}
Proposition \ref{prop:theta-torsion} is an
independent verification of a well-known calculation.
We follow the argument given in \cite{D}*{Appendix~B}. 

Let $\R^{q,q}$ be the antipodal $C_2$-representation on $\R^q$.
Consider the cofiber sequence
\[ S(q,q) \rtarr D(q,q) \rtarr S^{q,q},\]
where $S(q,q)\subset D(q,q) \subset \R^{q,q}$ are the unit sphere and unit disk respectively. Since $D(q,q)$ is equivariantly contractible, this gives the exact sequence 
\[ \pi_{m,0}(ko_{C_2}) \ltarr \pi_{m+q,q}(ko_{C_2}) \ltarr ko_{C_2}^{-m-1,0}(S(q,q)) \ltarr \pi_{m+1,0}(ko_{C_2}).\]
If $m\leq -2$, the outer groups vanish. Moreover, $C_2$ acts freely on $S(q,q)$, and the orbit space is $S(q,q)/C_2 \iso \R\bP^{q-1}$. It follows (\cite{Alaska}*{Section~XIV.1}) that 
\[
ko_{C_2}^{-m-1}(S(q,q)) \iso ko^{-m-1}(\R\bP^{q-1})
\]
when $m \leq -2$ and $q \geq 1$.
In particular,
\[
\pi_{j,j+5}(ko_{C_2}) \iso ko^4(\R\bP^{j+4}),
\]
and the latter groups are known (cf \cite{DMEuler}*{Section~2}) 
to be cyclic of order $\varphi(j)$.
\end{rmk}

Having described all of the Milnor-Witt stems as
$\Pi_0(ko_{C_2})[\tau^4]$-modules, it remains only to 
understand products of the various
$\Pi_0(ko_{C_2})[\tau^4]$-module generators.  

\begin{prop}
In the homotopy groups of $ko_{C_2}$, we have the relations:
\begin{enumerate}
\item $(\tau^2\omega)^2 = 2\omega \cdot \tau^4$.
\item $\tau^2\omega\cdot \tau^2\alpha = \tau^4 \cdot \omega\alpha$.
\item $(\tau^2\alpha)^2 = 2 \tau^4 \cdot \omega \beta$.
\end{enumerate}
\end{prop}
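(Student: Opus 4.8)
The plan is to derive all three relations from the three defining relations already recorded in Table~\ref{tbl:HtpyGen} --- namely $(\tau^2\omega)^2 = 2\omega\cdot\tau^4$, $\alpha\cdot\tau^2\omega = 2\,\tau^2\alpha$, and $\alpha^2 = 4\beta$ --- together with the $\tau^4$-periodicity of Theorem~\ref{thm:PhiPerd} to license the cancellations that each relation will require. Relation (1) needs no argument: it is exactly the condition imposed in Definition~\ref{defn:t^2omega} that pins down the choice of $\tau^2\omega$.

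For relation (2), first I would multiply the defining relation $\alpha\cdot\tau^2\omega = 2\,\tau^2\alpha$ by $\tau^2\omega$, obtaining
\[ 2\,\tau^2\omega\cdot\tau^2\alpha = (\tau^2\omega)^2\cdot\alpha. \]
Substituting relation (1) on the right gives $2\,\tau^2\omega\cdot\tau^2\alpha = 2\omega\tau^4\cdot\alpha = 2\,\tau^4\cdot\omega\alpha$, and it then remains to cancel the factor of $2$. Both sides lie in $\pi_{4,0}(ko_{C_2})$, which sits in the Milnor-Witt $4$-stem; by Theorem~\ref{thm:PhiPerd} multiplication by $\tau^4$ identifies it with $\pi_{4,4}(ko_{C_2})$, and inspection of the $\Ext$ charts of Section~\ref{sctn:chart} shows this group has no $2$-torsion. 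Hence the factor of $2$ may be cancelled and relation (2) follows.

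For relation (3) I would instead square the same defining relation $\alpha\cdot\tau^2\omega = 2\,\tau^2\alpha$, yielding $\alpha^2\cdot(\tau^2\omega)^2 = 4(\tau^2\alpha)^2$. Feeding in $\alpha^2 = 4\beta$ and relation (1) converts the left-hand side into $4\beta\cdot 2\omega\tau^4 = 8\,\tau^4\cdot\omega\beta$, so that $4(\tau^2\alpha)^2 = 8\,\tau^4\cdot\omega\beta$, i.e. $4\big[(\tau^2\alpha)^2 - 2\,\tau^4\omega\beta\big] = 0$. Here both sides live in $\pi_{8,4}(ko_{C_2})$, which by Lemma~\ref{lem:alpha^2-div-4} is identified via $\tau^4$ with $\pi_{8,8}(ko_{C_2})$; the latter was already observed to contain no $2$-torsion, so multiplication by $4$ is injective and relation (3) follows.

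The algebra itself is immediate; the real content --- and the main obstacle --- is justifying each cancellation. For relation (3) this is already in hand through Lemma~\ref{lem:alpha^2-div-4} and the torsion-freeness of $\pi_{8,8}(ko_{C_2})$. For relation (2) I expect the delicate point to be confirming that $\pi_{4,0}(ko_{C_2})$ carries no $2$-torsion; the safest route is to read this off the charts after transporting the question to $\pi_{4,4}(ko_{C_2})$ via $\tau^4$-periodicity, being careful throughout that the homotopy element $2$ is detected by $h_0 + \rho h_1$ rather than by $h_0$ alone.
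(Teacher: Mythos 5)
Your proposal is correct and follows essentially the same route as the paper: relation (1) is definitional, and relations (2) and (3) are obtained by manipulating the defining relations for $\tau^2\omega$, $\tau^2\alpha$, and $\beta$ and then cancelling using the absence of $2$-torsion in $\pi_{4,0}(ko_{C_2})$ and $\pi_{8,4}(ko_{C_2})$. The only (harmless) difference is in (3), where the paper multiplies the identity from part (2) by $\alpha$ and cancels a single factor of $2$, while you square the defining relation and cancel a factor of $4$; since no $2$-torsion implies multiplication by $4$ is injective, both work.
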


\begin{pf}
The first relation is part of the definition of $\tau^2 \omega$.

For the second relation, use the definitions of $\tau^2 \alpha$
and of $\tau^2 \omega$ 
to see that
\[
2 \cdot \tau^2 \omega \cdot \tau^2 \alpha =
(\tau^2 \omega)^2 \alpha = 2 \tau^4 \cdot \omega \alpha.
\]
The group $\pi_{4,0} (ko_{C_2})$ has no 2-torsion,
so it follows that 
$\tau^2 \omega \cdot \tau^2 \alpha$ equals
$\tau^4 \cdot \omega \alpha$.

The proof of the third relation is similar.  Use the 
definitions of $\tau^2 \alpha$ and $\beta$ and part (2) to see that
\[
2 (\tau^2 \alpha)^2 = \tau^2 \omega \cdot \tau^2 \alpha \cdot \alpha =
\tau^4 \cdot \omega \alpha^2 = 4 \tau^4 \cdot \omega \beta.
\]
The group $\pi_{8,4} (ko_{C_2})$ has no 2-torsion.
\end{pf}

\subsection{The homotopy ring of $k\R$}

We may similarly describe the homotopy of $k\R$. Since this has already appeared in the literature (cf. \cite[Section~11]{GrM}), we do not give complete details.

We use the forgetful exact sequence of Proposition~\ref{prop:underlying-rho} to define the homotopy classes listed in Table~\ref{tbl:HtpyGenkR}. In each case, the forgetful map is injective, and we stipulate that $\tau^4$ restricts to $1$, that $v_1$ and $\tau^{-4}v_1$ restrict to the Bott element, and that $\tau^2\omega$, $\tau^{-2}\omega$, and $\tau^{-4}\omega$ all restrict to $2$.

\begin{prop}\label{T4extnskR}
There are $\tau^4$-extensions
\[ \tau^4 \cdot \tau^{-2}\omega = \tau^2 \omega, \quad \tau^4 \cdot \tau^{-4}\omega = 2, \quad \tau^4 \cdot \tau^{-4}v_1 = v_1.\]
\end{prop}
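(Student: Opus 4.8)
The plan is to verify each of the three relations after applying the forgetful homomorphism to the underlying ring, and then to invoke injectivity. Write $\iota^*\colon \pi_{*,*}(k\R) \to \pi_*(ku)$ for the ring homomorphism induced by restriction to the trivial subgroup; this is a ring map because restriction is symmetric monoidal and the underlying spectrum of $k\R$ is $ku$. The paragraph preceding the statement records both the relevant restriction values and the injectivity of $\iota^*$ in each of the bidegrees under consideration.

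First I would confirm that the two sides of each equation occupy the same bidegree, so that the equations are well-posed. Since $\tau^4$ has degree $(0,-4)$, multiplication by $\tau^4$ fixes the stem and lowers the weight by $4$: it carries $\tau^{-2}\omega$ in $(0,2)$ to the bidegree $(0,-2)$ of $\tau^2\omega$, carries $\tau^{-4}\omega$ in $(0,4)$ to the bidegree $(0,0)$ of $2$, and carries $\tau^{-4}v_1$ in $(2,5)$ to the bidegree $(2,1)$ of $v_1$. All three equations are thus internally consistent in degree.

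Next, since $\iota^*$ is multiplicative with $\iota^*(\tau^4)=1$, we have $\iota^*(\tau^4\cdot x) = \iota^*(x)$ for every $x$. Applying $\iota^*$ to the three relations therefore reduces each to a tautology in $\pi_*(ku)$, using the stipulated values $\iota^*(\tau^{-2}\omega) = \iota^*(\tau^2\omega) = \iota^*(\tau^{-4}\omega) = 2$ and $\iota^*(\tau^{-4}v_1) = \iota^*(v_1)$, the common Bott element. Because $\iota^*$ is injective in the bidegrees $(0,-2)$, $(0,0)$, and $(2,1)$ where the right-hand sides live, these equalities of restrictions lift to the asserted equalities in $\pi_{*,*}(k\R)$. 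The only step carrying genuine content is this injectivity, which is precisely the claim already asserted in the paragraph preceding the statement; I expect to justify it by reading off the forgetful exact sequence of Proposition~\ref{prop:underlying-rho} and checking in each of these three bidegrees that the adjacent $\rho$-multiplication maps leave $\iota^*$ with trivial kernel. Everything else is a formal consequence of $\iota^*$ being a ring map that sends $\tau^4$ to $1$.
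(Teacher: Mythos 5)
Your proposal is correct and follows essentially the same route as the paper: apply the restriction $\iota^*$ to the underlying spectrum $ku$, use that it is a ring map sending $\tau^4$ to $1$ together with the stipulated restriction values of the generators, and conclude by injectivity of $\iota^*$ in the relevant bidegrees (which the paper, like you, reads off from the forgetful exact sequence of Proposition~\ref{prop:underlying-rho}). No gaps.
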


\begin{pf}
These all follow from the definition of these classes using the 
forgetful exact sequence of Proposition~\ref{prop:underlying-rho}.
Since the forgetful map is a ring homomorphism, we get that
\[\iota^*(\tau^4\cdot \tau^{-2}\omega) = \iota^*(\tau^4)\cdot \iota^*(\tau^{-2}\omega) = 1\cdot 2 = 2.\]
Since the forgetful map is injective in this degree, we conclude that 
$\tau^4\cdot \tau^{-2}\omega = \tau^2\omega$.
The same argument handles the other relations just as well.
\end{pf}

In order to describe the Milnor-Witt $0$-stem of $k\R$, it is convenient to write $\alpha=\tau^{-2}\omega\,v_1^2$ and $\beta=\tau^{-4}v_1\cdot v_1^3$.

\begin{prop} The (2-completed) Milnor-Witt $0$-stem of $k\R$ is
\[ \Pi_0(k\R) \iso 
\Z_2[\rho,\alpha,\beta]/
(2\rho,
\rho \alpha , \rho^3 \beta, \alpha^2-4 \beta),\]
where the generators have degrees
$(-1,-1)$, $(4,4)$, and $(8,8)$ respectively.
These homotopy classes are detected by $\rho$, 
$\frac{\gamma}{\tau} v_1^2$, and 
$\frac{\gamma}{\rho^2\tau^2}v_1^3$ in the Adams spectral sequence.
\end{prop}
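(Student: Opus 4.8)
The plan is to leverage our complete description of $\Pi_0(ko_{C_2})$ together with the complexification cofiber sequence, rather than rerunning the full $\rho$-Bockstein and hidden-extension analysis from scratch. First I would record that the Adams $E_2$-page for $k\R$ is $\Ext_{\cE^{C_2}(1)}$: by Ricka's computation $H^{*,*}_{C_2}(k\R)\iso\cA^{C_2}/\!/\cE^{C_2}(1)$, and the same change-of-rings isomorphism used for $ko_{C_2}$ (following Theorem~\ref{Hko}) identifies the $E_2$-page with the cohomology of $\cE^{C_2}(1)$ computed in Sections~\ref{sctn:Bockstein} and~\ref{sec:BockE1NC}. As for $ko_{C_2}$, the Adams spectral sequence collapses by the same inspection of Milnor--Witt degrees, so $\Ext_{\cE^{C_2}(1)}$ is the associated graded of $\pi_{*,*}(k\R)$; this is what we will use to name the detecting classes.

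The key step is to extract the ring structure from the cofiber sequence \eqref{EquivWood} of Proposition~\ref{EquivWoodProp}, namely $\Sigma^{1,1}ko_{C_2}\xrtarr{\eta}ko_{C_2}\xrtarr{c}k\R$. Applying $\pi_{*,*}$ and reindexing by Milnor--Witt degree, multiplication by $\eta$ preserves the grading while the boundary map lowers it by one, so for each $m$ there is an exact sequence
\[ \Pi_m(ko_{C_2})\xrtarr{\eta}\Pi_m(ko_{C_2})\xrtarr{c}\Pi_m(k\R)\xrtarr{\partial}\Pi_{m-1}(ko_{C_2})\xrtarr{\eta}\Pi_{m-1}(ko_{C_2}). \]
Since $\Pi_{-1}(ko_{C_2})=0$ (as noted in the proof of Theorem~\ref{thm:PhiPerd}), taking $m=0$ shows that $c$ is surjective with kernel $\eta\cdot\Pi_0(ko_{C_2})$. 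Because $c$ is a ring homomorphism, this gives a ring isomorphism $\Pi_0(k\R)\iso\Pi_0(ko_{C_2})/(\eta)$. Setting $\eta=0$ in our presentation of $\Pi_0(ko_{C_2})$ converts $\rho(\eta\rho+2)$ into $2\rho$, deletes $\eta(\eta\rho+2)$, and reduces $\rho\alpha-\eta^3$ and $\rho^3\beta-\eta\alpha$ to $\rho\alpha$ and $\rho^3\beta$, while $\alpha^2-4\beta$ is unchanged. This yields exactly $\Z_2[\rho,\alpha,\beta]/(2\rho,\rho\alpha,\rho^3\beta,\alpha^2-4\beta)$, with the generators in degrees $(-1,-1)$, $(4,4)$, and $(8,8)$. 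Note in particular that the class $\omega=\eta\rho+2$ becomes simply $2$ in $k\R$, which is why $\rho\omega=0$ turns into the relation $2\rho$.

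Finally I would identify the detecting classes. The map $c$ induces $q_*$ on $E_2$-pages, so $c(\rho)$, $c(\alpha)$, and $c(\beta)$ are detected by $q_*(\rho)$, $q_*({Q}h_1^3)$, and $q_*\!\left(\frac{Qh_1^4}{\rho^3}\right)$; by the $k=0$ rows of Tables~\ref{tbl:ExtA1toE1} and~\ref{tbl:ExtNCA1toE1} these are $\rho$, $\frac{\gamma}{\tau}v_1^2$, and $\frac{\gamma}{\rho^2\tau^2}v_1^3$ respectively, matching the intrinsic definitions $\alpha=\tau^{-2}\omega\,v_1^2$ and $\beta=\tau^{-4}v_1\cdot v_1^3$. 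The main obstacle is precisely this last bookkeeping: one must verify that the images under $c$ of the $ko_{C_2}$-generators coincide with the classes defined directly in $k\R$ out of $v_1$ and the $\tau^{-k}\omega$, which requires tracking the negative-cone part of $q_*$ in Table~\ref{tbl:ExtNCA1toE1} (in particular confirming the detection of $\beta$ at tridegree $(8,3,8)$ by $\frac{\gamma}{\rho^2\tau^2}v_1^3$). The hidden $h_0$-extensions over $\cE^{C_2}(1)$ established at the end of Section~\ref{sec:hidden} make this identification transparent.
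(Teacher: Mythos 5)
Your argument is correct, and it takes a genuinely different route from the paper. The paper does not really prove this proposition at all: it defers to the literature (Greenlees--Meier), defines the generators intrinsically via the forgetful exact sequence of Proposition~\ref{prop:underlying-rho} (normalizing $v_1$ to restrict to the Bott class and $\tau^{\pm 2}\omega$, $\tau^{-4}\omega$ to restrict to $2$), and reads off the ring structure from the $\rho$-Bockstein computation of $\Ext_{\cE^{C_2}(1)}$ together with the single hidden $h_0$-extension from Section~\ref{sec:hidden}. You instead apply $\pi_{*,*}$ to the cofiber sequence of Proposition~\ref{EquivWoodProp}, use $\Pi_{-1}(ko_{C_2})=0$ to conclude that $c$ induces a ring isomorphism $\Pi_0(k\R)\iso \Pi_0(ko_{C_2})/(\eta)$, and then just set $\eta=0$ in the presentation already established for $ko_{C_2}$. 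Your route is more self-contained and transfers the relations for free, with no need to re-examine hidden extensions over $\cE^{C_2}(1)$; what it costs is control of the normalization of the generators, since $c(\alpha)$ and $c(\beta)$ agree with the paper's intrinsically defined $\tau^{-2}\omega\, v_1^2$ and $\tau^{-4}v_1\cdot v_1^3$ only modulo higher powers of $2$ (as Table~\ref{tbl:kotokR} warns). This is harmless for the proposition as stated --- the relevant graded pieces are free of rank one over $\Z_2$, so the two choices differ by units and are detected by the same $\Ext$ classes --- but it is worth saying explicitly. Your final paragraph on detection via $q_*$ and Tables~\ref{tbl:ExtA1toE1} and~\ref{tbl:ExtNCA1toE1} is exactly how the paper identifies the detecting classes, so there you coincide with the intended argument.
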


The other Milnor-Witt stems, aside from those in degree $-5-4k$, can all be described cleanly as ideals in $\Pi_0(k\R)$.
The $\tau^4$-periodicities asserted in the following results all hold already on the level of $\Ext$, 
except for the $\tau^4$-multiplications from $\Ext_{NC}$ to $\Ext_{\cE(1)}$. Those are handled by Proposition~\ref{T4extnskR}. 
We recommend the reader to consult the diagram on page \pageref{fig:MWkR-module}  in order to visualize the following results.

\begin{prop} The map $\Pi_{-4}(k\R)\xrtarr{\tau^4} \Pi_{0}(k\R)$ 
is a monomorphism and identifies $\Pi_{-4}(k\R)$ with the ideal generated by $2$, $\alpha$, and $\beta$.
If $k\neq -1$, then multiplication by $\tau^4$ is an isomorphism $\Pi_{4k}(k\R) \iso \Pi_{4(k+1)}(k\R)$.
\end{prop}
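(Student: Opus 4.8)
The plan is to read the $\tau^4$-action off the associated graded and then correct for the handful of hidden $\tau^4$-extensions recorded in Proposition~\ref{T4extnskR}. First I would note that the Adams spectral sequence for $k\R$ collapses, by the same Milnor-Witt degree inspection that proves the analogous collapse for $ko_{C_2}$; thus $\Ext_{\cE^{C_2}(1)}$ in Milnor-Witt degree $4k$ is the associated graded of $\Pi_{4k}(k\R)$, and $\tau^4$-multiplication on homotopy induces $\tau^4$-multiplication on this associated graded up to terms of higher Adams filtration. Using the splitting $\Ext_{\cE^{C_2}(1)}\iso \Ext_{\cE^\R(1)}\oplus \Ext_{\cE^\R(1)}(NC)$ of Proposition~\ref{ExtSplits}, I would treat the two cones separately. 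On the positive cone $\Ext_{\cE^\R(1)}\iso \F_2[\rho,\tau^4,h_0,\tau^2 h_0,v_1]/(\rho h_0,\rho^3 v_1,(\tau^2 h_0)^2+\tau^4 h_0^2)$ of Remark~\ref{ExtE1}, the class $\tau^4$ is a polynomial generator, hence a non-zero-divisor, so $\tau^4$-multiplication is injective with cokernel the classes not divisible by $\tau^4$; in Milnor-Witt degree $0$ this cokernel is the $\rho$-tower $\{1,\rho,\rho^2,\dots\}$ together with the $h_0$-tower, and in positive Milnor-Witt degrees it consists of $v_1$-power families. On the negative cone, $\tau^4$-multiplication is $\tau$-multiplication on the $\frac{\gamma}{\rho^j\tau^k}$-classes and is therefore injective except that it annihilates the lowest classes $\frac{\gamma}{\tau^k}$ with $k\le 3$.

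The decisive point is that these two failures on the associated graded are precisely repaired by the hidden extensions of Proposition~\ref{T4extnskR}, and this bookkeeping is where I expect the real work to lie. The extension $\tau^4\cdot \tau^{-4}\omega = 2$ (together with its $h_0$-multiples) carries the negative-cone classes $\frac{\gamma}{\tau^3}$, which $\tau^4$ kills on $\Ext$, onto the positive-cone $h_0$-tower; this simultaneously restores injectivity on those classes and fills the $h_0$-tower part of the positive cokernel. Likewise $\tau^4\cdot\tau^{-4}v_1=v_1$ fills the $v_1^4=\beta\cdot\tau^4$ family in the positive Milnor-Witt degrees. I would verify on the diagram on page~\pageref{fig:MWkR-module} that after these corrections no associated-graded class is left uncancelled except the $\rho$-tower and unit in Milnor-Witt degree $0$; the care needed is to confirm that each hidden extension matches a unique kernel class with a unique cokernel class, so that nothing is double-counted and the periodicity is exact away from $k=-1$.

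It then remains to translate back to homotopy. For $k\ne -1$ the matching above shows that both the kernel and the cokernel of $\tau^4$ vanish, so $\tau^4\colon \Pi_{4k}(k\R)\to\Pi_{4(k+1)}(k\R)$ is an isomorphism; for $k\ge 0$ one can alternatively note, as for $ko_{C_2}$, that $\bigoplus_{k\ge 0}\Pi_{4k}(k\R)\iso \Pi_0(k\R)[\tau^4]$, on which $\tau^4$ is visibly an isomorphism in each positive degree. For $k=-1$, injectivity holds because the only $\Ext$-level kernel classes are saved by the hidden extensions above, while the cokernel is exactly the surviving $\rho$-tower and unit, i.e.\ $\F_2[\rho]\iso \Pi_0(k\R)/(2,\alpha,\beta)$. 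Finally, $2=\omega$ lies in the image by $\tau^4\cdot\tau^{-4}\omega=2$, and $\alpha$ and $\beta$ lie in the image because their detecting classes $\frac{\gamma}{\tau}v_1^2$ and $\frac{\gamma}{\rho^2\tau^2}v_1^3$ are infinitely $\tau$-divisible in the negative cone and hence $\tau^4$-divisible already on $\Ext$. This identifies $\Pi_{-4}(k\R)$ with the ideal $(2,\alpha,\beta)$ of $\Pi_0(k\R)$, completing the proof.
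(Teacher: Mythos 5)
Your proof is correct and follows the same strategy the paper uses (and largely leaves implicit): the $\tau^4$-multiplications hold already on the level of $\Ext_{\cE^{C_2}(1)}$ except for the maps from the negative cone into the positive cone, and those are exactly the hidden extensions supplied by Proposition~\ref{T4extnskR}. The paper records only this remark together with the diagram on page~\pageref{fig:MWkR-module}; your write-up carries out the same bookkeeping in more detail.
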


Thus the Milnor-Witt stems of degree $4k$ break up into two families, which are displayed as the first two rows of the diagram on page \pageref{fig:MWkR-module}.

\begin{prop} The map $\Pi_{-1}(k\R)\xrtarr{v_1} \Pi_0(k\R)$ 
is a monomorphism and identifies $\Pi_{-1}(k\R)$ with the ideal generated by $\alpha$ and $\beta$.
Multiplication by $\tau^4$ is a split epimorphism
\[ \frac{\F_2[\rho]}{\rho^\infty} \rtarr \Pi_{-5}(k\R) \xrtarr{\tau^4} \Pi_{-1}(k\R).\]
If $k\neq -1$, then multiplication by $\tau^4$ is an isomorphism $\Pi_{-1+4k}(k\R) \iso \Pi_{3+4k}(k\R)$.
\end{prop}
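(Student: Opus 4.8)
The plan is to read the three assertions off the $\Ext_{\cE^{C_2}(1)}$ charts and then promote the resulting $\Ext$-level structure to homotopy, using the forgetful exact sequence of Proposition~\ref{prop:underlying-rho}, the exceptional $\tau^4$-extensions of Proposition~\ref{T4extnskR}, and the $\rho$-colocalization technology of Section~\ref{sec:RhoComp}. As a preliminary step I would run the $\rho$-colocalization computation of Proposition~\ref{prop:rho-divisible-Ext} and Lemma~\ref{RhoColoc} verbatim, but over $\cE^{C_2}(1)$ in place of $\cA^{C_2}(1)$, to locate the infinitely $\rho$-divisible families of $\pi_{*,*}(k\R)$: as for $ko_{C_2}$, they occur only in Milnor--Witt degrees $\equiv 3 \pmod 4$ that are $\leq -5$. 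In particular $\Pi_{-1}(k\R)$ contains no infinitely $\rho$-divisible class.

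First I would prove that $v_1\colon \Pi_{-1}(k\R)\rtarr \Pi_0(k\R)$ is injective. Since $\iota^* k\R = ku$ and $\iota^*(v_1)$ is the Bott element, multiplication by $\iota^*(v_1)$ is injective on $\pi_*(ku)\iso \Z[v_1]$; the forgetful long exact sequence of Proposition~\ref{prop:underlying-rho} then forces any class in $\ker(v_1)$ to be $\rho$-divisible, and iterating shows $\ker(v_1)$ lies in the infinitely $\rho$-divisible part of $\Pi_{-1}(k\R)$, which is zero by the preliminary step. For the image I would match generators: the $v_1$-predecessors of the classes $\frac{\gamma}{\tau}v_1^2$ and $\frac{\gamma}{\rho^2\tau^2}v_1^3$ detecting $\alpha$ and $\beta$ lie in $\Pi_{-1}(k\R)$, being obtained from the negative-cone generators of Table~\ref{tbl:ExtNCA1toE1} by $v_1$-multiplication and $\rho$-division, so $\alpha,\beta$ are in the image. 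Conversely the image is exactly the $v_1$-divisible part of $\Pi_0(k\R)$, and since the unit together with its $\rho$-multiples is not $v_1$-divisible, the cokernel is $\Z_2[\rho]/(2\rho)\iso \Pi_0(k\R)/(\alpha,\beta)$, pinning the image down to $(\alpha,\beta)$.

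For the two $\tau^4$ statements I would use that nearly all $\tau^4$-periodicity is already present on $\Ext_{\cE^{C_2}(1)}$, the only exceptions being the multiplications from the negative-cone summand into the positive-cone summand supplied by Proposition~\ref{T4extnskR}. The infinitely $\rho$-divisible tower $\frac{\F_2[\rho]}{\rho^\infty}$ is identified, via the preliminary colocalization, as the kernel of $\tau^4\colon \Pi_{-5}(k\R)\rtarr \Pi_{-1}(k\R)$: it is precisely the divisible family which, having Milnor--Witt degree $-5$, has no home in $\Pi_{-1}(k\R)$. Surjectivity of $\tau^4$ onto $\Pi_{-1}(k\R)$ follows by exhibiting $\tau^4$-preimages of the generators, combining the Bockstein differential $d_3\!\left(\frac{\gamma}{\rho^3\tau^{4k+2}}\right) = \frac{\gamma}{\tau^{4k+4}}v_1$ of Proposition~\ref{d3ThetaRho2Tau2} with Proposition~\ref{T4extnskR}. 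The resulting short exact sequence $0\rtarr \frac{\F_2[\rho]}{\rho^\infty}\rtarr \Pi_{-5}(k\R)\xrtarr{\tau^4}\Pi_{-1}(k\R)\rtarr 0$ of $\F_2[\rho]$-modules then splits automatically, because $\frac{\F_2[\rho]}{\rho^\infty}$ is graded-injective over the graded principal ideal domain $\F_2[\rho]$. Finally, for $k\neq -1$ the map $\tau^4\colon \Pi_{-1+4k}(k\R)\rtarr \Pi_{3+4k}(k\R)$ is an isomorphism: for $k\geq 0$ it is the polynomial $\tau^4$-action on the positive-cone ideal $(\alpha,\beta)$ together with Proposition~\ref{T4extnskR}, and for $k\leq -2$ it is the $\tau^4$-periodicity of the divisible family strictly below the degree at which it is truncated.

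The main obstacle will be cleanly separating statement (2) from statement (3): showing that the infinitely $\rho$-divisible $\frac{\F_2[\rho]}{\rho^\infty}$ is annihilated by $\tau^4$ in the single transition $\Pi_{-5}\rtarr \Pi_{-1}$ yet is $\tau^4$-periodic in every lower transition. This hinges on the exceptional $\tau^4$-extensions of Proposition~\ref{T4extnskR}, which encode the genuinely hidden interaction between the negative and positive cones; getting the bookkeeping of these extensions right, rather than any individual Bockstein differential, is where the real work lies.
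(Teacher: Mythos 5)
First, a point of comparison: the paper offers essentially no written proof of this proposition. It states that the homotopy of $k\R$ is in the literature (Greenlees--Meier), that the asserted $\tau^4$-periodicities ``hold already on the level of $\Ext$'' except for the multiplications from $\Ext_{NC}$ into the positive cone, which are supplied by Proposition~\ref{T4extnskR}, and it points the reader to the charts. Your reconstruction is therefore more detailed than anything in the paper, and its overall architecture --- read the groups off the $\cE^{C_2}(1)$ charts, use the colocalization of Section~\ref{sec:RhoComp} to isolate the infinitely $\rho$-divisible tower in Milnor--Witt degrees $-5-4k$, and use Proposition~\ref{T4extnskR} for the hidden negative-cone-to-positive-cone $\tau^4$-extensions --- is exactly the intended one. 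The identification of the image of $v_1$ with $(\alpha,\beta)$, the splitting via graded injectivity of $\F_2[\rho]/\rho^\infty$, and the case analysis in $k$ are all sound (the appeal to Proposition~\ref{d3ThetaRho2Tau2} for surjectivity is a detour --- the relevant $\tau^4$-multiplications $\frac{\gamma}{\tau^5}v_1 \mapsto \frac{\gamma}{\tau}v_1$ and $\frac{\gamma}{\rho^2\tau^6}v_1^3 \mapsto \frac{\gamma}{\rho^2\tau^2}v_1^3$ are already visible, non-hidden, in $\Ext_{NC}$ --- but that is harmless).

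There is, however, a genuine gap in your injectivity argument for $v_1\colon \Pi_{-1}(k\R)\rtarr\Pi_0(k\R)$. From $v_1x=0$ you correctly deduce $\iota^*(x)=0$ (Bott is a non-zero-divisor in $\pi_*(ku)$) and hence $x=\rho y$ by Proposition~\ref{prop:underlying-rho}. But the claimed iteration does not go through: to repeat the argument you would need $y$ itself to lie in $\ker(v_1)$, whereas all you know is $\rho\cdot v_1y=0$, i.e.\ that $v_1y$ is $\rho$-torsion in $\Pi_0(k\R)$ --- and $\Pi_0(k\R)\iso\Z_2[\rho,\alpha,\beta]/(2\rho,\rho\alpha,\rho^3\beta,\alpha^2-4\beta)$ has plenty of nonzero $\rho$-torsion. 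So you have shown $\ker(v_1)\subseteq\rho\cdot\Pi_{-1}(k\R)$, which is strictly weaker than $\ker(v_1)$ being infinitely $\rho$-divisible, and the appeal to the colocalization does not close the argument. The fix is to argue at the level of detecting classes: every nonzero class of $\Pi_{-1}(k\R)$ is detected by a nonzero element of $E_\infty=\Ext_{\cE^{C_2}(1)}$ in Milnor--Witt degree $-1$ (namely a $\rho$- and $h_0$-multiple of $\frac{\gamma}{\tau}v_1$ or of $\frac{\gamma}{\rho^2\tau^2}v_1^3$), and multiplication by $v_1$ carries each such detecting class to a nonzero class of $E_\infty$, as one reads off the $\Ext_{NC}$ charts for $\cE^{C_2}(1)$. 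Since a product whose product of detecting classes is nonzero is itself nonzero, this gives injectivity on homotopy directly, with no iteration needed.
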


\begin{prop} The map $\Pi_{-2}(k\R)\xrtarr{v_1} \Pi_{-1}(k\R)$ 
is an isomorphism.
Multiplication by $\tau^4$ is an isomorphism $\Pi_{4k-2}(k\R)\iso \Pi_{4k+2}(k\R)$ for all $k\in \Z$.
\end{prop}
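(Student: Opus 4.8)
The plan is to read both isomorphisms off the Adams spectral sequence for $k\R$, whose $E_2$-page is $\Ext_{\cE^{C_2}(1)}$ and which collapses for the same degree reasons that force the collapse of the Adams spectral sequence for $ko_{C_2}$. Thus $\Ext_{\cE^{C_2}(1)}$ is the associated graded of $\pi_{*,*}(k\R)$, and each multiplication map in question is determined by its effect on the chart on page \pageref{fig:MWkR-module} together with whatever hidden extensions are needed to promote an $\Ext$-level statement to a homotopy-level statement. As recorded in the discussion preceding this proposition, every $\tau^4$-periodicity of $\Ext_{\cE^{C_2}(1)}$ already holds at the level of $\Ext$ except for the $\tau^4$-multiplications that carry a class of $\Ext_{NC}$ into the positive-cone summand, and those are exactly the hidden extensions supplied by Proposition~\ref{T4extnskR}.

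For the $\tau^4$-isomorphisms $\Pi_{4k-2}(k\R)\to\Pi_{4k+2}(k\R)$, I would first observe that the positive-cone summand of $\Ext_{\cE^{C_2}(1)}$ is concentrated in non-negative Milnor--Witt degree, since each of its multiplicative generators $\rho,\tau^4,h_0,\tau^2h_0,v_1$ has non-negative Milnor--Witt degree. Consequently $\Pi_{4k-2}(k\R)$ is purely negative-cone for every $k\le 0$, while $\Pi_{4k+2}(k\R)$ acquires a positive-cone contribution only for $k\ge 0$. For $k\ge 1$ both source and target are positive-cone and $\tau^4$ acts as honest multiplication by $\tau^4$ on the relevant families, a bijection by inspection; for $k\le -1$ both are negative-cone and $\tau^4$ shifts the infinitely $\tau$-divisible families $\frac{\gamma}{\tau^m}$ down by four, again bijectively. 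The single boundary case is $k=0$, where $\tau^4\colon \Pi_{-2}(k\R)\to\Pi_{2}(k\R)$ carries the negative-cone stem into the positive cone; here $\tau^4\cdot\frac{\gamma}{\tau}=0$ in $\Ext$, so one must instead use the hidden extension $\tau^4\cdot\tau^{-2}\omega=\tau^2\omega$ of Proposition~\ref{T4extnskR}. Since the map is $v_1$-linear, this single generator-level extension propagates across the whole stem, and matching generators on the chart confirms it is an isomorphism.

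For the first claim, that $v_1\colon \Pi_{-2}(k\R)\to\Pi_{-1}(k\R)$ is an isomorphism, both Milnor--Witt stems lie entirely in the negative cone, so it suffices to check the statement in $\Ext_{NC}$ for $\cE^{C_2}(1)$ and to verify that no hidden extension obstructs it. There $v_1$-multiplication carries the MW$-2$ generator $\frac{\gamma}{\tau}$ to the MW$-1$ class $\frac{\gamma}{\tau}v_1$ detecting $\tau^{-2}\omega\,v_1$, and is visibly a bijection from the $-2$ stem onto the $-1$ stem; combined with the identification of $\Pi_{-1}(k\R)$ with the ideal $(\alpha,\beta)$ of $\Pi_0(k\R)$ from the preceding proposition (recall $\alpha=v_1\cdot(\tau^{-2}\omega\,v_1)$), this pins down the answer. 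Because there is nothing in adjacent filtration to interfere, the $\Ext$-level isomorphism lifts to the claimed homotopy isomorphism.

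The main obstacle is the single boundary crossing at $k=0$: at the level of $\Ext$ the map $\tau^4$ vanishes there, so the isomorphism is carried entirely by the hidden extension $\tau^4\cdot\tau^{-2}\omega=\tau^2\omega$. Everything else is bookkeeping on the chart, provided one first confirms---via the Milnor--Witt grading of the positive-cone generators---that no positive-cone class strays into negative Milnor--Witt degree, so that the negative-cone and positive-cone analyses overlap only at this crossing.
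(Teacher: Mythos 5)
Your proposal is correct and follows essentially the route the paper takes: the paper offers no separate argument for this proposition beyond the blanket remark that all asserted $\tau^4$-periodicities hold already in $\Ext$ except for the multiplications carrying $\Ext_{NC}$ into the positive-cone summand, which are exactly the hidden extensions of Proposition~\ref{T4extnskR}; your Milnor--Witt-degree bookkeeping for the positive cone and the chart inspection fill in the same picture in more detail. One small correction: the crossing at $k=0$ is not handled by a \emph{single} generator-level extension propagated by $v_1$-linearity, since $\Pi_{-2}(k\R)$ is not generated over $v_1$ by $\tau^{-2}\omega$ alone --- the stem-$4$ generator $\tau^{-4}v_1^2=(\tau^{-4}v_1)\cdot v_1$ also lands in the positive cone and requires the separate extension $\tau^4\cdot\tau^{-4}v_1=v_1$. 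Both extensions are supplied by Proposition~\ref{T4extnskR}, so the conclusion stands; only the propagation remark is off.
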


\begin{prop} The map $\Pi_{-3}(k\R)\xrtarr{v_1^3} \Pi_{0}(k\R)$ 
is a monomorphism and identifies $\Pi_{-3}(k\R)$ with the ideal generated by $\beta$.
Multiplication by $\tau^4$ is an isomorphism $\Pi_{4k-3}(k\R)\iso \Pi_{4k+1}(k\R)$ for all $k\in \Z$.
\end{prop}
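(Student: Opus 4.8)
The plan is to compute both claims on the collapsed Adams $E_\infty$-page and then to resolve the finitely many hidden $\tau^4$-extensions. Since the Adams spectral sequence for $k\R$ has no room for differentials (as for $ko_{C_2}$), its $E_\infty$-page is $\Ext_{\cE^{C_2}(1)} \iso \Ext_{\cE^\R(1)} \oplus \Ext_{\cE^\R(1)}(NC)$, and I would read all module structure off the chart on page~\pageref{fig:MWkR-module}. The positive-cone summand $\Ext_{\cE^\R(1)}$ is supported in Milnor-Witt degrees $\geq 0$, so the Milnor-Witt $(-3)$-stem is detected entirely in the negative cone, whose structure is governed by the differential $d_3(\frac{\gamma}{\rho^3\tau^{4k+2}}) = \frac{\gamma}{\tau^{4k+4}}v_1$ of Proposition~\ref{d3ThetaRho2Tau2}, the hidden $h_0$-multiplications of Section~\ref{sec:hidden}, and the relation $\rho^3 v_1 = 0$ of Remark~\ref{ExtE1}.

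First I would dispatch the $\tau^4$-periodicity using the general principle recorded before these propositions: $\tau^4$-multiplication is an isomorphism already on $E_\infty$ in every Milnor-Witt stem, except where a negative-cone class overflows into the positive cone $\Ext_{\cE^\R(1)}$. In the $n \equiv 1 \pmod 4$ tower every such overflow is generated, by $\Pi_0(k\R)$-linearity, from the single hidden extension $\tau^4 \cdot \tau^{-4}v_1 = v_1$ of Proposition~\ref{T4extnskR} (so that $\tau^4\cdot(\tau^{-4}v_1\cdot v_1^{4k}) = v_1^{4k+1}$, and so on), while all remaining stems are $\tau^4$-periodic on $E_\infty$. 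This yields the isomorphism $\Pi_{4k-3}(k\R) \xrightarrow{\ \iso\ } \Pi_{4k+1}(k\R)$ for every $k$.

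For the statement about $v_1^3$, I would first establish the one substantive chart-level fact: that $\Pi_{-3}(k\R)$ is cyclic over $\Pi_0(k\R)$ with generator $\tau^{-4}v_1$. Granting this, the rest is nearly formal. Since $\beta = \tau^{-4}v_1 \cdot v_1^3$ by definition, multiplication by $v_1^3$ is the $\Pi_0(k\R)$-linear map sending the generator $\tau^{-4}v_1$ to $\beta$; its image is therefore the principal ideal $(\beta)$. For injectivity, observe that $\rho^3\cdot\tau^{-4}v_1 = \tau^{-4}(\rho^3 v_1) = 0$ by Remark~\ref{ExtE1}, and that a direct computation in the presentation $\Pi_0(k\R) \iso \Z_2[\rho,\alpha,\beta]/(2\rho,\rho\alpha,\rho^3\beta,\alpha^2-4\beta)$ gives $\operatorname{ann}_{\Pi_0(k\R)}(\beta) = (\rho^3)$. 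If $v_1^3(\tau^{-4}v_1\cdot m) = \beta m = 0$, then $m \in (\rho^3)$, whence $\tau^{-4}v_1\cdot m = 0$; thus $v_1^3$ descends to an isomorphism $\Pi_0(k\R)/(\rho^3) \xrightarrow{\ \iso\ } (\beta)$, which is exactly the asserted monomorphism onto the ideal generated by $\beta$.

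The main obstacle is the cyclicity input above: proving that every $E_\infty$-class in the Milnor-Witt $(-3)$-stem is a $\Pi_0(k\R)$-multiple of $\tau^{-4}v_1$, with no additional module generators concealed by multiplicative extensions. The decisive tool here is the forgetful exact sequence of Proposition~\ref{prop:underlying-rho}: since $\iota^*(v_1)$ is the Bott class in the integral domain $\pi_*(ku) \iso \Z_2[u]$, restriction detects which classes are $\rho$-divisible and rules out stray $\rho$-torsion summands that could otherwise enlarge $\Pi_{-3}(k\R)$ or destroy the injectivity of $v_1^3$.
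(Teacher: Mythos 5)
Your proposal is correct and follows essentially the same route as the paper, which states this proposition without a formal proof: the paper likewise reads the module structure off the collapsed Adams $E_\infty$-page (deferring to the literature and to the charts on page \pageref{fig:MWkR-module}), handles the single hidden $\tau^4$-extension out of the negative cone via Proposition~\ref{T4extnskR}, and relies on the definition $\beta = \tau^{-4}v_1\cdot v_1^3$. Your explicit computation $\operatorname{ann}_{\Pi_0(k\R)}(\beta) = (\rho^3) = \operatorname{ann}_{\Pi_0(k\R)}(\tau^{-4}v_1)$ is a legitimate filling-in of the cyclicity and injectivity details that the paper leaves to inspection of the diagrams.
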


\begin{table}
\captionof{table}{Notation for $\pi_{*,*}(k\R)$}
\label{tbl:HtpyGenkR}
\begin{center}
\begin{tabular}{LLLLL} 
\hline
mw & (s,w) & \text{element} & \text{detected by} & \text{definition} \\ \hline
0 &  (-1,-1) & \rho &  \rho & \\
1 &  (2,1) & v_1 & v_1 & \iota^*(v_1)=v_1 \\
4 & (0,-4) & \tau^4 &   \tau^4 &  \iota^*(\tau^4) = 1\\
2 & (0,-2) & \tau^2\omega & \tau^2 h_0 
  & \iota^*(\tau^2\omega) = 2 \\
-2 & (0,2) & \tau^{-2} \omega & \frac{\gamma}{\tau}
  & \iota^*(\tau^{-2}\omega) = 2 \\
-4 & (0,4) & \tau^{-4} \omega & \frac{\gamma}{\tau^3} 
  &  \iota^*(\tau^{-4}\omega) = 2 \\
-3 & (2,5) & \tau^{-4} v_1 & \frac{\gamma}{\rho^2\tau^2} 
  & \iota^*(\tau^{-4}v_1)=v_1 \\
-5 & (0,5) & \frac{\Gamma}{\tau^4} & \frac{\gamma}{\tau^4}
  & \\ 
\hline
\end{tabular}
\end{center}
\end{table}

Combining the information from Table~\ref{tbl:ExtA1toE1} and Table~\ref{tbl:ExtNCA1toE1} 
yields the induced homomorphism on homotopy groups as described in 
Table~\ref{tbl:kotokR}. Note that all values $c_*(x)$ are to be interpreted as correct modulo
higher powers of $2$.

\begin{table}[h]
\captionof{table}{The homomorphism $\pi_{*,*}(ko_{C_2}) \xrtarr{c_*} \pi_{*,*}(k\R)$, 
modulo higher powers of $2$
\label{tbl:kotokR}}
\begin{center}
\begin{tabular}{LLLL} 
\hline
mw & (s,w) & \text{$x\in\pi_{*,*}(ko_{C_2})$} & \text{$c_*x\in\pi_{*,*}(k\R)$}  \\ \hline
0 &  (-1,-1) & \rho &  \rho \\
0 &  (1,1) & \eta & 0 \\
0 & (4,4) & \alpha &  \tau^{-2}\omega  \cdot v_1^2 \\ 
0 & (0,0) & \omega & 2  \\
4 & (0,-4) & \tau^4 &   \tau^4  \\
0 & (8,8) & \beta &    \tau^{-4} v_1 \cdot v_1^3 \\ 
2 & (0,-2) & \tau^2\omega & \tau^2 \omega \\
-2 & (0,2) & \tau^{-2}\omega & \tau^{-2}\omega \\
-4 & (0,4) & \tau^{-4}\omega & \tau^{-4}\omega \\
-5 & (j,j+5) & \frac{\Gamma}{\rho^{j}\tau^4} & \frac{\Gamma}{\rho^{j}\tau^4}
 \\
1 & (1,0) & \tau\eta & \rho v_1 \\
2 & (4,2) & \tau^2 \alpha &  2v_1^2 \\ 
 \hline
\end{tabular}
\end{center}
\end{table}

\begin{rmk}
Note that the results of this section provide another means of 
demonstrating the $\tau^4$-periodicity in $ko_{C_2}$ established
in Section~\ref{sec:t4Per}.
More specifically, the $\tau^4$-extensions given in Proposition~\ref{T4extnskR},
together with the homomorphism $c_*$ as described in 
Table~\ref{tbl:kotokR}, imply the $\tau^4$-extensions given in 
Proposition~\ref{prop:PhiMult}.
\end{rmk}


\section{Charts}
\label{sctn:chart}

\subsection{Bockstein $E^+$ and $\Ext_{\cA^\R(1)}$ charts}

The charts on pages \pageref{E+start}--\pageref{E+end}
depict the Bockstein $E^+$
spectral sequence that converges to $\Ext_{\cA^\R(1)}$.
The details of this calculation are described in 
Section \ref{sec:ExtAR1}.

The $E_2^+$-page
is too complicated to present
conveniently in one chart, so this page is separated into two parts by 
Milnor-Witt stem modulo 2.
Similarly, the $E_3^+$-page is separated into four parts by
Milnor-Witt stem modulo 4.
The $E_4^+$-page in Milnor-Witt stems 0 or 1 modulo 4 is not shown,
since it is identical to the $E_3^+$-page in those Milnor-Witt stems.
The $E_4^+$-page in Milnor-Witt stems 3 modulo 4 is not shown because 
it is zero.

Here is a key for reading the Bockstein charts:
\begin{enumerate}
\item
Gray dots and green dots indicate groups as displayed on the charts.
\item
Horizontal lines indicate multiplications by $\rho$.
\item
Vertical lines indicate multiplications by $h_0$.
\item
Diagonal lines indicate multiplications by $h_1$.
\item
Horizontal arrows indicate infinite sequences of multiplications
by $\rho$.
\item
Vertical arrows indicate infinite sequences of multiplications
by $h_0$.
\item
Diagonal arrows indicate infinite sequences of multiplications
by $h_1$.
\end{enumerate}

Here is a key for the charts of $\Ext_{\cA^\R(1)}$:
\begin{enumerate}
\item 
Gray dots indicate copies of $\F_2[\tau^4]$ that arise 
from a copy of $\F_2[\tau^4]$ in
the $E_\infty^+$-page.
\item 
Green dots indicate copies of $\F_2[\tau^4]$ that arise
from a copy of $\F_2$ and a copy of $\F_2[\tau^4]$
in the $E_\infty^+$-page, connected by a 
$\tau^4$ extension that is hidden in the Bockstein spectral
sequence.
For example, the green dot at $(3,3)$ arises from
a hidden $\tau^4$ extension from $h_1^3$ to $\rho \cdot \tau^2 a$.
\item
Blue dots indicate copies of $\F_2[\tau^4]$ that arise
from two copies of $\F_2$ and one copy of $\F_2[\tau^4]$
in the $E_\infty^+$-page, connected by
$\tau^4$ extensions that are hidden in the Bockstein spectral
sequence.
For example, the blue dot at $(7,7)$ arises from
hidden $\tau^4$ extensions from $h_1^7$ to $\rho^4 h_1^3 b$,
and from $\rho^4 h_1^3 b$ to $\rho^5 \cdot \tau^2 a \cdot b$.
\item
Horizontal lines indicate multiplications by $\rho$.
\item
Vertical lines indicate multiplications by $h_0$.
\item
Diagonal lines indicate multiplications by $h_1$.
\item
Dashed lines indicate extensions that are hidden in the Bockstein
spectral sequence.
\item
Orange horizontal lines indicate
$\rho$ multiplications that equal $\tau^4$ times a generator.
For example, $\rho \cdot \tau^2 a$ equals $\tau^4 \cdot h_1^3$.
\item
Horizontal arrows indicate infinite sequences of multiplications
by $\rho$.
\item
Vertical arrows indicate infinite sequences of multiplications
by $h_0$.
\item
Diagonal arrows indicate infinite sequences of multiplications
by $h_1$.
\end{enumerate}

\subsection{Bockstein $E^-$ and $\Ext_{NC}$ charts for $\cA^{C_2}(1)$}

The charts on pages \pageref{E-start}--\pageref{E-end}
depict the Bockstein $E^-$
spectral sequence that converges to $\Ext_{NC}$.
The details of this calculation are described in 
Section \ref{sec:BockNegCone}.

The $E_2^-$-page
is too complicated to present
conveniently in one chart, so this page is separated into two parts by 
Milnor-Witt stem modulo 2.
Similarly, the $E_3^-$-page is separated into four parts by
Milnor-Witt stem modulo 4.
The $E_4^-$-page in Milnor-Witt stems 0 or 3 modulo 4 is not shown,
since it is identical to the $E_3^-$-page in those Milnor-Witt stems.
The $E_5^-$-page and $E_6^-$-page in
Milnor-Witt stems 1 or 2 modulo 4 is not shown, since it is identical
to the $E_4^-$-page in those Milnor-Witt stems.

Here is a key for reading the Bockstein charts:
\begin{enumerate}
\item
Gray dots and green dots indicate groups as displayed on the charts.
\item
Horizontal lines indicate multiplications by $\rho$.
\item
Vertical lines indicate multiplications by $h_0$.
\item
Diagonal lines indicate multiplications by $h_1$.
\item
Horizontal rightward arrows indicate infinite sequences of divisions
by $\rho$, i.e., infinitely $\rho$-divisible elements.
\item
Vertical arrows indicate infinite sequences of multiplications
by $h_0$.
\item
Diagonal arrows indicate infinite sequences of multiplications
by $h_1$.
\end{enumerate}

The structure of $\Ext_{NC}$
is too complicated to present
conveniently in one chart, so it is separated into parts by 
Milnor-Witt stem modulo 4.
Unfortunately, the part in positive Milnor-Witt stems 0 modulo 4 alone is
still too complicated to present conveniently in one chart.
Instead, we display
$\Ext_{C_2}$, including both $\Ext_{\cA^\R(1)}$ and
$\Ext_{NC}$, for the Milnor-Witt $0$-stem and the
Milnor-Witt $4$-stem.

Here is a key for the charts of $\Ext_{NC}$:
\begin{enumerate}
\item 
Gray dots indicate copies of $\F_2[\tau^4] / \tau^\infty$.
\item
Horizontal lines indicate multiplications by $\rho$.
\item
Vertical lines indicate multiplications by $h_0$.
\item
Diagonal lines indicate multiplications by $h_1$.
\item
Dashed lines indicate extensions that are hidden in the Bockstein
spectral sequence.
\item
Dashed lines of slope $-1$ indicate $\rho$ extensions that 
are hidden in the Adams spectral sequence.
\item
Horizontal rightward arrows indicate infinite sequences of divisions
by $\rho$, i.e., infinitely $\rho$-divisible elements.
\item
Vertical arrows indicate infinite sequences of multiplications
by $h_0$.
\item
Diagonal arrows indicate infinite sequences of multiplications
by $h_1$.
\end{enumerate}

\subsection{Bockstein and $\Ext$ charts for $\cE^{C_2}(1)$}

The charts on page \pageref{BockE1start}
depict the Bockstein $E^+$ and $E^-$
spectral sequences that converge to 
$\Ext_{\cE^\R(1)}$ and $\Ext_{\cE^{\R}(1)}(NC, \bM_2^\R)$,
respectively.
The details of this calculation are described in 
Remark~\ref{ExtE1} and Section~\ref{sec:BockE1NC}.
For legibility, we have split each of the $E^+_\infty$, $E^-_4$, 
and $\Ext_{NC}$ pages into a pair of charts, 
organized by families of $v_1$-multiples rather
than by Milnor-Witt stems.

Here is a key for reading the Bockstein and $\Ext_{NC}$ charts:
\begin{enumerate}
\item
Gray dots indicate groups as displayed on the charts.
\item
Horizontal lines indicate multiplications by $\rho$.
\item
Vertical lines indicate multiplications by $h_0$. Dashed vertical lines denote $h_0$-multiplications that are hidden in the Bockstein spectral sequence
\item
Horizontal rightward arrows indicate infinite sequences of divisions
by $\rho$, i.e., infinitely $\rho$-divisible elements.
\item
Vertical arrows indicate infinite sequences of multiplications
by $h_0$.
\end{enumerate}

\subsection{Milnor-Witt stems}

The diagrams on pages \pageref{fig:MW-module} and \pageref{fig:MWkR-module} depict the Milnor-Witt stems for $ko_{C_2}$ and $k\R$ in families as described in Section~\ref{sec:homotopy}. 

The top figure on page \pageref{fig:MW-module} represents the Milnor-Witt $4k$-stem, where $k\geq 0$. The middle three figures represent the $\tau^4$-periodic classes, as in Theorem~\ref{thm:PhiPerd}. The bottom figure represents the Milnor-Witt stem $\Pi_n$, where $n\equiv 3 \pmod{4}$ and $n\leq -5$.

Here is a key for reading the Milnor-Witt charts:
\begin{enumerate}
\item
Black dots indicate copies of $\F_2$.
\item
Hollow circles indicate copies of $\Z^2_2$.
\item
Circled numbers indicate cyclic groups of given order. 
For instance, the 1-stem of $\Pi_{-5}$ is $\Z/4$.
\item
Blue lines indicate multiplications by $\eta$.
\item
Red lines indicate multiplications by $\rho$.
\item
Curved green lines denote multiplications by $\alpha$.
\item
Lines labelled with numbers indicate that a multiplication
equals a multiple of an additive generator.
For example, 
$\alpha \cdot \eta^4$ equals $4 \eta \rho \beta$
in $\Pi_0$.
\end{enumerate}

For clarity, some $\alpha$ multiplications are not shown in the first and last
diagrams of page \pageref{fig:MW-module}.  For example, the
 $\alpha$ multiplication on $\eta$ is not shown in the first diagram.


\psset{linewidth=0.3mm}
\psset{unit=0.555cm}

\begin{landscape}

\label{E+start}
\centerline{Bockstein charts for $\cA^{\R}(1)$}

\begin{pspicture}(-1,-1)(17,9)
\psgrid[unit=2,gridcolor=gridline,subgriddiv=0,gridlabelcolor=white](0,0)(8,4)
\scriptsize
\rput(0,-1){0}
\rput(2,-1){2}
\rput(4,-1){4}
\rput(6,-1){6}
\rput(8,-1){8}
\rput(10,-1){10}
\rput(12,-1){12}
\rput(14,-1){14}
\rput(16,-1){16}

\rput(-1,0){0}
\rput(-1,2){2}
\rput(-1,4){4}
\rput(-1,6){6}
\rput(-1,8){8}

\uput[0](0,8.2){\textsc{Bockstein $E_1^+$-page}}
\uput{\labelrad}[-90](0,0){$1$}
\psline[linecolor=pcolor]{->}(0,0)(-0.70,0)
\psline[linecolor=hzerocolor]{->}(0,0)(0,0.70)
\psline[linecolor=honecolor](0,0)(1,1)
\pscircle*[linecolor=tauzerocolor](0,0){\cirrad}
\psline[linecolor=pcolor]{->}(1,1)(0.30,1)
\psline[linecolor=honecolor](1,1)(2,2)
\pscircle*[linecolor=tauzerocolor](1,1){\cirrad}
\psline[linecolor=pcolor]{->}(2,2)(1.30,2)
\psline[linecolor=tauonecolor](2,2)(3,3)
\pscircle*[linecolor=tauzerocolor](2,2){\cirrad}
\psline[linecolor=tauonecolor]{->}(3,3)(2.30,3)
\psline[linecolor=tauonecolor]{->}(3,3)(3.70,3.70)
\pscircle*[linecolor=tauonecolor](3,3){\cirrad}
\uput{\labelrad}[-90](4,3){$a$}
\psline[linecolor=pcolor]{->}(4,3)(3.30,3)
\psline[linecolor=hzerocolor]{->}(4,3)(4,3.70)
\pscircle*[linecolor=tauzerocolor](4,3){\cirrad}
\uput{\labelrad}[-90](8,4){$b$}
\psline[linecolor=pcolor]{->}(8,4)(7.30,4)
\psline[linecolor=hzerocolor]{->}(8,4)(8,4.70)
\psline[linecolor=honecolor](8,4)(9,5)
\pscircle*[linecolor=tauzerocolor](8,4){\cirrad}
\psline[linecolor=pcolor]{->}(9,5)(8.30,5)
\psline[linecolor=honecolor](9,5)(10,6)
\pscircle*[linecolor=tauzerocolor](9,5){\cirrad}
\psline[linecolor=pcolor]{->}(10,6)(9.30,6)
\psline[linecolor=tauonecolor](10,6)(11,7)
\pscircle*[linecolor=tauzerocolor](10,6){\cirrad}
\psline[linecolor=tauonecolor]{->}(11,7)(10.30,7)
\psline[linecolor=tauonecolor]{->}(11,7)(11.70,7.70)
\pscircle*[linecolor=tauonecolor](11,7){\cirrad}
\uput{\labelrad}[-90](12,7){$a b$}
\psline[linecolor=pcolor]{->}(12,7)(11.30,7)
\psline[linecolor=hzerocolor]{->}(12,7)(12,7.70)
\pscircle*[linecolor=tauzerocolor](12,7){\cirrad}
\uput{\labelrad}[-90](16,8){$b^{2}$}
\psline[linecolor=pcolor]{->}(16,8)(15.30,8)
\psline[linecolor=hzerocolor]{->}(16,8)(16,8.70)
\pscircle*[linecolor=tauzerocolor](16,8){\cirrad}
\uput{\labelrad}[0](1,7){$\F_2[\tau]$}
\pscircle*[linecolor=tauzerocolor](1,7){\cirrad}
\uput{\labelrad}[0](1,6.5){$\F_2$}
\pscircle*[linecolor=tauonecolor](1,6.5){\cirrad}
\end{pspicture}
\vfill

\begin{pspicture}(-1,-1)(17,9)
\psgrid[unit=2,gridcolor=gridline,subgriddiv=0,gridlabelcolor=white](0,0)(8,4)
\scriptsize
\rput(0,-1){0}
\rput(2,-1){2}
\rput(4,-1){4}
\rput(6,-1){6}
\rput(8,-1){8}
\rput(10,-1){10}
\rput(12,-1){12}
\rput(14,-1){14}
\rput(16,-1){16}

\rput(-1,0){0}
\rput(-1,2){2}
\rput(-1,4){4}
\rput(-1,6){6}
\rput(-1,8){8}

\uput[0](0,8.2){\textsc{Bockstein $E_2^+$-page, $mw \equiv 0 \pmod 2$}}
\uput{\labelrad}[-90](0,0){$1$}
\psline[linecolor=pcolor]{->}(0,0)(-0.70,0)
\psline[linecolor=hzerocolor](0,0)(0,1)
\psline[linecolor=honecolor](0,0)(1,1)
\pscircle*[linecolor=tauzerocolor](0,0){\cirrad}
\psline[linecolor=hzerocolor]{->}(0,1)(0,1.70)
\pscircle*[linecolor=tauzerocolor](0,1){\cirrad}
\psline[linecolor=pcolor]{->}(1,1)(0.30,1)
\psline[linecolor=honecolor](1,1)(2,2)
\pscircle*[linecolor=tauzerocolor](1,1){\cirrad}
\psline[linecolor=pcolor]{->}(2,2)(1.30,2)
\psline[linecolor=tauonecolor](2,2)(3,3)
\pscircle*[linecolor=tauzerocolor](2,2){\cirrad}
\psline[linecolor=tauonecolor]{->}(3,3)(2.30,3)
\psline[linecolor=tauonecolor]{->}(3,3)(3.70,3.70)
\pscircle*[linecolor=tauonecolor](3,3){\cirrad}
\uput{\labelrad}[-90](4,3){$a$}
\psline[linecolor=pcolor]{->}(4,3)(3.30,3)
\psline[linecolor=hzerocolor](4,3)(4,4)
\pscircle*[linecolor=tauzerocolor](4,3){\cirrad}
\psline[linecolor=hzerocolor]{->}(4,4)(4,4.70)
\pscircle*[linecolor=tauzerocolor](4,4){\cirrad}
\uput{\labelrad}[-90](8,4){$b$}
\psline[linecolor=pcolor]{->}(8,4)(7.30,4)
\psline[linecolor=hzerocolor](8,4)(8,5)
\psline[linecolor=honecolor](8,4)(9,5)
\pscircle*[linecolor=tauzerocolor](8,4){\cirrad}
\psline[linecolor=hzerocolor]{->}(8,5)(8,5.70)
\pscircle*[linecolor=tauzerocolor](8,5){\cirrad}
\psline[linecolor=pcolor]{->}(9,5)(8.30,5)
\psline[linecolor=honecolor](9,5)(10,6)
\pscircle*[linecolor=tauzerocolor](9,5){\cirrad}
\psline[linecolor=pcolor]{->}(10,6)(9.30,6)
\psline[linecolor=tauonecolor](10,6)(11,7)
\pscircle*[linecolor=tauzerocolor](10,6){\cirrad}
\psline[linecolor=tauonecolor]{->}(11,7)(10.30,7)
\psline[linecolor=tauonecolor]{->}(11,7)(11.70,7.70)
\pscircle*[linecolor=tauonecolor](11,7){\cirrad}
\uput{\labelrad}[-90](12,7){$a b$}
\psline[linecolor=pcolor]{->}(12,7)(11.30,7)
\psline[linecolor=hzerocolor](12,7)(12,8)
\pscircle*[linecolor=tauzerocolor](12,7){\cirrad}
\psline[linecolor=hzerocolor]{->}(12,8)(12,8.70)
\pscircle*[linecolor=tauzerocolor](12,8){\cirrad}
\uput{\labelrad}[-90](16,8){$b^{2}$}
\psline[linecolor=pcolor]{->}(16,8)(15.30,8)
\pscircle*[linecolor=tauzerocolor](16,8){\cirrad}
\uput{\labelrad}[0](1,7){$\F_2[\tau^2]$}
\pscircle*[linecolor=tauzerocolor](1,7){\cirrad}
\uput{\labelrad}[0](1,6.5){$\F_2$}
\pscircle*[linecolor=tauonecolor](1,6.5){\cirrad}
\end{pspicture}
\begin{pspicture}(-1,-1)(17,9)
\psgrid[unit=2,gridcolor=gridline,subgriddiv=0,gridlabelcolor=white](0,0)(8,4)
\scriptsize
\rput(0,-1){0}
\rput(2,-1){2}
\rput(4,-1){4}
\rput(6,-1){6}
\rput(8,-1){8}
\rput(10,-1){10}
\rput(12,-1){12}
\rput(14,-1){14}
\rput(16,-1){16}

\rput(-1,0){0}
\rput(-1,2){2}
\rput(-1,4){4}
\rput(-1,6){6}
\rput(-1,8){8}

\uput[0](0,8.2){\textsc{Bockstein $E_2^+$-page, $mw \equiv 1 \pmod 2$}}
\uput{\labelrad}[-90](1,1){$\tau h_1$}
\psline[linecolor=pcolor]{->}(1,1)(0.30,1)
\psline[linecolor=honecolor](1,1)(2,2)
\pscircle*[linecolor=tauzerocolor](1,1){\cirrad}
\psline[linecolor=pcolor]{->}(2,2)(1.30,2)
\pscircle*[linecolor=tauzerocolor](2,2){\cirrad}
\uput{\labelrad}[-90](9,5){$\tau h_1 b$}
\psline[linecolor=pcolor]{->}(9,5)(8.30,5)
\psline[linecolor=honecolor](9,5)(10,6)
\pscircle*[linecolor=tauzerocolor](9,5){\cirrad}
\psline[linecolor=pcolor]{->}(10,6)(9.30,6)
\pscircle*[linecolor=tauzerocolor](10,6){\cirrad}
\uput{\labelrad}[0](1,7){$\F_2[\tau^2]$}
\pscircle*[linecolor=tauzerocolor](1,7){\cirrad}
\end{pspicture}
\vfill

\newpage
\centerline{Bockstein charts for $\cA^{\R}(1)$}

\begin{pspicture}(-1,-1)(17,9)
\psgrid[unit=2,gridcolor=gridline,subgriddiv=0,gridlabelcolor=white](0,0)(8,4)
\scriptsize
\rput(0,-1){0}
\rput(2,-1){2}
\rput(4,-1){4}
\rput(6,-1){6}
\rput(8,-1){8}
\rput(10,-1){10}
\rput(12,-1){12}
\rput(14,-1){14}
\rput(16,-1){16}

\rput(-1,0){0}
\rput(-1,2){2}
\rput(-1,4){4}
\rput(-1,6){6}
\rput(-1,8){8}

\uput[0](0,8.2){\textsc{Bockstein $E_3^+ = E_\infty^+$-page, $mw \equiv 0 \pmod 4$}}
\uput{\labelrad}[-90](0,0){$1$}
\psline[linecolor=pcolor]{->}(0,0)(-0.70,0)
\psline[linecolor=hzerocolor](0,0)(0,1)
\psline[linecolor=honecolor](0,0)(1,1)
\pscircle*[linecolor=tauzerocolor](0,0){\cirrad}
\psline[linecolor=hzerocolor]{->}(0,1)(0,1.70)
\pscircle*[linecolor=tauzerocolor](0,1){\cirrad}
\psline[linecolor=pcolor]{->}(1,1)(0.30,1)
\psline[linecolor=honecolor](1,1)(2,2)
\pscircle*[linecolor=tauzerocolor](1,1){\cirrad}
\psline[linecolor=pcolor]{->}(2,2)(1.30,2)
\psline[linecolor=tauonecolor](2,2)(3,3)
\pscircle*[linecolor=tauzerocolor](2,2){\cirrad}
\psline[linecolor=tauonecolor]{->}(3,3)(2.30,3)
\psline[linecolor=tauonecolor]{->}(3,3)(3.70,3.70)
\pscircle*[linecolor=tauonecolor](3,3){\cirrad}
\uput{\labelrad}[-90](4,3){$\tau^{2} a$}
\psline[linecolor=pcolor]{->}(4,3)(3.30,3)
\psline[linecolor=hzerocolor](4,3)(4,4)
\pscircle*[linecolor=tauzerocolor](4,3){\cirrad}
\psline[linecolor=hzerocolor]{->}(4,4)(4,4.70)
\pscircle*[linecolor=tauzerocolor](4,4){\cirrad}
\uput{\labelrad}[-90](8,4){$b$}
\psline[linecolor=pcolor]{->}(8,4)(7.30,4)
\psline[linecolor=hzerocolor](8,4)(8,5)
\psline[linecolor=honecolor](8,4)(9,5)
\pscircle*[linecolor=tauzerocolor](8,4){\cirrad}
\psline[linecolor=hzerocolor]{->}(8,5)(8,5.70)
\pscircle*[linecolor=tauzerocolor](8,5){\cirrad}
\psline[linecolor=pcolor]{->}(9,5)(8.30,5)
\psline[linecolor=honecolor](9,5)(10,6)
\pscircle*[linecolor=tauzerocolor](9,5){\cirrad}
\psline[linecolor=pcolor]{->}(10,6)(9.30,6)
\psline[linecolor=tauonecolor](10,6)(11,7)
\pscircle*[linecolor=tauzerocolor](10,6){\cirrad}
\psline[linecolor=tauonecolor]{->}(11,7)(10.30,7)
\psline[linecolor=tauonecolor]{->}(11,7)(11.70,7.70)
\pscircle*[linecolor=tauonecolor](11,7){\cirrad}
\uput{\labelrad}[-90](12,7){$\tau^{2} a b$}
\psline[linecolor=pcolor]{->}(12,7)(11.30,7)
\psline[linecolor=hzerocolor](12,7)(12,8)
\pscircle*[linecolor=tauzerocolor](12,7){\cirrad}
\psline[linecolor=hzerocolor]{->}(12,8)(12,8.70)
\pscircle*[linecolor=tauzerocolor](12,8){\cirrad}
\uput{\labelrad}[-90](16,8){$b^{2}$}
\psline[linecolor=pcolor]{->}(16,8)(15.30,8)
\pscircle*[linecolor=tauzerocolor](16,8){\cirrad}
\uput{\labelrad}[0](1,6.5){$\F_2$}
\pscircle*[linecolor=tauonecolor](1,6.5){\cirrad}
\uput{\labelrad}[0](1,7){$\F_2[\tau^4]$}
\pscircle*[linecolor=tauzerocolor](1,7){\cirrad}
\end{pspicture}
\begin{pspicture}(-1,-1)(17,9)
\psgrid[unit=2,gridcolor=gridline,subgriddiv=0,gridlabelcolor=white](0,0)(8,4)
\scriptsize
\rput(0,-1){0}
\rput(2,-1){2}
\rput(4,-1){4}
\rput(6,-1){6}
\rput(8,-1){8}
\rput(10,-1){10}
\rput(12,-1){12}
\rput(14,-1){14}
\rput(16,-1){16}

\rput(-1,0){0}
\rput(-1,2){2}
\rput(-1,4){4}
\rput(-1,6){6}
\rput(-1,8){8}

\uput[0](0,8.2){\textsc{Bockstein $E_3^+ = E_\infty^+$-page, $mw \equiv 1 \pmod 4$}}
\uput{\labelrad}[-90](1,1){$\tau h_1$}
\psline[linecolor=pcolor](1,1)(0,1)
\psline[linecolor=honecolor](1,1)(2,2)
\pscircle*[linecolor=tauzerocolor](1,1){\cirrad}
\psline[linecolor=honecolor](0,1)(1,2)
\pscircle*[linecolor=tauzerocolor](0,1){\cirrad}
\psline[linecolor=pcolor](2,2)(1,2)
\pscircle*[linecolor=tauzerocolor](2,2){\cirrad}
\pscircle*[linecolor=tauzerocolor](1,2){\cirrad}
\uput{\labelrad}[-90](9,5){$\tau h_1 b$}
\psline[linecolor=pcolor](9,5)(8,5)
\psline[linecolor=honecolor](9,5)(10,6)
\pscircle*[linecolor=tauzerocolor](9,5){\cirrad}
\psline[linecolor=honecolor](8,5)(9,6)
\pscircle*[linecolor=tauzerocolor](8,5){\cirrad}
\psline[linecolor=pcolor](10,6)(9,6)
\pscircle*[linecolor=tauzerocolor](10,6){\cirrad}
\pscircle*[linecolor=tauzerocolor](9,6){\cirrad}
\uput{\labelrad}[0](1,7){$\F_2[\tau^4]$}
\pscircle*[linecolor=tauzerocolor](1,7){\cirrad}
\end{pspicture}
\vfill

\begin{pspicture}(-1,-1)(17,9)
\psgrid[unit=2,gridcolor=gridline,subgriddiv=0,gridlabelcolor=white](0,0)(8,4)
\scriptsize
\rput(0,-1){0}
\rput(2,-1){2}
\rput(4,-1){4}
\rput(6,-1){6}
\rput(8,-1){8}
\rput(10,-1){10}
\rput(12,-1){12}
\rput(14,-1){14}
\rput(16,-1){16}

\rput(-1,0){0}
\rput(-1,2){2}
\rput(-1,4){4}
\rput(-1,6){6}
\rput(-1,8){8}

\uput[0](0,8.2){\textsc{Bockstein $E_3^+$-page, $mw \equiv 2 \pmod 4$}}
\uput{\labelrad}[-90](0,1){$\tau^{2} h_0$}
\psline[linecolor=hzerocolor]{->}(0,1)(0,1.70)
\pscircle*[linecolor=tauzerocolor](0,1){\cirrad}
\uput{\labelrad}[-90](2,2){$\tau^{2} h_1^{2}$}
\psline[linecolor=pcolor](2,2)(1,2)
\pscircle*[linecolor=tauzerocolor](2,2){\cirrad}
\pscircle*[linecolor=tauzerocolor](1,2){\cirrad}
\uput{\labelrad}[-90](4,3){$a$}
\psline[linecolor=pcolor]{->}(4,3)(3.30,3)
\psline[linecolor=hzerocolor](4,3)(4,4)
\pscircle*[linecolor=tauzerocolor](4,3){\cirrad}
\psline[linecolor=hzerocolor]{->}(4,4)(4,4.70)
\pscircle*[linecolor=tauzerocolor](4,4){\cirrad}
\uput{\labelrad}[-90](8,5){$\tau^{2} h_0 b$}
\psline[linecolor=hzerocolor]{->}(8,5)(8,5.70)
\pscircle*[linecolor=tauzerocolor](8,5){\cirrad}
\uput{\labelrad}[-90](10,6){$\tau^{2} h_1^{2} b$}
\psline[linecolor=pcolor](10,6)(9,6)
\pscircle*[linecolor=tauzerocolor](10,6){\cirrad}
\pscircle*[linecolor=tauzerocolor](9,6){\cirrad}
\uput{\labelrad}[-90](12,7){$a b$}
\psline[linecolor=pcolor]{->}(12,7)(11.30,7)
\psline[linecolor=hzerocolor](12,7)(12,8)
\pscircle*[linecolor=tauzerocolor](12,7){\cirrad}
\psline[linecolor=hzerocolor]{->}(12,8)(12,8.70)
\pscircle*[linecolor=tauzerocolor](12,8){\cirrad}
\uput{\labelrad}[0](1,7){$\F_2[\tau^4]$}
\pscircle*[linecolor=tauzerocolor](1,7){\cirrad}
\end{pspicture}
\begin{pspicture}(-1,-1)(17,9)
\psgrid[unit=2,gridcolor=gridline,subgriddiv=0,gridlabelcolor=white](0,0)(8,4)
\scriptsize
\rput(0,-1){0}
\rput(2,-1){2}
\rput(4,-1){4}
\rput(6,-1){6}
\rput(8,-1){8}
\rput(10,-1){10}
\rput(12,-1){12}
\rput(14,-1){14}
\rput(16,-1){16}

\rput(-1,0){0}
\rput(-1,2){2}
\rput(-1,4){4}
\rput(-1,6){6}
\rput(-1,8){8}

\uput[0](0,8.2){\textsc{Bockstein $E_3^+$-page, $mw \equiv 3 \pmod 4$}}
\uput{\labelrad}[-90](2,2){$\tau^{3} h_1^{2}$}
\psline[linecolor=pcolor]{->}(2,2)(1.30,2)
\pscircle*[linecolor=tauzerocolor](2,2){\cirrad}
\uput{\labelrad}[-90](10,6){$\tau^{3} h_1^{2} b$}
\psline[linecolor=pcolor]{->}(10,6)(9.30,6)
\pscircle*[linecolor=tauzerocolor](10,6){\cirrad}
\uput{\labelrad}[0](1,7){$\F_2[\tau^4]$}
\pscircle*[linecolor=tauzerocolor](1,7){\cirrad}
\end{pspicture}
\vfill

\newpage
\centerline{Bockstein and $\Ext$ charts for $\cA^{\R}(1)$}

\begin{pspicture}(-1,-1)(17,9)
\psgrid[unit=2,gridcolor=gridline,subgriddiv=0,gridlabelcolor=white](0,0)(8,4)
\scriptsize
\rput(0,-1){0}
\rput(2,-1){2}
\rput(4,-1){4}
\rput(6,-1){6}
\rput(8,-1){8}
\rput(10,-1){10}
\rput(12,-1){12}
\rput(14,-1){14}
\rput(16,-1){16}

\rput(-1,0){0}
\rput(-1,2){2}
\rput(-1,4){4}
\rput(-1,6){6}
\rput(-1,8){8}

\uput[0](0,8.2){\textsc{Bockstein $E_4^+ = E_\infty^+$-page, $mw \equiv 2 \pmod 4$}}
\uput{\labelrad}[-90](0,1){$\tau^{2} h_0$}
\psline[linecolor=hzerocolor]{->}(0,1)(0,1.70)
\pscircle*[linecolor=tauzerocolor](0,1){\cirrad}
\pscircle*[linecolor=tauzerocolor](1,2){\cirrad}
\uput{\labelrad}[-90](2,2){$\tau^{2} h_1^{2}$}
\psline[linecolor=pcolor](2,2)(1,2)
\pscircle*[linecolor=tauzerocolor](2,2){\cirrad}
\pscircle*[linecolor=tauzerocolor](2,3){\cirrad}
\psline[linecolor=pcolor](3,3)(2,3)
\pscircle*[linecolor=tauzerocolor](3,3){\cirrad}
\uput{\labelrad}[-90](4,3){$a$}
\psline[linecolor=pcolor](4,3)(3,3)
\psline[linecolor=hzerocolor](4,3)(4,4)
\pscircle*[linecolor=tauzerocolor](4,3){\cirrad}
\psline[linecolor=hzerocolor]{->}(4,4)(4,4.70)
\pscircle*[linecolor=tauzerocolor](4,4){\cirrad}
\uput{\labelrad}[-90](8,5){$\tau^{2} h_0 b$}
\psline[linecolor=hzerocolor]{->}(8,5)(8,5.70)
\pscircle*[linecolor=tauzerocolor](8,5){\cirrad}
\pscircle*[linecolor=tauzerocolor](9,6){\cirrad}
\uput{\labelrad}[-90](10,6){$\tau^{2} h_1^{2} b$}
\psline[linecolor=pcolor](10,6)(9,6)
\pscircle*[linecolor=tauzerocolor](10,6){\cirrad}
\pscircle*[linecolor=tauzerocolor](10,7){\cirrad}
\psline[linecolor=pcolor](11,7)(10,7)
\pscircle*[linecolor=tauzerocolor](11,7){\cirrad}
\uput{\labelrad}[-90](12,7){$a b$}
\psline[linecolor=pcolor](12,7)(11,7)
\psline[linecolor=hzerocolor](12,7)(12,8)
\pscircle*[linecolor=tauzerocolor](12,7){\cirrad}
\psline[linecolor=hzerocolor]{->}(12,8)(12,8.70)
\pscircle*[linecolor=tauzerocolor](12,8){\cirrad}
\uput{\labelrad}[0](1,7){$\F_2[\tau^4]$}
\pscircle*[linecolor=tauzerocolor](1,7){\cirrad}
\end{pspicture}
\begin{pspicture}(-1,-1)(17,9)
\psgrid[unit=2,gridcolor=gridline,subgriddiv=0,gridlabelcolor=white](0,0)(8,4)
\scriptsize
\rput(0,-1){0}
\rput(2,-1){2}
\rput(4,-1){4}
\rput(6,-1){6}
\rput(8,-1){8}
\rput(10,-1){10}
\rput(12,-1){12}
\rput(14,-1){14}
\rput(16,-1){16}

\rput(-1,0){0}
\rput(-1,2){2}
\rput(-1,4){4}
\rput(-1,6){6}
\rput(-1,8){8}

\uput[0](0,8.2){\textsc{$\Ext_{\cA^{\R}(1)}$, $mw \equiv 0 \pmod 4$}}
\uput{\labelrad}[-90](0,0){$1$}
\psline[linecolor=pcolor]{->}(0,0)(-0.70,0)
\psline[linecolor=hzerocolor](0,0)(0,1)
\psline[linecolor=honecolor](0,0)(1,1)
\pscircle*[linecolor=tauzerocolor](0,0){\cirrad}
\psline[linecolor=hzerocolor]{->}(0,1)(0,1.70)
\pscircle*[linecolor=tauzerocolor](0,1){\cirrad}
\psline[linecolor=pcolor]{->}(1,1)(0.30,1)
\psline[linecolor=honecolor](1,1)(2,2)
\pscircle*[linecolor=tauzerocolor](1,1){\cirrad}
\psline[linecolor=pcolor]{->}(2,2)(1.30,2)
\psline[linecolor=hiddentaucolor](2,2)(3,3)
\pscircle*[linecolor=tauzerocolor](2,2){\cirrad}
\psline[linecolor=hiddentaucolor]{->}(3,3)(2.30,3)
\psline[linecolor=hiddentaucolor](3,3)(3.90,4.10)
\pscircle*[linecolor=hiddentaucolor](3,3){\cirrad}
\uput{\labelrad}[-90](4,3){$\tau^{2} a$}
\psline[linecolor=taufourextncolor](4,3)(3,3)
\psline[linecolor=hzerocolor](4,3)(4.10,3.90)
\psline[linecolor=honecolor,linestyle=dashed, dash=3pt 2pt](4,3)(5,4)
\pscircle*[linecolor=tauzerocolor](4,3){\cirrad}
\psline[linecolor=hzerocolor]{->}(4.10,3.90)(4.10,4.60)
\pscircle*[linecolor=tauzerocolor](4.10,3.90){\cirrad}
\psline[linecolor=hiddentaucolor]{->}(3.90,4.10)(3.20,4.10)
\psline[linecolor=hiddentaucolor](3.90,4.10)(5,5)
\pscircle*[linecolor=hiddentaucolor](3.90,4.10){\cirrad}
\psline[linecolor=taufourextncolor](5,4)(3.90,4.10)
\psline[linecolor=honecolor](5,4)(6,5)
\pscircle*[linecolor=tauzerocolor](5,4){\cirrad}
\psline[linecolor=pcolor](6,4)(5,4)
\psline[linecolor=honecolor](6,4)(7,5)
\pscircle*[linecolor=tauzerocolor](6,4){\cirrad}
\psline[linecolor=pcolor](7,4)(6,4)
\psline[linecolor=honecolor](7,4)(7.90,5.10)
\pscircle*[linecolor=tauzerocolor](7,4){\cirrad}
\uput{\labelrad}[-90](8,4){$b$}
\psline[linecolor=pcolor](8,4)(7,4)
\psline[linecolor=hzerocolor](8,4)(8.10,4.90)
\psline[linecolor=honecolor](8,4)(9,5)
\pscircle*[linecolor=tauzerocolor](8,4){\cirrad}
\psline[linecolor=hiddentaucolor]{->}(5,5)(4.30,5)
\psline[linecolor=hiddentaucolor](5,5)(6,6)
\pscircle*[linecolor=hiddentaucolor](5,5){\cirrad}
\psline[linecolor=taufourextncolor](6,5)(5,5)
\psline[linecolor=honecolor](6,5)(7,6)
\pscircle*[linecolor=tauzerocolor](6,5){\cirrad}
\psline[linecolor=pcolor](7,5)(6,5)
\psline[linecolor=honecolor](7,5)(8,6)
\pscircle*[linecolor=tauzerocolor](7,5){\cirrad}
\psline[linecolor=hzerocolor]{->}(8.10,4.90)(8.10,5.60)
\pscircle*[linecolor=tauzerocolor](8.10,4.90){\cirrad}
\psline[linecolor=pcolor](7.90,5.10)(7,5)
\psline[linecolor=honecolor](7.90,5.10)(9,6)
\pscircle*[linecolor=tauzerocolor](7.90,5.10){\cirrad}
\psline[linecolor=pcolor](9,5)(7.90,5.10)
\psline[linecolor=honecolor](9,5)(10,6)
\pscircle*[linecolor=tauzerocolor](9,5){\cirrad}
\psline[linecolor=hiddentaucolor]{->}(6,6)(5.30,6)
\psline[linecolor=hiddentwotaucolor](6,6)(7,7)
\pscircle*[linecolor=hiddentaucolor](6,6){\cirrad}
\psline[linecolor=taufourextncolor](7,6)(6,6)
\psline[linecolor=hiddentaucolor](7,6)(8,7)
\pscircle*[linecolor=tauzerocolor](7,6){\cirrad}
\psline[linecolor=pcolor](8,6)(7,6)
\psline[linecolor=hiddentaucolor](8,6)(9,7)
\pscircle*[linecolor=tauzerocolor](8,6){\cirrad}
\psline[linecolor=pcolor](9,6)(8,6)
\psline[linecolor=hiddentaucolor](9,6)(10,7)
\pscircle*[linecolor=tauzerocolor](9,6){\cirrad}
\psline[linecolor=pcolor](10,6)(9,6)
\psline[linecolor=hiddentaucolor](10,6)(11,7)
\pscircle*[linecolor=tauzerocolor](10,6){\cirrad}
\psline[linecolor=hiddentwotaucolor]{->}(7,7)(6.30,7)
\psline[linecolor=hiddentwotaucolor](7,7)(8,8)
\pscircle*[linecolor=hiddentwotaucolor](7,7){\cirrad}
\psline[linecolor=taufourextncolor](8,7)(7,7)
\psline[linecolor=hiddentaucolor](8,7)(9,8)
\pscircle*[linecolor=hiddentaucolor](8,7){\cirrad}
\psline[linecolor=hiddentaucolor](9,7)(8,7)
\psline[linecolor=hiddentaucolor](9,7)(10,8)
\pscircle*[linecolor=hiddentaucolor](9,7){\cirrad}
\psline[linecolor=hiddentaucolor](10,7)(9,7)
\psline[linecolor=hiddentaucolor](10,7)(11,8)
\pscircle*[linecolor=hiddentaucolor](10,7){\cirrad}
\psline[linecolor=hiddentaucolor](11,7)(10,7)
\psline[linecolor=hiddentaucolor](11,7)(11.90,8.10)
\pscircle*[linecolor=hiddentaucolor](11,7){\cirrad}
\uput{\labelrad}[-90](12,7){$\tau^{2} a b$}
\psline[linecolor=taufourextncolor](12,7)(11,7)
\psline[linecolor=hzerocolor](12,7)(12.10,7.90)
\psline[linecolor=honecolor,linestyle=dashed, dash=3pt 2pt](12,7)(13,8)
\pscircle*[linecolor=tauzerocolor](12,7){\cirrad}
\psline[linecolor=hiddentwotaucolor]{->}(8,8)(7.30,8)
\psline[linecolor=hiddentwotaucolor]{->}(8,8)(8.70,8.70)
\pscircle*[linecolor=hiddentwotaucolor](8,8){\cirrad}
\psline[linecolor=taufourextncolor](9,8)(8,8)
\psline[linecolor=hiddentaucolor]{->}(9,8)(9.70,8.70)
\pscircle*[linecolor=hiddentaucolor](9,8){\cirrad}
\psline[linecolor=hiddentaucolor](10,8)(9,8)
\psline[linecolor=hiddentaucolor]{->}(10,8)(10.70,8.70)
\pscircle*[linecolor=hiddentaucolor](10,8){\cirrad}
\psline[linecolor=hiddentaucolor](11,8)(10,8)
\psline[linecolor=hiddentaucolor]{->}(11,8)(11.70,8.70)
\pscircle*[linecolor=hiddentaucolor](11,8){\cirrad}
\psline[linecolor=hzerocolor]{->}(12.10,7.90)(12.10,8.60)
\pscircle*[linecolor=tauzerocolor](12.10,7.90){\cirrad}
\psline[linecolor=hiddentaucolor](11.90,8.10)(11,8)
\psline[linecolor=hiddentaucolor]{->}(11.90,8.10)(12.60,8.80)
\pscircle*[linecolor=hiddentaucolor](11.90,8.10){\cirrad}
\psline[linecolor=taufourextncolor](13,8)(11.90,8.10)
\psline[linecolor=honecolor]{->}(13,8)(13.70,8.70)
\pscircle*[linecolor=tauzerocolor](13,8){\cirrad}
\psline[linecolor=pcolor](14,8)(13,8)
\psline[linecolor=honecolor]{->}(14,8)(14.70,8.70)
\pscircle*[linecolor=tauzerocolor](14,8){\cirrad}
\psline[linecolor=pcolor](15,8)(14,8)
\psline[linecolor=honecolor]{->}(15,8)(15.70,8.70)
\pscircle*[linecolor=tauzerocolor](15,8){\cirrad}
\uput{\labelrad}[-90](16,8){$b^{2}$}
\psline[linecolor=pcolor](16,8)(15,8)
\psline[linecolor=honecolor]{->}(16,8)(16.70,8.70)
\pscircle*[linecolor=tauzerocolor](16,8){\cirrad}
\uput{\labelrad}[0](8,0.5){$\F_2[\tau^4], \text{two hidden } \tau^4 \text{ extensions}$}
\pscircle*[linecolor=hiddentwotaucolor](8,0.5){\cirrad}
\uput{\labelrad}[0](8,1){$\F_2[\tau^4], \text{hidden } \tau^4 \text{ extension}$}
\pscircle*[linecolor=hiddentaucolor](8,1){\cirrad}
\uput{\labelrad}[0](8,1.5){$\F_2[\tau^4]$}
\pscircle*[linecolor=tauzerocolor](8,1.5){\cirrad}
\end{pspicture}
\vfill

\begin{pspicture}(-1,-1)(17,9)
\psgrid[unit=2,gridcolor=gridline,subgriddiv=0,gridlabelcolor=white](0,0)(8,4)
\scriptsize
\rput(0,-1){0}
\rput(2,-1){2}
\rput(4,-1){4}
\rput(6,-1){6}
\rput(8,-1){8}
\rput(10,-1){10}
\rput(12,-1){12}
\rput(14,-1){14}
\rput(16,-1){16}

\rput(-1,0){0}
\rput(-1,2){2}
\rput(-1,4){4}
\rput(-1,6){6}
\rput(-1,8){8}

\uput[0](0,8.2){\textsc{$\Ext_{\cA^{\R}(1)}$, $mw \equiv 1 \pmod 4$}}
\psline[linecolor=honecolor](0,1)(1,2)
\pscircle*[linecolor=tauzerocolor](0,1){\cirrad}
\uput{\labelrad}[-90](1,1){$\tau h_1$}
\psline[linecolor=pcolor](1,1)(0,1)
\psline[linecolor=hzerocolor,linestyle=dashed, dash=3pt 2pt](1,1)(1,2)
\psline[linecolor=honecolor](1,1)(2,2)
\pscircle*[linecolor=tauzerocolor](1,1){\cirrad}
\pscircle*[linecolor=tauzerocolor](1,2){\cirrad}
\psline[linecolor=pcolor](2,2)(1,2)
\pscircle*[linecolor=tauzerocolor](2,2){\cirrad}
\psline[linecolor=honecolor](8,5)(9,6)
\pscircle*[linecolor=tauzerocolor](8,5){\cirrad}
\uput{\labelrad}[-90](9,5){$\tau h_1 b$}
\psline[linecolor=pcolor](9,5)(8,5)
\psline[linecolor=hzerocolor,linestyle=dashed, dash=3pt 2pt](9,5)(9,6)
\psline[linecolor=honecolor](9,5)(10,6)
\pscircle*[linecolor=tauzerocolor](9,5){\cirrad}
\pscircle*[linecolor=tauzerocolor](9,6){\cirrad}
\psline[linecolor=pcolor](10,6)(9,6)
\pscircle*[linecolor=tauzerocolor](10,6){\cirrad}
\uput{\labelrad}[0](1,7){$\F_2[\tau^4]$}
\pscircle*[linecolor=tauzerocolor](1,7){\cirrad}
\end{pspicture}
\begin{pspicture}(-1,-1)(17,9)
\psgrid[unit=2,gridcolor=gridline,subgriddiv=0,gridlabelcolor=white](0,0)(8,4)
\scriptsize
\rput(0,-1){0}
\rput(2,-1){2}
\rput(4,-1){4}
\rput(6,-1){6}
\rput(8,-1){8}
\rput(10,-1){10}
\rput(12,-1){12}
\rput(14,-1){14}
\rput(16,-1){16}

\rput(-1,0){0}
\rput(-1,2){2}
\rput(-1,4){4}
\rput(-1,6){6}
\rput(-1,8){8}

\uput[0](0,8.2){\textsc{$\Ext_{\cA^{\R}(1)}$, $mw \equiv 2 \pmod 4$}}
\uput{\labelrad}[-90](0,1){$\tau^{2} h_0$}
\psline[linecolor=hzerocolor]{->}(0,1)(0,1.70)
\psline[linecolor=honecolor,linestyle=dashed, dash=3pt 2pt](0,1)(1,2)
\pscircle*[linecolor=tauzerocolor](0,1){\cirrad}
\psline[linecolor=honecolor,linestyle=dashed, dash=3pt 2pt](1,2)(2,3)
\pscircle*[linecolor=tauzerocolor](1,2){\cirrad}
\uput{\labelrad}[-90](2,2){$\tau^{2} h_1^{2}$}
\psline[linecolor=pcolor](2,2)(1,2)
\psline[linecolor=hzerocolor,linestyle=dashed, dash=3pt 2pt](2,2)(2,3)
\psline[linecolor=honecolor,linestyle=dashed, dash=3pt 2pt](2,2)(3,3)
\pscircle*[linecolor=tauzerocolor](2,2){\cirrad}
\pscircle*[linecolor=tauzerocolor](2,3){\cirrad}
\psline[linecolor=pcolor](3,3)(2,3)
\pscircle*[linecolor=tauzerocolor](3,3){\cirrad}
\uput{\labelrad}[-90](4,3){$a$}
\psline[linecolor=pcolor](4,3)(3,3)
\psline[linecolor=hzerocolor](4,3)(4,4)
\pscircle*[linecolor=tauzerocolor](4,3){\cirrad}
\psline[linecolor=hzerocolor]{->}(4,4)(4,4.70)
\pscircle*[linecolor=tauzerocolor](4,4){\cirrad}
\uput{\labelrad}[-90](8,5){$\tau^{2} h_0 b$}
\psline[linecolor=hzerocolor]{->}(8,5)(8,5.70)
\psline[linecolor=honecolor,linestyle=dashed, dash=3pt 2pt](8,5)(9,6)
\pscircle*[linecolor=tauzerocolor](8,5){\cirrad}
\psline[linecolor=honecolor,linestyle=dashed, dash=3pt 2pt](9,6)(10,7)
\pscircle*[linecolor=tauzerocolor](9,6){\cirrad}
\uput{\labelrad}[-90](10,6){$\tau^{2} h_1^{2} b$}
\psline[linecolor=pcolor](10,6)(9,6)
\psline[linecolor=hzerocolor,linestyle=dashed, dash=3pt 2pt](10,6)(10,7)
\psline[linecolor=honecolor,linestyle=dashed, dash=3pt 2pt](10,6)(11,7)
\pscircle*[linecolor=tauzerocolor](10,6){\cirrad}
\pscircle*[linecolor=tauzerocolor](10,7){\cirrad}
\psline[linecolor=pcolor](11,7)(10,7)
\pscircle*[linecolor=tauzerocolor](11,7){\cirrad}
\uput{\labelrad}[-90](12,7){$a b$}
\psline[linecolor=pcolor](12,7)(11,7)
\psline[linecolor=hzerocolor](12,7)(12,8)
\pscircle*[linecolor=tauzerocolor](12,7){\cirrad}
\psline[linecolor=hzerocolor]{->}(12,8)(12,8.70)
\pscircle*[linecolor=tauzerocolor](12,8){\cirrad}
\uput{\labelrad}[0](1,7){$\F_2[\tau^4]$}
\pscircle*[linecolor=tauzerocolor](1,7){\cirrad}
\end{pspicture}
\vfill
\label{E+end}

\newpage
\centerline{Bockstein $E^{-}$ charts for $\cA^{C_2}(1)$}

\label{E-start}
\begin{pspicture}(-1,-1)(17,9)
\psgrid[unit=2,gridcolor=gridline,subgriddiv=0,gridlabelcolor=white](0,0)(8,4)
\scriptsize
\rput(0,-1){0}
\rput(2,-1){2}
\rput(4,-1){4}
\rput(6,-1){6}
\rput(8,-1){8}
\rput(10,-1){10}
\rput(12,-1){12}
\rput(14,-1){14}
\rput(16,-1){16}

\rput(-1,0){0}
\rput(-1,2){2}
\rput(-1,4){4}
\rput(-1,6){6}
\rput(-1,8){8}

\uput[0](0,8.2){\textsc{Bockstein $E_1^-$-page}}
\uput{\labelrad}[-90](0,0){$\frac{\gamma}{\tau}$}
\psline[linecolor=pcolor]{->}(0,0)(0.70,0)
\psline[linecolor=hzerocolor]{->}(0,0)(0,0.70)
\psline[linecolor=honecolor](0,0)(1,1)
\pscircle*[linecolor=tauzerocolor](0,0){\cirrad}
\psline[linecolor=pcolor]{->}(1,1)(1.70,1)
\psline[linecolor=honecolor](1,1)(2,2)
\pscircle*[linecolor=tauzerocolor](1,1){\cirrad}
\psline[linecolor=pcolor]{->}(2,2)(2.70,2)
\pscircle*[linecolor=tauzerocolor](2,2){\cirrad}
\uput{\labelrad}[-90](4,2){${Q} h_1^{3}$}
\psline[linecolor=tauonecolor]{->}(4,2)(4.70,2)
\psline[linecolor=tauonecolor](4,2)(4,3)
\psline[linecolor=tauonecolor]{->}(4,2)(4.70,2.70)
\pscircle*[linecolor=tauonecolor](4,2){\cirrad}
\uput{\labelrad}[150](4,3){$\frac{\gamma}{\tau} a$}
\psline[linecolor=pcolor]{->}(4,3)(4.70,3)
\psline[linecolor=hzerocolor]{->}(4,3)(4,3.70)
\pscircle*[linecolor=tauzerocolor](4,3){\cirrad}
\uput{\labelrad}[-90](8,4){$\frac{\gamma}{\tau} b$}
\psline[linecolor=pcolor]{->}(8,4)(8.70,4)
\psline[linecolor=hzerocolor]{->}(8,4)(8,4.70)
\psline[linecolor=honecolor](8,4)(9,5)
\pscircle*[linecolor=tauzerocolor](8,4){\cirrad}
\psline[linecolor=pcolor]{->}(9,5)(9.70,5)
\psline[linecolor=honecolor](9,5)(10,6)
\pscircle*[linecolor=tauzerocolor](9,5){\cirrad}
\psline[linecolor=pcolor]{->}(10,6)(10.70,6)
\pscircle*[linecolor=tauzerocolor](10,6){\cirrad}
\uput{\labelrad}[-90](12,6){${Q} h_1^{3} b$}
\psline[linecolor=tauonecolor]{->}(12,6)(12.70,6)
\psline[linecolor=tauonecolor](12,6)(12,7)
\psline[linecolor=tauonecolor]{->}(12,6)(12.70,6.70)
\pscircle*[linecolor=tauonecolor](12,6){\cirrad}
\uput{\labelrad}[150](12,7){$\frac{\gamma}{\tau} a b$}
\psline[linecolor=pcolor]{->}(12,7)(12.70,7)
\psline[linecolor=hzerocolor]{->}(12,7)(12,7.70)
\pscircle*[linecolor=tauzerocolor](12,7){\cirrad}
\uput{\labelrad}[-90](16,8){$\frac{\gamma}{\tau} b^{2}$}
\psline[linecolor=pcolor]{->}(16,8)(16.70,8)
\psline[linecolor=hzerocolor]{->}(16,8)(16,8.70)
\pscircle*[linecolor=tauzerocolor](16,8){\cirrad}
\uput{\labelrad}[0](1,7){$\F_2[\tau]/ \tau^\infty$}
\pscircle*[linecolor=tauzerocolor](1,7){\cirrad}
\uput{\labelrad}[0](1,6.5){$\F_2$}
\pscircle*[linecolor=tauonecolor](1,6.5){\cirrad}
\end{pspicture}
\vfill

\begin{pspicture}(-1,-1)(17,9)
\psgrid[unit=2,gridcolor=gridline,subgriddiv=0,gridlabelcolor=white](0,0)(8,4)
\scriptsize
\rput(0,-1){0}
\rput(2,-1){2}
\rput(4,-1){4}
\rput(6,-1){6}
\rput(8,-1){8}
\rput(10,-1){10}
\rput(12,-1){12}
\rput(14,-1){14}
\rput(16,-1){16}

\rput(-1,0){0}
\rput(-1,2){2}
\rput(-1,4){4}
\rput(-1,6){6}
\rput(-1,8){8}

\uput[0](0,8.2){\textsc{Bockstein $E_2^-$-page, $mw \equiv 0 \pmod 2$}}
\uput{\labelrad}[-90](0,0){$\frac{\gamma}{\tau}$}
\psline[linecolor=hzerocolor](0,0)(0,1)
\psline[linecolor=honecolor](0,0)(1,1)
\pscircle*[linecolor=tauzerocolor](0,0){\cirrad}
\psline[linecolor=hzerocolor]{->}(0,1)(0,1.70)
\pscircle*[linecolor=tauzerocolor](0,1){\cirrad}
\psline[linecolor=pcolor]{->}(1,1)(1.70,1)
\psline[linecolor=honecolor](1,1)(2,2)
\pscircle*[linecolor=tauzerocolor](1,1){\cirrad}
\psline[linecolor=pcolor]{->}(2,2)(2.70,2)
\pscircle*[linecolor=tauzerocolor](2,2){\cirrad}
\uput{\labelrad}[-90](4,2){${Q} h_1^{3}$}
\psline[linecolor=tauonecolor](4,2)(4,3)
\psline[linecolor=tauonecolor](4,2)(5,3)
\pscircle*[linecolor=tauonecolor](4,2){\cirrad}
\psline[linecolor=tauonecolor]{->}(5,3)(5.70,3)
\psline[linecolor=tauonecolor]{->}(5,3)(5.70,3.70)
\pscircle*[linecolor=tauonecolor](5,3){\cirrad}
\uput{\labelrad}[150](4,3){$\frac{\gamma}{\tau} a$}
\psline[linecolor=hzerocolor]{->}(4,3)(4,3.70)
\pscircle*[linecolor=tauzerocolor](4,3){\cirrad}
\uput{\labelrad}[-90](8,4){$\frac{\gamma}{\tau} b$}
\psline[linecolor=hzerocolor](8,4)(8,5)
\psline[linecolor=honecolor](8,4)(9,5)
\pscircle*[linecolor=tauzerocolor](8,4){\cirrad}
\psline[linecolor=hzerocolor]{->}(8,5)(8,5.70)
\pscircle*[linecolor=tauzerocolor](8,5){\cirrad}
\psline[linecolor=pcolor]{->}(9,5)(9.70,5)
\psline[linecolor=honecolor](9,5)(10,6)
\pscircle*[linecolor=tauzerocolor](9,5){\cirrad}
\psline[linecolor=pcolor]{->}(10,6)(10.70,6)
\pscircle*[linecolor=tauzerocolor](10,6){\cirrad}
\uput{\labelrad}[-90](12,6){${Q} h_1^{3} b$}
\psline[linecolor=tauonecolor](12,6)(12,7)
\psline[linecolor=tauonecolor](12,6)(13,7)
\pscircle*[linecolor=tauonecolor](12,6){\cirrad}
\psline[linecolor=tauonecolor]{->}(13,7)(13.70,7)
\psline[linecolor=tauonecolor]{->}(13,7)(13.70,7.70)
\pscircle*[linecolor=tauonecolor](13,7){\cirrad}
\uput{\labelrad}[150](12,7){$\frac{\gamma}{\tau} a b$}
\psline[linecolor=hzerocolor]{->}(12,7)(12,7.70)
\pscircle*[linecolor=tauzerocolor](12,7){\cirrad}
\uput{\labelrad}[-90](16,8){$\frac{\gamma}{\tau} b^{2}$}
\psline[linecolor=hzerocolor]{->}(16,8)(16,8.70)
\pscircle*[linecolor=tauzerocolor](16,8){\cirrad}
\uput{\labelrad}[0](1,7){$\F_2[\tau^2]/ \tau^\infty$}
\pscircle*[linecolor=tauzerocolor](1,7){\cirrad}
\uput{\labelrad}[0](1,6.5){$\F_2$}
\pscircle*[linecolor=tauonecolor](1,6.5){\cirrad}
\end{pspicture}
\begin{pspicture}(-1,-1)(17,9)
\psgrid[unit=2,gridcolor=gridline,subgriddiv=0,gridlabelcolor=white](0,0)(8,4)
\scriptsize
\rput(0,-1){0}
\rput(2,-1){2}
\rput(4,-1){4}
\rput(6,-1){6}
\rput(8,-1){8}
\rput(10,-1){10}
\rput(12,-1){12}
\rput(14,-1){14}
\rput(16,-1){16}

\rput(-1,0){0}
\rput(-1,2){2}
\rput(-1,4){4}
\rput(-1,6){6}
\rput(-1,8){8}

\uput[0](0,8.2){\textsc{Bockstein $E_2^-$-page, $mw \equiv 1 \pmod 2$}}
\uput{\labelrad}[-90](0,0){$\frac{\gamma}{\tau^{2}}$}
\psline[linecolor=pcolor]{->}(0,0)(0.70,0)
\psline[linecolor=honecolor](0,0)(1,1)
\pscircle*[linecolor=tauzerocolor](0,0){\cirrad}
\psline[linecolor=pcolor]{->}(1,1)(1.70,1)
\psline[linecolor=honecolor](1,1)(2,2)
\pscircle*[linecolor=tauzerocolor](1,1){\cirrad}
\psline[linecolor=pcolor]{->}(2,2)(2.70,2)
\pscircle*[linecolor=tauzerocolor](2,2){\cirrad}
\uput{\labelrad}[-90](4,3){$\frac{\gamma}{\tau^{4}} a$}
\psline[linecolor=pcolor]{->}(4,3)(4.70,3)
\pscircle*[linecolor=tauzerocolor](4,3){\cirrad}
\uput{\labelrad}[-90](8,4){$\frac{\gamma}{\tau^{2}} b$}
\psline[linecolor=pcolor]{->}(8,4)(8.70,4)
\psline[linecolor=honecolor](8,4)(9,5)
\pscircle*[linecolor=tauzerocolor](8,4){\cirrad}
\psline[linecolor=pcolor]{->}(9,5)(9.70,5)
\psline[linecolor=honecolor](9,5)(10,6)
\pscircle*[linecolor=tauzerocolor](9,5){\cirrad}
\psline[linecolor=pcolor]{->}(10,6)(10.70,6)
\pscircle*[linecolor=tauzerocolor](10,6){\cirrad}
\uput{\labelrad}[-90](12,7){$\frac{\gamma}{\tau^{4}} a b$}
\psline[linecolor=pcolor]{->}(12,7)(12.70,7)
\pscircle*[linecolor=tauzerocolor](12,7){\cirrad}
\uput{\labelrad}[-90](16,8){$\frac{\gamma}{\tau^{2}} b^{2}$}
\psline[linecolor=pcolor]{->}(16,8)(16.70,8)
\pscircle*[linecolor=tauzerocolor](16,8){\cirrad}
\uput{\labelrad}[0](1,7){$\F_2[\tau^2]/ \tau^\infty$}
\pscircle*[linecolor=tauzerocolor](1,7){\cirrad}
\end{pspicture}
\vfill

\newpage
\centerline{Bockstein $E^{-}$ charts for $\cA^{C_2}(1)$}

\begin{pspicture}(-1,-1)(17,9)
\psgrid[unit=2,gridcolor=gridline,subgriddiv=0,gridlabelcolor=white](0,0)(8,4)
\scriptsize
\rput(0,-1){0}
\rput(2,-1){2}
\rput(4,-1){4}
\rput(6,-1){6}
\rput(8,-1){8}
\rput(10,-1){10}
\rput(12,-1){12}
\rput(14,-1){14}
\rput(16,-1){16}

\rput(-1,0){0}
\rput(-1,2){2}
\rput(-1,4){4}
\rput(-1,6){6}
\rput(-1,8){8}

\uput[0](0,8.2){\textsc{Bockstein $E_3^-=E_4^-$-page, $mw \equiv 0 \pmod 4$}}
\uput{\labelrad}[-90](0,0){$\frac{\gamma}{\tau^{3}}$}
\psline[linecolor=hzerocolor]{->}(0,0)(0,0.70)
\pscircle*[linecolor=tauzerocolor](0,0){\cirrad}
\uput{\labelrad}[-90](4,2){${Q} h_1^{3}$}
\psline[linecolor=tauonecolor](4,2)(4,3)
\psline[linecolor=tauonecolor](4,2)(5,3)
\pscircle*[linecolor=tauonecolor](4,2){\cirrad}
\psline[linecolor=tauonecolor]{->}(5,3)(5.70,3)
\psline[linecolor=tauonecolor]{->}(5,3)(5.70,3.70)
\pscircle*[linecolor=tauonecolor](5,3){\cirrad}
\uput{\labelrad}[150](4,3){$\frac{\gamma}{\tau} a$}
\psline[linecolor=hzerocolor]{->}(4,3)(4,3.70)
\pscircle*[linecolor=tauzerocolor](4,3){\cirrad}
\uput{\labelrad}[-90](8,4){$\frac{\gamma}{\tau^{3}} b$}
\psline[linecolor=hzerocolor]{->}(8,4)(8,4.70)
\pscircle*[linecolor=tauzerocolor](8,4){\cirrad}
\uput{\labelrad}[-90](12,6){${Q} h_1^{3} b$}
\psline[linecolor=tauonecolor](12,6)(12,7)
\psline[linecolor=tauonecolor](12,6)(13,7)
\pscircle*[linecolor=tauonecolor](12,6){\cirrad}
\psline[linecolor=tauonecolor]{->}(13,7)(13.70,7)
\psline[linecolor=tauonecolor]{->}(13,7)(13.70,7.70)
\pscircle*[linecolor=tauonecolor](13,7){\cirrad}
\uput{\labelrad}[150](12,7){$\frac{\gamma}{\tau} a b$}
\psline[linecolor=hzerocolor]{->}(12,7)(12,7.70)
\pscircle*[linecolor=tauzerocolor](12,7){\cirrad}
\uput{\labelrad}[-90](16,8){$\frac{\gamma}{\tau^{3}} b^{2}$}
\psline[linecolor=hzerocolor]{->}(16,8)(16,8.70)
\pscircle*[linecolor=tauzerocolor](16,8){\cirrad}
\uput{\labelrad}[0](1,7){$\F_2[\tau^4]/ \tau^\infty$}
\pscircle*[linecolor=tauzerocolor](1,7){\cirrad}
\uput{\labelrad}[0](1,6.5){$\F_2$}
\pscircle*[linecolor=tauonecolor](1,6.5){\cirrad}
\end{pspicture}
\begin{pspicture}(-1,-1)(17,9)
\psgrid[unit=2,gridcolor=gridline,subgriddiv=0,gridlabelcolor=white](0,0)(8,4)
\scriptsize
\rput(0,-1){0}
\rput(2,-1){2}
\rput(4,-1){4}
\rput(6,-1){6}
\rput(8,-1){8}
\rput(10,-1){10}
\rput(12,-1){12}
\rput(14,-1){14}
\rput(16,-1){16}

\rput(-1,0){0}
\rput(-1,2){2}
\rput(-1,4){4}
\rput(-1,6){6}
\rput(-1,8){8}

\uput[0](0,8.2){\textsc{Bockstein $E_3^-$-page, $mw \equiv 1 \pmod 4$}}
\uput{\labelrad}[-90](0,0){$\frac{\gamma}{\tau^{2}}$}
\psline[linecolor=pcolor](0,0)(1,0)
\psline[linecolor=honecolor](0,0)(1,1)
\pscircle*[linecolor=tauzerocolor](0,0){\cirrad}
\psline[linecolor=honecolor](1,0)(2,1)
\pscircle*[linecolor=tauzerocolor](1,0){\cirrad}
\psline[linecolor=pcolor](1,1)(2,1)
\pscircle*[linecolor=tauzerocolor](1,1){\cirrad}
\pscircle*[linecolor=tauzerocolor](2,1){\cirrad}
\uput{\labelrad}[-90](4,3){$\frac{\gamma}{\tau^{4}} a$}
\psline[linecolor=pcolor]{->}(4,3)(4.70,3)
\pscircle*[linecolor=tauzerocolor](4,3){\cirrad}
\uput{\labelrad}[-90](8,4){$\frac{\gamma}{\tau^{2}} b$}
\psline[linecolor=pcolor](8,4)(9,4)
\psline[linecolor=honecolor](8,4)(9,5)
\pscircle*[linecolor=tauzerocolor](8,4){\cirrad}
\psline[linecolor=honecolor](9,4)(10,5)
\pscircle*[linecolor=tauzerocolor](9,4){\cirrad}
\psline[linecolor=pcolor](9,5)(10,5)
\pscircle*[linecolor=tauzerocolor](9,5){\cirrad}
\pscircle*[linecolor=tauzerocolor](10,5){\cirrad}
\uput{\labelrad}[-90](12,7){$\frac{\gamma}{\tau^{4}} a b$}
\psline[linecolor=pcolor]{->}(12,7)(12.70,7)
\pscircle*[linecolor=tauzerocolor](12,7){\cirrad}
\uput{\labelrad}[-90](16,8){$\frac{\gamma}{\tau^{2}} b^{2}$}
\pscircle*[linecolor=tauzerocolor](16,8){\cirrad}
\uput{\labelrad}[0](1,7){$\F_2[\tau^4]/ \tau^\infty$}
\pscircle*[linecolor=tauzerocolor](1,7){\cirrad}
\end{pspicture}
\vfill

\begin{pspicture}(-1,-1)(17,9)
\psgrid[unit=2,gridcolor=gridline,subgriddiv=0,gridlabelcolor=white](0,0)(8,4)
\scriptsize
\rput(0,-1){0}
\rput(2,-1){2}
\rput(4,-1){4}
\rput(6,-1){6}
\rput(8,-1){8}
\rput(10,-1){10}
\rput(12,-1){12}
\rput(14,-1){14}
\rput(16,-1){16}

\rput(-1,0){0}
\rput(-1,2){2}
\rput(-1,4){4}
\rput(-1,6){6}
\rput(-1,8){8}

\uput[0](0,8.2){\textsc{Bockstein $E_3^-$-page, $mw \equiv 2 \pmod 4$}}
\uput{\labelrad}[-90](0,0){$\frac{\gamma}{\tau}$}
\psline[linecolor=hzerocolor](0,0)(0,1)
\psline[linecolor=honecolor](0,0)(1,1)
\pscircle*[linecolor=tauzerocolor](0,0){\cirrad}
\psline[linecolor=hzerocolor]{->}(0,1)(0,1.70)
\pscircle*[linecolor=tauzerocolor](0,1){\cirrad}
\psline[linecolor=pcolor](1,1)(2,1)
\psline[linecolor=honecolor](1,1)(2,2)
\pscircle*[linecolor=tauzerocolor](1,1){\cirrad}
\psline[linecolor=honecolor](2,1)(3,2)
\pscircle*[linecolor=tauzerocolor](2,1){\cirrad}
\psline[linecolor=pcolor](2,2)(3,2)
\pscircle*[linecolor=tauzerocolor](2,2){\cirrad}
\psline[linecolor=pcolor]{->}(3,2)(3.70,2)
\pscircle*[linecolor=tauzerocolor](3,2){\cirrad}
\uput{\labelrad}[150](4,3){$\frac{\gamma}{\tau^{3}} a$}
\psline[linecolor=hzerocolor]{->}(4,3)(4,3.70)
\pscircle*[linecolor=tauzerocolor](4,3){\cirrad}
\uput{\labelrad}[-90](8,4){$\frac{\gamma}{\tau} b$}
\psline[linecolor=hzerocolor](8,4)(8,5)
\psline[linecolor=honecolor](8,4)(9,5)
\pscircle*[linecolor=tauzerocolor](8,4){\cirrad}
\psline[linecolor=hzerocolor]{->}(8,5)(8,5.70)
\pscircle*[linecolor=tauzerocolor](8,5){\cirrad}
\psline[linecolor=pcolor](9,5)(10,5)
\psline[linecolor=honecolor](9,5)(10,6)
\pscircle*[linecolor=tauzerocolor](9,5){\cirrad}
\psline[linecolor=honecolor](10,5)(11,6)
\pscircle*[linecolor=tauzerocolor](10,5){\cirrad}
\psline[linecolor=pcolor](10,6)(11,6)
\pscircle*[linecolor=tauzerocolor](10,6){\cirrad}
\psline[linecolor=pcolor]{->}(11,6)(11.70,6)
\pscircle*[linecolor=tauzerocolor](11,6){\cirrad}
\uput{\labelrad}[150](12,7){$\frac{\gamma}{\tau^{3}} a b$}
\psline[linecolor=hzerocolor]{->}(12,7)(12,7.70)
\pscircle*[linecolor=tauzerocolor](12,7){\cirrad}
\uput{\labelrad}[-90](16,8){$\frac{\gamma}{\tau} b^{2}$}
\psline[linecolor=hzerocolor]{->}(16,8)(16,8.70)
\pscircle*[linecolor=tauzerocolor](16,8){\cirrad}
\uput{\labelrad}[0](1,7){$\F_2[\tau^4]/ \tau^\infty$}
\pscircle*[linecolor=tauzerocolor](1,7){\cirrad}
\end{pspicture}
\begin{pspicture}(-1,-1)(17,9)
\psgrid[unit=2,gridcolor=gridline,subgriddiv=0,gridlabelcolor=white](0,0)(8,4)
\scriptsize
\rput(0,-1){0}
\rput(2,-1){2}
\rput(4,-1){4}
\rput(6,-1){6}
\rput(8,-1){8}
\rput(10,-1){10}
\rput(12,-1){12}
\rput(14,-1){14}
\rput(16,-1){16}

\rput(-1,0){0}
\rput(-1,2){2}
\rput(-1,4){4}
\rput(-1,6){6}
\rput(-1,8){8}

\uput[0](0,8.2){\textsc{Bockstein $E_3^-=E_4^-$-page, $mw \equiv 3 \pmod 4$}}
\uput{\labelrad}[-90](0,0){$\frac{\gamma}{\tau^{4}}$}
\psline[linecolor=pcolor]{->}(0,0)(0.70,0)
\psline[linecolor=honecolor](0,0)(1,1)
\pscircle*[linecolor=tauzerocolor](0,0){\cirrad}
\psline[linecolor=pcolor]{->}(1,1)(1.70,1)
\psline[linecolor=honecolor](1,1)(2,2)
\pscircle*[linecolor=tauzerocolor](1,1){\cirrad}
\psline[linecolor=pcolor]{->}(2,2)(2.70,2)
\pscircle*[linecolor=tauzerocolor](2,2){\cirrad}
\uput{\labelrad}[-90](4,3){$\frac{\gamma}{\tau^{6}} a$}
\psline[linecolor=pcolor]{->}(4,3)(4.70,3)
\pscircle*[linecolor=tauzerocolor](4,3){\cirrad}
\uput{\labelrad}[-90](8,4){$\frac{\gamma}{\tau^{4}} b$}
\psline[linecolor=pcolor]{->}(8,4)(8.70,4)
\psline[linecolor=honecolor](8,4)(9,5)
\pscircle*[linecolor=tauzerocolor](8,4){\cirrad}
\psline[linecolor=pcolor]{->}(9,5)(9.70,5)
\psline[linecolor=honecolor](9,5)(10,6)
\pscircle*[linecolor=tauzerocolor](9,5){\cirrad}
\psline[linecolor=pcolor]{->}(10,6)(10.70,6)
\pscircle*[linecolor=tauzerocolor](10,6){\cirrad}
\uput{\labelrad}[-90](12,7){$\frac{\gamma}{\tau^{6}} a b$}
\psline[linecolor=pcolor]{->}(12,7)(12.70,7)
\pscircle*[linecolor=tauzerocolor](12,7){\cirrad}
\uput{\labelrad}[-90](16,8){$\frac{\gamma}{\tau^{4}} b^{2}$}
\psline[linecolor=pcolor]{->}(16,8)(16.70,8)
\pscircle*[linecolor=tauzerocolor](16,8){\cirrad}
\uput{\labelrad}[0](1,7){$\F_2[\tau^4]/ \tau^\infty$}
\pscircle*[linecolor=tauzerocolor](1,7){\cirrad}
\end{pspicture}
\vfill

\newpage
\centerline{Bockstein $E^{-}$ charts for $\cA^{C_2}(1)$}

\begin{pspicture}(-1,-1)(17,9)
\psgrid[unit=2,gridcolor=gridline,subgriddiv=0,gridlabelcolor=white](0,0)(8,4)
\scriptsize
\rput(0,-1){0}
\rput(2,-1){2}
\rput(4,-1){4}
\rput(6,-1){6}
\rput(8,-1){8}
\rput(10,-1){10}
\rput(12,-1){12}
\rput(14,-1){14}
\rput(16,-1){16}

\rput(-1,0){0}
\rput(-1,2){2}
\rput(-1,4){4}
\rput(-1,6){6}
\rput(-1,8){8}

\uput[0](0,8.2){\textsc{Bockstein $E_4^- = E_\infty^-$-page, $mw \equiv 1 \pmod 4$}}
\uput{\labelrad}[-90](0,0){$\frac{\gamma}{\tau^{2}}$}
\psline[linecolor=pcolor](0,0)(1,0)
\psline[linecolor=honecolor](0,0)(1,1)
\pscircle*[linecolor=tauzerocolor](0,0){\cirrad}
\psline[linecolor=honecolor](1,0)(2,1)
\pscircle*[linecolor=tauzerocolor](1,0){\cirrad}
\psline[linecolor=pcolor](1,1)(2,1)
\pscircle*[linecolor=tauzerocolor](1,1){\cirrad}
\pscircle*[linecolor=tauzerocolor](2,1){\cirrad}
\uput{\labelrad}[-90](8,4){$\frac{\gamma}{\tau^{2}} b$}
\psline[linecolor=pcolor](8,4)(9,4)
\psline[linecolor=honecolor](8,4)(9,5)
\pscircle*[linecolor=tauzerocolor](8,4){\cirrad}
\psline[linecolor=honecolor](9,4)(10,5)
\pscircle*[linecolor=tauzerocolor](9,4){\cirrad}
\psline[linecolor=pcolor](9,5)(10,5)
\pscircle*[linecolor=tauzerocolor](9,5){\cirrad}
\pscircle*[linecolor=tauzerocolor](10,5){\cirrad}
\uput{\labelrad}[-90](16,8){$\frac{\gamma}{\tau^{2}} b^{2}$}
\pscircle*[linecolor=tauzerocolor](16,8){\cirrad}
\uput{\labelrad}[0](1,7){$\F_2[\tau^4]/ \tau^\infty$}
\pscircle*[linecolor=tauzerocolor](1,7){\cirrad}
\end{pspicture}
\begin{pspicture}(-1,-1)(17,9)
\psgrid[unit=2,gridcolor=gridline,subgriddiv=0,gridlabelcolor=white](0,0)(8,4)
\scriptsize
\rput(0,-1){0}
\rput(2,-1){2}
\rput(4,-1){4}
\rput(6,-1){6}
\rput(8,-1){8}
\rput(10,-1){10}
\rput(12,-1){12}
\rput(14,-1){14}
\rput(16,-1){16}

\rput(-1,0){0}
\rput(-1,2){2}
\rput(-1,4){4}
\rput(-1,6){6}
\rput(-1,8){8}

\uput[0](0,8.2){\textsc{Bockstein $E_4^- = E_\infty^-$-page, $mw \equiv 2 \pmod 4$}}
\uput{\labelrad}[-90](0,0){$\frac{\gamma}{\tau}$}
\psline[linecolor=hzerocolor](0,0)(0,1)
\psline[linecolor=honecolor](0,0)(1,1)
\pscircle*[linecolor=tauzerocolor](0,0){\cirrad}
\psline[linecolor=hzerocolor]{->}(0,1)(0,1.70)
\pscircle*[linecolor=tauzerocolor](0,1){\cirrad}
\psline[linecolor=pcolor](1,1)(2,1)
\psline[linecolor=honecolor](1,1)(2,2)
\pscircle*[linecolor=tauzerocolor](1,1){\cirrad}
\psline[linecolor=honecolor](2,1)(3,2)
\pscircle*[linecolor=tauzerocolor](2,1){\cirrad}
\psline[linecolor=pcolor](2,2)(3,2)
\pscircle*[linecolor=tauzerocolor](2,2){\cirrad}
\psline[linecolor=pcolor](3,2)(4,2)
\pscircle*[linecolor=tauzerocolor](3,2){\cirrad}
\pscircle*[linecolor=tauzerocolor](4,2){\cirrad}
\uput{\labelrad}[150](4,3){$\frac{\gamma}{\tau^{3}} a$}
\psline[linecolor=hzerocolor]{->}(4,3)(4,3.70)
\pscircle*[linecolor=tauzerocolor](4,3){\cirrad}
\uput{\labelrad}[-90](8,4){$\frac{\gamma}{\tau} b$}
\psline[linecolor=hzerocolor](8,4)(8,5)
\psline[linecolor=honecolor](8,4)(9,5)
\pscircle*[linecolor=tauzerocolor](8,4){\cirrad}
\psline[linecolor=hzerocolor]{->}(8,5)(8,5.70)
\pscircle*[linecolor=tauzerocolor](8,5){\cirrad}
\psline[linecolor=pcolor](9,5)(10,5)
\psline[linecolor=honecolor](9,5)(10,6)
\pscircle*[linecolor=tauzerocolor](9,5){\cirrad}
\psline[linecolor=honecolor](10,5)(11,6)
\pscircle*[linecolor=tauzerocolor](10,5){\cirrad}
\psline[linecolor=pcolor](10,6)(11,6)
\pscircle*[linecolor=tauzerocolor](10,6){\cirrad}
\psline[linecolor=pcolor](11,6)(12,6)
\pscircle*[linecolor=tauzerocolor](11,6){\cirrad}
\pscircle*[linecolor=tauzerocolor](12,6){\cirrad}
\uput{\labelrad}[150](12,7){$\frac{\gamma}{\tau^{3}} a b$}
\psline[linecolor=hzerocolor]{->}(12,7)(12,7.70)
\pscircle*[linecolor=tauzerocolor](12,7){\cirrad}
\uput{\labelrad}[-90](16,8){$\frac{\gamma}{\tau} b^{2}$}
\psline[linecolor=hzerocolor]{->}(16,8)(16,8.70)
\pscircle*[linecolor=tauzerocolor](16,8){\cirrad}
\uput{\labelrad}[0](1,7){$\F_2[\tau^4]/ \tau^\infty$}
\pscircle*[linecolor=tauzerocolor](1,7){\cirrad}
\end{pspicture}
\vfill

\begin{pspicture}(-1,-1)(17,9)
\psgrid[unit=2,gridcolor=gridline,subgriddiv=0,gridlabelcolor=white](0,0)(8,4)
\scriptsize
\rput(0,-1){0}
\rput(2,-1){2}
\rput(4,-1){4}
\rput(6,-1){6}
\rput(8,-1){8}
\rput(10,-1){10}
\rput(12,-1){12}
\rput(14,-1){14}
\rput(16,-1){16}

\rput(-1,0){0}
\rput(-1,2){2}
\rput(-1,4){4}
\rput(-1,6){6}
\rput(-1,8){8}

\uput[0](0,8.2){\textsc{Bockstein $E_5^-$-page, $mw \equiv 0 \pmod 4$}}
\uput{\labelrad}[-90](0,0){$\frac{\gamma}{\tau^{3}}$}
\psline[linecolor=hzerocolor]{->}(0,0)(0,0.70)
\pscircle*[linecolor=tauzerocolor](0,0){\cirrad}
\uput{\labelrad}[0](1,6.5){$\F_2$}
\pscircle*[linecolor=tauonecolor](1,6.5){\cirrad}
\uput{\labelrad}[0](1,7){$\F_2[\tau^4]/ \tau^\infty$}
\pscircle*[linecolor=tauzerocolor](1,7){\cirrad}
\uput{\labelrad}[-90](4,2){${Q} h_1^{3}$}
\psline[linecolor=tauonecolor](4,2)(4,3)
\psline[linecolor=tauonecolor](4,2)(5,3)
\pscircle*[linecolor=tauonecolor](4,2){\cirrad}
\uput{\labelrad}[150](4,3){$\frac{\gamma}{\tau} a$}
\psline[linecolor=hzerocolor]{->}(4,3)(4,3.70)
\pscircle*[linecolor=tauzerocolor](4,3){\cirrad}
\psline[linecolor=tauonecolor](5,3)(6,3)
\psline[linecolor=tauonecolor](5,3)(6,4)
\pscircle*[linecolor=tauonecolor](5,3){\cirrad}
\psline[linecolor=tauonecolor](6,3)(7,3)
\psline[linecolor=tauonecolor](6,3)(7,4)
\pscircle*[linecolor=tauonecolor](6,3){\cirrad}
\psline[linecolor=tauonecolor](6,4)(7,4)
\psline[linecolor=tauonecolor](6,4)(7,5)
\pscircle*[linecolor=tauonecolor](6,4){\cirrad}
\psline[linecolor=tauonecolor](7,3)(8,3)
\psline[linecolor=tauonecolor](7,3)(7.90,4.10)
\pscircle*[linecolor=tauonecolor](7,3){\cirrad}
\psline[linecolor=tauonecolor](7,4)(7.90,4.10)
\psline[linecolor=tauonecolor](7,4)(8,5)
\pscircle*[linecolor=tauonecolor](7,4){\cirrad}
\psline[linecolor=tauonecolor](7,5)(8,5)
\psline[linecolor=tauonecolor](7,5)(8,6)
\pscircle*[linecolor=tauonecolor](7,5){\cirrad}
\psline[linecolor=tauonecolor](8,3)(9,4)
\pscircle*[linecolor=tauonecolor](8,3){\cirrad}
\uput{\labelrad}[-90](8.10,3.90){$\frac{\gamma}{\tau^{3}} b$}
\psline[linecolor=hzerocolor]{->}(8.10,3.90)(8.10,4.60)
\pscircle*[linecolor=tauzerocolor](8.10,3.90){\cirrad}
\psline[linecolor=tauonecolor](7.90,4.10)(9,4)
\psline[linecolor=tauonecolor](7.90,4.10)(9,5)
\pscircle*[linecolor=tauonecolor](7.90,4.10){\cirrad}
\psline[linecolor=tauonecolor](8,5)(9,5)
\psline[linecolor=tauonecolor](8,5)(9,6)
\pscircle*[linecolor=tauonecolor](8,5){\cirrad}
\psline[linecolor=tauonecolor](8,6)(9,6)
\psline[linecolor=tauonecolor]{->}(8,6)(8.70,6.70)
\pscircle*[linecolor=tauonecolor](8,6){\cirrad}
\psline[linecolor=tauonecolor](9,4)(10,5)
\pscircle*[linecolor=tauonecolor](9,4){\cirrad}
\psline[linecolor=tauonecolor](9,5)(10,5)
\psline[linecolor=tauonecolor](9,5)(10,6)
\pscircle*[linecolor=tauonecolor](9,5){\cirrad}
\psline[linecolor=tauonecolor](9,6)(10,6)
\psline[linecolor=tauonecolor]{->}(9,6)(9.70,6.70)
\pscircle*[linecolor=tauonecolor](9,6){\cirrad}
\psline[linecolor=tauonecolor](10,5)(11,6)
\pscircle*[linecolor=tauonecolor](10,5){\cirrad}
\psline[linecolor=tauonecolor](10,6)(11,6)
\psline[linecolor=tauonecolor]{->}(10,6)(10.70,6.70)
\pscircle*[linecolor=tauonecolor](10,6){\cirrad}
\psline[linecolor=tauonecolor]{->}(11,6)(11.70,6)
\psline[linecolor=tauonecolor]{->}(11,6)(11.70,6.70)
\pscircle*[linecolor=tauonecolor](11,6){\cirrad}
\uput{\labelrad}[-90](12,6){${Q} h_1^{3} b$}
\psline[linecolor=tauonecolor](12,6)(12,7)
\psline[linecolor=tauonecolor](12,6)(13,7)
\pscircle*[linecolor=tauonecolor](12,6){\cirrad}
\uput{\labelrad}[150](12,7){$\frac{\gamma}{\tau} a b$}
\psline[linecolor=hzerocolor]{->}(12,7)(12,7.70)
\pscircle*[linecolor=tauzerocolor](12,7){\cirrad}
\psline[linecolor=tauonecolor](13,7)(14,7)
\psline[linecolor=tauonecolor](13,7)(14,8)
\pscircle*[linecolor=tauonecolor](13,7){\cirrad}
\psline[linecolor=tauonecolor](14,7)(15,7)
\psline[linecolor=tauonecolor](14,7)(15,8)
\pscircle*[linecolor=tauonecolor](14,7){\cirrad}
\psline[linecolor=tauonecolor](15,7)(16,7)
\psline[linecolor=tauonecolor](15,7)(15.90,8.10)
\pscircle*[linecolor=tauonecolor](15,7){\cirrad}
\pscircle*[linecolor=tauonecolor](16,7){\cirrad}
\psline[linecolor=tauonecolor](14,8)(15,8)
\psline[linecolor=tauonecolor]{->}(14,8)(14.70,8.70)
\pscircle*[linecolor=tauonecolor](14,8){\cirrad}
\psline[linecolor=tauonecolor](15,8)(15.90,8.10)
\psline[linecolor=tauonecolor]{->}(15,8)(15.70,8.70)
\pscircle*[linecolor=tauonecolor](15,8){\cirrad}
\psline[linecolor=tauonecolor]{->}(15.90,8.10)(16.60,8.80)
\pscircle*[linecolor=tauonecolor](15.90,8.10){\cirrad}
\uput{\labelrad}[-90](16.10,7.90){$\frac{\gamma}{\tau^{3}} b^{2}$}
\psline[linecolor=hzerocolor]{->}(16.10,7.90)(16.10,8.60)
\pscircle*[linecolor=tauzerocolor](16.10,7.90){\cirrad}
\end{pspicture}
\begin{pspicture}(-1,-1)(17,9)
\psgrid[unit=2,gridcolor=gridline,subgriddiv=0,gridlabelcolor=white](0,0)(8,4)
\scriptsize
\rput(0,-1){0}
\rput(2,-1){2}
\rput(4,-1){4}
\rput(6,-1){6}
\rput(8,-1){8}
\rput(10,-1){10}
\rput(12,-1){12}
\rput(14,-1){14}
\rput(16,-1){16}

\rput(-1,0){0}
\rput(-1,2){2}
\rput(-1,4){4}
\rput(-1,6){6}
\rput(-1,8){8}

\uput[0](0,8.2){\textsc{Bockstein $E_5^-$-page, $mw \equiv 3 \pmod 4$}}
\uput{\labelrad}[-90](0,0){$\frac{\gamma}{\tau^{4}}$}
\psline[linecolor=pcolor]{->}(0,0)(0.70,0)
\psline[linecolor=honecolor](0,0)(1,1)
\pscircle*[linecolor=tauzerocolor](0,0){\cirrad}
\psline[linecolor=pcolor]{->}(1,1)(1.70,1)
\psline[linecolor=honecolor](1,1)(2,2)
\pscircle*[linecolor=tauzerocolor](1,1){\cirrad}
\uput{\labelrad}[0](1,7){$\F_2[\tau^4]/ \tau^\infty$}
\pscircle*[linecolor=tauzerocolor](1,7){\cirrad}
\psline[linecolor=pcolor]{->}(2,2)(2.70,2)
\pscircle*[linecolor=tauzerocolor](2,2){\cirrad}
\uput{\labelrad}[-90](4,3){$\frac{\gamma}{\tau^{6}} a$}
\psline[linecolor=pcolor]{->}(4,3)(4.70,3)
\pscircle*[linecolor=tauzerocolor](4,3){\cirrad}
\uput{\labelrad}[-90](8,4){$\frac{\gamma}{\tau^{8}} b$}
\psline[linecolor=pcolor]{->}(8,4)(8.70,4)
\psline[linecolor=honecolor](8,4)(9,5)
\pscircle*[linecolor=tauzerocolor](8,4){\cirrad}
\psline[linecolor=pcolor]{->}(9,5)(9.70,5)
\psline[linecolor=honecolor](9,5)(10,6)
\pscircle*[linecolor=tauzerocolor](9,5){\cirrad}
\psline[linecolor=pcolor]{->}(10,6)(10.70,6)
\pscircle*[linecolor=tauzerocolor](10,6){\cirrad}
\uput{\labelrad}[-90](12,7){$\frac{\gamma}{\tau^{6}} a b$}
\psline[linecolor=pcolor]{->}(12,7)(12.70,7)
\pscircle*[linecolor=tauzerocolor](12,7){\cirrad}
\uput{\labelrad}[-90](16,8){$\frac{\gamma}{\tau^{8}} b^{2}$}
\psline[linecolor=pcolor]{->}(16,8)(16.70,8)
\pscircle*[linecolor=tauzerocolor](16,8){\cirrad}
\end{pspicture}
\vfill

\newpage
\centerline{Bockstein $E^{-}$ charts for $\cA^{C_2}(1)$}

\begin{pspicture}(-1,-1)(17,9)
\psgrid[unit=2,gridcolor=gridline,subgriddiv=0,gridlabelcolor=white](0,0)(8,4)
\scriptsize
\rput(0,-1){0}
\rput(2,-1){2}
\rput(4,-1){4}
\rput(6,-1){6}
\rput(8,-1){8}
\rput(10,-1){10}
\rput(12,-1){12}
\rput(14,-1){14}
\rput(16,-1){16}

\rput(-1,0){0}
\rput(-1,2){2}
\rput(-1,4){4}
\rput(-1,6){6}
\rput(-1,8){8}

\uput[0](0,8.2){\textsc{Bockstein $E_6^-$-page, $mw \equiv 0 \pmod 4$}}
\uput{\labelrad}[-90](0,0){$\frac{\gamma}{\tau^{3}}$}
\psline[linecolor=hzerocolor]{->}(0,0)(0,0.70)
\pscircle*[linecolor=tauzerocolor](0,0){\cirrad}
\uput{\labelrad}[0](1,6.5){$\F_2$}
\pscircle*[linecolor=tauonecolor](1,6.5){\cirrad}
\uput{\labelrad}[0](1,7){$\F_2[\tau^4]/ \tau^\infty$}
\pscircle*[linecolor=tauzerocolor](1,7){\cirrad}
\uput{\labelrad}[-90](4,2){${Q} h_1^{3}$}
\psline[linecolor=tauonecolor](4,2)(4,3)
\psline[linecolor=tauonecolor](4,2)(5,3)
\pscircle*[linecolor=tauonecolor](4,2){\cirrad}
\uput{\labelrad}[150](4,3){$\frac{\gamma}{\tau} a$}
\psline[linecolor=hzerocolor]{->}(4,3)(4,3.70)
\pscircle*[linecolor=tauzerocolor](4,3){\cirrad}
\psline[linecolor=tauonecolor](5,3)(6,3)
\psline[linecolor=tauonecolor](5,3)(6,4)
\pscircle*[linecolor=tauonecolor](5,3){\cirrad}
\psline[linecolor=tauonecolor](6,3)(7,3)
\psline[linecolor=tauonecolor](6,3)(7,4)
\pscircle*[linecolor=tauonecolor](6,3){\cirrad}
\psline[linecolor=tauonecolor](6,4)(7,4)
\psline[linecolor=tauonecolor](6,4)(7,5)
\pscircle*[linecolor=tauonecolor](6,4){\cirrad}
\psline[linecolor=tauonecolor](7,3)(8,3)
\psline[linecolor=tauonecolor](7,3)(7.90,4.10)
\pscircle*[linecolor=tauonecolor](7,3){\cirrad}
\psline[linecolor=tauonecolor](7,4)(7.90,4.10)
\psline[linecolor=tauonecolor](7,4)(8,5)
\pscircle*[linecolor=tauonecolor](7,4){\cirrad}
\psline[linecolor=tauonecolor](7,5)(8,5)
\psline[linecolor=tauonecolor](7,5)(8,6)
\pscircle*[linecolor=tauonecolor](7,5){\cirrad}
\psline[linecolor=tauonecolor](8,3)(9,4)
\pscircle*[linecolor=tauonecolor](8,3){\cirrad}
\uput{\labelrad}[-90](8.10,3.90){$\frac{\gamma}{\tau^{3}} b$}
\psline[linecolor=hzerocolor]{->}(8.10,3.90)(8.10,4.60)
\pscircle*[linecolor=tauzerocolor](8.10,3.90){\cirrad}
\psline[linecolor=tauonecolor](7.90,4.10)(9,4)
\psline[linecolor=tauonecolor](7.90,4.10)(9,5)
\pscircle*[linecolor=tauonecolor](7.90,4.10){\cirrad}
\psline[linecolor=tauonecolor](8,5)(9,5)
\psline[linecolor=tauonecolor](8,5)(9,6)
\pscircle*[linecolor=tauonecolor](8,5){\cirrad}
\psline[linecolor=tauonecolor](8,6)(9,6)
\psline[linecolor=tauonecolor](8,6)(9,7)
\pscircle*[linecolor=tauonecolor](8,6){\cirrad}
\psline[linecolor=tauonecolor](9,4)(10,5)
\pscircle*[linecolor=tauonecolor](9,4){\cirrad}
\psline[linecolor=tauonecolor](9,5)(10,5)
\psline[linecolor=tauonecolor](9,5)(10,6)
\pscircle*[linecolor=tauonecolor](9,5){\cirrad}
\psline[linecolor=tauonecolor](9,6)(10,6)
\psline[linecolor=tauonecolor](9,6)(10,7)
\pscircle*[linecolor=tauonecolor](9,6){\cirrad}
\psline[linecolor=tauonecolor](9,7)(10,7)
\psline[linecolor=tauonecolor]{->}(9,7)(9.70,7.70)
\pscircle*[linecolor=tauonecolor](9,7){\cirrad}
\psline[linecolor=tauonecolor](10,5)(11,6)
\pscircle*[linecolor=tauonecolor](10,5){\cirrad}
\psline[linecolor=tauonecolor](10,6)(11,6)
\psline[linecolor=tauonecolor](10,6)(11,7)
\pscircle*[linecolor=tauonecolor](10,6){\cirrad}
\psline[linecolor=tauonecolor](10,7)(11,7)
\psline[linecolor=tauonecolor]{->}(10,7)(10.70,7.70)
\pscircle*[linecolor=tauonecolor](10,7){\cirrad}
\psline[linecolor=tauonecolor](11,6)(11.90,6.10)
\psline[linecolor=tauonecolor](11,6)(11.90,7.10)
\pscircle*[linecolor=tauonecolor](11,6){\cirrad}
\psline[linecolor=tauonecolor](11,7)(11.90,7.10)
\psline[linecolor=tauonecolor]{->}(11,7)(11.70,7.70)
\pscircle*[linecolor=tauonecolor](11,7){\cirrad}
\psline[linecolor=tauonecolor](11.90,6.10)(12.90,7.10)
\pscircle*[linecolor=tauonecolor](11.90,6.10){\cirrad}
\uput{\labelrad}[-90](12.10,5.90){${Q} h_1^{3} b$}
\psline[linecolor=tauonecolor](12.10,5.90)(12.10,6.90)
\psline[linecolor=tauonecolor](12.10,5.90)(13.10,6.90)
\pscircle*[linecolor=tauonecolor](12.10,5.90){\cirrad}
\psline[linecolor=tauonecolor](11.90,7.10)(12.90,7.10)
\psline[linecolor=tauonecolor]{->}(11.90,7.10)(12.60,7.80)
\pscircle*[linecolor=tauonecolor](11.90,7.10){\cirrad}
\uput{\labelrad}[-150](12.10,6.90){$\frac{\gamma}{\tau} a b$}
\psline[linecolor=hzerocolor]{->}(12.10,6.90)(12.10,7.60)
\pscircle*[linecolor=tauzerocolor](12.10,6.90){\cirrad}
\psline[linecolor=tauonecolor]{->}(12.90,7.10)(13.60,7.10)
\psline[linecolor=tauonecolor]{->}(12.90,7.10)(13.60,7.80)
\pscircle*[linecolor=tauonecolor](12.90,7.10){\cirrad}
\psline[linecolor=tauonecolor](13.10,6.90)(14,7)
\psline[linecolor=tauonecolor](13.10,6.90)(14,8)
\pscircle*[linecolor=tauonecolor](13.10,6.90){\cirrad}
\psline[linecolor=tauonecolor](14,7)(15,7)
\psline[linecolor=tauonecolor](14,7)(15,8)
\pscircle*[linecolor=tauonecolor](14,7){\cirrad}
\psline[linecolor=tauonecolor](14,8)(15,8)
\psline[linecolor=tauonecolor]{->}(14,8)(14.70,8.70)
\pscircle*[linecolor=tauonecolor](14,8){\cirrad}
\psline[linecolor=tauonecolor](15,7)(16,7)
\psline[linecolor=tauonecolor](15,7)(15.90,8.10)
\pscircle*[linecolor=tauonecolor](15,7){\cirrad}
\psline[linecolor=tauonecolor](15,8)(15.90,8.10)
\psline[linecolor=tauonecolor]{->}(15,8)(15.70,8.70)
\pscircle*[linecolor=tauonecolor](15,8){\cirrad}
\pscircle*[linecolor=tauonecolor](16,7){\cirrad}
\psline[linecolor=tauonecolor]{->}(15.90,8.10)(16.60,8.80)
\pscircle*[linecolor=tauonecolor](15.90,8.10){\cirrad}
\uput{\labelrad}[-90](16.10,7.90){$\frac{\gamma}{\tau^{3}} b^{2}$}
\psline[linecolor=hzerocolor]{->}(16.10,7.90)(16.10,8.60)
\pscircle*[linecolor=tauzerocolor](16.10,7.90){\cirrad}
\end{pspicture}
\begin{pspicture}(-1,-1)(17,9)
\psgrid[unit=2,gridcolor=gridline,subgriddiv=0,gridlabelcolor=white](0,0)(8,4)
\scriptsize
\rput(0,-1){0}
\rput(2,-1){2}
\rput(4,-1){4}
\rput(6,-1){6}
\rput(8,-1){8}
\rput(10,-1){10}
\rput(12,-1){12}
\rput(14,-1){14}
\rput(16,-1){16}

\rput(-1,0){0}
\rput(-1,2){2}
\rput(-1,4){4}
\rput(-1,6){6}
\rput(-1,8){8}

\uput[0](0,8.2){\textsc{Bockstein $E_6^-$-page, $mw \equiv 3 \pmod 4$}}
\uput{\labelrad}[-90](0,0){$\frac{\gamma}{\tau^{4}}$}
\psline[linecolor=pcolor]{->}(0,0)(0.70,0)
\psline[linecolor=honecolor](0,0)(1,1)
\pscircle*[linecolor=tauzerocolor](0,0){\cirrad}
\psline[linecolor=pcolor]{->}(1,1)(1.70,1)
\psline[linecolor=honecolor](1,1)(2,2)
\pscircle*[linecolor=tauzerocolor](1,1){\cirrad}
\uput{\labelrad}[0](1,7){$\F_2[\tau^4]/ \tau^\infty$}
\pscircle*[linecolor=tauzerocolor](1,7){\cirrad}
\psline[linecolor=pcolor]{->}(2,2)(2.70,2)
\pscircle*[linecolor=tauzerocolor](2,2){\cirrad}
\uput{\labelrad}[-90](4,3){$\frac{\gamma}{\tau^{6}} a$}
\psline[linecolor=pcolor]{->}(4,3)(4.70,3)
\pscircle*[linecolor=tauzerocolor](4,3){\cirrad}
\uput{\labelrad}[-90](8,4){$\frac{\gamma}{\tau^{8}} b$}
\psline[linecolor=pcolor]{->}(8,4)(8.70,4)
\psline[linecolor=honecolor](8,4)(9,5)
\pscircle*[linecolor=tauzerocolor](8,4){\cirrad}
\psline[linecolor=pcolor]{->}(9,5)(9.70,5)
\psline[linecolor=honecolor](9,5)(10,6)
\pscircle*[linecolor=tauzerocolor](9,5){\cirrad}
\psline[linecolor=pcolor]{->}(10,6)(10.70,6)
\pscircle*[linecolor=tauzerocolor](10,6){\cirrad}
\uput{\labelrad}[-90](12,7){$\frac{\gamma}{\tau^{10}} a b$}
\psline[linecolor=pcolor]{->}(12,7)(12.70,7)
\pscircle*[linecolor=tauzerocolor](12,7){\cirrad}
\uput{\labelrad}[-90](16,8){$\frac{\gamma}{\tau^{8}} b^{2}$}
\psline[linecolor=pcolor]{->}(16,8)(16.70,8)
\pscircle*[linecolor=tauzerocolor](16,8){\cirrad}
\end{pspicture}
\vfill
\label{E-end}

\newpage
\centerline{$\Ext_{NC}$ charts for $\cA^{C_2}(1)$}

\label{E-dummy}
\begin{pspicture}(-1,-1)(17,9)
\psgrid[unit=2,gridcolor=gridline,subgriddiv=0,gridlabelcolor=white](0,0)(8,4)
\scriptsize
\rput(0,-1){0}
\rput(2,-1){2}
\rput(4,-1){4}
\rput(6,-1){6}
\rput(8,-1){8}
\rput(10,-1){10}
\rput(12,-1){12}
\rput(14,-1){14}
\rput(16,-1){16}

\rput(-1,0){0}
\rput(-1,2){2}
\rput(-1,4){4}
\rput(-1,6){6}
\rput(-1,8){8}

\uput[0](0,8.2){\textsc{$\Ext_{NC}$, $mw \equiv 1 \pmod 4$}}
\uput{\labelrad}[-90](0,0){$\frac{\gamma}{\tau^{2}}$}
\psline[linecolor=pcolor](0,0)(1,0)
\psline[linecolor=honecolor](0,0)(1,1)
\pscircle*[linecolor=tauzerocolor](0,0){\cirrad}
\psline[linecolor=hzerocolor,linestyle=dashed, dash=3pt 2pt](1,0)(1,1)
\psline[linecolor=honecolor](1,0)(2,1)
\pscircle*[linecolor=tauzerocolor](1,0){\cirrad}
\psline[linecolor=pcolor](1,1)(2,1)
\pscircle*[linecolor=tauzerocolor](1,1){\cirrad}
\pscircle*[linecolor=tauzerocolor](2,1){\cirrad}
\uput{\labelrad}[-90](8,4){$\frac{\gamma}{\tau^{2}} b$}
\psline[linecolor=pcolor](8,4)(9,4)
\psline[linecolor=honecolor](8,4)(9,5)
\pscircle*[linecolor=tauzerocolor](8,4){\cirrad}
\psline[linecolor=hzerocolor,linestyle=dashed, dash=3pt 2pt](9,4)(9,5)
\psline[linecolor=honecolor](9,4)(10,5)
\pscircle*[linecolor=tauzerocolor](9,4){\cirrad}
\psline[linecolor=pcolor](9,5)(10,5)
\pscircle*[linecolor=tauzerocolor](9,5){\cirrad}
\pscircle*[linecolor=tauzerocolor](10,5){\cirrad}
\uput{\labelrad}[0](1,7){$\F_2[\tau^4]/ \tau^\infty$}
\pscircle*[linecolor=tauzerocolor](1,7){\cirrad}
\uput{\labelrad}[-90](16,8){$\frac{\gamma}{\tau^{2}} b^{2}$}
\pscircle*[linecolor=tauzerocolor](16,8){\cirrad}
\end{pspicture}
\vfill

\begin{pspicture}(-1,-1)(17,9)
\psgrid[unit=2,gridcolor=gridline,subgriddiv=0,gridlabelcolor=white](0,0)(8,4)
\scriptsize
\rput(0,-1){0}
\rput(2,-1){2}
\rput(4,-1){4}
\rput(6,-1){6}
\rput(8,-1){8}
\rput(10,-1){10}
\rput(12,-1){12}
\rput(14,-1){14}
\rput(16,-1){16}

\rput(-1,0){0}
\rput(-1,2){2}
\rput(-1,4){4}
\rput(-1,6){6}
\rput(-1,8){8}

\uput[0](0,8.2){\textsc{$\Ext_{NC}$, $mw \equiv 2 \pmod 4$}}
\uput{\labelrad}[-90](0,0){$\frac{\gamma}{\tau}$}
\psline[linecolor=hzerocolor](0,0)(0,1)
\psline[linecolor=honecolor](0,0)(1,1)
\pscircle*[linecolor=tauzerocolor](0,0){\cirrad}
\psline[linecolor=hzerocolor]{->}(0,1)(0,1.70)
\pscircle*[linecolor=tauzerocolor](0,1){\cirrad}
\psline[linecolor=pcolor](1,1)(2,1)
\psline[linecolor=honecolor](1,1)(2,2)
\pscircle*[linecolor=tauzerocolor](1,1){\cirrad}
\psline[linecolor=hzerocolor,linestyle=dashed, dash=3pt 2pt](2,1)(2,2)
\psline[linecolor=honecolor](2,1)(3,2)
\pscircle*[linecolor=tauzerocolor](2,1){\cirrad}
\psline[linecolor=pcolor](2,2)(3,2)
\pscircle*[linecolor=tauzerocolor](2,2){\cirrad}
\psline[linecolor=pcolor](3,2)(4,2)
\pscircle*[linecolor=tauzerocolor](3,2){\cirrad}
\psline[linecolor=hzerocolor,linestyle=dashed, dash=3pt 2pt](4,2)(4,3)
\pscircle*[linecolor=tauzerocolor](4,2){\cirrad}
\uput{\labelrad}[150](4,3){$\frac{\gamma}{\tau^{3}} a$}
\psline[linecolor=hzerocolor]{->}(4,3)(4,3.70)
\pscircle*[linecolor=tauzerocolor](4,3){\cirrad}
\uput{\labelrad}[-90](8,4){$\frac{\gamma}{\tau} b$}
\psline[linecolor=hzerocolor](8,4)(8,5)
\psline[linecolor=honecolor](8,4)(9,5)
\pscircle*[linecolor=tauzerocolor](8,4){\cirrad}
\psline[linecolor=hzerocolor]{->}(8,5)(8,5.70)
\pscircle*[linecolor=tauzerocolor](8,5){\cirrad}
\psline[linecolor=pcolor](9,5)(10,5)
\psline[linecolor=honecolor](9,5)(10,6)
\pscircle*[linecolor=tauzerocolor](9,5){\cirrad}
\psline[linecolor=hzerocolor,linestyle=dashed, dash=3pt 2pt](10,5)(10,6)
\psline[linecolor=honecolor](10,5)(11,6)
\pscircle*[linecolor=tauzerocolor](10,5){\cirrad}
\psline[linecolor=pcolor](10,6)(11,6)
\pscircle*[linecolor=tauzerocolor](10,6){\cirrad}
\psline[linecolor=pcolor](11,6)(12,6)
\pscircle*[linecolor=tauzerocolor](11,6){\cirrad}
\psline[linecolor=hzerocolor,linestyle=dashed, dash=3pt 2pt](12,6)(12,7)
\pscircle*[linecolor=tauzerocolor](12,6){\cirrad}
\uput{\labelrad}[0](1,7){$\F_2[\tau^4]/ \tau^\infty$}
\pscircle*[linecolor=tauzerocolor](1,7){\cirrad}
\uput{\labelrad}[150](12,7){$\frac{\gamma}{\tau^{3}} a b$}
\psline[linecolor=hzerocolor]{->}(12,7)(12,7.70)
\pscircle*[linecolor=tauzerocolor](12,7){\cirrad}
\uput{\labelrad}[-90](16,8){$\frac{\gamma}{\tau} b^{2}$}
\psline[linecolor=hzerocolor]{->}(16,8)(16,8.70)
\pscircle*[linecolor=tauzerocolor](16,8){\cirrad}
\end{pspicture}
\begin{pspicture}(-1,-1)(17,9)
\psgrid[unit=2,gridcolor=gridline,subgriddiv=0,gridlabelcolor=white](0,0)(8,4)
\scriptsize
\rput(0,-1){0}
\rput(2,-1){2}
\rput(4,-1){4}
\rput(6,-1){6}
\rput(8,-1){8}
\rput(10,-1){10}
\rput(12,-1){12}
\rput(14,-1){14}
\rput(16,-1){16}

\rput(-1,0){0}
\rput(-1,2){2}
\rput(-1,4){4}
\rput(-1,6){6}
\rput(-1,8){8}

\uput[0](0,8.2){\textsc{$\Ext_{NC}$, $mw \equiv 3 \pmod 4$}}
\uput{\labelrad}[-90](0,0){$\frac{\gamma}{\tau^{4}}$}
\psline[linecolor=pcolor](0,0)(1,0)
\psline[linecolor=honecolor](0,0)(1,1)
\pscircle*[linecolor=tauzerocolor](0,0){\cirrad}
\psline[linecolor=pcolor](1,0)(2,0)
\psline[linecolor=honecolor](1,0)(2,1)
\pscircle*[linecolor=tauzerocolor](1,0){\cirrad}
\psline[linecolor=pcolor](2,0)(3,0)
\psline[linecolor=honecolor](2,0)(3,1)
\pscircle*[linecolor=tauzerocolor](2,0){\cirrad}
\psline[linecolor=pcolor](3,0)(4,0)
\psline[linecolor=honecolor](3,0)(4,1)
\pscircle*[linecolor=tauzerocolor](3,0){\cirrad}
\psline[linecolor=pcolor](4,0)(5,0)
\psline[linecolor=honecolor](4,0)(5,1)
\pscircle*[linecolor=tauzerocolor](4,0){\cirrad}
\psline[linecolor=pcolor](5,0)(6,0)
\psline[linecolor=honecolor](5,0)(6,1)
\pscircle*[linecolor=tauzerocolor](5,0){\cirrad}
\psline[linecolor=pcolor](6,0)(7,0)
\psline[linecolor=honecolor](6,0)(7,1)
\pscircle*[linecolor=tauzerocolor](6,0){\cirrad}
\psline[linecolor=pcolor](7,0)(8,0)
\psline[linecolor=honecolor](7,0)(8,1)
\pscircle*[linecolor=tauzerocolor](7,0){\cirrad}
\psline[linecolor=pcolor]{->}(8,0)(8.70,0)
\psline[linecolor=honecolor](8,0)(9,1)
\pscircle*[linecolor=tauzerocolor](8,0){\cirrad}
\psline[linecolor=pcolor](1,1)(2,1)
\psline[linecolor=honecolor](1,1)(2,2)
\pscircle*[linecolor=tauzerocolor](1,1){\cirrad}
\psline[linecolor=pcolor](2,1)(3,1)
\psline[linecolor=honecolor](2,1)(3,2)
\pscircle*[linecolor=tauzerocolor](2,1){\cirrad}
\psline[linecolor=pcolor](3,1)(4,1)
\psline[linecolor=honecolor](3,1)(4,2)
\pscircle*[linecolor=tauzerocolor](3,1){\cirrad}
\psline[linecolor=pcolor](4,1)(5,1)
\psline[linecolor=honecolor](4,1)(5,2)
\pscircle*[linecolor=tauzerocolor](4,1){\cirrad}
\psline[linecolor=pcolor](5,1)(6,1)
\psline[linecolor=honecolor](5,1)(6,2)
\pscircle*[linecolor=tauzerocolor](5,1){\cirrad}
\psline[linecolor=pcolor](6,1)(7,1)
\psline[linecolor=honecolor](6,1)(7,2)
\pscircle*[linecolor=tauzerocolor](6,1){\cirrad}
\psline[linecolor=pcolor](7,1)(8,1)
\psline[linecolor=honecolor](7,1)(8,2)
\pscircle*[linecolor=tauzerocolor](7,1){\cirrad}
\psline[linecolor=pcolor](8,1)(9,1)
\psline[linecolor=honecolor](8,1)(9,2)
\pscircle*[linecolor=tauzerocolor](8,1){\cirrad}
\psline[linecolor=pcolor]{->}(9,1)(9.70,1)
\psline[linecolor=honecolor](9,1)(10,2)
\pscircle*[linecolor=tauzerocolor](9,1){\cirrad}
\psline[linecolor=pcolor](2,2)(3,2)
\pscircle*[linecolor=tauzerocolor](2,2){\cirrad}
\psline[linecolor=pcolor](3,2)(4,2)
\psline[linecolor=honecolor,linestyle=dashed, dash=3pt 2pt](3,2)(4,3)
\pscircle*[linecolor=tauzerocolor](3,2){\cirrad}
\psline[linecolor=pcolor](4,2)(5,2)
\psline[linecolor=honecolor,linestyle=dashed, dash=3pt 2pt](4,2)(5,3)
\pscircle*[linecolor=tauzerocolor](4,2){\cirrad}
\psline[linecolor=pcolor](5,2)(6,2)
\psline[linecolor=honecolor,linestyle=dashed, dash=3pt 2pt](5,2)(6,3)
\pscircle*[linecolor=tauzerocolor](5,2){\cirrad}
\psline[linecolor=pcolor](6,2)(7,2)
\psline[linecolor=honecolor,linestyle=dashed, dash=3pt 2pt](6,2)(7,3)
\pscircle*[linecolor=tauzerocolor](6,2){\cirrad}
\psline[linecolor=pcolor](7,2)(8,2)
\psline[linecolor=honecolor,linestyle=dashed, dash=3pt 2pt](7,2)(8,3)
\pscircle*[linecolor=tauzerocolor](7,2){\cirrad}
\psline[linecolor=pcolor](8,2)(9,2)
\psline[linecolor=honecolor,linestyle=dashed, dash=3pt 2pt](8,2)(9,3)
\pscircle*[linecolor=tauzerocolor](8,2){\cirrad}
\psline[linecolor=pcolor](9,2)(10,2)
\psline[linecolor=honecolor,linestyle=dashed, dash=3pt 2pt](9,2)(10,3)
\pscircle*[linecolor=tauzerocolor](9,2){\cirrad}
\psline[linecolor=pcolor]{->}(10,2)(10.70,2)
\psline[linecolor=honecolor,linestyle=dashed, dash=3pt 2pt](10,2)(11,3)
\pscircle*[linecolor=tauzerocolor](10,2){\cirrad}
\uput{\labelrad}[150](4,3){$\frac{\gamma}{\tau^{6}} a$}
\psline[linecolor=pcolor](4,3)(5,3)
\pscircle*[linecolor=tauzerocolor](4,3){\cirrad}
\psline[linecolor=pcolor](5,3)(6,3)
\pscircle*[linecolor=tauzerocolor](5,3){\cirrad}
\psline[linecolor=pcolor](6,3)(7,3)
\pscircle*[linecolor=tauzerocolor](6,3){\cirrad}
\psline[linecolor=pcolor](7,3)(8,3)
\psline[linecolor=honecolor,linestyle=dashed, dash=3pt 2pt](7,3)(8,4)
\pscircle*[linecolor=tauzerocolor](7,3){\cirrad}
\psline[linecolor=pcolor](8,3)(9,3)
\psline[linecolor=honecolor,linestyle=dashed, dash=3pt 2pt](8,3)(9,4)
\pscircle*[linecolor=tauzerocolor](8,3){\cirrad}
\psline[linecolor=pcolor](9,3)(10,3)
\psline[linecolor=honecolor,linestyle=dashed, dash=3pt 2pt](9,3)(10,4)
\pscircle*[linecolor=tauzerocolor](9,3){\cirrad}
\psline[linecolor=pcolor](10,3)(11,3)
\psline[linecolor=honecolor,linestyle=dashed, dash=3pt 2pt](10,3)(11,4)
\pscircle*[linecolor=tauzerocolor](10,3){\cirrad}
\psline[linecolor=pcolor]{->}(11,3)(11.70,3)
\psline[linecolor=honecolor,linestyle=dashed, dash=3pt 2pt](11,3)(12,4)
\pscircle*[linecolor=tauzerocolor](11,3){\cirrad}
\uput{\labelrad}[150](8,4){$\frac{\gamma}{\tau^{8}} b$}
\psline[linecolor=pcolor](8,4)(9,4)
\psline[linecolor=honecolor](8,4)(9,5)
\pscircle*[linecolor=tauzerocolor](8,4){\cirrad}
\psline[linecolor=pcolor](9,4)(10,4)
\psline[linecolor=honecolor](9,4)(10,5)
\pscircle*[linecolor=tauzerocolor](9,4){\cirrad}
\psline[linecolor=pcolor](10,4)(11,4)
\psline[linecolor=honecolor](10,4)(11,5)
\pscircle*[linecolor=tauzerocolor](10,4){\cirrad}
\psline[linecolor=pcolor](11,4)(12,4)
\psline[linecolor=honecolor](11,4)(12,5)
\pscircle*[linecolor=tauzerocolor](11,4){\cirrad}
\psline[linecolor=pcolor]{->}(12,4)(12.70,4)
\psline[linecolor=honecolor](12,4)(13,5)
\pscircle*[linecolor=tauzerocolor](12,4){\cirrad}
\psline[linecolor=pcolor](9,5)(10,5)
\psline[linecolor=honecolor](9,5)(10,6)
\pscircle*[linecolor=tauzerocolor](9,5){\cirrad}
\psline[linecolor=pcolor](10,5)(11,5)
\psline[linecolor=honecolor](10,5)(11,6)
\pscircle*[linecolor=tauzerocolor](10,5){\cirrad}
\psline[linecolor=pcolor](11,5)(12,5)
\psline[linecolor=honecolor](11,5)(12,6)
\pscircle*[linecolor=tauzerocolor](11,5){\cirrad}
\psline[linecolor=pcolor](12,5)(13,5)
\psline[linecolor=honecolor](12,5)(13,6)
\pscircle*[linecolor=tauzerocolor](12,5){\cirrad}
\psline[linecolor=pcolor]{->}(13,5)(13.70,5)
\psline[linecolor=honecolor](13,5)(14,6)
\pscircle*[linecolor=tauzerocolor](13,5){\cirrad}
\psline[linecolor=pcolor](10,6)(11,6)
\pscircle*[linecolor=tauzerocolor](10,6){\cirrad}
\psline[linecolor=pcolor](11,6)(12,6)
\psline[linecolor=honecolor,linestyle=dashed, dash=3pt 2pt](11,6)(12,7)
\pscircle*[linecolor=tauzerocolor](11,6){\cirrad}
\psline[linecolor=pcolor](12,6)(13,6)
\psline[linecolor=honecolor,linestyle=dashed, dash=3pt 2pt](12,6)(13,7)
\pscircle*[linecolor=tauzerocolor](12,6){\cirrad}
\psline[linecolor=pcolor](13,6)(14,6)
\psline[linecolor=honecolor,linestyle=dashed, dash=3pt 2pt](13,6)(14,7)
\pscircle*[linecolor=tauzerocolor](13,6){\cirrad}
\psline[linecolor=pcolor]{->}(14,6)(14.70,6)
\psline[linecolor=honecolor,linestyle=dashed, dash=3pt 2pt](14,6)(15,7)
\pscircle*[linecolor=tauzerocolor](14,6){\cirrad}
\uput{\labelrad}[0](1,7){$\F_2[\tau^4]/ \tau^\infty$}
\pscircle*[linecolor=tauzerocolor](1,7){\cirrad}
\uput{\labelrad}[150](12,7){$\frac{\gamma}{\tau^{10}} a b$}
\psline[linecolor=pcolor](12,7)(13,7)
\pscircle*[linecolor=tauzerocolor](12,7){\cirrad}
\psline[linecolor=pcolor](13,7)(14,7)
\pscircle*[linecolor=tauzerocolor](13,7){\cirrad}
\psline[linecolor=pcolor](14,7)(15,7)
\pscircle*[linecolor=tauzerocolor](14,7){\cirrad}
\psline[linecolor=pcolor]{->}(15,7)(15.70,7)
\psline[linecolor=honecolor,linestyle=dashed, dash=3pt 2pt](15,7)(16,8)
\pscircle*[linecolor=tauzerocolor](15,7){\cirrad}
\uput{\labelrad}[150](16,8){$\frac{\gamma}{\tau^{8}} b^{2}$}
\psline[linecolor=pcolor]{->}(16,8)(16.70,8)
\pscircle*[linecolor=tauzerocolor](16,8){\cirrad}
\end{pspicture}
\vfill

\newpage
\end{landscape}

\psset{unit=0.466cm}

\renewcommand{\cirrad}{0.13}

\centerline{$\Ext$ charts for $\cA^{C_2}(1)$ in $mw=0$ and $mw=4$}

\input{charts/ExtC2MW0chartdata.tex}

\input{charts/ExtC2MW4chartdata.tex}
\vfill

\newpage

\psset{unit=0.555cm}

\centerline{Bockstein charts for $\cE^{C_2}(1)$}
\label{BockE1start}

\renewcommand{\cirrad}{0.11}

\begin{center}
\begin{pspicture}(-1,-1)(9,7)
\psgrid[unit=2,gridcolor=gridline,subgriddiv=0,gridlabelcolor=white](0,0)(4,3)
\scriptsize
\rput(0,-1){0}
\rput(2,-1){2}
\rput(4,-1){4}
\rput(6,-1){6}
\rput(8,-1){8}

\rput(-1,0){0}
\rput(-1,2){2}
\rput(-1,4){4}
\rput(-1,6){6}

\uput[0](0,6.2){\textsc{Bockstein $E_1^+$-page}}
\uput{\labelrad}[-90](0,0){$1$}
\psline[linecolor=pcolor]{->}(0,0)(-0.70,0)
\psline[linecolor=hzerocolor](0,0)(0,1)
\pscircle*[linecolor=tauzerocolor](0,0){\cirrad}
\psline[linecolor=pcolor]{->}(0,1)(-0.70,1)
\psline[linecolor=hzerocolor]{->}(0,1)(0,1.70)
\pscircle*[linecolor=tauzerocolor](0,1){\cirrad}
\uput{\labelrad}[-90](2,1){$v_1$}
\psline[linecolor=pcolor]{->}(2,1)(1.30,1)
\psline[linecolor=hzerocolor](2,1)(2,2)
\pscircle*[linecolor=tauzerocolor](2,1){\cirrad}
\psline[linecolor=pcolor]{->}(2,2)(1.30,2)
\psline[linecolor=hzerocolor]{->}(2,2)(2,2.70)
\pscircle*[linecolor=tauzerocolor](2,2){\cirrad}
\uput{\labelrad}[-90](4,2){$v_1^{2}$}
\psline[linecolor=pcolor]{->}(4,2)(3.30,2)
\psline[linecolor=hzerocolor](4,2)(4,3)
\pscircle*[linecolor=tauzerocolor](4,2){\cirrad}
\psline[linecolor=pcolor]{->}(4,3)(3.30,3)
\psline[linecolor=hzerocolor]{->}(4,3)(4,3.70)
\pscircle*[linecolor=tauzerocolor](4,3){\cirrad}
\uput{\labelrad}[-90](6,3){$v_1^{3}$}
\psline[linecolor=pcolor]{->}(6,3)(5.30,3)
\psline[linecolor=hzerocolor](6,3)(6,4)
\pscircle*[linecolor=tauzerocolor](6,3){\cirrad}
\psline[linecolor=pcolor]{->}(6,4)(5.30,4)
\psline[linecolor=hzerocolor]{->}(6,4)(6,4.70)
\pscircle*[linecolor=tauzerocolor](6,4){\cirrad}
\uput{\labelrad}[-90](8,4){$v_1^{4}$}
\psline[linecolor=pcolor]{->}(8,4)(7.30,4)
\psline[linecolor=hzerocolor](8,4)(8,5)
\pscircle*[linecolor=tauzerocolor](8,4){\cirrad}
\psline[linecolor=pcolor]{->}(8,5)(7.30,5)
\psline[linecolor=hzerocolor]{->}(8,5)(8,5.70)
\pscircle*[linecolor=tauzerocolor](8,5){\cirrad}
\uput{\labelrad}[0](1,5){$\F_2[\tau]$}
\pscircle*[linecolor=tauzerocolor](1,5){\cirrad}
\end{pspicture}
\begin{pspicture}(-1,-1)(9,7)
\psgrid[unit=2,gridcolor=gridline,subgriddiv=0,gridlabelcolor=white](0,0)(4,3)
\scriptsize
\rput(0,-1){0}
\rput(2,-1){2}
\rput(4,-1){4}
\rput(6,-1){6}
\rput(8,-1){8}

\rput(-1,0){0}
\rput(-1,2){2}
\rput(-1,4){4}
\rput(-1,6){6}

\uput[0](0,6.2){\textsc{Bockstein $E_2^+$-page}}
\uput{\labelrad}[-90](0,0){$1$}
\psline[linecolor=pcolor]{->}(0,0)(-0.70,0)
\psline[linecolor=hzerocolor](0,0)(0,1)
\pscircle*[linecolor=tauzerocolor](0,0){\cirrad}
\psline[linecolor=hzerocolor]{->}(0,1)(0,1.70)
\pscircle*[linecolor=tauzerocolor](0,1){\cirrad}
\uput{\labelrad}[-90](2,1){$v_1$}
\psline[linecolor=pcolor]{->}(2,1)(1.30,1)
\psline[linecolor=hzerocolor](2,1)(2,2)
\pscircle*[linecolor=tauzerocolor](2,1){\cirrad}
\psline[linecolor=hzerocolor]{->}(2,2)(2,2.70)
\pscircle*[linecolor=tauzerocolor](2,2){\cirrad}
\uput{\labelrad}[-90](4,2){$v_1^{2}$}
\psline[linecolor=pcolor]{->}(4,2)(3.30,2)
\psline[linecolor=hzerocolor](4,2)(4,3)
\pscircle*[linecolor=tauzerocolor](4,2){\cirrad}
\psline[linecolor=hzerocolor]{->}(4,3)(4,3.70)
\pscircle*[linecolor=tauzerocolor](4,3){\cirrad}
\uput{\labelrad}[-90](6,3){$v_1^{3}$}
\psline[linecolor=pcolor]{->}(6,3)(5.30,3)
\psline[linecolor=hzerocolor](6,3)(6,4)
\pscircle*[linecolor=tauzerocolor](6,3){\cirrad}
\psline[linecolor=hzerocolor]{->}(6,4)(6,4.70)
\pscircle*[linecolor=tauzerocolor](6,4){\cirrad}
\uput{\labelrad}[-90](8,4){$v_1^{4}$}
\psline[linecolor=pcolor]{->}(8,4)(7.30,4)
\psline[linecolor=hzerocolor](8,4)(8,5)
\pscircle*[linecolor=tauzerocolor](8,4){\cirrad}
\psline[linecolor=hzerocolor]{->}(8,5)(8,5.70)
\pscircle*[linecolor=tauzerocolor](8,5){\cirrad}
\uput{\labelrad}[0](1,5){$\F_2[\tau^2]$}
\pscircle*[linecolor=tauzerocolor](1,5){\cirrad}
\end{pspicture}

\begin{pspicture}(-1,-1)(9,7)
\psgrid[unit=2,gridcolor=gridline,subgriddiv=0,gridlabelcolor=white](0,0)(4,3)
\scriptsize
\rput(0,-1){0}
\rput(2,-1){2}
\rput(4,-1){4}
\rput(6,-1){6}
\rput(8,-1){8}

\rput(-1,0){0}
\rput(-1,2){2}
\rput(-1,4){4}
\rput(-1,6){6}

\uput[0](0,6.2){\textsc{Bockstein $E_3^+ = E_\infty^+$-page, Part A}}
\uput{\labelrad}[-90](0.00,0.00){$1$}
\psline[linecolor=pcolor]{->}(0.00,0.00)(-0.70,0.00)
\psline[linecolor=hzerocolor](0.00,0.00)(-0.10,1.10)
\pscircle*[linecolor=tauzerocolor](0.00,0.00){\cirrad}
\psline[linecolor=hzerocolor]{->}(-0.10,1.10)(-0.10,1.80)
\pscircle*[linecolor=tauzerocolor](-0.10,1.10){\cirrad}
\pscircle*[linecolor=tauzerocolor](0.10,0.90){\cirrad}
\psline[linecolor=pcolor](1.00,1.00)(0.10,0.90)
\pscircle*[linecolor=tauzerocolor](1.00,1.00){\cirrad}
\uput{\labelrad}[0](1.00,5.00){$\F_2[\tau^4]$}
\pscircle*[linecolor=tauzerocolor](1.00,5.00){\cirrad}
\uput{\labelrad}[-90](2.00,1.00){$v_1$}
\psline[linecolor=pcolor](2.00,1.00)(1.00,1.00)
\psline[linecolor=hzerocolor](2.00,1.00)(1.90,2.10)
\pscircle*[linecolor=tauzerocolor](2.00,1.00){\cirrad}
\psline[linecolor=hzerocolor]{->}(1.90,2.10)(1.90,2.80)
\pscircle*[linecolor=tauzerocolor](1.90,2.10){\cirrad}
\pscircle*[linecolor=tauzerocolor](2.10,1.90){\cirrad}
\psline[linecolor=pcolor](3.00,2.00)(2.10,1.90)
\pscircle*[linecolor=tauzerocolor](3.00,2.00){\cirrad}
\uput{\labelrad}[-90](4.00,2.00){$v_1^{2}$}
\psline[linecolor=pcolor](4.00,2.00)(3.00,2.00)
\psline[linecolor=hzerocolor](4.00,2.00)(3.90,3.10)
\pscircle*[linecolor=tauzerocolor](4.00,2.00){\cirrad}
\psline[linecolor=hzerocolor]{->}(3.90,3.10)(3.90,3.80)
\pscircle*[linecolor=tauzerocolor](3.90,3.10){\cirrad}
\pscircle*[linecolor=tauzerocolor](4.10,2.90){\cirrad}
\psline[linecolor=pcolor](5.00,3.00)(4.10,2.90)
\pscircle*[linecolor=tauzerocolor](5.00,3.00){\cirrad}
\uput{\labelrad}[-90](6.00,3.00){$v_1^{3}$}
\psline[linecolor=pcolor](6.00,3.00)(5.00,3.00)
\psline[linecolor=hzerocolor](6.00,3.00)(5.90,4.10)
\pscircle*[linecolor=tauzerocolor](6.00,3.00){\cirrad}
\psline[linecolor=hzerocolor]{->}(5.90,4.10)(5.90,4.80)
\pscircle*[linecolor=tauzerocolor](5.90,4.10){\cirrad}
\pscircle*[linecolor=tauzerocolor](6.10,3.90){\cirrad}
\psline[linecolor=pcolor](7.00,4.00)(6.10,3.90)
\pscircle*[linecolor=tauzerocolor](7.00,4.00){\cirrad}
\uput{\labelrad}[-90](8.00,4.00){$v_1^{4}$}
\psline[linecolor=pcolor](8.00,4.00)(7.00,4.00)
\psline[linecolor=hzerocolor](8.00,4.00)(7.90,5.10)
\pscircle*[linecolor=tauzerocolor](8.00,4.00){\cirrad}
\psline[linecolor=hzerocolor]{->}(7.90,5.10)(7.90,5.80)
\pscircle*[linecolor=tauzerocolor](7.90,5.10){\cirrad}
\pscircle*[linecolor=tauzerocolor](8.10,4.90){\cirrad}
\end{pspicture}
\begin{pspicture}(-1,-1)(9,7)
\psgrid[unit=2,gridcolor=gridline,subgriddiv=0,gridlabelcolor=white](0,0)(4,3)
\scriptsize
\rput(0,-1){0}
\rput(2,-1){2}
\rput(4,-1){4}
\rput(6,-1){6}
\rput(8,-1){8}

\rput(-1,0){0}
\rput(-1,2){2}
\rput(-1,4){4}
\rput(-1,6){6}

\uput[0](0,6.2){\textsc{Bockstein $E_3^+ = E_\infty^+$-page, Part B}}
\uput{\labelrad}[-90](0.00,1.00){$\tau^{2} h_0$}
\psline[linecolor=hzerocolor]{->}(0.00,1.00)(0.00,1.70)
\pscircle*[linecolor=tauzerocolor](0.00,1.00){\cirrad}
\uput{\labelrad}[0](1.00,5.00){$\F_2[\tau^4]$}
\pscircle*[linecolor=tauzerocolor](1.00,5.00){\cirrad}
\uput{\labelrad}[-90](2.00,2.00){$\tau^{2} h_0 v_1$}
\psline[linecolor=hzerocolor]{->}(2.00,2.00)(2.00,2.70)
\pscircle*[linecolor=tauzerocolor](2.00,2.00){\cirrad}
\uput{\labelrad}[-90](4.00,3.00){$\tau^{2} h_0 v_1^{2}$}
\psline[linecolor=hzerocolor]{->}(4.00,3.00)(4.00,3.70)
\pscircle*[linecolor=tauzerocolor](4.00,3.00){\cirrad}
\uput{\labelrad}[-90](6.00,4.00){$\tau^{2} h_0 v_1^{3}$}
\psline[linecolor=hzerocolor]{->}(6.00,4.00)(6.00,4.70)
\pscircle*[linecolor=tauzerocolor](6.00,4.00){\cirrad}
\uput{\labelrad}[-90](8.00,5.00){$\tau^{2} h_0 v_1^{4}$}
\psline[linecolor=hzerocolor]{->}(8.00,5.00)(8.00,5.70)
\pscircle*[linecolor=tauzerocolor](8.00,5.00){\cirrad}
\end{pspicture}

\begin{pspicture}(-1,-1)(9,7)
\psgrid[unit=2,gridcolor=gridline,subgriddiv=0,gridlabelcolor=white](0,0)(4,3)
\scriptsize
\rput(0,-1.1){0}
\rput(2,-1.1){2}
\rput(4,-1.1){4}
\rput(6,-1.1){6}
\rput(8,-1.1){8}

\rput(-1,0){0}
\rput(-1,2){2}
\rput(-1,4){4}
\rput(-1,6){6}

\uput[0](0,6.2){\textsc{Bockstein $E_1^-$-page}}
\uput{\labelrad}[-90](0,0){$\frac{\gamma}{ \tau}$}
\psline[linecolor=pcolor]{->}(0,0)(0.70,0)
\psline[linecolor=hzerocolor](0,0)(0,1)
\pscircle*[linecolor=tauzerocolor](0,0){\cirrad}
\psline[linecolor=pcolor]{->}(0,1)(0.70,1)
\psline[linecolor=hzerocolor]{->}(0,1)(0,1.70)
\pscircle*[linecolor=tauzerocolor](0,1){\cirrad}
\uput{\labelrad}[-90](2,1){$\frac{\gamma}{ \tau} v_1$}
\psline[linecolor=pcolor]{->}(2,1)(2.70,1)
\psline[linecolor=hzerocolor](2,1)(2,2)
\pscircle*[linecolor=tauzerocolor](2,1){\cirrad}
\psline[linecolor=pcolor]{->}(2,2)(2.70,2)
\psline[linecolor=hzerocolor]{->}(2,2)(2,2.70)
\pscircle*[linecolor=tauzerocolor](2,2){\cirrad}
\uput{\labelrad}[-90](4,2){$\frac{\gamma}{ \tau} v_1^{2}$}
\psline[linecolor=pcolor]{->}(4,2)(4.70,2)
\psline[linecolor=hzerocolor](4,2)(4,3)
\pscircle*[linecolor=tauzerocolor](4,2){\cirrad}
\psline[linecolor=pcolor]{->}(4,3)(4.70,3)
\psline[linecolor=hzerocolor]{->}(4,3)(4,3.70)
\pscircle*[linecolor=tauzerocolor](4,3){\cirrad}
\uput{\labelrad}[-90](6,3){$\frac{\gamma}{ \tau} v_1^{3}$}
\psline[linecolor=pcolor]{->}(6,3)(6.70,3)
\psline[linecolor=hzerocolor](6,3)(6,4)
\pscircle*[linecolor=tauzerocolor](6,3){\cirrad}
\psline[linecolor=pcolor]{->}(6,4)(6.70,4)
\psline[linecolor=hzerocolor]{->}(6,4)(6,4.70)
\pscircle*[linecolor=tauzerocolor](6,4){\cirrad}
\uput{\labelrad}[-90](8,4){$\frac{\gamma}{ \tau} v_1^{4}$}
\psline[linecolor=pcolor]{->}(8,4)(8.70,4)
\psline[linecolor=hzerocolor](8,4)(8,5)
\pscircle*[linecolor=tauzerocolor](8,4){\cirrad}
\psline[linecolor=pcolor]{->}(8,5)(8.70,5)
\psline[linecolor=hzerocolor]{->}(8,5)(8,5.70)
\pscircle*[linecolor=tauzerocolor](8,5){\cirrad}
\uput{\labelrad}[0](1,5){$\F_2[\tau]/ \tau^\infty$}
\pscircle*[linecolor=tauzerocolor](1,5){\cirrad}
\end{pspicture}
\begin{pspicture}(-1,-1)(9,7)
\psgrid[unit=2,gridcolor=gridline,subgriddiv=0,gridlabelcolor=white](0,0)(4,3)
\scriptsize
\rput(0,-1.1){0}
\rput(2,-1.1){2}
\rput(4,-1.1){4}
\rput(6,-1.1){6}
\rput(8,-1.1){8}

\rput(-1,0){0}
\rput(-1,2){2}
\rput(-1,4){4}
\rput(-1,6){6}

\uput[0](0,6.2){\textsc{Bockstein $E_2^-$-page}}
\uput{\labelrad}[180](-0.10,0.10){$\frac{\gamma}{ \tau}$}
\psline[linecolor=hzerocolor](-0.10,0.10)(0.00,1.00)
\pscircle*[linecolor=tauzerocolor](-0.10,0.10){\cirrad}
\uput{\labelrad}[-90](0.10,-0.10){$\frac{\gamma}{ \tau^{2}}$}
\psline[linecolor=pcolor]{->}(0.10,-0.10)(0.80,-0.10)
\pscircle*[linecolor=tauzerocolor](0.10,-0.10){\cirrad}
\psline[linecolor=hzerocolor]{->}(0.00,1.00)(0.00,1.70)
\pscircle*[linecolor=tauzerocolor](0.00,1.00){\cirrad}
\uput{\labelrad}[0](1.00,5.00){$\F_2[\tau^2]/ \tau^\infty$}
\pscircle*[linecolor=tauzerocolor](1.00,5.00){\cirrad}
\uput{\labelrad}[180](1.90,1.10){$\frac{\gamma}{ \tau} v_1$}
\psline[linecolor=hzerocolor](1.90,1.10)(2.00,2.00)
\pscircle*[linecolor=tauzerocolor](1.90,1.10){\cirrad}
\uput{\labelrad}[-90](2.10,0.90){$\frac{\gamma}{ \tau^{2}} v_1$}
\psline[linecolor=pcolor]{->}(2.10,0.90)(2.80,0.90)
\pscircle*[linecolor=tauzerocolor](2.10,0.90){\cirrad}
\psline[linecolor=hzerocolor]{->}(2.00,2.00)(2.00,2.70)
\pscircle*[linecolor=tauzerocolor](2.00,2.00){\cirrad}
\uput{\labelrad}[180](3.90,2.10){$\frac{\gamma}{ \tau} v_1^{2}$}
\psline[linecolor=hzerocolor](3.90,2.10)(4.00,3.00)
\pscircle*[linecolor=tauzerocolor](3.90,2.10){\cirrad}
\uput{\labelrad}[-90](4.10,1.90){$\frac{\gamma}{ \tau^{2}} v_1^{2}$}
\psline[linecolor=pcolor]{->}(4.10,1.90)(4.80,1.90)
\pscircle*[linecolor=tauzerocolor](4.10,1.90){\cirrad}
\psline[linecolor=hzerocolor]{->}(4.00,3.00)(4.00,3.70)
\pscircle*[linecolor=tauzerocolor](4.00,3.00){\cirrad}
\uput{\labelrad}[180](5.90,3.10){$\frac{\gamma}{ \tau} v_1^{3}$}
\psline[linecolor=hzerocolor](5.90,3.10)(6.00,4.00)
\pscircle*[linecolor=tauzerocolor](5.90,3.10){\cirrad}
\uput{\labelrad}[-90](6.10,2.90){$\frac{\gamma}{ \tau^{2}} v_1^{3}$}
\psline[linecolor=pcolor]{->}(6.10,2.90)(6.80,2.90)
\pscircle*[linecolor=tauzerocolor](6.10,2.90){\cirrad}
\psline[linecolor=hzerocolor]{->}(6.00,4.00)(6.00,4.70)
\pscircle*[linecolor=tauzerocolor](6.00,4.00){\cirrad}
\uput{\labelrad}[180](7.90,4.10){$\frac{\gamma}{ \tau} v_1^{4}$}
\psline[linecolor=hzerocolor](7.90,4.10)(8.00,5.00)
\pscircle*[linecolor=tauzerocolor](7.90,4.10){\cirrad}
\uput{\labelrad}[-90](8.10,3.90){$\frac{\gamma}{ \tau^{2}} v_1^{4}$}
\psline[linecolor=pcolor]{->}(8.10,3.90)(8.80,3.90)
\pscircle*[linecolor=tauzerocolor](8.10,3.90){\cirrad}
\psline[linecolor=hzerocolor]{->}(8.00,5.00)(8.00,5.70)
\pscircle*[linecolor=tauzerocolor](8.00,5.00){\cirrad}
\end{pspicture}

\begin{pspicture}(-1,-1)(9,7)
\psgrid[unit=2,gridcolor=gridline,subgriddiv=0,gridlabelcolor=white](0,0)(4,3)
\scriptsize
\rput(0,-1.1){0}
\rput(2,-1.1){2}
\rput(4,-1.1){4}
\rput(6,-1.1){6}
\rput(8,-1.1){8}

\rput(-1,0){0}
\rput(-1,2){2}
\rput(-1,4){4}
\rput(-1,6){6}

\uput[0](0,6.2){\textsc{Bockstein $E_4^-$-page, Part A}}
\uput{\labelrad}[180](-0.10,0.10){$\frac{\gamma}{ \tau}$}
\psline[linecolor=hzerocolor](-0.10,0.10)(0.00,1.00)
\pscircle*[linecolor=tauzerocolor](-0.10,0.10){\cirrad}
\uput{\labelrad}[-90](0.10,-0.10){$\frac{\gamma}{ \tau^{4}}$}
\psline[linecolor=pcolor]{->}(0.10,-0.10)(0.80,-0.10)
\pscircle*[linecolor=tauzerocolor](0.10,-0.10){\cirrad}
\psline[linecolor=hzerocolor]{->}(0.00,1.00)(0.00,1.70)
\pscircle*[linecolor=tauzerocolor](0.00,1.00){\cirrad}
\uput{\labelrad}[0](1.00,5.00){$\F_2[\tau^4]/ \tau^\infty$}
\pscircle*[linecolor=tauzerocolor](1.00,5.00){\cirrad}
\uput{\labelrad}[-90](2.00,1.00){$\frac{\gamma}{ \tau} v_1$}
\psline[linecolor=hzerocolor](2.00,1.00)(2.00,2.00)
\pscircle*[linecolor=tauzerocolor](2.00,1.00){\cirrad}
\psline[linecolor=hzerocolor]{->}(2.00,2.00)(2.00,2.70)
\pscircle*[linecolor=tauzerocolor](2.00,2.00){\cirrad}
\uput{\labelrad}[-90](4.00,2.00){$\frac{\gamma}{ \tau} v_1^{2}$}
\psline[linecolor=hzerocolor](4.00,2.00)(4.00,3.00)
\pscircle*[linecolor=tauzerocolor](4.00,2.00){\cirrad}
\psline[linecolor=hzerocolor]{->}(4.00,3.00)(4.00,3.70)
\pscircle*[linecolor=tauzerocolor](4.00,3.00){\cirrad}
\uput{\labelrad}[-90](6.00,3.00){$\frac{\gamma}{ \tau} v_1^{3}$}
\psline[linecolor=hzerocolor](6.00,3.00)(6.00,4.00)
\pscircle*[linecolor=tauzerocolor](6.00,3.00){\cirrad}
\psline[linecolor=hzerocolor]{->}(6.00,4.00)(6.00,4.70)
\pscircle*[linecolor=tauzerocolor](6.00,4.00){\cirrad}
\uput{\labelrad}[-90](8.00,4.00){$\frac{\gamma}{ \tau} v_1^{4}$}
\psline[linecolor=hzerocolor](8.00,4.00)(8.00,5.00)
\pscircle*[linecolor=tauzerocolor](8.00,4.00){\cirrad}
\psline[linecolor=hzerocolor]{->}(8.00,5.00)(8.00,5.70)
\pscircle*[linecolor=tauzerocolor](8.00,5.00){\cirrad}
\end{pspicture}
\begin{pspicture}(-1,-1)(9,7)
\psgrid[unit=2,gridcolor=gridline,subgriddiv=0,gridlabelcolor=white](0,0)(4,3)
\scriptsize
\rput(0,-1.1){0}
\rput(2,-1.1){2}
\rput(4,-1){4}
\rput(6,-1){6}
\rput(8,-1){8}

\rput(-1,0){0}
\rput(-1,2){2}
\rput(-1,4){4}
\rput(-1,6){6}

\uput[0](0,6.2){\textsc{Bockstein $E_4^-$-page, Part B}}
\uput{\labelrad}[-90](0.10,-0.10){$\frac{\gamma}{ \tau^{2}}$}
\psline[linecolor=pcolor](0.10,-0.10)(1.00,0.00)
\pscircle*[linecolor=tauzerocolor](0.10,-0.10){\cirrad}
\uput{\labelrad}[120](-0.10,0.10){$\frac{\gamma}{ \tau^{3}}$}
\psline[linecolor=hzerocolor](-0.10,0.10)(0.00,1.00)
\pscircle*[linecolor=tauzerocolor](-0.10,0.10){\cirrad}
\psline[linecolor=hzerocolor]{->}(0.00,1.00)(0.00,1.70)
\pscircle*[linecolor=tauzerocolor](0.00,1.00){\cirrad}
\psline[linecolor=pcolor](1.00,0.00)(2.00,0.00)
\pscircle*[linecolor=tauzerocolor](1.00,0.00){\cirrad}
\uput{\labelrad}[0](1.00,5.00){$\F_2[\tau^4]/ \tau^\infty$}
\pscircle*[linecolor=tauzerocolor](1.00,5.00){\cirrad}
\pscircle*[linecolor=tauzerocolor](2.00,0.00){\cirrad}
\uput{\labelrad}[-60](2.18,0.95){$\frac{\gamma}{ \tau^{2}} v_1$}
\psline[linecolor=pcolor](2.10,0.90)(3.00,1.00)
\pscircle*[linecolor=tauzerocolor](2.10,0.90){\cirrad}
\uput{\labelrad}[180](1.90,1.10){$\frac{\gamma}{ \tau^{3}} v_1$}
\psline[linecolor=hzerocolor](1.90,1.10)(2.00,2.00)
\pscircle*[linecolor=tauzerocolor](1.90,1.10){\cirrad}
\psline[linecolor=hzerocolor]{->}(2.00,2.00)(2.00,2.70)
\pscircle*[linecolor=tauzerocolor](2.00,2.00){\cirrad}
\psline[linecolor=pcolor](3.00,1.00)(4.00,1.00)
\pscircle*[linecolor=tauzerocolor](3.00,1.00){\cirrad}
\pscircle*[linecolor=tauzerocolor](4.00,1.00){\cirrad}
\uput{\labelrad}[-60](4.18,1.95){$\frac{\gamma}{ \tau^{2}} v_1^{2}$}
\psline[linecolor=pcolor](4.10,1.90)(5.00,2.00)
\pscircle*[linecolor=tauzerocolor](4.10,1.90){\cirrad}
\uput{\labelrad}[180](3.90,2.10){$\frac{\gamma}{ \tau^{3}} v_1^{2}$}
\psline[linecolor=hzerocolor](3.90,2.10)(4.00,3.00)
\pscircle*[linecolor=tauzerocolor](3.90,2.10){\cirrad}
\psline[linecolor=hzerocolor]{->}(4.00,3.00)(4.00,3.70)
\pscircle*[linecolor=tauzerocolor](4.00,3.00){\cirrad}
\psline[linecolor=pcolor](5.00,2.00)(6.00,2.00)
\pscircle*[linecolor=tauzerocolor](5.00,2.00){\cirrad}
\pscircle*[linecolor=tauzerocolor](6.00,2.00){\cirrad}
\uput{\labelrad}[-60](6.18,2.95){$\frac{\gamma}{ \tau^{2}} v_1^{3}$}
\psline[linecolor=pcolor](6.10,2.90)(7.00,3.00)
\pscircle*[linecolor=tauzerocolor](6.10,2.90){\cirrad}
\uput{\labelrad}[180](5.90,3.10){$\frac{\gamma}{ \tau^{3}} v_1^{3}$}
\psline[linecolor=hzerocolor](5.90,3.10)(6.00,4.00)
\pscircle*[linecolor=tauzerocolor](5.90,3.10){\cirrad}
\psline[linecolor=hzerocolor]{->}(6.00,4.00)(6.00,4.70)
\pscircle*[linecolor=tauzerocolor](6.00,4.00){\cirrad}
\psline[linecolor=pcolor](7.00,3.00)(8.00,3.00)
\pscircle*[linecolor=tauzerocolor](7.00,3.00){\cirrad}
\pscircle*[linecolor=tauzerocolor](8.00,3.00){\cirrad}
\uput{\labelrad}[-60](8.18,3.95){$\frac{\gamma}{ \tau^{2}} v_1^{4}$}
\pscircle*[linecolor=tauzerocolor](8.10,3.90){\cirrad}
\uput{\labelrad}[180](7.90,4.10){$\frac{\gamma}{ \tau^{3}} v_1^{4}$}
\psline[linecolor=hzerocolor](7.90,4.10)(8.00,5.00)
\pscircle*[linecolor=tauzerocolor](7.90,4.10){\cirrad}
\psline[linecolor=hzerocolor]{->}(8.00,5.00)(8.00,5.70)
\pscircle*[linecolor=tauzerocolor](8.00,5.00){\cirrad}
\end{pspicture}
\end{center}

\newpage
\centerline{$\Ext_{NC}$ charts for $\cE^{C_2}(1)$}

\begin{center}
\begin{pspicture}(-1,-1)(9,7)
\psgrid[unit=2,gridcolor=gridline,subgriddiv=0,gridlabelcolor=white](0,0)(4,3)
\scriptsize
\rput(0,-1.1){0}
\rput(2,-1.1){2}
\rput(4,-1.1){4}
\rput(6,-1.1){6}
\rput(8,-1.1){8}

\rput(-1,0){0}
\rput(-1,2){2}
\rput(-1,4){4}
\rput(-1,6){6}

\uput[0](0,6.2){\textsc{$\Ext_{NC}$, Part A}}
\uput{\labelrad}[180](-0.10,0.10){$\frac{\gamma}{ \tau}$}
\psline[linecolor=hzerocolor](-0.10,0.10)(0.00,1.00)
\pscircle*[linecolor=tauzerocolor](-0.10,0.10){\cirrad}
\uput{\labelrad}[-90](0.10,-0.10){$\frac{\gamma}{ \tau^{4}}$}
\psline[linecolor=pcolor]{->}(0.10,-0.10)(0.80,-0.10)
\pscircle*[linecolor=tauzerocolor](0.10,-0.10){\cirrad}
\psline[linecolor=hzerocolor]{->}(0.00,1.00)(0.00,1.70)
\pscircle*[linecolor=tauzerocolor](0.00,1.00){\cirrad}
\uput{\labelrad}[0](1.00,5.00){$\F_2[\tau^4]/ \tau^\infty$}
\pscircle*[linecolor=tauzerocolor](1.00,5.00){\cirrad}
\uput{\labelrad}[-90](2.00,1.00){$\frac{\gamma}{ \tau} v_1$}
\psline[linecolor=hzerocolor](2.00,1.00)(2.00,2.00)
\pscircle*[linecolor=tauzerocolor](2.00,1.00){\cirrad}
\psline[linecolor=hzerocolor]{->}(2.00,2.00)(2.00,2.70)
\pscircle*[linecolor=tauzerocolor](2.00,2.00){\cirrad}
\uput{\labelrad}[-90](4.00,2.00){$\frac{\gamma}{ \tau} v_1^{2}$}
\psline[linecolor=hzerocolor](4.00,2.00)(4.00,3.00)
\pscircle*[linecolor=tauzerocolor](4.00,2.00){\cirrad}
\psline[linecolor=hzerocolor]{->}(4.00,3.00)(4.00,3.70)
\pscircle*[linecolor=tauzerocolor](4.00,3.00){\cirrad}
\uput{\labelrad}[-90](6.00,3.00){$\frac{\gamma}{ \tau} v_1^{3}$}
\psline[linecolor=hzerocolor](6.00,3.00)(6.00,4.00)
\pscircle*[linecolor=tauzerocolor](6.00,3.00){\cirrad}
\psline[linecolor=hzerocolor]{->}(6.00,4.00)(6.00,4.70)
\pscircle*[linecolor=tauzerocolor](6.00,4.00){\cirrad}
\uput{\labelrad}[-90](8.00,4.00){$\frac{\gamma}{ \tau} v_1^{4}$}
\psline[linecolor=hzerocolor](8.00,4.00)(8.00,5.00)
\pscircle*[linecolor=tauzerocolor](8.00,4.00){\cirrad}
\psline[linecolor=hzerocolor]{->}(8.00,5.00)(8.00,5.70)
\pscircle*[linecolor=tauzerocolor](8.00,5.00){\cirrad}
\end{pspicture}
\begin{pspicture}(-1,-1)(9,7)
\psgrid[unit=2,gridcolor=gridline,subgriddiv=0,gridlabelcolor=white](0,0)(4,3)
\scriptsize
\rput(0,-1.1){0}
\rput(2,-1.1){2}
\rput(4,-1.1){4}
\rput(6,-1.1){6}
\rput(8,-1.1){8}

\rput(-1,0){0}
\rput(-1,2){2}
\rput(-1,4){4}
\rput(-1,6){6}

\uput[0](0,6.2){\textsc{$\Ext_{NC}$, Part B}}
\uput{\labelrad}[-90](0.10,-0.10){$\frac{\gamma}{ \tau^{2}}$}
\psline[linecolor=pcolor](0.10,-0.10)(1.00,0.00)
\pscircle*[linecolor=tauzerocolor](0.10,-0.10){\cirrad}
\uput{\labelrad}[120](-0.10,0.10){$\frac{\gamma}{ \tau^{3}}$}
\psline[linecolor=hzerocolor](-0.10,0.10)(0.00,1.00)
\pscircle*[linecolor=tauzerocolor](-0.10,0.10){\cirrad}
\psline[linecolor=hzerocolor]{->}(0.00,1.00)(0.00,1.70)
\pscircle*[linecolor=tauzerocolor](0.00,1.00){\cirrad}
\psline[linecolor=pcolor](1.00,0.00)(2.00,0.00)
\pscircle*[linecolor=tauzerocolor](1.00,0.00){\cirrad}
\uput{\labelrad}[0](1.00,5.00){$\F_2[\tau^4]/ \tau^\infty$}
\pscircle*[linecolor=tauzerocolor](1.00,5.00){\cirrad}
\psline[linecolor=hzerocolor,linestyle=dashed, dash=3pt 2pt](2.00,0.00)(1.90,1.10)
\pscircle*[linecolor=tauzerocolor](2.00,0.00){\cirrad}
\uput{\labelrad}[-60](2.18,0.95){$\frac{\gamma}{ \tau^{2}} v_1$}
\psline[linecolor=pcolor](2.10,0.90)(3.00,1.00)
\pscircle*[linecolor=tauzerocolor](2.10,0.90){\cirrad}
\uput{\labelrad}[180](1.90,1.10){$\frac{\gamma}{ \tau^{3}} v_1$}
\psline[linecolor=hzerocolor](1.90,1.10)(2.00,2.00)
\pscircle*[linecolor=tauzerocolor](1.90,1.10){\cirrad}
\psline[linecolor=hzerocolor]{->}(2.00,2.00)(2.00,2.70)
\pscircle*[linecolor=tauzerocolor](2.00,2.00){\cirrad}
\psline[linecolor=pcolor](3.00,1.00)(4.00,1.00)
\pscircle*[linecolor=tauzerocolor](3.00,1.00){\cirrad}
\psline[linecolor=hzerocolor,linestyle=dashed, dash=3pt 2pt](4.00,1.00)(3.90,2.10)
\pscircle*[linecolor=tauzerocolor](4.00,1.00){\cirrad}
\uput{\labelrad}[-60](4.18,1.95){$\frac{\gamma}{ \tau^{2}} v_1^{2}$}
\psline[linecolor=pcolor](4.10,1.90)(5.00,2.00)
\pscircle*[linecolor=tauzerocolor](4.10,1.90){\cirrad}
\uput{\labelrad}[180](3.90,2.10){$\frac{\gamma}{ \tau^{3}} v_1^{2}$}
\psline[linecolor=hzerocolor](3.90,2.10)(4.00,3.00)
\pscircle*[linecolor=tauzerocolor](3.90,2.10){\cirrad}
\psline[linecolor=hzerocolor]{->}(4.00,3.00)(4.00,3.70)
\pscircle*[linecolor=tauzerocolor](4.00,3.00){\cirrad}
\psline[linecolor=pcolor](5.00,2.00)(6.00,2.00)
\pscircle*[linecolor=tauzerocolor](5.00,2.00){\cirrad}
\psline[linecolor=hzerocolor,linestyle=dashed, dash=3pt 2pt](6.00,2.00)(5.90,3.10)
\pscircle*[linecolor=tauzerocolor](6.00,2.00){\cirrad}
\uput{\labelrad}[-60](6.18,2.95){$\frac{\gamma}{ \tau^{2}} v_1^{3}$}
\psline[linecolor=pcolor](6.10,2.90)(7.00,3.00)
\pscircle*[linecolor=tauzerocolor](6.10,2.90){\cirrad}
\uput{\labelrad}[180](5.90,3.10){$\frac{\gamma}{ \tau^{3}} v_1^{3}$}
\psline[linecolor=hzerocolor](5.90,3.10)(6.00,4.00)
\pscircle*[linecolor=tauzerocolor](5.90,3.10){\cirrad}
\psline[linecolor=hzerocolor]{->}(6.00,4.00)(6.00,4.70)
\pscircle*[linecolor=tauzerocolor](6.00,4.00){\cirrad}
\psline[linecolor=pcolor](7.00,3.00)(8.00,3.00)
\pscircle*[linecolor=tauzerocolor](7.00,3.00){\cirrad}
\psline[linecolor=hzerocolor,linestyle=dashed, dash=3pt 2pt](8.00,3.00)(7.90,4.10)
\pscircle*[linecolor=tauzerocolor](8.00,3.00){\cirrad}
\uput{\labelrad}[-60](8.18,3.95){$\frac{\gamma}{ \tau^{2}} v_1^{4}$}
\pscircle*[linecolor=tauzerocolor](8.10,3.90){\cirrad}
\uput{\labelrad}[180](7.90,4.10){$\frac{\gamma}{ \tau^{3}} v_1^{4}$}
\psline[linecolor=hzerocolor](7.90,4.10)(8.00,5.00)
\pscircle*[linecolor=tauzerocolor](7.90,4.10){\cirrad}
\psline[linecolor=hzerocolor]{->}(8.00,5.00)(8.00,5.70)
\pscircle*[linecolor=tauzerocolor](8.00,5.00){\cirrad}
\end{pspicture}
\end{center}


\clearpage
\newpage

\begin{myfigure}
\label{fig:MW-module}

\begin{center}
Milnor-Witt modules for $ko_{C_2}$
\end{center}
\bigskip \bigskip

\begin{center}

\input{charts/HtpyMWchartdata.tex}
\end{center}

\end{myfigure}

\newpage

\begin{myfigure}
\label{fig:MWkR-module}

\begin{center}
Milnor-Witt modules for $k\R$
\end{center}
\bigskip \bigskip

\begin{center}

\psset{unit=0.72cm}
\begin{pspicture}(-10,0)(7,1)

\rput(-1,0){
\begin{pspicture}(-3,-1.5)(15,3)

\rput(-2.5,1){$\Pi_{4k}$}

\scriptsize

\multido{\ii=-2+1}{17}
{\rput(\ii,2){$\ii$}}

\psline[linecolor=MWrhocolor](0,0)(-1,0)
\psline[linecolor=MWrhocolor](6,0)(8,0)
\psline[linecolor=MWrhocolor](14,0)(14.4,0)
\psline[linecolor=MWrhocolor]{->}(-1,-0)(-3,-0)

\psbezier[linecolor=MWalphacolor](0,0)(0.1,-1)(3.9,-1)(4,0)
\rput(4,0){\psbezier[linecolor=MWalphacolor](0,0)(0.1,-1)(3.9,-1)(4,0)}
\rput(8,0){\psbezier[linecolor=MWalphacolor](0,0)(0.1,-1)(3.9,-1)(4,0)}
\rput(12,0){\psbezier[linecolor=MWalphacolor](0,0)(0.1,-0.8)(2,-0.72)(2.4,-0.7)}

\pscircle[fillstyle=solid,fillcolor=white](0,0){\crad}
\pscircle[fillstyle=solid,fillcolor=white](4,0){\crad}
\pscircle[fillstyle=solid,fillcolor=white](8,0){\crad}
\pscircle[fillstyle=solid,fillcolor=white](12,0){\crad}
\multido{\ri=-1+-1}{2}
{\pscircle*(\ri,0){\smallcrad}}
\multido{\ri=6+1}{2}
{\pscircle*(\ri,0){\smallcrad}}
\pscircle*(14,0){\smallcrad}

\rput(-1.1,0.4){$\arho$}
\rput(-2,0.45){$\arho^2$}

\rput(0,0.4){$1$}
\rput(4,0.4){$\tau^{-2} \omega\, v_1^2$}
\rput(8,0.4){$\tau^{-4} v_1^4$}
\rput(12,0.4){$\tau^{-6}\omega\, v_1^6$}

\rput(6,-0.9){$\color{MWalphacolor}\scriptscriptstyle{4}$}

\psline[linecolor=gray,linestyle=dashed](-3,-1.5)(14.5,-1.5)

\end{pspicture} }


\rput(-1,-3.5){
\begin{pspicture}(-3,-1.1)(15,1.5)

\rput(-2.5,0.6){$\Pi_{-4-4k}$}

\scriptsize

\psline[linecolor=MWrhocolor](6,0)(8,0)
\psline[linecolor=MWrhocolor](14,0)(14.4,0)

\psbezier[linecolor=MWalphacolor](0,0)(0.1,-1)(3.9,-1)(4,0)
\rput(4,0){\psbezier[linecolor=MWalphacolor](0,0)(0.1,-1)(3.9,-1)(4,0)}
\rput(8,0){\psbezier[linecolor=MWalphacolor](0,0)(0.1,-1)(3.9,-1)(4,0)}
\rput(12,0){\psbezier[linecolor=MWalphacolor](0,0)(0.1,-0.8)(2,-0.72)(2.4,-0.7)}

\multido{\ri=0+4}{4}
{\pscircle[fillstyle=solid,fillcolor=white](\ri,0){\crad}}
\pscircle*(6,0){\smallcrad}
\pscircle*(7,0){\smallcrad}
\pscircle*(14,0){\smallcrad}

\rput(0,0.4){$\tau^{-4}\omega$}
\rput(4,0.4){$\tau^{-6}\omega\, v_1^2$}
\rput(8,0.4){$ \tau^{-8} v_1^4$}
\rput(12,0.4){$ \tau^{-10}\omega\, v_1^6$}

\rput(2,-1){$\color{MWalphacolor}\scriptscriptstyle{2}$}
\rput(6,-1){$\color{MWalphacolor}\scriptscriptstyle{4}$}

\psline[linecolor=gray,linestyle=dashed](-3,-1.5)(14.5,-1.5)

\end{pspicture} }


\rput(-1,-6.5){
\begin{pspicture}(-3,-1)(15,2)

\rput(-2.5,0.5){$\Pi_{1\pm4k}$}

\scriptsize

\psline[linecolor=MWrhocolor](2,0)(0,0)
\psline[linecolor=MWrhocolor](10,0)(8,0)

\rput(2,0){\psbezier[linecolor=MWalphacolor](0,0)(0.1,-1)(3.9,-1)(4,0)}
\rput(6,0){\psbezier[linecolor=MWalphacolor](0,0)(0.1,-1)(3.9,-1)(4,0)}
\rput(10,0){\psbezier[linecolor=MWalphacolor](0,0)(0.1,-1)(3.9,-1)(4,0)}
\rput(14,0){\psbezier[linecolor=MWalphacolor](0,0)(0.1,-0.4)(0.4,-0.45)(0.5,-0.5)}

\pscircle*(1,0){\smallcrad}
\pscircle[fillstyle=solid,fillcolor=white](2,0){\crad}
\pscircle*(0,0){\smallcrad}

\rput(2,0.4){$v_1$}

\pscircle[fillstyle=solid,fillcolor=white](6,0){\crad}
\rput(6,0.4){$ \tau^{-2}\omega\, v_1^3$}

\pscircle*(9,0){\smallcrad}
\pscircle[fillstyle=solid,fillcolor=white](10,0){\crad}
\pscircle*(8,0){\smallcrad}

\pscircle[fillstyle=solid,fillcolor=white](14,0){\crad}

\rput(10,0.4){$\tau^{-4} v_1^5$}
\rput(14,0.4){$\tau^{-6}\omega\, v_1^7$}

\rput(8,-1){$\color{MWalphacolor}\scriptscriptstyle{4}$}

\psline[linecolor=gray,linestyle=dashed](-3,-1.5)(14.5,-1.5)

\end{pspicture} }


\rput(-1,-9.5){
\begin{pspicture}(-3,-1.5)(15,1.5)

\rput(-2.5,0.5){$\Pi_{2\pm4k}$}

\scriptsize

\psline[linecolor=MWrhocolor](2,0)(4,0)

\psbezier[linecolor=MWalphacolor](0,0)(0.1,-1)(3.9,-1)(4,0)
\rput(4,0){\psbezier[linecolor=MWalphacolor](0,0)(0.1,-1)(3.9,-1)(4,0)}
\rput(8,0){\psbezier[linecolor=MWalphacolor](0,0)(0.1,-1)(3.9,-1)(4,0)}
\rput(12,0){\psbezier[linecolor=MWalphacolor](0,0)(0.1,-0.8)(2,-0.72)(2.4,-0.7)}

\pscircle[fillstyle=solid,fillcolor=white](0,0){\crad}
\pscircle*(2,0){\smallcrad}
\pscircle*(3,0){\smallcrad}
\pscircle[fillstyle=solid,fillcolor=white](4,0){\crad}

\rput(0,0.4){$\tau^2\!\omega$}
\rput(4,0.4){$v_1^2$}
\rput(2,-1){$\color{MWalphacolor}\scriptscriptstyle{4}$}

\psline[linecolor=MWrhocolor](10,0)(12,0)

\psbezier[linecolor=MWalphacolor](8,0)(8.1,-1)(11.9,-1)(12,0)

\pscircle[fillstyle=solid,fillcolor=white](8,0){\crad}
\pscircle*(10,0){\smallcrad}
\pscircle*(11,0){\smallcrad}
\pscircle[fillstyle=solid,fillcolor=white](12,0){\crad}

\rput(8,0.4){$\tau^{-2}\!\omega\, v_1^4$}
\rput(12,0.4){$\tau^{-4} v_1^6$}
\rput(10,-1){$\color{MWalphacolor}\scriptscriptstyle{4}$}

\psline[linecolor=gray,linestyle=dashed](-3,-1.5)(14.5,-1.5)

\end{pspicture} }


\rput(-1,-12.5){
\begin{pspicture}(-3,-1.5)(15,1.5)

\rput(-2.5,0.5){$\Pi_{-1 + 4k}$}

\scriptsize

\psline[linecolor=MWrhocolor](4,0)(6,0)

\rput(2,0){\psbezier[linecolor=MWalphacolor](0,0)(0.1,-1)(3.9,-1)(4,0)}
\rput(6,0){\psbezier[linecolor=MWalphacolor](0,0)(0.1,-1)(3.9,-1)(4,0)}
\rput(10,0){\psbezier[linecolor=MWalphacolor](0,0)(0.1,-1)(3.9,-1)(4,0)}
\rput(14,0){\psbezier[linecolor=MWalphacolor](0,0)(0.1,-0.4)(0.4,-0.45)(0.5,-0.5)}

\pscircle[fillstyle=solid,fillcolor=white](2,0){\crad}
\pscircle*(4,0){\smallcrad}
\pscircle*(5,0){\smallcrad}
\pscircle[fillstyle=solid,fillcolor=white](6,0){\crad}

\rput(2,0.4){$\tau^{-2}\!\omega v_1$}
\rput(6,0.4){$\tau^{-4}v_1^3$}
\rput(4,-1){$\color{MWalphacolor}\scriptscriptstyle{4}$}

\psline[linecolor=MWrhocolor](12,0)(14,0)

\pscircle[fillstyle=solid,fillcolor=white](10,0){\crad}
\pscircle*(12,0){\smallcrad}
\pscircle*(13,0){\smallcrad}
\pscircle[fillstyle=solid,fillcolor=white](14,0){\crad}

\rput(10,0.4){$\tau^{-6}\!\omega\,v_1^5$}
\rput(14,0.4){$\tau^{-4} v_1^7$}
\rput(12,-1){$\color{MWalphacolor}\scriptscriptstyle{4}$}

\psline[linecolor=gray,linestyle=dashed](-3,-1.5)(14.5,-1.5)

\end{pspicture} }


%
%
%
%
%


\rput(-1,-16){
\begin{pspicture}(-3,-2)(15,1)

\rput(-2.5,0.5){$\Pi_{-5-4k}$}

\scriptsize

\psline[linecolor=MWrhocolor](0,0)(14.4,0)

\rput(2,-0.4){\psbezier[linecolor=MWalphacolor](0,0)(0.1,-1)(3.9,-1)(4,0)}
\rput(6,-0.4){\psbezier[linecolor=MWalphacolor](0,0)(0.1,-1)(3.9,-1)(4,0)}
\rput(10,-0.4){\psbezier[linecolor=MWalphacolor](0,0)(0.1,-1)(3.9,-1)(4,0)}
\rput(14,-0.4){\psbezier[linecolor=MWalphacolor](0,0)(0.1,-0.4)(0.4,-0.45)(0.5,-0.5)}

\multido{\ri=0+1}{15}
{\pscircle*(\ri,0){\smallcrad}}

\psline[linecolor=MWrhocolor](4,-0.5)(6,-0.5)
\psline[linecolor=MWrhocolor](12,-0.5)(14,-0.5)

\pscircle[fillstyle=solid,fillcolor=white](2,-0.5){\crad}
\pscircle*(4,-0.5){\smallcrad}
\pscircle*(5,-0.5){\smallcrad}
\pscircle[fillstyle=solid,fillcolor=white](6,-0.5){\crad}
\pscircle[fillstyle=solid,fillcolor=white](10,-0.5){\crad}
\pscircle*(12,-0.5){\smallcrad}
\pscircle*(13,-0.5){\smallcrad}
\pscircle[fillstyle=solid,fillcolor=white](14,-0.5){\crad}

\rput(4,-1.4){$\color{MWalphacolor}\scriptscriptstyle{4}$}
\rput(12,-1.4){$\color{MWalphacolor}\scriptscriptstyle{4}$}

\rput(0,0.4){$\frac{\Gamma}{\tau^4}$}
\rput(1.6,-1){$\tau^{-6}\!\omega v_1$}
\rput(6.3,-1.3){$\tau^{-8} v_1^3$}
\rput(10,-1.3){$\tau^{-10}\!\omega v_1^5$}
\rput(14.2,-1.3){$\tau^{-12} v_1^7$}

\multido{\ii=-2+1}{17}
{\rput(\ii,-2.5){$\ii$}}

\end{pspicture}}

\end{pspicture}
\end{center}

\end{myfigure}

\clearpage
\newpage

\

\end{document}